\title{Unifying notions of pasting diagrams}
\author{Simon Forest}
\email{simon.forest@normalesup.org}
\begin{document}
\maketitle

\input{abstract}

\ifhs
\else
  \tableofcontents
\fi

\ifdebugmode
  \tableoftodos
\fi

\section*{Introduction}

\hspace*{\fill} \textit{From an original idea of S.\,Mimram.}

\paragraph*{Pasting diagrams}
Central to the theory of strict $\omega$-categories is the notion of pasting
diagram, which gives a simple representation for formal composites of cells of
strict \ocats. Indeed, the standard representation, as equivalence classes of
expressions under the axioms of \ocats, can be difficult to handle in practice,
since the equivalence relation induced by the axioms is hard to describe.
Instead, a graphical representation of the cells involved in the composite is
often sufficient to designate a cell.
For instance, consider the two formal composites
\begin{gather*}
  a \comp_0 (\alpha \comp_{1} \beta) \comp_{0} ( (\gamma \comp_{0} h) \comp_{1} (\delta \comp_{0} h))
  \shortintertext{and}
  (a \comp_{0} \alpha \comp_{0} e \comp_{0} h) \comp_{1} (a \comp_{0} c \comp_{0} \gamma \comp_{0}
  h) \comp_{1} (a \comp_{0} \beta \comp_{0} \delta \comp_{0} h ).
\end{gather*}
Under the axioms of \ocats, it can be checked, though it is not immediate, that
they represent the same cell. However, both are formal composites of the
elements of the following diagram
\begin{equation}
  \label{eq:2-pd}
  \begin{tikzcd}[ampersand replacement=\&]
    u \arrow[rr,"a"] \& \& v \arrow[rr,"c"{description},""{auto=false,name=c}]
    \arrow[rr,out=70,in=110,"b",""{auto=false,name=b}] 
    \arrow[rr,out=-70,in=-110,"d"',""{auto=false,name=p}] 
    \arrow[phantom,"\Downarrow \alpha",from=b,to=c]
    \arrow[phantom,"\Downarrow \beta",from=c,to=p] 
    \& \& w \arrow[rr,"f"{description},""{auto=false,name=f}] \arrow[rr,out=70,in=110,"e",""{auto=false,name=e}]
    \arrow[rr,out=-70,in=-110,"g"',""{auto=false,name=g}] \& \& x \arrow[rr,"h",""{auto=false,name=h}] \& \& y
    \arrow[phantom,"\Downarrow \gamma",from=e,to=f]
    \arrow[phantom,"\Downarrow \delta",from=f,to=g]
  \end{tikzcd}
\end{equation}
More generally, all formal composites involving all the generators of this diagram are
equal and the data of the diagram enables referring to the cell obtained by
composing $u$, $v, \ldots, y$, $a$, $b, \ldots, h$, $\alpha$, $\beta$, $\gamma$,
$\delta$ together unambiguously without giving an explicit composite for them.
We call \emph{pasting diagrams} the diagrams satisfying this property. It can be
observed that this pasting diagram is made of smaller pasting diagrams like
\[
  \begin{tikzcd}[ampersand replacement=\&]
    v \arrow[rr,"c"{description},""{auto=false,name=c}]
    \arrow[rr,out=70,in=110,"b",""{auto=false,name=b}] 
    \arrow[rr,out=-70,in=-110,"d"',""{auto=false,name=p}] 
    \arrow[phantom,"\Downarrow \alpha",from=b,to=c]
    \arrow[phantom,"\Downarrow \beta",from=c,to=p] 
    \& \& w
  \end{tikzcd}
  \quad\qtand\quad
  \begin{tikzcd}[ampersand replacement=\&]
    w \arrow[rr,"f"',""{auto=false,name=f}] \arrow[rr,out=70,in=110,"e",""{auto=false,name=e}]
     \& \& x \arrow[rr,"h",""{auto=false,name=h}] \& \& y
    \arrow[phantom,"\Downarrow \gamma",from=e,to=f]
  \end{tikzcd}
  .
\]
Moreover, the two can be composed along $w$ by taking the union of the pasting
diagrams. Thus, given a set of generators and a specification of sources and
targets for them satisfying sufficient properties, one can obtain an \ocat of
pasting diagrams on such a set, which is actually free on the generators. This
fact justifies the use of pasting diagrams as an adequate replacement to formal
composites to designate particular cells.

\paragraph*{Pasting diagrams in $1$-categories}
The simplest instances of pasting diagrams are the ones of dimension~$1$: in this case,
they are of the form
\begin{equation}
  \label{eq:1-pd}
  \begin{tikzcd}
    x_0 \arrow[r,"a_1"] & x_1\ar[r,"a_2"]&x_2\ar[r,"a_3"]&\cdots\ar[r,"a_n"]&x_n    
  \end{tikzcd}
\end{equation}
and admit $a_n\circ\cdots\circ a_1$ as composite. On the contrary,
diagrams such as
\begin{equation}
  \label{eq:1-non-pd}
  \begin{tikzcd}
    y&\ar[l,"a"']x\ar[r,"b"]&z
  \end{tikzcd}
  \qquad\qquad\text{or}\qquad\qquad
  \begin{tikzcd}
    x\ar[loop right,"a"]
  \end{tikzcd}
\end{equation}
are not expected to be pasting diagrams: in the first one, the two arrows are
not even composable, and the second one is ambiguous in the sense that it might
denote $a$, or $a\circ a$, etc. Note that the diagram~\eqref{eq:1-pd} can
be freely obtained as the composite of generating diagrams of the form
\[
  \begin{tikzcd}
    x_i\ar[r,"a_i"]&x_{i+1}
  \end{tikzcd}
\]
(composition amounts here to identify the target object of a diagram with the
source of the second), whereas this is not the case for the diagrams
of~\eqref{eq:1-non-pd}. Pasting diagrams of the form~\eqref{eq:1-pd} can
also be characterized as finite graphs which are connected, acyclic and
non-branching, in the sense that no two arrows have the same source or the same
target.

\paragraph*{Pasting diagrams in higher dimensions}%

In order to extend the definition of pasting diagrams to higher dimensions, we
first need to extend the one of graph: an \emph{\ohg} is the data of sets of
hyperedges, or \emph{generators}, for each dimension, where each generator of
dimension~$i{+}1$ has specified source and target sets of edges of
dimension~$i$. Then, the definition of higher-dimensional pasting diagrams can
be sketched as follows: an $(n{+}1)$-pasting diagram is an \ohg with edges up to
dimension~$n{+}1$ whose source and target are valid $n$-pasting diagrams, and
whose $(n{+}1)$-generator can be composed unambiguously in an
$(n{+}1)$\category. The conditions for which the composition is unambiguous cannot be formulated as easily as in dimension~$1$. Indeed, on the one hand, the
complexity of the definition of higher strict categories makes it difficult to
check whether a set of generators can be composed in at least one way, and if
two composites are formally equivalent. On the other hand, the sources for
non-composability or ambiguity are much more varied.
For example, the order in which we are supposed to compose the elements
of~\eqref{eq:2-pd} is ambiguous. Considering only the $2$-generators, the orders
of composition $\alpha,\beta,\gamma,\delta$ and $\alpha,\gamma,\delta,\beta$ are
both possible. However, it can be proved that all possible orders of composition
are equivalent by the axioms of strict $\omega$-categories, so this ambiguity is
not important. On the contrary, given the $2$-cells~$\alpha$ and~$\beta$
described by the diagrams
\begin{equation*}
  \label{eq:2-pd-ambi}
  \begin{tikzcd}[row sep={2.5em,between origins},column sep={3.5em,between origins},cramped]
    & x \arrow[rd,"b"]& \\
    w \arrow[ru,"a"] \arrow[rd,"a'"']& \Downarrow \alpha & y \\
    & x \arrow[ru,"b"']& 
  \end{tikzcd}
  \quad
  \qtand
  \quad
  \begin{tikzcd}[row sep={2.5em,between origins},column sep={3.5em,between origins},cramped]
    & y \arrow[rd,"c"]& \\
    x \arrow[ru,"b"] \arrow[rd,"b"']& \Downarrow \beta & z \\
    & y \arrow[ru,"c'"']& 
  \end{tikzcd},
\end{equation*}
$\alpha$ and $\beta$ can be composed together in two possible orders: $\alpha$
then $\beta$ or $\beta$ then $\alpha$, which can be represented as
\[
  \begin{tikzcd}[row sep={2.5em,between origins},column sep={3.5em,between origins}]
    & x \arrow[rd,"b"] \ar[dd,phantom,"\Downarrow \alpha"]& \\
    w \arrow[ru,"a"] \arrow[rd,"a'"']& & y \ar[rd,"c"]
    \ar[dd,phantom,"\Downarrow \beta"] & \\
    & x \arrow[ru,"b"'{description}] \ar[rd,"b"'] &  & z \\
    & & y \ar[ru,"c'"']
  \end{tikzcd}
  \quad
  \qtand
  \quad
  \begin{tikzcd}[row sep={2.5em,between origins},column sep={3.5em,between origins}]
    &   & y \arrow[rd,"c"] \ar[dd,phantom,"\Downarrow \beta"]& \\
    &   x \arrow[ru,"b"] \arrow[rd,"b"'{description}] \ar[dd,phantom,"\Downarrow \alpha"]& & z \\
    w \ar[ru,"a"] \ar[rd,"a'"'] &   & y \arrow[ru,"c'"']& \\
    & x \ar[ru,"b"']
  \end{tikzcd}
  .
\]
But here, these two composites are different. Even more subtle problems arise
starting from dimension $3$, justifying the use of sophisticated formalisms for
recognizing pasting diagrams.

\paragraph*{Pasting diagram formalisms}
Until now, different proposals of pasting diagram formalisms, each one giving a
set of conditions to recognize pasting diagrams, have been made. The main ones
are Johnson's \emph{pasting schemes}~\cite{johnson1989combinatorics}, Street's
\emph{parity complexes}~\cite{street1991parity,street1994parity} and Steiner's
\emph{augmented directed complexes}~\cite{steiner2004omega}.
Even though the ideas underlying the definitions of these formalisms are quite
similar, they differ on many points and comparing them precisely is uneasy, and
actually, to the best of our knowledge, no formal account of the differences was
ever made.

One of the main difference between these formalisms is the notion of sub-pasting
diagram, or \emph{cell}, which is used. The formalism of pasting schemes has the
most evident notion of cell: it is simply a set which gathers all the generators which appear in the pasting
diagram it represents, regardless of their dimension or source/target status. For example, the
diagram~\eqref{eq:2-pd} will be represented by the set
\[
  X = \set{u,v,w,x,y,a,b,c,d,e,f,g,h,\alpha,\beta,\gamma,\delta}.
\]
Parity complexes use cells defined as
tuples of sets of generators that are kept organized by dimension and by
source/target status. For instance, the pasting diagram~\eqref{eq:2-pd} is
represented by five sets
\[
  \arraycolsep=0pt
  \begin{array}{rclrcl}
    \multicolumn{6}{c}{X_2 = \set{\alpha,\beta,\gamma,\delta},} \\[3pt]
    X_{1,-}\; & \spaceeq &\;  \set{a,b,e,h}, &\hspace{3.5em} X_{1,+}\; &\spaceeq&\; \set{a,d,g,h}, \\[3pt]
    X_{0,-}\; &\spaceeq&\; \set{u}, & X_{0,+}\;&\spaceeq&\; \set{y}
  \end{array}
\]
where $X_{i,-}$ represents the $i$-source, $X_{i,+}$ the $i$-target, and $X_2$
the $2$\dimensional part of the pasting diagram. This notion of cell seems less
natural at first than the one of pasting schemes, since the translations from a
pasting diagram to a cell, and \viceversa, are not evident. The notion of cell
used by augmented directed complexes can be obtained by considering the abelian
groups induced by an \ohg. As a variant of the ones of parity complexes, cells
are now given by sums of generators for each dimension and source/target status.
For example, the pasting diagram~\eqref{eq:2-pd} will be represented by the five
elements
\begin{carray}{X_2 = \alpha + \beta + \gamma + \delta,}%
    X_{1,-}\;\;&=&\; a + b + e + h, &\hspace{5em} X_{1,+} \;\;&=&\; a + d + g + h, \\[3pt]
    X_{0,-} \;\;&=&\; u, & X_{0,+}\;\;&=&\; y.
\end{carray}%
\noindent Thus, one can use tools from group
theory and commutative algebra when manipulating augmented directed complexes,
which make them an interesting alternative to the two other set-based formalisms.

Another important point of divergence between the different formalisms is the
conditions, or \emph{axioms}, they require on diagrams in order for them to be
pasting diagrams. This naturally raises the question of the difference of
expressivity, \ie ability to recognize more or fewer
diagrams, between these formalisms. Since the axioms are quite sophisticated and rely on different definitions used by
each formalism and, in particular, the different notions of cells, these
comparisons cannot be done so easily.

\paragraph*{Outline and results}
In \Cref{sec:pd}, we recall the definitions of the main structures involved in
this article. We first introduce the definitions of globular sets
(\Cref{text:globular-sets}) and strict categories (\Cref{text:strcat-firstdef}),
and then recall the definitions of the three existing pasting diagram formalisms
that we consider: parity complexes (\Cref{ssec:street}), pasting schemes
(\Cref{ssec:johnson}) and augmented directed complexes (\Cref{ssec:steiner}). We
relate each definition to the unifying notion of \ohg (\Cref{ssec:hypergraph}):
a formalism is then a class of \ohg{}s (defined by axioms) together with a
notion of cell and operations on these cells. In
\Cref{ssec:street}\oldparref{par:ce-pc}, we discuss a counter-example to the
freeness property claimed in the respective articles of parity complexes and
pasting schemes, \ie that the diagrams they accept are pasting diagrams. It
involves the diagram made of
\begin{equation*}
  \begin{tikzcd}[ampersand replacement=\&]
    x \arrow[rr,"b"{name=b,description}]
    \arrow[rr,out=70,in=110,"a"{name=a}] 
    \arrow[rr,out=-70,in=-110,"c"'{name=p}] 
    \arrow[phantom,"\alpha\!\Downarrow\ \Downarrow\!\alpha'",pos=0.6,from=a,to=b]
    \arrow[phantom,"\beta\!\Downarrow\ \Downarrow\!\beta'",pos=0.4,from=b,to=p] 
    \& \& y \arrow[rr,"e"{name=e,description}]
    \arrow[rr,out=70,in=110,"d"{name=p}]
    \arrow[rr,out=-70,in=-110,"f"'{name=f}] \& \& z
    \arrow[phantom,"\gamma\!\Downarrow\ \Downarrow\!\gamma'",pos=0.6,from=p,to=e]
    \arrow[phantom,"\delta\!\Downarrow\ \Downarrow\!\delta'",pos=0.4,from=e,to=f]
  \end{tikzcd}
\end{equation*}
together with two 3-generators
\begin{align*}
  \begin{tikzcd}[ampersand replacement=\&]
    x \arrow[rr,"b"{name=b,description}]
    \arrow[rr,out=70,in=110,"a"{name=a}] 
    \arrow[phantom,"\Downarrow \alpha",pos=0.6,from=a,to=b]
    \& \& y \arrow[rr,"e"{name=e,description}]
    \arrow[rr,out=-70,in=-110,"f"'{name=f}] \& \& z
    \arrow[phantom,"\Downarrow \delta",pos=0.4,from=e,to=f]
  \end{tikzcd}
  &
  \quad\overset{A}{\Rrightarrow}\quad
  \begin{tikzcd}[ampersand replacement=\&]
    x \arrow[rr,"b"{name=b,description}]
    \arrow[rr,out=70,in=110,"a"{name=a}] 
    \arrow[phantom,"\Downarrow \alpha'",pos=0.6,from=a,to=b]
    \& \& y \arrow[rr,"e"{name=e,description}]
    \arrow[rr,out=-70,in=-110,"f"'{name=f}] \& \& z
    \arrow[phantom,"\Downarrow \delta'",pos=0.4,from=e,to=f]
  \end{tikzcd},
    \shortintertext{and}
  \begin{tikzcd}[ampersand replacement=\&]
    x \arrow[rr,"b"{name=b,description}]
    \arrow[rr,out=-70,in=-110,"c"'{name=p}] 
    \arrow[phantom,"\Downarrow \beta",pos=0.4,from=b,to=p] 
    \& \& y \arrow[rr,"e"{name=e,description}]
    \arrow[rr,out=70,in=110,"d"{name=q}]
    \& \& z
    \arrow[phantom,"\Downarrow \gamma",pos=0.6,from=q,to=e]
  \end{tikzcd}
  &
  \quad\overset{B}{\Rrightarrow}\quad
  \begin{tikzcd}[ampersand replacement=\&]
    x \arrow[rr,"b"{name=b}]
    \arrow[rr,out=-70,in=-110,"c"'{name=p,description}] 
    \arrow[phantom,"\Downarrow \beta'",pos=0.4,from=b,to=p] 
    \& \& y \arrow[rr,"e"{name=e,description}]
    \arrow[rr,out=70,in=110,"d"{name=q}]
    \& \& z
    \arrow[phantom,"\Downarrow \gamma'",pos=0.6,from=q,to=e]
  \end{tikzcd}
    .
\end{align*}
This shortcoming motivated the introduction of a new
formalism, called \emph{torsion-free complexes}, whose axioms aim at correcting
and generalizing the ones of parity complexes and pasting schemes
(\Cref{ssec:gen-pc}).

In \Cref{sec:cat-cells}, we show the correctness of torsion-free complexes as a
pasting diagram formalism, \ie that the set of cells associated with a
torsion-free complex has a canonical structure of a free \ocat. For this
purpose, we state in \Cref{ssec:gluing-thm} the correctness of a \eq{gluing}
operation (\Cref{thm:cell-gluing}), as an adapted version of an existing result
for parity complexes~\cite[Lemma 3.2]{street1991parity}. This operation allows
constructing new cells by gluing higher-dimensional generators on existing
cells. In \Cref{ssec:cells-are-cat}, we prove that the cells of a torsion-free
complex admit a structure of an \ocat (\Cref{thm:cell-is-ocat}). Then, in
\Cref{ssec:freeness-def}, in order to show that the \ocat is free, we first
introduce the notion of freeness that we use by recalling the definition of
polygraphs~\cite{street1976limits,burroni1993higher}, which describe sets of
generators of different dimensions from which one can generate a free \ocat.
Finally, in \Cref{ssec:freeness-def}, we state the freeness properties of the
\ocat of cells of a torsion-free complex
(\Cref{thm:ext-freeness,coro:cells-are-freely-gen}).

In \Cref{sec:alt-rep}, we relate the different pasting diagram formalisms that were
introduced. We first make the link between torsion-free complexes and the three
other ones. For this purpose, in \Cref{text:cl-max-cells}, we define other
notions of cells for torsion-free complexes, namely \emph{maximal-well-formed}
and \emph{closed-well-formed sets}. Closed-well-formed sets should be understood
as the equivalent of the notion of cell for pasting schemes in torsion-free
complexes. Maximal-well-formed sets are then a convenient intermediate for
proofs between the original notion of cell for parity complexes and
closed-well-formed sets. We show that the both new notions induce \ocats of
cells isomorphic to the original one
(\Cref{thm:ctoprinc-m-iso,thm:ctocl-m-iso}). We then prove the embedding results
into torsion-free complexes for the three other formalisms: in
\Cref{ssec:enc-street}, we show that parity complexes are torsion-free complexes
(\Cref{thm:pc-is-gpc}); in \Cref{ssec:enc-johnson}, we show that loop-free
pasting schemes are torsion-free complexes (\Cref{thm:ps-is-gpc}) and that both
formalisms induce isomorphic $\omega$-categories (\Cref{thm:cells-wfset-isom});
in \Cref{ssec:enc-steiner}, we show that loop-free unital augmented directed
complexes are torsion-free complexes (\Cref{thm:adc-is-gpc}) and that both
formalisms induce isomorphic $\omega$-categories (\Cref{thm:ctost-functor}).
Finally, in \Cref{ssec:incl-ce}, we give counter-examples to the other
embeddings between the formalisms.

\paragraph*{Applications and related works}
Pasting diagram formalisms are an effective description of cells of free
\ocats. In particular, they give a precise definition to the notion of
commutative diagram and can represent generic compositions. Moreover, they make
it possible to study higher categories by probing them through pasting diagrams.
For example, augmented directed complexes were used to give an effective
description of the Gray tensor product in~\cite{steiner2004omega}. In a related
manner, Kapranov and Voevodsky studied topological properties of pasting schemes
in~\cite{kapranov1991combinatorial} and used them in an attempt to give a
description of $\omega$-groupoids in~\cite{kapranov1991infty}, but their results
were shown paradoxical~\cite{simpson1998homotopy}.

Several other works studied pasting diagrams. In~\cite{buckley2015formal},
Buckley gives a mechanized Coq proof of the results of~\cite{street1991parity}
but stops at the excision theorem~\cite[Theorem 4.1]{street1991parity}. In
particular, the proof of the freeness claim~\cite[Theorem 4.2]{street1991parity}
was not formally verified, and could not be, since this claim does not hold in
general, as is shown in the present paper.
In~\cite{campbell2016higher}, Campbell isolates a common structure behind parity
complexes and pasting schemes, called \emph{parity structure},
and gives stronger axioms
than the ones of parity complexes and pasting schemes, taking an opposite path
from this work which seeks a more general formalism. In~\cite{nguyen2017parity},
Nguyen studies \emph{pre-polytopes with labeled structures} and shows that they
give a parity structure that satisfies a variant of Campbell's axioms that are
enough to obtain another correct notion of pasting diagrams.
In~\cite{henry2017non}, Henry defines a theoretical notion of pasting diagrams,
called polyplexes, to show that certain classes of polygraphs are presheaf
categories, and uses them to prove a variant of the Simpson's conjecture
in~\cite{henry2018regular}. However, his pasting diagrams can involve some
looping behaviors, and are then out of the scope of the formalisms studied in
the present work. Using similar ideas,
Hadzihasanovic~\cite{hadzihasanovic2018combinatorial} defines a class of pasting
diagrams, called \emph{regular polygraphs}, that is ``big enough'' to study
semi-strict categories and which is well-behaved for several constructions
(notably, their realizations as topological spaces are CW complexes).

\paragraph*{Acknowledgements}
I would like to deeply thank Samuel Mimram and Yves Guiraud for their
supervision, help and useful feedback during this work. I would also like to
thank Simon Henry, Ross Street and Léonard Guetta for the interesting exchanges
on the subject. Finally, I would like to thank École Normale Supérieure de Paris
for funding my PhD thesis.

\paragraph*{Notations} We write $\N$ for the set of natural numbers, $\N^*$
for~$\N\setminus\set 0$ and~$\omega$ for the first infinite ordinal. Given $n
\in \N$, we write~$\N_n$ for the set~$\set{0,\ldots,n}$ and~$\N^*_n$ for $\N_n
\setminus \set 0$. We use the convention that~$\N_\omega$ denotes~$\N$.

\section{Formalisms of pasting diagrams}
\label{sec:pd}

In this section, we recall some basic definitions about strict categories and
introduce the definitions of the formalisms of pasting diagrams that we will
consider in this article. We present them through the common perspective of
\emph{\ohg{}s}, that are structures which encode the information in diagrams of
generators like~\eqref{eq:2-pd}. Then, the definition of each formalism roughly
follows the same pattern. First, a definition for cells that represent pasting
diagrams is introduced, together with an identity and composition operations
that aim at equipping those cells with a structure of \ocat. Then, a class of
\ohg{}s that are correctly handled by the considered formalism is defined by the
mean of axioms or conditions.

We first recall the definition of globular sets (\Cref{text:globular-sets}) and
of strict categories (\Cref{text:strcat-firstdef}) as
globular sets with additional operations. We then introduce \ohg{}s
(\Cref{ssec:hypergraph}) and recall the definitions of the three main existing
formalisms for pasting diagrams: \emph{parity complexes} (\Cref{ssec:street}),
\emph{pasting schemes} (\Cref{ssec:johnson}) and \emph{augmented directed
  complexes} (\Cref{ssec:steiner}). Then, we introduce the new formalism of
\emph{torsion-free complexes} that share the definitions of parity complexes but
have different axioms on \ohg{}s (\Cref{ssec:gen-pc}).

\subsection{Globular sets}
\label{text:globular-sets}

Given $n \in \Ninf$, an \emph{$n$\globular set}~$(X,\csrc,\ctgt)$ is the data of
sets~$X_k$ for~$k \in \N_n$, the elements of~$X_k$ being called
\emph{$k$-cells}, together with, for~$i \in \N_{n-1}$, functions
\[
  \csrc_i,\ctgt_i\co X_{i+1}\to X_i
\]
as in
\[
  \begin{tikzcd}
    X_0
    &
    \ar[l,shift right,"\csrc_0"']
    \ar[l,shift left,"\ctgt_0"]
    X_1
    &
    \ar[l,shift right,"\csrc_1"']
    \ar[l,shift left,"\ctgt_1"]
    X_2
    &
    \ar[l,shift right,"\csrc_2"']
    \ar[l,shift left,"\ctgt_2"]
    \cdots
    &
    \ar[l,shift right,"\csrc_{n-2}"']
    \ar[l,shift left,"\ctgt_{n-2}"]
    X_{n-1}
    &
    \ar[l,shift right,"\csrc_{n-1}"']
    \ar[l,shift left,"\ctgt_{n-1}"]
    X_{n}
  \end{tikzcd}
\]
satisfying the following globular identities for every~$i \in \N_{n-2}$:
\[
  \csrc_i\circ\csrc_{i+1}
  =
  \csrc_i\circ\ctgt_{i+1}
  \qquad\text{and}\qquad
  \ctgt_i\circ\csrc_{i+1}
  =
  \ctgt_i\circ\ctgt_{i+1}
  .
\]
Given~$k \in \N^*_n$, the elements of~$X_k$ are called the \emph{$k$\globes}
of~$X$. Given~$i,j\in\N$ with~$i\leq j$, by abusing notation, we write
$\csrc_i\colon X_j\to X_i$ for the function
\[
  \csrc_i=\csrc_i\circ\csrc_{i+1}\circ\cdots\circ\csrc_{j-1}
\]
and similarly for $\ctgt$. Given $i,k\in \N_n$ with $i \le k$, for~$u \in X_k$,
$\csrc_i(u)$ and~$\ctgt_i(u)$ are respectively the \emph{$i$\source} and \emph{$i$\target}
of~$u$. We write $X_k\times_i X_k$ for the pullback
\[
  \begin{tikzcd}[column sep={4em,between origins},row sep={2.5em,between origins}]
    &\ar[dl,dotted]X_k\times_i X_k\ar[dr,dotted]&\\
    X_k\ar[dr,"\ctgt_i"']&&\ar[dl,"\csrc_i"]X_k.\\
    &X_i&
  \end{tikzcd}
\]
Given $u,v \in X_k$, we say that $u$ and $v$ are \emph{$i$-composable}
when~$\ctgt_i (u) = \csrc_i (v)$. More generally, given $p \ge 0$ and
$u_1,\ldots,u_p \in X_k$, we say that $u_1,\ldots,u_p$ are \emph{$i$-composable}
when, for $j\in \N^*_{p-1}$, $u_j$ and $u_{j+1}$ are $i$-composable.

Given two $n$\globular sets~$X$ and~$Y$, a \emph{morphism} $F\co X \to Y$ between~$X$
and~$Y$ is the data of functions~$F_k \co X_k \to Y_k$ for~$k \in \N_n$ such
that $F_i \circ \csrctgt\eps_i = \csrctgt\eps_i \circ F_{i+1}$ for~$i \in
\N_{n-1}$ and~$\eps \in \set{-,+}$.

\subsection{Strict categories}
\label{text:strcat-firstdef}

Given~$n \in \N \cup \set{\omega}$, a \index{strict category}\emph{strict
  $n$\category}~$(C,\csrc,\ctgt,\unit{},\comp)$ (often simply denoted~$C$) is an
$n$\globular set~$(C,\csrc,\ctgt)$ together with, for~$k \in \N$ with~$k < n$,
identity operations
\[
  \unitp{k+1} {}\colon C_{k}\to C_{k+1}
\]
often written~$\unit {}$ when there is no ambiguity on~$k$, and, for~$i,k \in
\N_n$ with~$i < k$, composition operations
\[
  \comp_{i,k}\colon C_k\times_i C_k\to C_k 
\]%
often denoted~$\comp_i$ when there is no ambiguity on~$k$, which satisfy the
axioms~\ref{cat:first} to~\ref{cat:last} below. Given~$k,l \in \N_n$ such
that~${k \le l}$ and~${u \in C_k}$, we extend the notations for identity
operations and write~$\unitp{l}{}(u)$ for
\[
  \unitp{l}{}(u) = \unitp{l}{} \circ \cdots \circ \unitp{k+1}{}(u)
\]
and, for the sake of conciseness, we often write~$\unitp l u$ for~$\unitp l {}
(u)$, or even~$\unit u$ when~$l = k+1$. The axioms are the following:
\begin{enumerate}[label=(S-\roman*), ref=(S-\roman*)]
\item\label{cat:first} \label{cat:id-srctgt} for~$k \in \N_{n-1}$ and~$u \in
  C_k$,
  \[
  \csrc_k (\unitp{k+1}u) = \ctgt_k(\unitp{k+1}u)
  = u,
  \]
\item \label{cat:src-tgt}for~$i,k \in \N_n$ with~$i < k$,~$(u,v) \in C_k
  \times_i C_k$ and~$\eps \in \set{-,+}$,
  \[
    \csrctgt\eps_{k-1}(u\comp_i v) =
    \begin{cases}
      \csrctgt\eps_{k-1}(u) \comp_{i} \csrctgt\eps_{k-1} (v)&\text{if~$i < k-1$,} \\
      \csrc_{k-1}(u)&\text{if~$i=k-1$ and~$\epsilon=-$,}\\
      \ctgt_{k-1}(v)&\text{if~$i=k-1$ and~$\epsilon=+$,}
    \end{cases}
  \]
\item\label{cat:unital} for~$i,k \in \N_n$ such that~$i < k$, and~$u \in C_k$,
  \[
    \unitp{k}{} (\csrc_i (u)) \comp_{i} u = u = u \comp_{i} \unitp{k}{}
    (\ctgt_i (u)),
  \]
\item\label{cat:assoc} for~$i,k \in \N_n$ such that~$i < k$, and $i$\composable~$u,v,w \in C_k$,
  \[
    (u \comp_{i} v) \comp_{i} w = u \comp_{i} (v \comp_{i} w) ,
  \]
\item\label{cat:id-xch} for~$i,k \in \N_{n-1}$ such that~$i < k$, and~$(u,v) \in C_k
  \times_i C_k$,
  \[
    \unitp{k+1}{}(u \comp_{i} v) = \unitp{k+1}u \comp_{i} \unitp{k+1}v ,
  \]
\item\label{cat:xch}\label{cat:last} (\eq{exchange law}) for~$i,j,k \in \N_n$ such that~$i < j < k$,
  and~$u,u',v,v' \in C_k$ such that~$u,v$ are $i$\composable, and~${u,u'}$ are
  $j$\composable, and~$v,v'$ are $j$\composable,
  \[
    (u \comp_i v) \comp_j (u' \comp_i v') = (u \comp_j u') \comp_i (v \comp_j v').
  \]
\end{enumerate}
\noindent Given two strict $n$\categories~$C$ and~$D$, a
\index{morphism!of strict categories}\emph{morphism}~$F$ between~$C$ and~$D$ is
the data of an $n$\globular morphism~$F \co C \to D$ which moreover satisfies
that
\begin{itemize}
\item $F(\unitp {k+1} u) = \unitp {k+1} {F(u)}$ for every~$k \in \N_{n-1}$ and~$u \in
  C_k$,
\item $F(u \comp_i v) = F(u) \comp_i F(v)$ for every~$i,k \in \N_n$ with~$i < k$
  and $i$\composable~$u,v \in C_k$.
\end{itemize}
We often call such morphisms \index{functor@$n$-functor}\emph{$n$\functors}. We
write~$\nCat n$ for the category of strict $n$\categories. Given~$k,l\in \Ninf$ with~$k <
l$,
there is an evident truncation functor
\[
  \restrict k - \co \nCat l \to \nCat k
\]
which forgets the cells of dimension~$> k$ from an $l$\category.

\begin{rem}
  \label{rem:ocat-limit}
  Using the truncation functors, the category~$\oCat$ could equivalently be
  defined as the strict limit in~$\CAT$ on the diagram
  \[
    \begin{tikzcd}
      \nCat 0
      &
      \ar[l,"\restrict 0 -"']
      \nCat 1
      &
      \ar[l,"\restrict 1 -"']
      \nCat 2
      &
      \ar[l,"\restrict 2 -"']
      \cdots
      &
      \ar[l,"\restrict {k-1} -"']
      \nCat{k}
      &
      \ar[l,"\restrict {k} -"']
      \nCat{k+1}
      &
      \ar[l,"\restrict {k+1} -"']
      \cdots
    \end{tikzcd}
  \]
\end{rem}

\subsection{Hypergraphs}
\label{ssec:hypergraph}

Here, we introduce the notion of \emph{\ohg}. It is essentially the same as the
one of \emph{parity structure} introduced by \citeauthor{campbell2016higher}
in~\cite{campbell2016higher} when defining a new formalism whose instances are
both parity complexes and pasting schemes. It is also similar to the notion of
\emph{oriented graded poset} that, in a related context,
\citeauthor{hadzihasanovic2018combinatorial} used to define presentations of
polygraphs~\cite{hadzihasanovic2018combinatorial}.

\paragraph{Definition}

A \index{graded set}\emph{graded set} is a set~$P$ equipped with a partition $ P
= \sqcup_{n\in\N}P_n $. An \index{hypergraph@$\omega$-hypergraph}\emph{\ohg} is
a graded set~$P$, the elements of $P_n$ being called \index{generator!of
  an $\omega$-hypergraph}\emph{$n$\generators}, together with, for~$n \in \N$
and~$u \in P_{n+1}$, two finite \glossary(.j){$(-)^-$,
  $(-)^+$}{the source and target operations of an \ohg}subsets~$u^-,u^+
\subseteq P_n$ called the \index{source!for an $\omega$-hypergraph}\emph{source}
and \index{target!for an $\omega$-hypergraph}\emph{target} of~$u$. Given a
subset~$U\subseteq P$ and~$\epsilon\in\set{-,+}$, we write~$U^\epsilon$ for
$\cup_{u\in U}u^\epsilon$.

Simple \ohg{}s can be represented graphically using \emph{diagrams}, where
$0$\generators are represented by their names, and higher generators
by arrows~$\to$,~$\To$,~$\Tto$, \etc that represent respectively
$1$\generators, $2$\generators, $3$\generators etc.
%
\begin{example}
  The diagram
  \begin{equation}
    \label{eq:ex-ppc}
    \begin{tikzcd}[ampersand replacement=\&,row sep=1ex,column sep=1.5ex,cramped]
      \& y \arrow[rd,"c"] \& \\
      x \arrow[ru,"a"] \arrow[rd,"b"'] \& \Downarrow \alpha \& z \\
      \& y' \arrow[ru,"d"'] \&   
    \end{tikzcd}
  \end{equation}
  represents the \ohg~$P$ with $P_0=\set{x,y,y',z}$, $P_1=\set{a,b,c,d}$,
  $P_2=\set{\alpha}$, and~$P_n=\emptyset$ for~$n\geq 3$, sources and targets
  being
    $a^-=\set{x}$,
      $a^+=\set{y}$,
      $\alpha^-=\set{a,c}$,
      $\alpha^+=\set{b,d}$,
  and so on.
\end{example}
%


\noindent
%
  %


\paragraph{Fork-freeness}
\parlabel{text:fork-freeness}

Given an \ohg~$P$ and~$n\in\N$, a subset~${U\subseteq P_n}$ is
\index{fork-free!for an \ohg}\emph{fork-free} (also called \emph{well-formed} in~\cite{street1991parity})
when:
\begin{itemize}
\item either~$n = 0$ and~$\setsize U = 1$,
\item or~$n > 0$ and for all~$u,v \in U$ and~$\eps \in \set{-,+}$, we
  have~$u^\epsilon\cap v^\epsilon=\emptyset$.
\end{itemize}
For example, the subset~$\set{a,b}$
of~\eqref{eq:ex-ppc} is not fork-free since~${a^-\cap b^-=\set{x}}$,
but~$\set{a,c}$ is.
\begin{remark}
  Note that the definition of fork-freeness depends on the intended
  dimension~$n$. This subtlety is important in the case of the empty
  set:~$\emptyset$ is not well-formed as a subset of~$P_0$ but it is as a subset
  of~$P_n$ when~$n > 0$.
\end{remark}

\paragraph[The relation \texorpdfstring{$\tl$}{triangle}]{The relation $\bm{\tl}$}
\parlabel{par:tl-rel}

Given an \ohg~$P$,~$n \in \N^*$ and~$U \subseteq P_n$, for~$u,v \in U$, we
write~$u \tlone_U v$ when~$u^+ \cap v^- \neq \emptyset$ and we define the
\glossary(.k){$\tl_U$, $\tl$}{a pre-order on the generators of an
  \ohg}relation~$\tl_U$ on~$U$ as the transitive closure of~$\tlone_U$. Given
subsets~$V,W \subseteq U$, we write~$V \tl_U W$ when there exist~$u \in V$
and~$v \in W$ such that~$u \tl_U v$.
We define the relation~$\tl$ on~$P$ by putting~$u \tl v$ when there exists~$n
\in \N^*$ such that~$u,v \in P_n$ and~$u \tl_{P_n} v$. The \ohg~$P$ is then said
\index{acyclic}\emph{acyclic} when~$\tl$ is irreflexive.
\begin{example}
  The \ohg represented by
  \begin{equation}
    \label{eq:not-acyclic}
    \begin{tikzcd}[ampersand replacement=\&,cramped]
      x\ar[r,bend left,"a"]\&\ar[l,bend left,"b"]y
    \end{tikzcd}
  \end{equation}
  is not acyclic since~$a\triangleleft b\triangleleft a$. On the contrary, the
  \ohg represented by~\eqref{eq:2-pd} is acyclic.
\end{example}

\smallpar Given a subset~${V \subseteq U}$, we say that~$V$ is a
\index{segment}\emph{segment for~$\tl_U$} when for all~$u_1,u_2,u_3 \in U$ such
that
\[
  {u_1,u_3 \in V}
  \quad\qtand\quad
  {u_1 \tl_U
    u_2 \tl_U u_3},
\]
it holds that~$u_2 \in V$.

\paragraph{Other source and target operations}
\parlabel{par:other-src-tgt-ops}

Given an \ohg~$P$, for~$n \ge 2$,~$u \in P_n$ and~${\eps,\eta \in \set{-,+}}$,
we write~$u^{\eps \eta}$ for~$(u^\eps)^\eta$. We extend the notation to
subsets~$U \subseteq P_n$ and write~$U^{\eps\eta}$ for~$(U^\eps)^\eta$.
Moreover, we \glossary(.ka){$(-)^\mp$, $(-)^\pm$}{the border operations for sets
  of generators of an \ohg}write~$u^\mp$ and~$u^\pm$ for
\[
  u^\mp = u^- \setminus u^+
  \quad\qtand\quad
  u^\pm = u^+ \setminus u^-.
\]
We also extend the notation to subsets~$U \subseteq P_n$ and write~$U^\mp$ and~$U^\pm$ for
\[
  U^\mp = U^- \setminus U^+
  \quad\qtand\quad
  U^\pm = U^+ \setminus U^-\pbox.
\]

\begin{example}
  Consider the \ohg represented by the diagram
  \begin{equation}
    \label{eq:ohg-cells}
    \begin{tikzcd}[row sep={2.5em,between origins},column sep={4em,between origins}]
      & &         & \mathmakebox[10pt]{w} \arrow[rd,"d"] \arrow[phantom,dd,"\Downarrow\beta"] &         &  & \\
      & & \mathmakebox[10pt]{v} \arrow[ru,"c"] \arrow[rd,"c''" description] \arrow[phantom,dd,"\Downarrow\alpha"] &         & \mathmakebox[10pt]{x} \arrow[rd,"e"] \arrow[phantom,dd,"\Downarrow\delta"]&  & \\
      \mathmakebox[10pt]{t} \arrow[r,"a"] & \mathmakebox[10pt]{u} \arrow[ru,"b"] \arrow[rd,"b'"']& & w' \arrow[ru,"d''" description]
      \arrow[rd,"d'''" description] \arrow[phantom,dd,"\Downarrow\gamma"] &         & \mathmakebox[10pt]{y} \arrow[r,"f"]& \mathmakebox[10pt]{z} \\
      & & v' \arrow[ru,"c'''" description] \arrow[rd,"c'"'] &         & x' \arrow[ru,"e'"'] &  & \\
      & &         & w'' \arrow[ru,"d'"']&         &  & 
    \end{tikzcd}
    \pbox.
  \end{equation}
  For this \ohg, we have
  \[
    \alpha^{--} = \set{u,v}, \qquad \alpha^{+-} = \set{u,v'}, \qquad
    \alpha^{-\mp} = \set{u}, \qquad \alpha^{+\pm} = \set{w'}
  \]
  and, writing~$U$ for the set~$\set{a,b,c,d,e,f}$,
  \begin{align*}
    U^- &= \set{t,u,v,w,x,y}\zbox, & U^+ &= \set{u,v,w,x,y,z}\zbox, \\
    U^\mp &= \set{t}\zbox, & U^\pm &= \set{z}
                                     \intertext{and, writing~$V$ for the set~$\set{\alpha,\beta,\gamma,\delta}$,}
                                     V^- &= \set{b,c,c'',c''',d,d'',d''',e}\zbox,
                           &
                             V^+ &= \set{b',c',c'',c''',d',d'',d''',e'}\zbox,
    \\
    V^\mp &= \set{b,c,d,e}\zbox,
                                   & V^\pm &= \set{b',c',d',e'}\zbox.
  \end{align*}%
  From the above examples, one can intuitively describe the operations~$(-)^-$
  and~$(-)^+$ as computing the ``inner'' sources and targets of a set of
  generators, whereas the operations~$(-)^\mp$ and~$(-)^\pm$ compute the source
  and target ``borders'' of a set of generators.
\end{example}


\subsection{Parity complexes}
\label{ssec:street}
In this section, we recall the formalism of parity complexes developed by
Street in~\cite{street1991parity}. Most of the content will be reused when
defining torsion-free complexes. The idea behind the formalism is to represent
an $(n{+}1)$\cell as a pair of source and target $n$\cells together with a
subset of~$P_{n+1}$ which ``moves'' the source $n$\cell to the target $n$\cell.

\paragraph{Pre-cells}
Let~$P$ be an \ohg. For~$n \in \N$, an \index{pre-cell!of an
  $\omega$-hypergraph}\emph{$n$\precell of~$P$} is a tuple
\[
  X = (X_{0,-},X_{0,+},\ldots,X_{n-1,-},X_{n-1,+},X_n)
\]
of finite subsets of~$P$, such that~$X_{i,\epsilon} \subseteq P_i$ for~$i \in
\N_{n-1}$ and~$\epsilon \in \set{-,+}$, and~$X_n \subseteq P_n$. By convention,
we often denote~$X_n$ by~$X_{n,-}$ or~$X_{n,+}$. We
\glossary(PCellP){$\PCell(P)$}{the graded set of pre-cells
  of~$P$}write~$\PCell(P)$ for the graded set of pre-cells of~$P$.
Given~$n \in \N$,~$\eps \in \set{-,+}$ and an $(n{+}1)$\precell~$X$
of~$P$, we define the $n$\precell~$\csrctgt\eps_n (X)$ as
\[
  \csrctgt\eps_n (X)=(X_{0,-},X_{0,+},\ldots,X_{n-1,-},X_{n-1,+},X_{n,\eps}).
\]
The functions~$\csrc,\ctgt$ then equip~$\PCell(P)$ with a structure of an
$\omega$\globular set.
\paragraph{Movement and orthogonality}
\parlabel{par:move} Let~$P$ be an \ohg. Given~$n\in\N$ and finite sets
$M\subseteq P_{n+1}$, $U\subseteq P_n$ and $V\subseteq P_n$, we say that~$M$
\index{movement}\emph{moves}~$U$ to~$V$ when
\[
  U = (V \cup M^{-}) \setminus M^{+} \qqtand V = (U \cup M^{+}) \setminus M^{-}
  \zbox.
\]
Intuitively, the first equation means that~$U$ is the subset obtained from~$V$
by replacing the target of~$M$ by its source, and the second equation has a dual
meaning.
\begin{example}
  In the \ohg~\eqref{eq:ohg-cells}, the set~$\set{\alpha,\beta,\gamma,\delta}$
  moves the set~$\set{a,b,c,d,e,f}$ to the set~$\set{a,b',c',d',e',f}$.
\end{example}

\paragraph{Cells}
\parlabel{par:cell-def}

Let~$P$ be an \ohg. Given~$n \in \N$, an \index{cell!of an \ohg}\emph{$n$\cell of~$P$} is an
$n$\precell of~$P$, such that
\begin{enumerate}[label=(\roman*),ref=(\roman*)]
\item \label{pc:cell:move} $X_{i+1,\epsilon}$ moves~$X_{i,-}$ to~$X_{i,+}$
  for~$i \in \N_{n-1}$ and~$\epsilon \in \set{-,+}$,
\item \label{pc:cell:fork-free}$X_{i,\epsilon}$ is fork-free for~$i \in \N_n$ and~$\epsilon\in\set{-,+}$.
\end{enumerate}
We denote \glossary(CellP){$\Cell(P)$}{the graded set of cells
  of the \ohg~$P$}by~$\Cell(P)$ the graded set of cells of~$P$, which inherits the
structure of globular set from~$\PCell(P)$. An $n$\cell~$X$ can be represented
as on \Cref{fig:cell-movements}
\begin{figure}
  \centering
  \[
    \begin{tikzpicture}[xscale=2,yscale=1.5]
      \node (X0s) at (0,0) {$X_{0,-}$};
      \node (X0t) at (0,-1) {$X_{0,+}$};
      \node (X1s) at ($(X0s) + (1,0)$) {$X_{1,-}$};
      \node (X1t) at ($(X0t) + (1,0)$) {$X_{1,+}$};
      \node (Xds) at ($(X1s) + (1,0)$) {$\cdots$};
      \node (Xdt) at ($(X1t) + (1,0)$) {$\cdots$};
      \node (Xn-2s) at ($(Xds) + (1,0)$) {$X_{n-2,-}$};
      \node (Xn-2t) at ($(Xdt) + (1,0)$) {$X_{n-2,+}$};
      \node (Xn-1s) at ($(Xn-2s) + (1,0)$) {$X_{n-1,-}$};
      \node (Xn-1t) at ($(Xn-2t) + (1,0)$) {$X_{n-1,+}$};
      \node (Xn) at ($(Xn-1s)!0.5!(Xn-1t) + (1,0)$) {$X_{n}$};
      \draw[->] (X0s) .. controls ($(X1s) + (-0.1,0)$) .. (X0t);
      \draw[->] (X0s) .. controls ($(X1t) + (-0.1,0)$) .. (X0t);
      \draw[->] (X1s) .. controls ($(Xds) + (-0.1,0)$) .. (X1t);
      \draw[->] (X1s) .. controls ($(Xdt) + (-0.1,0)$) .. (X1t);
      \draw[->] (Xds) .. controls ($(Xn-2s) + (-0.2,0)$) .. (Xdt);
      \draw[->] (Xds) .. controls ($(Xn-2t) + (-0.2,0)$) .. (Xdt);
      \draw[->] (Xn-2s) .. controls ($(Xn-1s) + (-0.2,0)$) .. (Xn-2t);
      \draw[->] (Xn-2s) .. controls ($(Xn-1t) + (-0.2,0)$) .. (Xn-2t);
      \draw[->] (Xn-1s) .. controls ($(Xn) + (-0.1,0)$) .. (Xn-1t);
    \end{tikzpicture}
  \]
  \caption{Movements in a cell}
  \label{fig:cell-movements}
\end{figure}
where each arrow
\[
  \begin{tikzcd}[cramped] U \arrow[r,bend left,"M"] & V
  \end{tikzcd}
\]
means that~$M$ moves~$U$ to~$V$.
\begin{example}
  The \ohg represented by~\eqref{eq:ohg-cells} has, among others,
  \begin{itemize}
  \item a $0$\cell~$(\set{t})$,
  \item a $1$\cell~$(\set{t},\set{w'},\set{a,b,c''},\set{a,b,c'''},\set{\alpha})$,
  \item a $2$\cell~$(\set{t},\set{z},\set{a,b,c,d,e,f},\set{a,b',c',d',e',f},\set{\alpha,\beta,\gamma,\delta})$, \etcend
  \end{itemize}
\end{example}

\paragraph{Identity and composition of operations}
\parlabel{text:pc-identity-composition}
Let~$P$ be an \ohg. Given~$n \in \N$ and an $n$\cell~$X$, the
\index{identity operation!for \ohg cells}\emph{identity of~$X$} is the $(n{+}1)$\cell
\[
  \unitp{n+1}{}(X) = (X_{0,-},X_{0,+},\ldots,X_{n-1,-},X_{n-1,+},X_n,X_n,\emptyset).
\]
Given~$i,n\in \N$ with~$i < n$, and $i$\composable $n$\cells~$X,Y \in
\Cell(P)_n$, the \index{composition operation!for \ohg cells}\emph{$i$-composition~$X \comp_i Y$ of~$X$ and~$Y$} is defined as
the $n$\precell~$Z$
such that, for~$j \in \N_n$ and~$\eps \in \set{-,+}$,
\[
  Z_{j,\epsilon}
  =
  \begin{cases}
    X_{j,\epsilon} & \text{if~$j < i$,} \\
    X_{i,-} & \text{if~$j = i$ and~$\epsilon = -$,} \\
    Y_{i,+} & \text{if~$j = i$ and~$\eps = +$,} \\
    X_{j,\eps} \cup Y_{j,\eps} & \text{if~$j > i$.}
  \end{cases}
\]
It will be shown in \Secr{cat-cells} that, under suitable assumptions, the
composite of two $n$\cells is actually an $n$\cell. 
\paragraph{Atoms and relevance}
%
Let~$P$ be an \ohg. Given~$n \in \N$ and~$u \in P_n$, we define sets~$\gen{u}_{i,\eps}
\subseteq P_i$ for~$i \in \N_n$ and~$\eps \in \set{-,+}$ with a downward
induction by
\begin{gather*}
  \gen{u}_{n,-} = \gen{u}_{n,+} = \set{u}
  \shortintertext{and}
  \gen{u}_{j,-} = \gen{u}_{j+1,-}^\mp \qquad \gen{u}_{j,+} = \gen{u}_{j+1,+}^\pm
\end{gather*}
for~$j \in \N_{n-1}$. We often write~$\gen{u}_n$ for both~$\gen{u}_{n,-}$
and~$\gen u_{n,+}$. The \glossary(.kb){$\gen u$}{the atom associated to the
  generator~$u$}\index{atom!of an \ohg}\emph{atom associated to~$u$} is then the
$n$\precell of~$P$
\[
  \gen{u}=(\gen{u}_{0,-},\gen{u}_{0,+},\ldots,\gen{u}_{n-1,-},\gen{u}_{n-1,+},\gen{u}_n) .
\]
A generator~$u$ is said \index{relevant}\emph{relevant} when the atom~$\gen{u}$ is a cell.
When~$P$ is a parity complex, the relevant generators of~$P$ will have the role
of generating cells in the $\omega$\category~$\Cell(P)$.\tobackpropagate{mauvais
  générateurs ici}%
\begin{example}
  The atom associated to~$\alpha$ in~\eqref{eq:ex-ppc} is~$\gen{\alpha}$ with
  \begin{align*}
    \gen{\alpha}_{0,-} &= \set{u}, & \gen{\alpha}_{1,-} &= \set{a,c}, &\gen{\alpha}_2 &= \set{\alpha}\zbox,\\
    \gen{\alpha}_{0,+} &= \set{z}, & \gen{\alpha}_{1,+} &= \set{b,d}\zbox,
  \end{align*}
  and, since it is a cell,~$\alpha$ is relevant.
\end{example}

\paragraph{Tightness}

Some defects were found in the first definition of parity complexes given
in~\cite{street1991parity}, so that Street fixed his definition
in~\cite{street1994parity}. His correction involves the notion of tightness
defined as follows. Given~$n \in \nats$, a subset~$U \subseteq P_n$ is said to
be \index{tight}\emph{tight} when, for all~$u,v \in P_n$ such that~$u \tl v$ and~$v\in U$, we
have~$u^-\cap U^\pm = \emptyset$.
\begin{example}
  In~\eqref{eq:ohg-cells},~$U = \set{\beta,\gamma}$ is not tight since~$\alpha
  \tl \gamma$ and~$c'' \in \alpha^- \cap U^\pm$. However, the set~$U' =
  \set{\alpha,\beta,\gamma,\delta}$ is tight.
\end{example}

\paragraph{Parity complexes}
\parlabel{par:pc} A \index{parity complex}\emph{parity
  complex}~\cite{street1991parity,street1994parity} is an \ohg~$P$ satisfying the
axioms \streetax{0} to \streetax{5} below:
\begin{enumerateaxioms}
  \item[{\streetaxiom[d]{0}}]  for~$n \in \N^*$ and~$u \in P_n$,~$u^- \neq \emptyset$ and~$u^+ \neq \emptyset$;
  \item[{\streetaxiom[d]{1}}] for~$n \in \N$ with~$n \ge 2$ and~$u \in P_n$,~$u^{--} \cup u^{++} = u^{-+} \cup u^{+-}$;
  \item[{\streetaxiom[d]{2}}]  for~$n \in \N^*$ and~$u \in P_n$,~$u^- \text{ and } u^+ \text{ are fork-free}$;
  \item[{\streetaxiom[d]{3}}]  $P$ is acyclic; 
  \item[{\streetaxiom[d]{4}}] for~$n \in \N^*$,~$u,v \in P_n$,~$w \in P_{n+1}$,
    if~$u \tl v$,~$u \in w^\epsilon$ and~$v \in w^\eta$ for some~$\eps,\eta \in \set{-,+}$, then~$\epsilon = \eta$;
  \item[{\streetaxiom[d]{5}}] for~$i,n \in \N$ with~$i < n$ and~$u \in P_n$,~$\gen{u}_{i,-}$ is tight.
\end{enumerateaxioms}
\streetaxiom{0} ensures that each generator has defined source and target.
\streetaxiom{1} enforces basic globular properties on generators. For example,
it forbids the \ohg
\begin{equation}
  \label{eq:johnson-non-glob}
  \begin{tikzcd}[ampersand replacement=\&,cramped]
    w \arrow["a", ""{name=x, below},rrr, bend left] \& x
    \arrow["b"',""{name=y, above}, r, bend right] \arrow["\Downarrow \alpha", from=x, to=y,phantom] \& y \& z
  \end{tikzcd}
\end{equation}
since~$\alpha^{--} \cup \alpha^{++} = \set{w,y}$ and~$\alpha^{+-} \cup \alpha^{-+} = \set{x,z}$.
\streetaxiom{2} forbids generators with parallel elements in their sources or targets.
For example, it forbids the \ohg
\begin{equation}
  \label{eq:ex-non-wfs}
  \begin{tikzcd}[cramped]
    \begin{array}{c}
    x \\ y
    \end{array}
    \arrow[r,"a"] & z
  \end{tikzcd}
\end{equation}
since~$a^-=\set{x,y}$ is not fork-free. \streetaxiom{3} forbids \ohg{}s with
some loops like~\eqref{eq:not-acyclic}. \streetaxiom{4} can be informally
described as forbidding ``bridges''. For instance, the \ohg
\begin{equation}
  \label{eq:not-3b}
\begin{tikzcd}[baseline=(current bounding box.center),sep=small,cramped]
  & y \arrow[to=Z,"b"] \ar[dd,"c",to path={-| ([xshift=0.5cm]Z.east) [pos=1]\tikztonodes |- 
    (\tikztotarget)}] &   \\
  x \arrow[ru,"a"] \arrow[rd,"a'"'] & \Downarrow \alpha
  & |[name=Z]|z \\
  & y' \arrow[to=Z,"b'"'] &
\end{tikzcd}
\end{equation}
does not satisfy \streetaxiom{4}. Indeed,~$a \tl c \tl b'$ and~$a \in \alpha^-$
and~$b' \in \alpha^+$. \streetaxiom{5} prevents more subtle problems, like the
one exposed by the \ohg~\eqref{eq:ex-non-segment}
(even though the latter does not satisfy~\streetaxiom{3} in the
first place). It entails that the sources and targets of each generator are
segments
(as defined in \Cref{ssec:hypergraph}), which is a condition that we will
motivate
in~\Cref{ssec:gen-pc} when discussing \forestaxiom{3} of torsion-free
complexes.

\paragraph{A counter-example to the freeness property}
\parlabel{par:ce-pc}
Given a parity complex~$P$, the main result claimed in~\cite{street1991parity}
is that the globular set~$\Cell(P)$ together with the source, target, identity
and composition operations, has the structure of an $\omega$\category, which is
freely generated by the atoms~$\gen{u}$ for~$u \in P$ (\cite[Theorem
4.2]{street1991parity}). More precisely, using the terminology
of~\Cref{ssec:freeness-def}, this result states that there is an \opol~$\Q$ and an
$\omega$\functor~$F \co \freecat\Q \to \Cell(P)$ such that
\begin{itemize}
\item $\Q_k = P_k$ for~$k \in \N$,
\item $F(u) = \gen{u}$ for~$u \in \Q$,
\item $F$ is an isomorphism.
\end{itemize}
Since the cells of a free \ocat on a polygraph are equivalent classes of formal
composites of generators (\cf \cite[Lemma~4.1]{metayer2008cofibrant}), this
property intuitively says that the cells of parity complexes adequately
represent pasting diagrams, \ie diagrams associated with a unique class of
equivalent formal composites of generators. However, this property does not
hold as we illustrate with a counter-example. Consider the \ohg~$P$ defined by
the diagram given by
\begin{equation}
  \label{eqn:ce-tf}
  \begin{tikzcd}[ampersand replacement=\&]
    x \arrow[rr,"b"{description},""{auto=false,name=b}]
    \arrow[rr,out=70,in=110,"a",""{auto=false,name=a}] 
    \arrow[rr,out=-70,in=-110,"c"',""{auto=false,name=p}] 
    \arrow[phantom,"\alpha\!\Downarrow\ \Downarrow\!\alpha'",from=a,to=b]
    \arrow[phantom,"\beta\!\Downarrow\ \Downarrow\!\beta'",from=b,to=p] 
    \& \& y \arrow[rr,"e"{description},""{auto=false,name=e}]
    \arrow[rr,out=70,in=110,"d",""{auto=false,name=p}]
    \arrow[rr,out=-70,in=-110,"f"',""{auto=false,name=f}] \& \& z
    \arrow[phantom,"\gamma\!\Downarrow\ \Downarrow\!\gamma'",from=p,to=e]
    \arrow[phantom,"\delta\!\Downarrow\ \Downarrow\!\delta'",from=e,to=f]
  \end{tikzcd}
\end{equation}
together with two 3-generators
\begin{align*}
  \begin{tikzcd}[ampersand replacement=\&]
    x \arrow[rr,"b"{description},""{auto=false,name=b}]
    \arrow[rr,out=70,in=110,"a",""{auto=false,name=a}] 
    \arrow[phantom,"\Downarrow \alpha",from=a,to=b]
    \& \& y \arrow[rr,"e"{description},""{auto=false,name=e}]
    \arrow[rr,out=-70,in=-110,"f"',""{auto=false,name=f}] \& \& z
    \arrow[phantom,"\Downarrow \delta",from=e,to=f]
  \end{tikzcd}
  &
  \quad\overset{A}{\Rrightarrow}\quad
  \begin{tikzcd}[ampersand replacement=\&]
    x \arrow[rr,"b"{description},""{auto=false,name=b}]
    \arrow[rr,out=70,in=110,"a",""{auto=false,name=a}] 
    \arrow[phantom,"\Downarrow \alpha'",from=a,to=b]
    \& \& y \arrow[rr,"e"{description},""{auto=false,name=e}]
    \arrow[rr,out=-70,in=-110,"f"',""{auto=false,name=f}] \& \& z
    \arrow[phantom,"\Downarrow \delta'",from=e,to=f]
  \end{tikzcd},
  \\
  \begin{tikzcd}[ampersand replacement=\&]
    x \arrow[rr,"b"{description},""{auto=false,name=b}]
    \arrow[rr,out=-70,in=-110,"c"',""{auto=false,name=p}] 
    \arrow[phantom,"\Downarrow \beta",from=b,to=p] 
    \& \& y \arrow[rr,"e"{description},""{auto=false,name=e}]
    \arrow[rr,out=70,in=110,"d",""{auto=false,name=q}]
    \& \& z
    \arrow[phantom,"\Downarrow \gamma",from=q,to=e]
  \end{tikzcd}
  &
  \quad\overset{B}{\Rrightarrow}\quad
  \begin{tikzcd}[ampersand replacement=\&]
    x \arrow[rr,"b"{description},""{auto=false,name=b}]
    \arrow[rr,out=-70,in=-110,"c"{description},""{auto=false,name=p}] 
    \arrow[phantom,"\Downarrow \beta'",from=b,to=p] 
    \& \& y \arrow[rr,"e"{description},""{auto=false,name=e}]
    \arrow[rr,out=70,in=110,"d",""{auto=false,name=q}]
    \& \& z
    \arrow[phantom,"\Downarrow \gamma'",from=q,to=e]
  \end{tikzcd}
    \pbox.
\end{align*}
By carefully checking Axioms~\streetax{0} to~\streetax{5}, it can be
shown that~$P$ is a parity complex. The diagram~\eqref{eqn:ce-tf} moreover
defines a polygraph~$\Q$, whose induced \ocat~$\freecat{\Q}$ is
supposed to be isomorphic to~$\Cell(P)$, as a consequence of~\cite[Theorem
4.2]{street1991parity}, but it is not the case here. Indeed, we can find two
expressions that compose together the $3$\generators~$A$ and~$B$ in~$\freecat\Q$,
inducing two $3$\cells~$\cecompn_1$ and~$\cecompn_2$ with
\begin{align*}
  \cecompn_1 &= ((a \comp_0 \gamma) \comp_1 A \comp_1 (\beta \comp_0 f)) \comp_2 ((\alpha'
  \comp_0 d) \comp_1 B \comp_1 (c \comp_0 \delta'))
               \\
  \cecompn_2 &= ((\alpha \comp_0 d) \comp_1 B \comp_1 (c \comp_0 \delta))\comp_2 ((a
  \comp_0 \gamma') \comp_1 A \comp_1 (\beta' \comp_0 f))
\end{align*}
where, for the sake of readability, we omitted the operations~$\unit{}$ required
to lift the generators to dimension~$3$. The canonical morphism~$F\co \freecat\Q
\to \Cell(P)$ maps~$\cecompn_1$ and~$\cecompn_2$ to the same $3$\cell~$X$
defined by:
\begin{carray}{X_3 = \set{A,B},}
  X_{2,-} \;\;&=&\; \set{\alpha,\beta,\gamma,\delta}, \hspace*{5em}& X_{2,+} \;\;&=&\; \set{\alpha',\beta',\gamma',\delta'}, \\[3pt]
  X_{1,-} \;\;&=&\; \set{a,d}, & X_{1,+} \;\;&=&\; \set{c,f}, \\[3pt]
  X_{0,-} \;\;&=&\; \set{x}, & X_{0,-} \;\;&=&\; \set{z}.
\end{carray}%
However,~$H_1$ and~$H_2$ are different cells in~$\freecat\Q$. This has first
been proved using \hbox{Agda} as a proof assistant~\cite{forest2019describing},
but can be proved more quickly using a solver for the word problem on strict
categories like \cateq~\cite{forestcateq}. Hence, the distinct
cells~$\cecompn_1$ and~$\cecompn_2$ of~$\freecat\Q$ are sent to the same cell
of~$\Cell(P)$ by~$F$, since the information that makes~$\cecompn_1$
and~$\cecompn_2$ different is the order in which~$A$ and~$B$ are composed, which
can not be expressed by a cell of a parity complex. This
refutes~\cite[Theorem~4.2]{street1991parity} which asserts that~$F$ is an
isomorphism. Thus, parity complexes do not necessarily induce free
$\omega$\categories in general.



\subsection{Pasting schemes}
\label{ssec:johnson}
Johnson's loop-free pasting schemes~\cite{johnson1989combinatorics} is another
proposed formalism for pasting diagrams. Like parity complexes, they are based
on \ohg{}s, but the cells will now be represented as single subsets of
generators instead of tuples like for parity complexes. This formalism relies on
set relations, namely~$\jB$ and~$\jE$, to encode which generators to remove in
order to obtain respectively the target and the source of a cell.

\paragraph{Conventions for relations} A \emph{relation between two sets~$X$
  and~$Y$} is a subset~$\genrel \subseteq X \times Y$. For~$(x,y) \in X \times
Y$, we write~$x \genrel y$ when~$(x,y) \in \genrel$. The \emph{identity relation
  on a set~$X$} is the relation~$\genrel \subseteq X \times X$ such that~$x
\genrel y$ if and only if $x = y$\tobackpropagate{utiliser~$y$ ici au lieu de~$x'$}. Given
a relation~$\genrel$ between~$X$ and~$Y$, and~$x \in X$, we write~$\genrel(x)$
for the set ${\genrel(x) = \set{ y \in Y \mid x \genrel y }}$. Given a
subset~$X' \subseteq X$, we denote by~$\genrel(X')$ the set ${\set{ y \in Y \mid
    \exists x \in X', x \genrel y}}$. The relation~$\genrel$ is said
\index{finitary!relation}\emph{finitary} when, for all~$x \in X$,~$\genrel(x)$
is a finite set. If~$\genrel$ is a relation on a graded set~$P = \sqcup_{n \in
  \N} P_n$, given~$k,l \in \N$, we write~$\genrel^l_k$ for the relation
between~$P_l$ and~$P_k$ defined as~${\genrel \cap (P_l \times P_k)}$. Similarly,
we write~$\genrel^l$ for the relation between~$P_l$ and~$P$ defined as~$\genrel
\cap (P_l \times P)$. Given relations~$\genrel$ between~$X$ and~$Y$
and~$\genrel'$ between~$Y$ and~$Z$, we write~$\comprel{\genrel}{\genrel'}$ for
the relation between~$X$ and~$Z$ which is the composite relation defined as
\[
  \comprel{\genrel}{\genrel'} = \set{ (x,z) \in X \times Z \mid \exists y \in
    Y, x \genrel y~\text{and}~ y \genrel'z}.
\]


\paragraph{Pre-pasting schemes} A \index{pre-pasting scheme}\emph{pre-pasting
  scheme}~$(P,\jB,\jE)$ is given by a graded set~$P$ and two
\glossary(B){$\jB$}{the source relation of a pasting
  scheme}\glossary(E){$\jE$}{the target relation of a pasting
  scheme}relations~$\jB,\jE$ (for ``beginning'' and ``end'') on~$P$ such that
\begin{enumerate}[label=(\roman*),ref=(\roman*)]
 \item $\jB$ and~$\jE$ are finitary,
 \item for~$k,l \in \N$ with~$l < k$,~$\jB^l_k = \jE^l_k = \emptyset$,
 \item $\jB_k^k$ (\resp~$\jE_k^k$) is the identity relation on~$P_k$,
 \item for~$k,l \in \N$ with~$k < l$,~$\genrel \in \set{\jB,\jE}$,~$u \in
   P_{l+1}$ and~$v \in P_k$,~$u \genrel^{l+1}_k v$ if and only if
    \[
      u \comprel{\genrel^{l+1}_{l}}{\jB^{l}_k} v \qtand u
      \comprel{\genrel^{l+1}_{l}}{\jE^{l}_k} v.
    \]
\end{enumerate}
\begin{example}
  The diagram \eqref{eq:ex-ppc} can be encoded as a pre\nbd-pasting
  scheme
  \begin{align*}
    \jB^2_1(\alpha) &= \set{a,c}, & \jB^2_0(\alpha) &= \set{y}, & \jB^1_0(a) &= \set{x}, \\
    \jE^2_1(\alpha) &= \set{b,d}, & \jE^2_0(\alpha) &= \set{y'}, & \jE^1_0(a) &= \set{y} \ldots
  \end{align*}%
  \tobackpropagate{ce n'est pas~$f$ mais~$\alpha$ ici, aussi~$\backslash\backslash$ à enlever}%
\end{example}
\noindent Note that the relations~$\jB$ and~$\jE$ of a pre-pasting scheme~$P$
are completely determined by the data of~$\jB^{k+1}_{k}(u)$
and~$\jE^{k+1}_{k}(u)$ for~$k \in \N$ and~$u \in P_k$. As a consequence, the
data of a pre-pasting scheme structure on~$P$ is equivalent to the data of an
\ohg structure on~$P$: the correspondence is given by $u^- = \jB^{k+1}_{k}(u)$
and $u^+ = \jE^{k+1}_{k}(u)$ for~$k \in \N^*$ and~$u \in P_{k+1}$. In
particular, the relation~$\tl$ on a pasting scheme is defined as the one on the
associated \ohg.


\paragraph{Direct loops}
Given an \ohg~$P$,~$P$ \index{direct loop}\emph{has a direct loop} when
\begin{enumerate}[label=(\roman*),ref=(\roman*)]
\item either there exist~$n \in \N^*$ and~$u,v \in P_n$ such that~$u \tl v$
  and~$\jE(v) \cap \jB(u) \neq \emptyset$,
\item or there exists~$w\in P$ such that~$\jE(w) \cap \jB(w) \neq \set{w}$.
\end{enumerate}
\begin{example}
  The \ohg
  \begin{equation}
    \label{eq:ex-johncyclic}
    \begin{tikzcd}[ampersand replacement=\&,cramped]
      \& |[alias=y]| y  \arrow[rd,"a_2"] \& \\
      x \arrow[rr,"b"{description},""{name=b},""{name=bup,above},""{name=bdown,below}] \arrow[ru, "a_1"] \arrow[rd, "c_1"'] \& \& z \\
      \& |[alias=ybis]| y \arrow[ru, "c_2"'] \& \arrow[phantom, from=y,
      to=bup, "\Downarrow \alpha"] \arrow[phantom, from=b, to=ybis,
      "\Downarrow \beta"]
    \end{tikzcd}
  \end{equation}
  has a direct loop by the first criterion, because~$\alpha \tl \beta$ and~$y \in
  \jB(\alpha) \cap \jE(\beta)$. Examples of direct loops by the second condition
  are given by the \ohg{}s
  \begin{equation}
    \label{eq:2-pd-direct-loops}
    P^1 = 
    \begin{tikzcd}[row sep={3em,between origins},column sep={4em,between origins}]
      v
      \ar[r,bend left=70,"a",""{name=top,auto=false}]
      \ar[r,bend right=70,"a"',""{name=bot,auto=false}]
      &
      w
      \ar[from=top,to=bot,phantom,"\Downarrow\alpha"]
    \end{tikzcd}
    \qtand
    P^2 =
    \begin{tikzcd}[row sep={2em,between origins},column sep={3em,between origins},baseline=(x.base)]
      & y \arrow[rd,"c"]& \\
      |[alias=x]| x
      \arrow[ru,"b"]
      \arrow[rd,"b'"']
      &
      \Downarrow \beta
      &
      z \\
      & y \arrow[ru,"c'"']& 
    \end{tikzcd}
    \pbox.
  \end{equation}
\end{example}

\paragraph{Finite graded subsets} 
\parlabel{text:fgs}

Let~$P$ be a pre-pasting scheme. We define the \glossary(R){$\clrel$}{the
  closure relation for a pasting scheme or, more generally, an
  \ohg}relation~$\clrel \subseteq P \times P$ as the smallest reflexive
transitive relation on~$P$ such that, for all~$k \in \N$ and~$x \in P_{k+1}$, we
have
\[
  \jB(x) \cup \jE(x) \subseteq \clset(x)\zbox.
\]
\vskip-\baselineskip
\begin{example}
  In the case of the \ohg~\eqref{eq:ex-johncyclic}, we
  have
  \[
    \clrel(\alpha) = \set{x,y,z,a_1,a_2,b,\alpha}
    \qtand
    \clrel(\beta) = \set{x,y,z,b,c_1,c_2,\beta}.
  \]
\end{example}
\noindent A \glossary(fgs){fgs}{``finite graded subset''}\index{finite graded
  subset (fgs)}\emph{finite graded subset of dimension~$n$ of~$P$} (abbreviated
\emph{$n$\fgs}) is an $(n{+}1)$\nb{tuple}
\[
  X = (X_0,\ldots,X_n)
\]
such that~$X_k
\subseteq P_k$ and~$X_k$ is finite for~$k \in \N_n$. We often identify the
$n$\fgs~$X$ with the set~${\cup_{k \in \N_n} X_k}$, but one should keep in mind
that the $n$\fgs~$X$ and the $(n{+}1)$\fgs~$(X_0,\ldots,X_n,\emptyset)$ are two
different objects. We say that~$X$ is~\index{closed fgs}\emph{closed}
when~$\clset(X) = X$.
Given~$n \in \N$ and an $(n{+}1)$\fgs~$X$ of~$P$, we define the
\index{source!of a pre-pasting scheme fgs}\emph{source}
and the \index{target!of a pre-pasting fgs}\emph{target} of~$X$ as the $n$\fgs's~$\csrc_n (X)$ and~$\ctgt_n (X)$
of~$P$ such that
\[
  \csrc_n (X) = X \setminus \jE^{n}(X) \qtand \ctgt_n (X) = X \setminus
  \jB^{n}(Y).
\]
\vskip-\baselineskip
\begin{example}
Considering the \ohg~\eqref{eq:ex-johncyclic}, we have
\[
  \csrc_n(\clrel(\alpha)) = \clrel(\alpha) \setminus \set {b,\alpha} = \set{x,y,z,a_1,a_2}
  \qtand
  \ctgt_n(\clrel(\alpha)) = \clrel(\alpha) \setminus \set{y,a_1,a_2} = \set{x,z,b}.
\]
\end{example}
\begin{remark}
  The fgs's of the form~$\clrel(u)$ for~$u \in P$ are the analogue of the
  \index{atom!of a pre-pasting scheme}atoms defined for parity complexes.
\end{remark}

\tobackpropagate{utiliser des parenthèses pour~$\csrc$ et~$\ctgt$}

\paragraph{Well-formed sets}
\parlabel{text:wf-sets}

Given a pre-pasting scheme~$P$, we define by induction on~$n$ the notion of
\glossary(wfs){wfs}{``well-formed fgs''}\index{well-formed fgs
  (wfs)}\emph{well-formed $n$\fgs} (abbreviated \emph{$n$\wfs}): given~$n\in
\N$, an $n$\fgs~$X$ of~$P$ is \emph{well-formed} when
\begin{enumerate}[label=(\roman*),ref=(\roman*)]
\item $X$ is closed,
\item $X_n$ is fork-free,
\item when~$n > 0$,~$\csrc_n (X)$ and~$\ctgt_n (X)$ are well-formed $(n{-}1)$\fgs.
\end{enumerate}
We denote \glossary(WFP){$\wfset(P)$}{the graded set of well-formed $n$\fgs's
  (or $n$\wfs's) on~$P$}by~$\wfset(P)$ the graded set of $n$\wfs's of~$P$ for~$n
\in \N$. By~\cite[Theorem 3]{johnson1989combinatorics}, for~$n \in \N$, the
operations~$\csrc_n$ and~$\ctgt_n$ on $(n{+}1)$\fgs's restrict to functions
\[
  \csrc_n \co \wfset(P)_{n+1} \to \wfset(P)_n
  \qtand
  \ctgt_n \co \wfset(P)_{n+1} \to \wfset(P)_n
\]
and they equip~$\wfset(P)$ with a structure of $\omega$\globular set. In the
following, the wfs's will be the ``cells'' of the pasting diagram formalism of
pasting schemes.
\begin{example}
  The pre-pasting scheme
  \begin{equation}
    \label{eq:ex-wfs}
    \begin{tikzcd}[ampersand replacement=\&,cramped]
      \& |[alias=y]| y_1  \arrow[rd,"a_2"] \& \\
      x \arrow[rr,"b"{description},""{name=b},""{name=bup,above},""{name=bdown,below}] \arrow[ru, "a_1"] \arrow[rd, "c_1"'] \& \& z \\
      \& |[alias=ybis]| y_2 \arrow[ru, "c_2"'] \& \arrow[phantom, from=y,
      to=bup, "\Downarrow \alpha"] \arrow[phantom, from=b, to=ybis,
      "\Downarrow \beta"]
    \end{tikzcd}
  \end{equation}
  has, among others,
  the $0$\wfs's~$\set{x}$ and~$\set{z}$,
  the $1$\wfs's~$\set{x,y_1,z,a_1,a_2}$ and~$\set{x,y_2,z,c_1,c_2}$,
  and the $2$\wfs~$\set{x,y_1,y_2,z,a_1,a_2,b,c_1,c_2,\alpha,\beta}$.
\end{example}

\paragraph{Identity and composition operations}

Let~$P$ be a pre-pasting scheme. Given~$n \in \N$ and an $n$\wfs~$X =
(X_0,\ldots,X_n)$ of~$P$, the \index{identity operation!for pre-pasting scheme
  fgs's}\emph{identity of~$X$} is the $(n{+}1)$\wfs~$\unitp{n+1}{}(X)$ defined
by
\[
   \unitp{n+1}{}(X) = (X_0,\ldots,X_n,\emptyset) .
\]
Given~$i,n \in \N$ with~$i < n$ and~$X,Y$ two $n$\wfs such that~$\ctgt_i (X) = \csrc_i
(Y)$, the \index{composition operation!for pre-pasting scheme fgs's}\emph{$i$\composition} of~$X$ and~$Y$ is the $n$\fgs~$X \comp_i Y$ such
that
\[
  X \comp_{i} Y = X \cup Y.
\]
Under the conditions of a pre-pasting scheme, it is not necessarily the case
that the composite of two $n$\wfs's is an $n$\wfs, but it will under the axioms
of a pasting scheme introduced below.

\paragraph{Loop-free pasting schemes}
\parlabel{text:loop-free-ps}

A \index{pasting scheme}\emph{pasting scheme}~\cite{johnson1989combinatorics} is
a pre\nbd-pasting scheme~$P$ satisfying the following two axioms:
\begin{enumerateaxioms}
    \item[{\johnsonaxiom[d]{0}}] for~$k \in \N$ and~$u \in P_{k+1}$,~$\jB^{k+1}_{k}(u) \neq \emptyset$ and~$\jE^{k+1}_{k}(u) \neq \emptyset$;
    \item[{\johnsonaxiom[d]{1}}] for~$k,l \in \N$ with~$k \le l$,~$\genrel \in \set{\jB,\jE}$,~$u \in P_{l+1}$
    and~$v \in P_k$,
    \begin{enumerate}[label=--]
    \item if~$u \comprel{\jE^{l+1}_l}{\genrel^l_k} v$ then~$u
    \jE^{l+1}_k v$ or~$u \comprel{\jB^{l+1}_k}{\genrel^l_k} v$,
    \item if~$u \comprel{\jB^{l+1}_l}{\genrel^l_k} v$ then~$u \jB^{l+1}_k
    v$ or~$u \comprel{\jE^{l+1}_l}{\genrel^l_k} v$.
    \end{enumerate}
\end{enumerateaxioms}
\tobackpropagate{erreur d'indice dans le dernier or}%
The pasting scheme~$P$ is a \index{loop-free pasting scheme}\emph{loop-free pasting scheme} when it
moreover satisfies the following:
\begin{enumerateaxioms}
    \item[{\johnsonaxiom[d]{2}}] $P$ has no direct loops;
    \item[{\johnsonaxiom[d]{3}}] for~$u \in P$,~$\clset(u) \in \wfset(P)$;
    \item[{\johnsonaxiom[d]{4}}] for~$k,n \in \N$ with~$k < n$,~$X \in \wfset(P)_k$ and~$u \in P_n$,
    \begin{enumerate}[label=--]
    \item if~$\csrc_k (\clset(u)) \subseteq X$, then~$\gen{u}_{k,-}$ is a segment for~$\tl_{X_k}$,
    \item if~$\ctgt_k (\clset(u)) \subseteq X$, then~$\gen{u}_{k,+}$ is a
      segment for~$\tl_{X_k}$;
    \end{enumerate}
    \item[{\johnsonaxiom[d]{5}}] for~$n \in \N$,~$X \in \wfset(P)_n$ and~$u \in P_{n+1}$
    with~$\csrc_n(\clrel(u)) \subseteq X$, the following hold:
    \begin{enumerate}[label=(\alph*),ref=(\alph*)]
    \item $X \cap \jE(u) = \emptyset$,
    \item for~$y \in X$, if~$\jB(u) \cap \clset(y) \neq \emptyset$, then~$y \in \jB(u)$.
    \end{enumerate}
\end{enumerateaxioms}
\tobackpropagate{IMPORTANT: condition oubliée dans \johnsonaxiom{5} !!!}%
\johnsonaxiom{1} enforces basic globular properties on generators and forbids,
the \ohg~\eqref{eq:johnson-non-glob} for example. \johnsonaxiom{2} forbids
\ohg{}s with loops like~\eqref{eq:not-acyclic},~\eqref{eq:ex-johncyclic}
and~\eqref{eq:2-pd-direct-loops}.\tobackpropagate{axiome 2 non traité !}
\johnsonaxiom{3} enforces fork-freeness on the iterated sources and targets of a
generator (for example, it forbids the \ohg~\eqref{eq:ex-non-wfs}).
\johnsonaxiom{4} relates to \streetaxiom{5} of parity complexes and prevent
situations in the spirit of the \ohg~\eqref{eq:ex-non-segment}
(even though the latter does not satisfy~\johnsonaxiom{2} in the first place).
We motivate this axiom in \Cref{ssec:gen-pc} when we discuss a similar axiom for
torsion-free complexes. \johnsonaxiom{5}~can be deduced from the other axioms
(\cf~\cite[Theorem~3.7]{johnson1987pasting}) but it simplifies the proofs
of~\cite{johnson1989combinatorics}. An example of a sensible pre-pasting scheme
that satisfy Axioms~\johnsonax{0} to~\johnsonax{3}, but neither \johnsonaxiom{4}
nor \johnsonaxiom{5}, exists in dimension four (see~\cite[Example
3.11]{power1991n}).


\paragraph{A counter-example to the freeness property}
\parlabel{par:ce-ps}
The main result claimed in~\cite{johnson1989combinatorics} is similar to the one
of~\cite{street1991parity}: given a loop-free pasting scheme~$P$, the globular
set~$\wfset(P)$ together with the source, target, identity and composition
operations has the structure of an \ocat, which is freely generated by the
wfs's~$\clset(u)$ for~$u \in P$ (\cite[Theorem 13]{johnson1989combinatorics}).
Using the terminology of~\Cref{ssec:freeness-def}, this amounts to say that there
exist an \opol~$\Q$ and an $\omega$\functor~$F \co \freecat\Q \to \wfset(P)$
such that
\begin{itemize}
\item $\Q_k = P_k$ for~$k \in \N$,
\item $F(u) = \clset (u)$ for~$u \in \Q$,
\item $F$ is an isomorphism.
\end{itemize}
But the same flaw as for parity complexes is present here too, which makes the
freeness result wrong. In fact, the counter-example to the freeness property of
parity complexes, introduced in \Cref{ssec:street}, is also a counter-example to the
freeness property of pasting schemes: the \ohg~$P$ is a loop-free pasting scheme
and the canonical morphism~$F\co\freecat\Q \to \wfset(P)$ sends~$H_1$ and~$H_2$
to the same $3$\wfs~$X = \set{x,y,z,\alpha,\beta,\gamma,\delta,\alpha',\beta',\gamma',\delta',A,B}$
refuting the freeness property~\cite[Theorem 13]{johnson1989combinatorics}.


\subsection{Augmented directed complexes}
\label{ssec:steiner}
Augmented directed complexes, designed by Steiner in~\cite{steiner2004omega},
are not directly based on \ohg{}s, but on chain complexes. Under the conditions
required by~\citeauthor{steiner2004omega}, it happens that the data of a chain
complex is equivalent to the data of an \ohg. The definition of
cells for this formalism strongly resembles the one of parity complexes. The
only difference is that the cells are tuples of group elements instead of
subsets of an \ohg.

\paragraph{Augmented directed complex}

A \glossary(preadc){pre-adc}{``pre-augmented directed
  complex''}\index{pre-augmented directed complex}\emph{pre-augmented directed
  complexes~$\genadc$} (abbreviated \emph{pre-adc}) is the data of
\begin{itemize}
\item for~$n \in \N$, an abelian group~$\defgrpname_n$ together with a
  distinguished submonoid~$\deffreemon_n \subseteq \defgrpname_n$,
\item for~$n \in \N$, group morphisms called \index{boundary operator}\emph{boundary operators}
  \[
    \diffop_n \colon K_{n+1} \to K_n,
  \]
\item an \index{augmentation}\emph{augmentation}, that is, a group morphism
  \[
    \augop \colon K_0 \to\Z.
  \]
\end{itemize}
An \glossary(adc){adc}{``augmented directed complex''}\index{augmented directed
  complex (adc)}\emph{augmented directed complex}, abbreviated \emph{adc}, is a
pre-adc~$\genadc$ such that
\[
  \augop \circ \diffop_0 = 0
  \qtand
  \diffop_n \circ
  \diffop_{n+1} = 0 \text{ for~$n \in \N$.}
\]\tobackpropagate{un $=0$ oublié !!!}%

\paragraph{Bases for pre-adc's}
\parlabel{par:adc-basis}

Given a pre-adc~$\genadc$, a \index{basis of an adc}\emph{basis} of~$\genadc$ is the data of a graded
set~$P \subseteq \bigsqcup_{n \in \N} K_n$ such that
each~$\deffreemon_n$ is the free commutative monoid on~$P_n$ and
each~$K_n$ is the free abelian group on~$\deffreemon_n$. Given a
basis~$P$ of~$\genadc$, every element $u \in K_n$\tobackpropagate{se
  placer dans~$K_n$ et pas~$K_n^\ast$, et utiliser~$\Z$ et pas~$\N$ après} can
be uniquely written as
$ u = \sum_{g \in P_n} u_g g $,
with~$u_g \in \Z$ such that~$u_g \neq 0$ for a finite number of~$g\in
P_n$. This representation induces a partial order~$\le$ where, for~$n
\in \N$ and~$u,v \in K_n$,~$u \le v$ when~$u_g \le v_g$ for all~$g \in
P_n$. Given~$n \in \N$ and~$u,v \in K_n$ we can define a
\glossary(.m){$u \land v$}{the greatest lower bound of~$u$
  and~$v$}\index{greatest lower bound}\emph{greatest lower bound}~${u \land v}$
of~$u$ and~$v$ by
\[
  u \land v = \sum_{g \in P_n} \min(u_g,v_g)g
  \zbox.
\]
Given~$n \in \N$ and~$u \in K_{n+1}$, we write~$u^\mp,u^\pm \in \freemon{K}_{n}$
for the unique elements \glossary(.kb){$(-)^\mp$, $(-)^\pm$}{the border
  operations on a pre-adc} which satisfies that
  $\diffop_n (u) = u^\pm - u^\mp$ and $u^\mp \land u^\pm = 0$.
Moreover, we write~$u^-,u^+$ for
\[
  u^- = \sum_{g \in P_{n+1}} u_g g^\mp \qtand u^+ = \sum_{g \in P_{n+1}} u_g g^\pm\zbox.
\]

\begin{remark}
  The elements~$u^\mp$ and~$u^\pm$ are respectively denoted by~$\csrc (u)$ and~$\ctgt (u)$ in~\cite{steiner2004omega}. We adopt the former notation for
  consistency with those of \Ssecr{street}.
\end{remark}

\paragraph[From \texorpdfstring{$\omega$}{omega}-hypergraphs to pre-adc's with basis]{From \bmohgwe{}s to pre-adc's with basis}
\parlabel{par:ohg-to-pre-adc}
\ndr{ce paragraphe permet une translation directe des exemples donnés pour les
  autres structures aux \ohg{}s}
Given an \ohg~$P$, we define the \emph{pre-adc~$\genadc$ associated to~$P$} as
follows. For~$n \in \N$,~$\freemon{K}_n$ is defined as the free commutative
monoid on~$P_n$ and~$K_n$ as the free abelian group on~$\freemon{K}_n$. The
augmentation~$\augop \colon K_0 \to \mathbb{Z}$ is defined as the unique
morphism such that~$\augop(x) = 1$ for~$x \in P_0$. Given~$n \in \N$ and a
finite subset~$U \subseteq P_n$, we write~$\stom_n (U)$ for~$\sum_{u \in U} u
\in K_n$. Then,~$\diffop_n \colon K_{n+1} \to K_n$ is defined as the unique
morphism such that
$
  \diffop_n(u) = \stom_n(u^+) - \stom_n(u^-)
  $
for~$u \in P_{n+1}$. Then,~$K$ canonically admits~$P$ as a basis. We say that
\emph{$P$ is an adc} when~$K$ is an adc.
\begin{example}
  We explicitly describe the pre-adc associated to the \ohg~\eqref{eq:ex-wfs} as
  follows. Writing~$\freemon{S}$ for the free commutative monoid on a set~$S$,
  we put
  \[
    \freemon{K}_0 = \freemon{\set{x,y_1,y_2,z}}, \quad \freemon{K}_1 =
    \freemon{\set{a_1,a_2,b,c_1,c_2}},\quad \freemon{K}_2 =
    \freemon{\set{\alpha,\beta}}
  \]
  and~$\freemon{K}_n = \set{0}$ for~$n \ge 3$.~$K_0$,~$K_1$,~$K_2$ and~$K_n$
  for~$n \ge 3$ are then the induced free abelian groups on these monoids. The
  operations~$\augop$ and~$\diffop$ are defined by universal property to be the
  unique morphisms such that $\augop(x) = \augop(y_1) = \augop(y_2) = \augop(z)
  = 1$ and
  \begin{gather*}
    \diffop_0(a_1) = y_1 - x\zbox,\qquad
    \diffop_0(a_2) = z - y_1\zbox,\qquad
    \diffop_0(b) = z - x\zbox, \\
    \begin{aligned}
      \diffop_0(c_1) &= y_2 - x\zbox, &  \diffop_0(c_2) &= z-y_2\zbox, \\
      \diffop_1(\alpha) &= b - (a_1 + a_2)\zbox,\qquad
      & \diffop_1(\beta)
      &= \mathmakebox[0pt][l]{(c_1 + c_2) - b\zbox.}
    \end{aligned}
  \end{gather*}
  We can now give some examples for the operations~$(-)^\mp$ and~$(-)^\pm$
  operations defined above:
  \begin{align*}
    (a_1 + a_2)^\mp &= x, & (a_1 + a_2)^\pm &= z, &
    (\alpha + \beta)^\mp &= a_1 + a_2, & (\alpha + \beta)^\pm &= c_1 + c_2\zbox.
  \end{align*}
  We moreover illustrate the operations~$(-)^-$ and~$(-)^+$:
  \begin{align*}
    (a_1 + a_2)^- & =  x + y_1, & (a_1 + a_2)^+ &= y_1 + z, &
    (\alpha + \beta)^- &= a_1 + a_2 + b, & (\alpha + \beta)^+ &= b + c_1 + c_2\zbox.
  \end{align*}
  Thus, the operations~$(-)^\mp$ and~$(-)^\pm$ compute the source and target
  ``borders'' of an element of~$K_n$, whereas the operations~$(-)^-$ and~$(-)^+$
  compute the sum of the ``inner'' sources and targets of an element of~$K_n$.
  They are the analogues of the operations defined for \ohg
  in~\Cref{ssec:hypergraph}.
\end{example}

\paragraph{Cells}

Let~$K$ be a pre-adc. Given~$n \in \N$, an \index{pre-cell!of a (pre-)adc}\emph{$n$\precell of~$K$} is given by
an $(2n{+}1)$\nb{tuple}
\[
  X=(X_{0,-},X_{0,+},\ldots,X_{n-1,-},X_{n-1,+},X_n)
\]
with~$X_n \in \freemon{K}_n$ and $X_{i,-},X_{i,+} \in
\freemon{K}_i$\tobackpropagate{problème d'indice ici} for~$i \in \N_{n-1}$. For
the sake of conciseness, we often refer to~$X_n$ by~$X_{n,-}$ or~$X_{n,+}$. We
\glossary(PCellK){$\adcPCell(K)$}{the set of pre-cells of the pre-adc~$K$}write~$\adcPCell(K)$ for the graded set of pre-cells of~$K$. When~$n > 0$,
given~$\eps \in \set{-,+}$, we define the $n$\precell~$\csrctgt\eps_n (X)$ as
\[
  \csrctgt\eps_n (X)=(X_{0,-},X_{0,+},\ldots,X_{n-1,-},X_{n-1,+},X_{n,\eps}) .
\]
The functions~$\csrc,\ctgt$ then equip~$\adcPCell(K)$ with a structure of
$\omega$\globular set.

\smallpar Given $n \in \N$, an \index{cell!of a (pre-)adc}\emph{$n$\cell of~$K$} is an $n$\precell~$X$
of~$K$ such that
\begin{enumerate}[label=(\roman*),ref=(\roman*)]
\item \label{adc:cell:move} for~$i \in \N_{n-1}$,~$\diffop_{i} (X_{i+1,-}) = \diffop_{i} (X_{i+1,+}) =
  X_{i,+} - X_{i,-}$,
\item \label{adc:cell:0cell} $\augop (X_{0,-}) = \augop (X_{0,+}) = 1$.
\end{enumerate}
We denote \glossary(CellK){$\adcCell(K)$}{the set of cells of the
  pre-adc~$K$}by~$\adcCell(K)$ the graded set of cells of~$K$, which inherits
the $\omega$\globular structure from~$\adcPCell(K)$.
\begin{remark}
  The condition~\ref{adc:cell:move} is analogous to the moving
  condition~\ref{pc:cell:move} of parity complex cells, and the
  condition~\ref{adc:cell:0cell} is related to the fork-freeness
  condition~\ref{pc:cell:fork-free} of parity complex cells instantiated in
  dimension~$0$.
\end{remark}

\paragraph{Identity and composition operations}

Let~$K$ be a pre-adc. Given~$n \in \N$ and an $n$\precell~$X$ of~$K$, we define
the \index{identity operation!for (pre-)adc cells}\emph{identity of~$X$} as the
$(n{+}1)$\precell~$\unitp{n+1}{}(X)$ of~$K$ such that
\[
  \unitp{n+1}{}(X) = (X_{0,-},X_{0,+},\ldots,X_{n-1,-},X_{n-1,+},X_{n},X_{n}, 0)\zbox.
\]
Given~$i,n \in \N$ with~$i < n$ and $i$\composable $n$\cells~$X,Y$, we define
the \index{composition operation!for (pre-)adc cells}\emph{$i$\composition~$X
  \comp_{i} Y$} of~$X$ and~$Y$ as the $n$\precell~$Z$ such that, for~$j \in
\N_n$ and~$\eps \in \set{-,+}$,
\[ Z_{j,\eps} = 
  \begin{cases}
    X_{j,\eps} + Y_{j,\eps} & \text{when~$j > i$,} \\
    X_{i,-} & \text{when~$j = i$ and~$\eps = -$,} \\
    Y_{i,+} & \text{when~$j = i$ and~$\eps = +$,} \\
    X_{j,\eps} \text{ (or equivalently~$Y_{j,\eps}$) } & \text{when~$j < i$.}
  \end{cases}
\]
We then easily verify that~$Z \in \adcCell(K)$.

\paragraph{Atoms}
Let~$K$ be a pre-adc equipped with a basis~$P$. Given~$n \in \N$ and~$u \in
P_n$, we define~${\stgen{u}_{i,\eps} \subseteq P_i}$ for~$i \in \N_n$ and~${\eps
  \in \set{-,+}}$ using a downward induction by $\stgen{u}_{n,-} =
\stgen{u}_{n,+} = u$ and, for~$j \in \N_{n-1}$, $\stgen{u}_{j,-} =
\stgen{u}_{j+1,-}^\mp$ and $\stgen{u}_{j,+} = \stgen{u}_{j+1,+}^\pm$. For
simplicity, we sometimes write~$\stgen{u}_{n,-}$ or~$\stgen{u}_{n,+}$
for~$\stgen{u}_n$. The \glossary(.o){$\stgen u$}{the atom associated to the
  basis generator~$u$ of a pre-adc}\index{atom!of a pre-adc}\emph{atom
  associated to~$u$} is then the $n$\precell of~$K$
\[
  \stgen{u}=(\stgen{u}_{0,-},\stgen{u}_{0,+},\ldots,\stgen{u}_{n-1,-},\stgen{u}_{n-1,+},\stgen{u}_n) \zbox.
\]
\vskip-\baselineskip
\begin{example}
  In the pre-adc associated to the \ohg~\eqref{eq:ex-wfs}, the
  atom~$\stgen{\alpha}$ associated to~$\alpha$ is defined by
  \begin{carray}{\stgen{\alpha}_2 = \alpha,}
    \stgen{\alpha}_{1,-} \;\;&=&\; a_1 + a_2, \hspace*{7em} & \stgen{\alpha}_{1,+} \;\;&=&\; b, \\[3pt]
    \stgen{\alpha}_{0,-} \;\;&=&\; x, & \stgen{\alpha}_{0,+} \;\;&=&\; z.
  \end{carray}%
\end{example}

\paragraph{Unital loop-free basis} 

Let~$K$ be a pre-adc equipped with a basis~$\genbasis$. Given~$i \in \N$, we
define a \glossary(.p){$<_i$}{a pre-order on the basis of a
  pre-adc}relation~$<_i$ on~$\genbasis$ as the smallest transitive relation such
that, for~$k,l \in \N$ with~$i < \min(k,l)$, and~$u \in \genbasis_k,v \in
\genbasis_l$ with~${\stgen{u}_{i,+} \land \stgen{v}_{i,-} \neq 0}$, we have~$u
<_i v$. The basis~$\genbasis$ is then said
\begin{itemize}
\item \index{unital basis}\emph{unital} when for all~$u \in \genbasis$,~$\augop(\stgen{u}_{0,-}) = \augop(\stgen{u}_{0,+}) = 1$,

\item \index{loop-free basis}\emph{loop-free} when, for all~$i \in \N$,~$<_i$ is irreflexive.
\end{itemize}
\begin{example}
  Consider the pre-adc~$K$ with basis~$B$ derived from the
  hypergraph~\eqref{eq:not-acyclic}. The basis~$B$ is then unital but not
  loop-free since~$a <_0 b <_0 a$. Now, consider the pre-adc with basis~$B$
  derived from the hypergraph~\eqref{eq:ex-non-wfs}. The basis~$B$ is then not
  unital since~$\augop(\stgen{a}_{0,-}) = \augop(x+y) = 2$, but it is loop-free.
  Now consider the pre-adc~$K$ with basis~$B$ derived from the
  hypergraph~\eqref{eq:ohg-cells}. We have, among others, the relations
  \begin{gather*}
    a <_0 b <_0 c <_0 d <_0 e <_0 f\zbox,
    \qquad
    a <_0 \alpha <_0 \delta <_0 f\zbox,
    \qquad
    \beta <_1 \alpha <_1 \gamma \qtand \beta <_1 \delta <_1 \gamma\zbox.
  \end{gather*}
  It can be verified that~$B$ is unital and loop-free.
\end{example}

\paragraph{The freeness property} In~\cite{steiner2004omega}, the author shows
that, given an adc~$K$ with a loop-free unital basis~$B$, the globular
set~$\adcCell(K)$, together with identity and composition operations, has a
structure of an $\omega$\category which is freely generated by the
atoms~$\stgen{u}$ for~$u \in B$. Using the terminology
of~\Cref{ssec:freeness-def}, this amounts to say that there exist an \opol~$\Q$
and an $\omega$\functor~$F \co \freecat\Q \to \adcCell(K)$ such that
\begin{itemize}
\item $\Q_k = B_k$ for~$k \in \N$,
\item $F(u) = \stgen u$ for~$u \in \Q$,
\item $F$ is an isomorphism.
\end{itemize}
Contrary to parity complexes and pasting schemes, the pre-adc with basis
associated to the \ohg~\eqref{eqn:ce-tf} is not a loop-free adc. Indeed, it is
an adc with unital basis, but the basis is not loop-free since~${A <_1 B <_1
  A}$. Thus, augmented directed complexes are, to the best of our knowledge, the
only formalism of pasting diagrams among the three that we have already
introduced for which the freeness property holds.



\subsection{Torsion-free complexes}
\label{ssec:gen-pc}
In this section, we introduce \emph{torsion-free complexes}. They are a new
formalism for pasting diagrams based on parity complexes. More precisely,
torsion-free complexes rely on the same notion of cell than parity complexes,
but satisfy different axioms, namely the axioms~\forestax{0} to~\forestax{4}.

\paragraph{Definitions}
\parlabel{par:gpc-ax}

Let~$P$ be an \ohg. Given~$k \in \N$ and~$u \in P_k$, we say that \emph{$u$
  satisfies the segment condition} when, for all~$n \in \N_{k-1}$ and every
$n$\cell~$X$ such that~$\gen{u}_{n,-} \subseteq X_n$, it holds that
both~$\gen{u}_{n,-}$ and~$\gen{u}_{n,+}$ are segments for~$\tl_{X_n}$.
Given~$n,k,l \in \N$ with~$0 < n < \min(k,l)$,~$u \in P_k$, $v \in P_l$ and an
$n$\cell~$X$,~$u$ and~$v$ are said to be \index{torsion}\emph{in torsion with respect to~$X$}
when
\[
  \gen{u}_{n,+} \subseteq X_n
  , \quad
  \gen{v}_{n,-} \subseteq X_n
  ,\quad
  \gen{u}_{n,+} \cap \gen{v}_{n,-} = \emptyset
  \qtand
  \gen{u}_{n,+} \tl_{X_n}
  \gen{v}_{n,-} \tl_{X_n} \gen{u}_{n,+}.
\]
The \ohg~$P$ is then a \index{torsion-free complex}\emph{torsion-free complex}
when it satisfies the following axioms:
\begin{enumerateaxioms}
\item[{\forestaxiom[d]{0}}] (non-emptiness) for all~$u \in P$,~$u^- \neq \emptyset$ and~$u^+ \neq
  \emptyset$;
\item[{\forestaxiom[d]{1}}] (acyclicity)~$P$ is acyclic;
\item[{\forestaxiom[d]{2}}] (relevance) for all~$u \in P$,~$u$ is relevant;
\item[{\forestaxiom[d]{3}}] (segment condition) for~$u \in P$,~$u$ satisfies the segment condition;
\item[{\forestaxiom[d]{4}}] (torsion-freeness) for all~$n,k,l \in \N^*$ with~$n
  < \min(k,l)$,~$u \in P_k$,~$v \in P_l$ and every $n$\cell~$X$,~$u$ and~$v$ are
  not in torsion with respect to~$X$.
\end{enumerateaxioms}
\forestaxiom{1} enforces the same notion of acyclicity than for parity
complexes, forbidding loops like~\eqref{eq:not-acyclic}. \forestaxiom{2}
requires that the generators of the \ohg induce cells, forbidding \ohg{}s
like~\eqref{eq:johnson-non-glob} and~\eqref{eq:ex-non-wfs}. It can be shown that
\forestaxiom{2} entails Axioms~\streetax{1} and~\streetax{2} of parity
complexes. The last axioms deserve their own paragraphs.

\paragraph[The segment Axiom \forestaxlabel{3}]{The segment \forestaxiom{3}}
\parlabel{par:segment}

Our goal is to find conditions on \ohg{}s~$P$ so that the freeness property
holds, \ie the \ocat of cells~$\Cell(P)$ is freely generated by the atoms. In
particular, a technical result states that every cell should be decomposable as
a sequence of \eq{whiskered atoms} (\cf
\Cref{prop:decomp-cells-polextp-strcats}). But there are cells of \ohg{}s
satisfying Axioms~\forestax{0} to~\forestax{2} that cannot be decomposed this
way, because of the constraints that~$\tl$ requires on the composition order. We
illustrate this with an example. Consider the \ohg~$P$ represented
on \Cref{fig:ex-non-segment}
\begin{figure}
  \centering
  \begin{equation}
    \label{eq:ex-non-segment}
    \begin{tikzpicture}[yscale=1.8,xscale=1.2,baseline=(b)]
      \coordinate (a) at (0,1);
      \coordinate (b) at (0,0);
      \coordinate (c) at (0,-1);
      \coordinate (d) at (1.5,0.3);
      \node (A) at (a) {$z$};
      \node (B) at (b) {$y$};
      \node (C) at (c) {$x$};
      \node (D) at (d) {$w$};
      \draw[->] (B) to[bend right=60,"$d'$",swap,pos=0.6,inner sep=0.3ex] (A); 
      \draw[->] (B) to[bend left=60,"$d$"] (A); 
      \draw[->] (C) to[bend right=60,"$a'$"',inner sep=0.3ex] (B); 
      \draw[->] (C) to[bend left=60,"$a$",inner sep=0.3ex] (B); 
      \draw[->] (C) .. controls +(-1,0) and +(-1,0) .. (B) node[midway,left] {$c$}; 
      \draw[->] (B) to[bend left=10,"$b$",pos=0.6,inner sep=0.3ex] (D); 
      \draw[->] (C) to[out=20,in=-100,"$b'$"'] (D); 
      
      \draw[->] (A) .. controls +(-3,0) and +(-3,0) .. (0,-2) node[pos=0.7,left] {$e$}
      .. controls +(2,0) and +(1,-1) .. (D) ;
      \node (X1) at (0,-0.5) {
        $
        \begin{array}{c}
          \myoverset{$\Rightarrow$}{$\alpha_1$} \\[-5pt]
          \myoverset{$\alpha_1'$}{$\Rightarrow$}
        \end{array}
        $
      };
      \node (X2) at (0,0.5) {$\begin{array}{c} \myoverset{$\Rightarrow$}{$\alpha_4$} \\[-5pt]
                                \myoverset{$\alpha_4'$}{$\Rightarrow$} \end{array}$};
      \node at (0.75,-0.2) {$\myoverset{$\Rightarrow$}{$\alpha_2$}$};
      \node at (0,-1.5) {$\myoverset{$\Leftarrow$}{$\alpha_3$}$};
    \end{tikzpicture}
    \quad
    \begin{tikzpicture}[yscale=1.8,xscale=1.2,baseline=(b)]
      \coordinate (a) at (0,1);
      \coordinate (b) at (0,0);
      \coordinate (c) at (0,-1);
      \coordinate (d) at (1.5,0.3);
      \node (A) at (a) {$z$};
      \node (B) at (b) {$y$};
      \node (C) at (c) {$x$};
      \draw[->] (B) to[bend right=60,"$d'$"'] (A); 
      \draw[->] (B) to[bend left=60,"$d$"] (A); 
      \draw[->] (C) to[bend right=60,"$\mathstrut a'$"'] (B); 
      \draw[->] (C) to[bend left=60,"$\mathstrut a$"] (B); 
      
      \node (X1) at (0,-0.5) {\myoverset{$\Rightarrow$}{$\alpha_1$}};
      \node (X2) at (0,0.5) {\myoverset{$\Rightarrow$}{$\alpha_4$}};
    \end{tikzpicture}
    \myoverset{$\Rrightarrow$}{$A$}
    \begin{tikzpicture}[yscale=1.8,xscale=1.2,baseline=(b)]
      \coordinate (a) at (0,1);
      \coordinate (b) at (0,0);
      \coordinate (c) at (0,-1);
      \coordinate (d) at (1.5,0.3);
      \node (A) at (a) {$z$};
      \node (B) at (b) {$y$};
      \node (C) at (c) {$x$};
      \draw[->] (B) to[bend right=60,"$d'$"'] (A); 
      \draw[->] (B) to[bend left=60,"$d$"] (A); 
      \draw[->] (C) to[bend right=60,"$\mathstrut a'$"'] (B); 
      \draw[->] (C) to[bend left=60,"$\mathstrut a$"] (B); 
      \node (X1) at (0,-0.5) {\myoverset{$\Rightarrow$}{$\alpha_1'$}};
      \node (X2) at (0,0.5) {\myoverset{$\Rightarrow$}{$\alpha_4'$}};
    \end{tikzpicture}
  \end{equation}
  \caption{A problematic \ohgwe}
  \label{fig:ex-non-segment}
\end{figure}
where, more precisely,
\begin{align*}
  A^- &= \set{\alpha_1,\alpha_4}, & A^+ &= \set{\alpha'_1,\alpha'_4}, \\
  \alpha_1^- &= {\alpha'_1}^{-} = \set{a}, & \alpha_1^+ &= {\alpha'_1}^+ = \set{a'}, \\
  \alpha_4^- &= {\alpha'_4}^- = \set{d}, & \alpha_4^+ &= {\alpha'_4}^+ = \set{d'},
                                                        \zbox{\quad \etcend}
\end{align*}
One can verify that~$P$ satisfies Axioms~\forestax{0},~\forestax{1}
and~\forestax{2}. In this \ohg, there is a $2$\cell~$X$ given by
\begin{carray}{X_2 = \set{\alpha_1,\alpha_2,\alpha_3,\alpha_4},}
  X_{1,-} \;\;&=&\; \set{a,b}, \hspace*{5em} & X_{1,+} \;\;&=&\; \set{c,d',e}, \\[3pt]
  X_{0,-} \;\;&=&\; \set{x}, & X_{0,+} \;\;&=&\; \set{z}\zbox,
\end{carray}%
and a $3$\cell~$Y$ uniquely defined by $\csrc_2(Y) = X$ and~$Y_3 = \set A$.
Suppose by contradiction that~$\Cell(P)$ is an \ocat which is freely generated
by the atoms. Then,~$Y$ can be written
\begin{equation*}
  Y = \unit\lambda \comp_1 (\unitp 2 l \comp_0 \gen A \comp_0 \unitp 2 r) \comp_1 \unit\rho
\end{equation*}
for some $1$\cells $l,r$ and $2$\cells~$\lambda,\rho$. Thus, $X = \unit \lambda
\comp_1 X' \comp_1 \unit\rho$ where~$X'$ is a $2$\cell such that~$X'_2 = A^-$. Since~${\restrictcat{\Cell(P)}{2} \simeq
  \restrictcat{\Cell(P \setminus \set{A})}{2}}$ and~${P \setminus \set{A}}$ is a
torsion-free complex, using \Lemr{cell-comp-n-1} introduced later, the existence
of the decomposition of~$X$ implies that
\begin{equation*}
  \label{ax3:partition}
  \text{the sets~$\lambda_2$,~$X'_2$ and~$\rho_2$ form a
    partition of~$X_2 = \set{\alpha_1,\alpha_2,\alpha_3,\alpha_4}$,}
\end{equation*}
and, using \Lemr{dep-comp} introduced later, we have that
 \begin{equation*}
   \label{ax3:tlone-restr}
   \text{for~${(\beta,\gamma) \in (\lambda_2 \times (X'_2 \cup \rho_2)) \cup ((\rho_2 \cup X'_2) \times \rho_2)}$,
   we have~$\textstyle\lnot (\gamma \tl_{X_2} \beta)$,}
 \end{equation*}
 or, more simply put, the partition~$\lambda_2,X'_2,\rho_2$ respects the
 relation~$\tl_{X_2}$. But this cannot be
 possible since~$X'_2 = \set{\alpha_1,\alpha_4}$ and~$\alpha_1 \tl \alpha_2 \tl
 \alpha_3 \tl \alpha_4$. Hence,~$\Cell(P)$ is not an \ocat freely generated by
 the atoms. Since $\gen{A}_{2,-}$ is not a segment for~$\tl_{X_2}$,
 \forestaxiom{3} prevents this kind of problem.

\paragraph[The torsion-freeness Axiom \forestaxlabel{4}]{The torsion-freeness \forestaxiom{4}}
\parlabel{par:torsionless}

The notion of torsion captures the essence of the counter-example to the
freeness property of parity complexes and pasting schemes presented in
\Cref{ssec:street}. Indeed,
considering the \ohg~$P$ represented by~\eqref{eqn:ce-tf}, there is a
$2$\cell~$X$ defined by
\begin{carray}{X_2 = \set{\alpha',\beta,\gamma,\delta'},}
  X_{1,-} \;\;&=&\; \set{a,d},\hspace*{5em} & X_{1,+} \;\;&=&\; \set{c,f}, \\[3pt]
  X_{0,-} \;\;&=&\; \set{x}, & X_{0,+} \;\;&=&\; \set{z},
\end{carray}%
which is induced by the pasting diagram
\[
  \begin{tikzcd}[ampersand replacement=\&]
    x \arrow[rr,"b"{name=b,description}]
    \arrow[rr,out=70,in=110,"a"{name=a}] 
    \arrow[rr,out=-70,in=-110,"c"'{name=p}] 
    \arrow[phantom,"\Downarrow\!\alpha'",pos=0.6,from=a,to=b]
    \arrow[phantom,"\Downarrow\!\beta",pos=0.4,from=b,to=p] 
    \& \& y \arrow[rr,"e"{name=e,description}]
    \arrow[rr,out=70,in=110,"d"{name=p}]
    \arrow[rr,out=-70,in=-110,"f"'{name=f}] \& \& z
    \arrow[phantom,"\Downarrow\!\gamma",pos=0.6,from=p,to=e]
    \arrow[phantom,"\Downarrow\!\delta'",pos=0.4,from=e,to=f]
  \end{tikzcd}
  \pbox.
\]
Then, one can verify that~$A$ and~$B$ are in torsion with respect to~$X$, so
that~$P$ does not satisfy \forestaxiom{4} (on the other hand, it satisfies
Axioms~\forestax{0} to~\forestax{3}).

Intuitively, the situations with torsion are the minimal cases where the
freeness property fails for a parity complex~$P$ (and similarly for a pasting
scheme~$P$). When~${u,v \in P}$ are in torsion with respect to a cell~$X$
of~$P$, there are two possible order to compose~$u$ and~$v$: first~$u$ then~$v$,
or first~$v$ then~$u$. And both composites produce equal cells in~$\Cell(P)$.
However, this equality can not be deduced from an exchange law, since the
torsion says basically that~$u$ and~$v$ cross each other, preventing to use the
exchange law to swap them.

\paragraph{More computable axioms}
\parlabel{par:forest-comp}
Axioms~\forestax{3} and~\forestax{4} happen to be hard to check in practice,
since they both involve a quantification on all the cells of an \ohg. Here, we give stronger axioms that are
simpler to verify.

Given an \ohg~$P$, for~$n \in \N$,~$u,v \in P_n$, we \glossary(.q){$\jtlelem$}{a
  pre-order on the generators of an \ohg}write~$u \jtlelem v$ when there
exists~$w \in P_{n+1}$ such that~$u \in w^-$ and~$v \in w^+$ and we write~$\jtl$
for the reflexive transitive closure of~$\jtlelem$. For~$U,V \subseteq P_n$, we
write~$U \jtl V$ when there exist~$u \in U$ and~$v \in V$ such that~$u \jtl v$.
Consider the following axiom on an \ohg~$P$:
\begin{enumerateaxioms}
\item[{\forestaxiomcomp[d]{3}}] for~$k,n \in \N^*$ with~$k < n$ and~$u \in P_n$,
  we do not have~$ \gen{u}_{k,+} \jtl \gen{u}_{k,-}$.
\end{enumerateaxioms}%
Then, \forestaxiom{3} can be replaced by \forestaxiomcomp{3} in the axioms of
torsion-free complexes:
\begin{prop}
  \label{prop:comp-impl-norm-three}
  Let~$P$ be an \ohg satisfying Axioms~\forestax{0}, \forestax{1}
  and~\forestax{2}. If~$P$ satisfies \forestaxiomcomp{3}, then it satisfies
  \forestaxiom{3}.
\end{prop}
\begin{proof}
  Suppose that~$P$ satisfies \forestaxiomcomp{3}. Let~$n,k \in \N$ with~$n <
  k$,~$X$ be an $n$\cell and~$u \in P_k$ such that~${\gen{u}_{n,-} \subseteq
    X_n}$. If~$n = 0$, there is nothing to prove, so we can assume~$n > 0$. By
  contradiction, suppose that~$\gen{u}_{n,-}$ is not a segment for~$\tl_{X_n}$. So
  there are~$r \in \N$ with~$r > 2$ and~$u_1,\ldots,u_r \in X_n$ such that
    $u_1,u_r \in \gen{u}_{n,-}$,
    $u_2,\ldots,u_{r-1} \not \in \gen{u}_{n,-}$
    and
    $u_i \tlone_{X_n} u_{i+1}$
  for~$i \in \N^*_{r-1}$. In particular, there are~$v_1,\ldots,v_{r-1} \in
  P_{n-1}$ such that~${v_i \in u_i^+ \cap u_{i+1}^-}$ for~$i \in \N^*_{r-1}$.
  Given~${w \in X_n}$ such that~${v_1 \in w^-}$, since~$X_n$ is fork-free, we
  have~${w = u_2 \not \in \gen{u}_{n,-}}$. Thus, since~$u$ is relevant by
  \forestaxiom{2},~$v_1 \in \gen{u}_{n,-}^\pm = \gen{u}_{n-1,+}$.
  Similarly, we have that~$v_{r-1} \in \gen{u}_{n-1,-}$. So, $\gen{u}_{n-1,+} \jtl
  \gen{u}_{n-1,-}$, contradicting \forestaxiomcomp{3}. Hence,~$P$ satisfies
  \forestaxiom{3}.
\end{proof}
\noindent Now, consider the following axiom on an \ohg~$P$:
\begin{enumerateaxioms}
\item[{\forestaxiomcomp[d]{4}}] for~$n,k,l \in \N^*$ with~$n < \min(k,l)$,~$u\in
  P_k$ and~$v \in P_l$, if~${\gen{u}_{n,+} \cap \gen{v}_{n,-} = \emptyset}$,
  then at most one of the following holds:
\begin{itemize}
\item $\gen{u}_{n-1,+} \jtl \gen{v}_{n-1,-}$,
  
\item $\gen{v}_{n-1,+} \jtl \gen{u}_{n-1,-}$.
\end{itemize}
\end{enumerateaxioms}
  
\noindent Then, \forestaxiom{4} can be replaced by \forestaxiomcomp{4} in
the axioms of torsion-free complexes:
\begin{prop}
  \label{prop:comp-impl-norm-four}
  Let~$P$ be an \ohg satisfying Axioms~\forestax{0},~\forestax{1}
  and~\forestax{2}. If~$P$ satisfies \forestaxiomcomp{4}, then it satisfies
  \forestaxiom{4}.
\end{prop}
\begin{proof}
  \newcommand\barw{\bar w}%
  Suppose that~$P$ satisfies \forestaxiomcomp{4}. By contradiction, assume
  that~$P$ does not satisfy \forestaxiom{4}. So there are~$n,k,l \in \N^*$
  with~$n < \min(k,l)$,~$u \in P_k$,~$v \in P_l$ and an $n$\cell~$X$ such
  that~$u$ and~$v$ are in torsion with respect to~$X$. That is, $\gen{u}_{n,+}
  \subseteq X_n$, $\gen{v}_{n,-} \subseteq X_n$ $\gen{u}_{n,+} \cap
  \gen{v}_{n,-} = \emptyset$ and $\gen{u}_{n,+} \tl_{X_n} \gen{v}_{n,-}
  \tl_{X_n} \gen{u}_{n,+}$. By the last condition, there are~$r \in \N$ with~$r
  > 1$, and~$w_1,\ldots,w_r \in X_n$ such that
  \[
    w_1 \in \gen{u}_{n,+}
    ,\quad
    w_r \in \gen{v}_{n,-}
    ,\quad
    w_2,\ldots,w_{r-1} \not \in \gen{u}_{n,+} \cup \gen{v}_{n,-}
    ,\qtand
    w_i \tlone_{X_n} w_{i+1}
  \]
  for~$i \in \N^*_{r-1}$. Thus, there are~$\barw_1,\ldots,\barw_{r-1} \in
  P_{n-1}$ such that~$\barw_i \in w_i^+ \cap w_{i+1}^-$ for~$i \in \N^*_{r-1}$.
  Given~$w \in X_n$ with~$\barw_1 \in w^-$, we have~$w = w_2 \not \in
  \gen{u}_{n,+}$ since~$X_n$ is fork-free. Thus, $\barw_1 \in
  \gen{u}_{n,+}^{\pm} = \gen{u}_{n-1,+}$. Similarly,~$\barw_{r-1} \in
  \gen{v}_{n-1,-}$, so~$\gen{u}_{n-1,+} \jtl \gen{v}_{n-1,-}$. Likewise, we
  have~$\gen{v}_{n-1,+} \jtl \gen{u}_{n-1,-}$, which contradicts
  \forestaxiomcomp{4}. Hence,~$P$ satisfies \forestaxiom{4}.
\end{proof}



\section[The free \texorpdfstring{$\omega$}{omega}-category of cells]{The free $\omega$-category of cells}
\label{sec:cat-cells}
In this section, we show that the cells on a torsion-free complex have a
structure of a free \ocat. For the \ocategorical structure, we essentially have
to prove that the composition of two cells is a cell. In order to show this, we
adapt a result of~\cite{street1991parity} and prove a ``gluing theorem'', which
enables building an $(n{+}1)$\cell from an $n$\cell by gluing a set
of~$(n{+}1)$\generators. As a by-product, it also gives some properties
satisfied by composable cells, from which one can derive that the composition of
cells is well-defined. Then, for the freeness, we must first define the meaning
of \eq{freeness} that we use. The most natural notion for our context is the one of
polygraph~\cite{street1976limits,burroni1993higher}, which is a set of
generators for strict categories whose sources and targets are composites made
of other generators, and from which a free strict category can be generated.
This structure adequately encodes the fact that the source and target of each
generator of a torsion-free complexes are themselves pasting diagrams of
generators that must be composed.

In \Cref{ssec:gluing-thm}, we first state the gluing theorem
(\Cref{thm:cell-gluing}), after introducing the adequate terminology for the
\eq{gluing} of a set of generators on a cell. Then, in
\Cref{ssec:cells-are-cat}, we use this property to prove that the cells of a
torsion-free complex have a structure of an \ocat (\Cref{thm:cell-is-ocat}). In
\Cref{ssec:freeness-def}, we introduce the definition of \eq{freeness} for
strict categories that we are going to use for the \ocat of cells by recalling
the definition of polygraphs and the associated free strict category construction. In
\Cref{ssec:freeness}, we prove that the \ocat of cells of a torsion-free complex
is free in this sense, \ie is the free \ocat on a canonical polygraph (\Cref{coro:cells-are-freely-gen}).

\subsection{Gluing sets on cells}
\label{ssec:gluing-thm}

\paragraph{Gluings and activations}
Let~$P$ be an \ohg. Given~$n \in \N$, an $n$\precell~$X$ of~$P$ and a finite
set~$\glueset \subseteq P_{n+1}$, we say that~$\glueset$ is
\index{glueable}\emph{glueable on~$X$} if~$\glueset^{\mp} \subseteq X_n$. If so,
we call \index{gluing}\emph{gluing of~$\glueset$ on~$X$} the $(n{+}1)$\precell~$Y$
of~$P$ defined by
\[
  Y_{n+1} = \glueset
  ,\quad
  Y_{n,-} = X_{n}
  ,\quad
  Y_{n,+} = (X_{n} \cup \glueset^+) \setminus \glueset^-
  \qtand
  Y_{i,\epsilon} = X_{i,\epsilon}
\]
for~$i \in \N_n$ and~$\eps \in \set{-,+}$.
We denote~$Y$ \glossary(Glue){$\cellgluing(X,G)$}{the gluing of~$G$
  on~$X$}by~$\cellgluing(X,\glueset)$. Moreover, we call
\glossary(Act){$\cellactivation(X,\glueset)$}{the activation of~$\glueset$
  on~$X$}\index{activation}\emph{activation of~$\glueset$ on~$X$} the
$n$\precell~$\cellactivation(X,\glueset)$ defined by
\[
  \cellactivation(X,\glueset) = \ctgt_n(\cellgluing(X,\glueset))
\]
We say that~$\glueset$ is \emph{dually glueable on~$X$} when~$\glueset^\pm
\subseteq X_n$, and we define the dual gluing~$\dualcellgluing(X,\glueset)$ and
the dual activation~$\dualcellactivation(X,\glueset)$ similarly. For example,
consider the~\ohg~\eqref{eq:ex-non-segment} %
from \Cref{ssec:gen-pc} and recall there the definitions of~$X$ and~$Y$.
Then~$\set{A}$ is glueable on~$X$ and~$\cellgluing(X,\set{A}) = Y$,
and~$\cellactivation(X,\set{A})$ is the $2$\precell~$\bar X$ with
\begin{carray}{\bar X_2 = \set{\alpha_1,\alpha'_2,\alpha'_3,\alpha_4},}
  \bar X_{1,-} \;\;&=&\; \set{a,b}, \hspace*{5em}& \bar X_{1,+} \;\;&=&\; \set{c,d',e}, \\[3pt]
  \bar X_{0,-} \;\;&=&\; \set{x}, & \bar X_{0,+} \;\;&=&\; \set{z}.
\end{carray}%
Conversely,~$\set{A}$ is dually glueable on~$\bar X$, and we
have~$\dualcellgluing(\bar X,\set{A}) = Y$, and~$\dualcellactivation(\bar X,\set
A) = X$.

\paragraph{The gluing theorem}
We now state the ``gluing theorem''. It is an adapted version
of~\cite[Lemma~3.2]{street1991parity} which enables building new cells using the
gluing and activation operations. The theorem moreover gives additional results
concerning intersections with the source and the target sets of gluing sets.

\begin{restatable}{theo}{theoremcellgluing}%
  \label{thm:cell-gluing}
  Let~$P$ be an \ohg which satisfies Axioms~\forestax{0},~\forestax{1},
  \forestax{2} and~\forestax{3}. Given~${n \in \N}$, an $n$\cell~$X$ of~$P$
  and a finite fork-free set~$\glueset \subseteq P_{n+1}$ such that~$\glueset$
  is glueable on~$X$, we have that
  \begin{enumerate}[label=(\alph*),ref=(\alph*)]
  \item \label{thm:cell-gluing:act} $\cellactivation(X,\glueset)$
    is a cell and~$\glueset^+ \cap X_n = \emptyset$,
  \item \label{thm:cell-gluing:glue} $\cellgluing(X,\glueset)$
    is a cell,
  \item \label{thm:cell-gluing:inter} given a finite fork-free subset~$\dualglueset \subseteq
    P_{n+1}$ which is dually glueable on~$X$,
    $\dualglueset^- \cap \glueset^+ = \emptyset$.
  \end{enumerate}
  and dual properties hold when~$\glueset$ is dually glueable on~$X$.
\end{restatable}
\noindent A representation of the cells of the statement is shown in \Cref{fig:21}.
\begin{figure}
  \centering
\begin{tikzpicture}[scale=1.5,ampersand replacement=\&]
  \node (Q) at (0,2) {$\glueset$};
  \node (nw) at (-1,2) {};
  \node (ne) at (1,2) {};
  \node (Xn) at (-1,1) {$X_n$};
  \node (R) at (1,1) {$(X_n \cup \glueset^+) \setminus \glueset^-$};
  \node (Xn-1s) at (-1,0) {$X_{n-1,-}$};
  \node (Xn-1t) at (1,0) {$X_{n-1,+}$};
  \node (dots) at (0,-0.5) {\vdots};
  \node (X1s) at (-1,-1) {$X_{1,-}$};
  \node (X1t) at (1,-1) {$X_{1,+}$};
  \node (X0s) at (-1,-2) {$X_{0,-}$};
  \node (X0t) at (1,-2) {$X_{0,+}$};
  \node[green,right] at (dots -| Xn-1s.west) {$X$};
  \node[red,left] at (dots -| R.east) {$\cellactivation(X,\glueset)$};
  \node[blue,left] at (Q -| R.east) {$\cellgluing(X,\glueset)$};
  \draw[->] (X0s) .. controls (X1s) .. (X0t);
  \draw[->] (X0s) .. controls (X1t) .. (X0t);
  \draw[->] (Xn-1s) .. controls (Xn) .. (Xn-1t);
  \draw[->] (Xn-1s) .. controls (R) .. (Xn-1t);
  \draw[->] (Xn) .. controls (Q) .. (R);
  \draw[green] (X0s.south west) -| (Xn-1s.north west) |- (Xn.north west) --
  (Xn.north east -| Xn-1s.north east) |-
   (Xn-1t.north east -| R.east) |- (X0t.south east) -- cycle;
  \draw[red] ([xshift=-0.2em,yshift=-0.2em]X0s.south west) -| ([xshift=-0.2em,yshift=+0.2em]Xn-1s.north west) -|
  ([xshift=-0.2em,yshift=0.2em]R.north west) -- ([xshift=0.2em,yshift=0.2em]R.north east) |- cycle;
  \draw[blue] ([xshift=-0.4em,yshift=-0.4em]X0s.south west) -|
  ([xshift=-0.4em]Xn-1s.west) |-
  ([yshift=0.4em]Q.north) -| ([xshift=0.4em]R.east) |- cycle;
\end{tikzpicture}
\caption{Cells involved and their movements in \protect\Thmr{cell-gluing}}
  \label{fig:21}
\end{figure}
%
\begin{proof}
  See \Cref{sec:details-gluing-thm}.
\end{proof}

\subsection[Structure of \texorpdfstring{$\omega$}{omega}-category]{Structure of $\omega$-category}
\label{ssec:cells-are-cat}
Here, we prove that the cells on a torsion-free complex have a structure of an
\ocat. For this purpose, we first show that the composite of two cells is a
cell using \Cref{thm:cell-gluing} shown above. Then, we quickly verify that the
axioms of \ocats are satisfied, which is almost immediate by the definitions of
the operations on cells.

\smallpar We first handle the case of compositions of cells in codimension~$1$ with the
following result:
\begin{lem}
  \label{lem:cell-comp-n-1}
  Let~$P$ be an \ohg satisfying Axioms~\forestax{0}, \forestax{1}, \forestax{2}
  and~\forestax{3}. Given $n \in \N^*$ and two $n$\cells~$X,Y$ of~$P$ that are
  $(n{-}1)$\composable, the following hold:
  \begin{enumerate}[label=(\alph*),ref=(\alph*)]
  \item \label{lemcomposable.a} $X_{n}^- \cap Y_n^+ = \emptyset$,
  \item \label{lemcomposable.abis} $X_n \cap Y_n = \emptyset$,
  \item \label{lemcomposable.b} $X \comp_{n-1} Y$ is an $n$\cell of~$P$. 
  \end{enumerate}
\end{lem}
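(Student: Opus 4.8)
The plan is to obtain~\ref{lemcomposable.a} as a direct instance of the gluing theorem, and then to deduce~\ref{lemcomposable.abis} and~\ref{lemcomposable.b} by elementary manipulations with the movement laws of \Ssecr{gluing-thm} (the relevant ones being \Lemr{street-2.1} and \Lemr{street-2.3}).

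First I would prove~\ref{lemcomposable.a}. Set $W = \ctgt X$, which is an $(n{-}1)$-cell of~$P$ since $\ctgt$ preserves cells, and note that $W = \csrc Y$ because $X$ and $Y$ are $(n{-}1)$-composable, so that $W_{n-1} = X_{n-1,+} = Y_{n-1,-}$. As $X_n$ moves $X_{n-1,-}$ to $X_{n-1,+}$, we get $X_n^\pm \subset X_{n-1,+} = W_{n-1}$, so the fork-free set $X_n$ is dually glueable on~$W$; as $Y_n$ moves $Y_{n-1,-}$ to $Y_{n-1,+}$, \Lemr{street-2.1} gives $Y_n^\mp \subset Y_{n-1,-} = W_{n-1}$, so the fork-free set $Y_n$ is glueable on~$W$. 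Applying \Thmr{cell-gluing}\ref{lem.c} to the cell~$W$, with $Y_n$ in the role of $\glueset$ and $X_n$ in the role of $\dualglueset$, yields $X_n^- \cap Y_n^+ = \emptyset$, which is~\ref{lemcomposable.a}.

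Next I would record two auxiliary facts, $X_n^+ \cap Y_n^+ = \emptyset$ and $X_n^- \cap Y_n^- = \emptyset$. The first follows by combining $X_{n-1,+} \cap Y_n^+ = \emptyset$ (from $Y_n$ moving $Y_{n-1,-} = X_{n-1,+}$ to $Y_{n-1,+}$) with $X_n^\pm \subset X_{n-1,+}$ (from $X_n$ moving $X_{n-1,-}$ to $X_{n-1,+}$) and~\ref{lemcomposable.a}, since then $X_n^+ \cap Y_n^+ \subset (X_n^\pm \cap Y_n^+) \cup (X_n^- \cap Y_n^+) = \emptyset$. For the second, if $z \in X_n^- \cap Y_n^-$, then $z \notin Y_n^+$ by~\ref{lemcomposable.a}, hence $z \in Y_n^\mp \subset Y_{n-1,-} = X_{n-1,+}$ by \Lemr{street-2.1}, contradicting $X_{n-1,+} \cap X_n^- = \emptyset$ (immediate from the definition of the movement of~$X_n$). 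Then~\ref{lemcomposable.abis} is immediate: any $x \in X_n \cap Y_n$ would satisfy $x^- \subset X_n^- \cap Y_n^- = \emptyset$, contradicting \forestaxiom{0}.

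Finally, for~\ref{lemcomposable.b}, write $Z = X \comp_{n-1} Y$. Its components $Z_{i,\epsilon}$ with $i < n{-}1$ are components of~$X$, while $Z_{n-1,-} = X_{n-1,-}$ and $Z_{n-1,+} = Y_{n-1,+}$ are fork-free since $X$ and $Y$ are cells, and $Z_n = X_n \cup Y_n$ is fork-free by the two auxiliary facts together with the fork-freeness of $X_n$ and $Y_n$. For the movement conditions: those at levels $i < n{-}2$ come from~$X$; at level $i = n{-}2$, $X_{n-1,-}$ moves $X_{n-2,-}$ to $X_{n-2,+}$ since $X$ is a cell, and $Y_{n-1,+}$ moves $X_{n-2,-}$ to $X_{n-2,+}$ since $Y$ is a cell and $(n{-}1)$-composability identifies the $(n{-}2)$-boundaries of $X$ and $Y$; at level $i = n{-}1$, since $X_n$ moves $X_{n-1,-}$ to $Y_{n-1,-}$, $Y_n$ moves $Y_{n-1,-}$ to $Y_{n-1,+}$, and $X_n^- \cap Y_n^+ = \emptyset$ by~\ref{lemcomposable.a}, \Lemr{street-2.3} shows that $X_n \cup Y_n$ moves $X_{n-1,-}$ to $Y_{n-1,+}$. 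I expect the only non-routine step to be~\ref{lemcomposable.a}: everything afterwards is bookkeeping with movement identities, and the key observation there is that, relative to the cell $\ctgt X = \csrc Y$, the set $X_n$ is dually glueable (it ``arrives'' at $X_{n-1,+}$) while $Y_n$ is glueable (it ``departs'' from it), which is precisely the configuration handled by \Thmr{cell-gluing}\ref{lem.c}.
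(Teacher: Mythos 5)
Your proof is correct and follows essentially the same route as the paper's: part~\ref{lemcomposable.a} via \Thmr{cell-gluing}\ref{lem.c} applied to the $(n{-}1)$-cell $\ctgt X$ with $X_n$ dually glueable and $Y_n$ glueable, then the disjointness of $X_n^+\cap Y_n^+$ and $X_n^-\cap Y_n^-$ combined with \forestaxiom{0} for part~\ref{lemcomposable.abis}, and \Lemr{street-2.3} for the movement condition in part~\ref{lemcomposable.b}. You merely spell out the glueability hypotheses and the lower-dimensional movement checks in more detail than the paper does.
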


\begin{proof}
  Using \Thmr{cell-gluing}\ref{thm:cell-gluing:inter} with~$\ctgt_{n-1}
  (X)$,~$X_n$ and~$Y_n$, we get $X_{n}^- \cap Y_n^+ = \emptyset$. Moreover,
  \begin{align*}
    X_n^+ \cap Y_n^+ & = X_n^{\pm} \cap Y_n^+ & \text{(since~$X_n^- \cap Y_n^+ = \emptyset$)} \\
                     & \subseteq X_{n-1,+} \cap Y_n^+ \\
                     & = Y_{n-1,-} \cap Y_n^+ \\
                     & = \emptyset & \text{(by \Thmr{cell-gluing}\ref{thm:cell-gluing:act})\zbox.}
  \end{align*}
  By \forestaxiom{0}, it implies that~$X_n \cap Y_n = \emptyset$.
  Similarly,
    $X_n^- \cap Y_n^- = \emptyset$,
  so~$X_n \cup Y_n$ is fork-free. For~$X \comp_{n-1} Y$ to be a cell,~$X_n \cup
  Y_n$ must move~$X_{n-1,-}$ to~$Y_{n-1,+}$. But, since~$X$ and~$Y$ are cells
  and are $(n{-}1)$\composable, we know that~$X_n$ moves~$X_{n-1,-}$
  to~$X_{n-1,+}$,~$Y_n$ moves~$Y_{n-1,-}$ to~$Y_{n-1,+}$ and~$X_{n-1,+} =
  Y_{n-1,-}$. Since~$X_n^- \cap Y_n^+$, using \Lemr{street-2.3}, we get
  that~$X_n \cup Y_n$ moves~$X_{n-1,-}$ to~$Y_{n-1,+}$. Hence,~$X \comp_{n-1} Y$
  is a cell.
\end{proof}
\noindent We now handle the general case of compositions of cells:
\begin{lem}
  \label{lem:gencomposable}
  Let~$P$ be an \ohg satisfying Axioms~\forestax{0},~\forestax{1},~\forestax{2}
  and~\forestax{3}. Let $i,n \in \N$ with~$i < n$ and~$X,Y$ be two $n$\cells
  of~$P$ that are $i$\composable. Then,
  \begin{enumerate}[label=(\roman*),ref=(\roman*)]
  \item \label{lemgencomposable.a} for~$j \in \N$ with~$i < j \le
    n$,~$(X_{j,-}^- \cup X_{j,+}^-) \cap (Y_{j,-}^+ \cup Y_{j,+}^+) =
    \emptyset$,
  \item \label{lemgencomposable.b} $X \comp_i Y$ is a cell.
  \end{enumerate}
\end{lem}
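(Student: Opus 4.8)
The plan is to prove \ref{lemgencomposable.a} and \ref{lemgencomposable.b} simultaneously by induction on $n$, strengthening the statement with the extra disjointness ``$X_{j,\epsilon}\cap Y_{j,\eta}=\emptyset$ for $i<j\le n$ and $\epsilon,\eta\in\set{-,+}$'', which is what makes the final movement computation work. When $i=n-1$ (in particular for $n=1$) all three claims are already contained in \Lemr{cell-comp-n-1}: \ref{lemgencomposable.b} is \ref{lemcomposable.b}, the unique instance $j=n$ of \ref{lemgencomposable.a} is \ref{lemcomposable.a}, and the $j=n$ instance of the extra disjointness is \ref{lemcomposable.abis}. So I would assume $i<n-1$ from now on.

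The central device is the auxiliary $(n{-}1)$-cell $W:=\partial_{n-1}^{+}X\comp_i\partial_{n-1}^{-}Y$. Its constituents $\partial_{n-1}^{+}X$ and $\partial_{n-1}^{-}Y$ are $i$-composable $(n{-}1)$-cells (since $\partial_i^{+}\partial_{n-1}^{+}X=\partial_i^{+}X=\partial_i^{-}Y=\partial_i^{-}\partial_{n-1}^{-}Y$), hence $W$ is an $(n{-}1)$-cell, by \Lemr{cell-comp-n-1} if $i=n-2$ and by the induction hypothesis \ref{lemgencomposable.b} if $i<n-2$; its top level is $W_{n-1}=X_{n-1,+}\cup Y_{n-1,-}$. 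Since $Y$ is a cell, $Y_n$ moves $Y_{n-1,-}$ to $Y_{n-1,+}$, so $Y_n^{\mp}\subseteq Y_{n-1,-}\subseteq W_{n-1}$ by \Lemr{street-2.1}, i.e.\ $Y_n$ is glueable on $W$; dually $X_n^{\pm}\subseteq X_{n-1,+}\subseteq W_{n-1}$, so $X_n$ is dually glueable on $W$. Applying \Thmr{cell-gluing}\ref{lem.a}, its dual, and \Thmr{cell-gluing}\ref{lem.c} to $W$ with $\glueset=Y_n$ and $\dualglueset=X_n$ gives $Y_n^{+}\cap W_{n-1}=\emptyset$, $X_n^{-}\cap W_{n-1}=\emptyset$ and $X_n^{-}\cap Y_n^{+}=\emptyset$; unpacking $W_{n-1}$, this yields in particular $X_n^{-}\cap Y_n^{+}=\emptyset$, $X_{n-1,+}\cap Y_n^{+}=\emptyset$ and $Y_{n-1,-}\cap X_n^{-}=\emptyset$.

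The first of these is the $j=n$ instance of \ref{lemgencomposable.a}. For $i<j<n$, I would deduce \ref{lemgencomposable.a} by applying the induction hypothesis (or \Lemr{cell-comp-n-1}\ref{lemcomposable.a} when $j=n-1=i+1$) to the $i$-composable $(n{-}1)$-cells $\partial_{n-1}^{\epsilon}X$ and $\partial_{n-1}^{\eta}Y$, using that $(\partial_{n-1}^{\epsilon}X)_{j,\delta}$ equals $X_{j,\delta}$ for $j<n-1$ and $X_{n-1,\epsilon}$ for $j=n-1$ (and likewise for $Y$). The extra disjointness is obtained in the same way for $i<j<n$, and for $j=n$ from $X_n^{+}\cap Y_n^{+}=X_n^{\pm}\cap Y_n^{+}\subseteq X_{n-1,+}\cap Y_n^{+}=\emptyset$ together with \forestaxiom{0} (which forces $X_n\cap Y_n=\emptyset$). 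Its instance $X_{n-1,+}\cap Y_{n-1,-}=\emptyset$ is the crucial input below; morally it is the substitute, in the general $i$-composable situation, for the equality $X_{n-1,+}=Y_{n-1,-}$ that is available when $i=n-1$.

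For \ref{lemgencomposable.b}, set $Z:=X\comp_iY$. From the definitions $\partial_{n-1}^{\epsilon}Z=(\partial_{n-1}^{\epsilon}X)\comp_i(\partial_{n-1}^{\epsilon}Y)$, which is an $(n{-}1)$-cell by \Lemr{cell-comp-n-1} or the induction hypothesis; so all fork-freeness and movement conditions for $Z$ strictly below the top level hold automatically, and it remains only to check that $Z_n=X_n\cup Y_n$ is fork-free and moves $Z_{n-1,-}=X_{n-1,-}\cup Y_{n-1,-}$ to $Z_{n-1,+}=X_{n-1,+}\cup Y_{n-1,+}$. Fork-freeness reduces to $X_n^{+}\cap Y_n^{+}=\emptyset$ and $X_n^{-}\cap Y_n^{-}\subseteq X_n^{-}\cap Y_n^{\mp}\subseteq X_n^{-}\cap Y_{n-1,-}=\emptyset$, both in hand. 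For the movement I would apply \Lemr{street-2.2} twice: once to obtain that $X_n$ moves $X_{n-1,-}\cup Y_{n-1,-}$ to $X_{n-1,+}\cup Y_{n-1,-}$ (its hypothesis $Y_{n-1,-}\cap(X_n^{-}\cup X_n^{+})=\emptyset$ following, via $X_n^{+}\subseteq X_{n-1,+}\cup X_n^{-}$, from $Y_{n-1,-}\cap X_n^{-}=\emptyset$ and $Y_{n-1,-}\cap X_{n-1,+}=\emptyset$), and once to obtain that $Y_n$ moves $X_{n-1,+}\cup Y_{n-1,-}$ to $X_{n-1,+}\cup Y_{n-1,+}$ (similarly, via $Y_n^{-}\subseteq Y_{n-1,-}\cup Y_n^{+}$, from $X_{n-1,+}\cap Y_n^{+}=\emptyset$ and $X_{n-1,+}\cap Y_{n-1,-}=\emptyset$); I would then splice the two movements using \Lemr{street-2.3}, whose hypothesis $X_n^{-}\cap Y_n^{+}=\emptyset$ is available. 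The hard part is really conceptual: recognising that the cell $W$ together with \Thmr{cell-gluing}\ref{lem.c} is the right replacement for $(n{-}1)$-composability; once that is in place, the intersection bookkeeping of this last step is routine.
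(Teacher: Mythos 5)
Your proof is correct and follows essentially the same route as the paper's: induction on $n-i$ with \Lemr{cell-comp-n-1} as base case, the auxiliary cell $W=\ctgt_{n-1}X\comp_i\csrc_{n-1}Y$ fed into \Thmr{cell-gluing}\ref{lem.a}, its dual and \ref{lem.c}, and the final assembly of the movement via \Lemr{street-2.2} and \Lemr{street-2.3}. The only (harmless) divergence is in the bookkeeping for the hypotheses of \Lemr{street-2.2}: you source $X_n^+\cap Y_{n-1,-}=\emptyset$ and $Y_n^-\cap X_{n-1,+}=\emptyset$ from a strengthened induction hypothesis $X_{j,\eps}\cap Y_{j,\eta}=\emptyset$, whereas the paper gets them by applying \Thmr{cell-gluing}\ref{lem.a} to the further auxiliary cells $\csrc X\comp_i\csrc Y$ and $\ctgt X\comp_i\ctgt Y$.
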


\begin{proof}
  By induction on~$n - i$. If~$n - i= 1$, the properties follow from
  \Lemr{cell-comp-n-1}. So suppose that~$n - i > 1$. For~$\eps,\eta \in
  \set{-,+}$, by induction hypothesis with~$\csrctgt\epsilon_{n-1} (X)$
  and~$\csrctgt\eta_{n-1} (Y)$, we get $X_{n-1,\epsilon}^- \cap Y_{n-1,\eta}^+ =
  \emptyset$. Therefore, $(X_{n-1,-}^- \cup X_{n-1,+}^-) \cap (Y_{n-1,-}^+ \cup
  Y_{n-1,+}^+) = \emptyset$. We moreover obtain that $(X_{j,-}^- \cup X_{j,+}^-)
  \cap (Y_{j,-}^+ \cup Y_{j,+}^+) = \emptyset$ for~$j \in \N$ with~$i < j < n -
  1$. Let~$Z = \ctgt_{n-1} (X) \comp_{i} \csrc_{n-1} (Y)$. By induction,~$Z$ is
  a $(n{-}1)$\cell and $Z_{n-1} = X_{n-1,+} \cup Y_{n-1,-}$. Using
  \Thmr{cell-gluing}\ref{thm:cell-gluing:inter}, we get
  that  $X_n^- \cap Y_n^+ = \emptyset$
  which concludes the proof of \ref{lemgencomposable.a}.

  For \ref{lemgencomposable.b}, we already know that~$\csrc_{n-1} (X) \comp_i \csrc_{n-1} (Y)$
  and~$\ctgt_{n-1} (X) \comp_i \ctgt_{n-1} (Y)$ are cells by induction. So, in order to prove
  that~$X \comp_i Y$ is a cell, we just need to show that~$X_n \cup Y_n$ is
  fork-free and moves~${X_{n-1,-} \cup Y_{n-1,-}}$ to~${X_{n-1,+} \cup Y_{n-1,+}}$.
  But
  \begin{align*}
    X_n^+ \cap Y_n^+ & = X_n^{\pm} \cap Y_n^+ & \text{(by~\ref{lemgencomposable.a})}\\
                     & \subseteq Z_{n-1} \cap Y_n^+ \\
                     & = \emptyset & \text{(by \Thmr{cell-gluing}\ref{thm:cell-gluing:act})}
  \end{align*}
  and similarly,
    $X_n^- \cap Y_n^- = \emptyset$,
  so~$X_n \cup Y_n$ is fork-free. Using the dual of
  \Thmr{cell-gluing}\ref{thm:cell-gluing:act} with~$Z$ and~$X_n$, we get
  \[
    X_n^- \cap (X_{n-1,+} \cup Y_{n-1,-}) =X_n^- \cap Y_{n-1,-}= \emptyset.
  \]
  Similarly, if~$Z' = \csrc_{n-1} (X) \comp_i \csrc_{n-1} (Y)$ then~$Z'_{n-1} =
  X_{n-1,-} \cup Y_{n-1,-}$. Using \Thmr{cell-gluing}\ref{thm:cell-gluing:act}
  with~$Z'$ and~$X_n$, we have
  \[
    X_n^+ \cap (X_{n-1,-} \cup Y_{n-1,-}) = X_n^+ \cap Y_{n-1,-} =  \emptyset.
  \]
  Since~$X_n$ moves~$X_{n-1,-}$ to~$X_{n-1,+}$, using \Lemr{street-2.2}, we
  deduce that $X_n$ moves~$X_{n-1,-} \cup Y_{n-1,-}$ to~$X_{n-1,+} \cup
  Y_{n-1,-}$. Similarly, $Y_n$ moves~$X_{n-1,+} \cup Y_{n-1,-}$ to~$X_{n-1,+}
  \cup Y_{n-1,+}$. Since~$X_n^- \cap Y_n^+ = \emptyset$, by \Lemr{street-2.3},
  we have that $X_n \cup Y_n$ moves~$X_{n-1,-} \cup Y_{n-1,-}$ to~$X_{n-1,+}
  \cup Y_{n-1,+}$. Hence,~$X \comp_i Y$ is a cell.
\end{proof}
\noindent We can finally conclude that the \ocat of cells has a structure of \ocat given
by the identity and composition operations on cells:
\begin{theo}
  \label{thm:cell-is-ocat}
  Let~$P$ be a torsion-free complex. $(\Cell(P),\csrc,\ctgt,\unit{},\comp)$ is an \ocat.
\end{theo}
\begin{proof}
  We already know that~$\Cell(P)$ is a $\omega$\globular set. By
  \Lemr{gencomposable}, the composition operation~$\comp$ is well-defined on
  composable cells. Moreover, all the axioms of \ocats (given in
  \Cref{text:strcat-firstdef}), follow readily from the definitions
  of~$\csrc,\ctgt,\unit{}$ and~$\comp$. For example, consider the exchange law
  \Axr{cat:id-xch}. Given~$i,j,n \in \N$ with~$i < j \le n$ and~$X,X',Y,Y'
  \in \Cell(P)_n$ such that~${X,Y}$ are $i$\composable,~$X,X'$ are
  $j$\composable and~$Y,Y'$ are $j$\composable, let
  \[
    Z = (X \comp_j Y) \comp_i (X' \comp_j Y') \qtand Z' = (X \comp_i X') \comp_j
    (Y \comp_i Y').
  \]
  One then easily verifies that~$Z_{k,\eps} = Z'_{k,\eps}$ for~$k \le n$
  and~$\eps \in \set{-,+}$, so~$Z = Z'$. Thus,~$\Cell(P)$ satisfies
  \Axr{cat:id-xch}, and the other axioms are shown as easily.
\end{proof}
\begin{remark}
  \label{rem:cell-is-ocat-stronger}
  For the proof of \cref{thm:cell-is-ocat}, we did not use \forestaxiom{4}, so
  that the same property holds for an \ohg which only satisfies Axioms~\forestax{0},
  \forestax{1}, \forestax{2}, \forestax{3}.
\end{remark}

\subsection{The notion of freeness}
\label{ssec:freeness-def}

Our aim is to show that the \ocat of cells on a torsion-free complex is
\eq{free} on the generators of the \ohg. We now give a precise sense to the
notion of freeness that we want to use. It is based on the structure of
polygraph~\cite{street1976limits,burroni1993higher}: the latter describes
generators of strict categories of multiple dimensions, whose sources and
targets are composite of generators of lower dimensions. We recall its
definition following~\cite{burroni1993higher}, using the intermediate notion of
cellular extension. The latter describes sets of $(n{+}1)$\generators specified
on strict $n$\categories. A polygraph is then simply a tower of cellular extensions.

\paragraph{Cellular extensions}
\parlabel{text:cell-extensions}

Given $n \in \N$, an \emph{$n$\cellular extension} is a pair~$(C,S)$ where $C$ is an
$n$\category and $S$ is a set, together with two functions
\[
  \gsrc_n,\gtgt_n\co S \to C_n
\]
such
that, when~$n > 0$, $\csrctgt\eps_{n-1} \circ \gsrc_n = \csrctgt\eps_{n-1} \circ \gtgt_n$
for~$\eps \in \set{-,+}$. The set~$S$ is to be considered as a set of
$(n{+}1)$\generators. Given two $n$\cellular extensions~$(C,S)$ and~$(C',S')$, a
morphism between~$(C,S)$ and~$(C',S')$ is a pair~$(F,f)$ where
\[
  F \co C \to C' \in \nCat n
  \qtand
  f \co S \to S' \in \Set
\]
and such that~$\gsrctgt\eps_n \circ f = F_n \circ \gsrctgt\eps_n$ for~$\eps \in
\set{-,+}$. We write~$\nCatp n$ for the category of $n$\cellular extensions.
There is a canonical functor
\[
  \algtoce_n \co \nCat{n+1} \to \nCatp n
\]
which forgets the operations on the $(n{+}1)$\cells except the globular ones, and we have that
\begin{prop}
  \label{prop:algtoce-ra}
  The functor~$\algtoce_n$ admits a left adjoint.
\end{prop}
\begin{proof}
  By the equational definitions of strict categories, the functor~$\algtoce_n$
  is a functor induced by a morphism of sketches and thus has a left
  adjoint (see for example~\cite[Theorem~3.5]{barr2000toposes}).
\end{proof}
\noindent We write
\[
  \polextp[n] - - \co \nCatp n \to \nCat {n+1}
\]
for such a left adjoint, or even~$\polextp--$ when there is no ambiguity on~$n$.
This functor maps an $n$\cellular extension~$(C,S)$ to its \emph{free
  extension}~$\polextp C S$.
In fact, the functor~$\polextp --$ can be chosen so that the canonical morphism
\[
  C \to \restrict n {\polextp C S}
\]
is the identity for every $(C,S) \in \nCatp n$. The reason is that the theory of
strict categories is \emph{truncable} in the sense
of~\cite{batanin1998computads} (see for
example~\cite[Proposition~1.3.2.10]{forest:tel-03155192}). Given~$(C,S)\in
\nCatp n$ and~$g \in S$, we often abuse notation and write~$g$ for the embedding
of~$g$ in~$\polextp C S$.

\paragraph{Polygraphs}
\parlabel{text:polygraphs}

For $n \in \N$, we define inductively on~$n$ the notion of an
\emph{$n$\polygraph~$\P$} together with a \emph{free $n$\category~$\freecat{\P}$
  on~$\P$}:
\begin{itemize}
\item a $0$-polygraph~$\P$ is a set~$\P_0$ and the free $0$-category
  on~$\freecat{\P}$ is $\P_0$ (seen as a $0$-category),
  
\item an $(n{+}1)$-polygraph~$\P$ is given by an
  $n$-polygraph~$\restrictcat{\P}{n}$ together with an $n$-cellular
  extension~$(\restrictcat{\P}{n},\P_{n+1})$ and the
  free $(n{+}1)$-category~$\freecat{\P}$ on~$\P$ is the free extension
    $\freecat{(\restrictcat{\P}{n})}[\P_{n+1}]$.
\end{itemize}
By induction on~$n$, we naturally define a notion of morphism between $n$\polygraphs:
a morphism between $0$\polygraphs is simply a function between sets, and a morphism
of $(n{+}1)$\polygraphs is the data of a morphism between the underlying $n$\polygraphs together
with a morphism between the underlying $n$\cellular extensions. Thus, for every
$n\in \N$ we obtain
a category~$\nPol n$ of $n$\polygraphs, and a functor
\[
  \freecat-\co \nPol n \to \nCat n\pbox.
\]
\vskip-\belowdisplayskip
\begin{remark}
  An $n$\polygraph~$\P$ can alternatively be described as a diagram in~$\Set$ of
  the form
  \[
    \begin{tikzcd}[column sep=10ex,labels={inner sep=0.5pt}]
      \P_0\ar[d,"\polinj0"{inner sep=2pt}]
      &\P_1
      \ar[dl,shift right,"\gsrc_0"',pos=0.3]
      \ar[dl,shift left,"\gsrc_0",pos=0.3]\ar[d,"\polinj1"{inner sep=2pt}]
      &\P_2\ar[dl,shift right,"\gsrc_1"',pos=0.3]\ar[dl,shift left,"\gsrc_1",pos=0.3]\ar[d,"\polinj2"{inner sep=2pt}]
      &\ldots
      &\P_{n-1}
      \ar[dl,shift right,"\gsrc_{n-2}"',pos=0.3]
      \ar[dl,shift
      left,"\gsrc_{n-2}",pos=0.3]\ar[d,"\polinj{n}"{inner sep=2pt}]
      &\P_{n}\ar[dl,shift right,"\gsrc_{n-1}"',pos=0.3]\ar[dl,shift left,"\gsrc_{n-1}",pos=0.3]\\
      \freecat{\P_0}
      &
      \freecat{\P_1}
      \ar[l,shift right,"{\csrc_0}"']
      \ar[l,shift
      left,"{\ctgt_0}"]
      &\ldots
      \ar[l,shift
      right,"{\csrc_1}"']\ar[l,shift
      left,"{\ctgt_1}"]
      &\freecat{\P_{n-2}}
      &\ar[l,shift
      right,"{\csrc_{n-2}}"']\ar[l,shift
      left,"{\ctgt_{n-2}}"]\freecat{\P_{n-1}}
    \end{tikzcd}
  \]
  where, for~$i \in \N_{n-1}$, $\freecat\P_i$ is the set of cells freely
  generated on the generators of dimensions~$\le i$ with associated
  embedding~$\polinj i \co \P_i \to \freecat\P_i$, and such that
  \[
    {\csrc_i}\circ\gsrc_{i+1}={\csrc_i}\circ\gtgt_{i+1}
    \qqtand
    \ctgt_i\circ\gsrc_{i+1}=\ctgt_i\circ\gtgt_{i+1}
  \]
  for~$i \in \N_{n-1}$. This description of polygraphs can already be found in
  the original paper of Burroni~\cite{burroni1993higher}.
\end{remark}
These constructions naturally extend to $\omega$: an
\emph{$\omega$-polygraph~$\P$} is a sequence~$(\P^k)_{k \ge 0}$ where $\P^k$ is
a $k$-polygraph such that $\restrict k{(\P^{k+1})} = \P^k$ and the
\emph{free $\omega$-category on~$\P$} is defined by
\[
  \freecat\P = \colim_{k \to \omega} \inc[k]\omega {(\freecat{(\P^k)})}\pbox.
\]
where, for every~$k \in \N$,
\[
  \inc[k]\omega{(-)} \co \nCat k \to \nCat\omega
\]
is the left adjoint to the truncation functor $\restrict k - \co \nCat\omega \to
\nCat k$. The notion of morphism of \opol is defined as expected and we obtain a
category~$\oPol$ of \opols together with a functor~$\freecat-\co \oPol \to
\oCat$.

\subsection[Freeness of the \texorpdfstring{$\omega$}{omega}-category of
cells]{Freeness of the $\omega$-category of cells}
\label{ssec:freeness}

Here, we prove that the \ocat of cells on a torsion-free complex is free in the
sense introduced previously, \ie that it is isomorphic to the free \ocat on a
certain polygraph. For this purpose, we introduce the canonical cellular
extensions from which this polygraph is built from, and show inductively that the
adequate restrictions of the \ocat of cells are isomorphic to the free
extensions on these cellular extensions.

\paragraph{The canonical cellular extension}

Let~$P$ be a torsion-free complex~$P$. Given~$n \in \N$, there is an
$n$\cellular extension
\[
  \begin{tikzcd}[cramped,column sep=3.5em]
    \restrictcat{\Cell(P)}{n}
    &
    P_{n+1}
    \ar[l,shift right,"\csrc_n \circ \gen{-}"']
    \ar[l,shift left,"\ctgt_n \circ \gen{-}"]
  \end{tikzcd}
\]
where, for~$x \in P_{n+1}$ and~$\eps \in \set{-,+}$,~$\csrctgt\eps_n \circ
\gen{-} (x) = \csrctgt\eps_n (\gen{x})$, which is an $n$\cell
by~\forestaxiom{2}. We \glossary(CellPn+){$\defcellext$}{the strict
  $(n{+}1)$\category~$\polextp{\restrict
    n{\Cell(P)}}{P_{n+1}}$}write~$\defcellext$ for the $(n{+}1)$\category
\[
  \defcellext = \polextp{\restrict n{\Cell(P)}}{P_{n+1}}
\]
\ie the image of~$(\restrict n {\Cell(P)},P_{n+1}) \in \nCatp n$ by the
functor~$\polextp[n] -- \co \nCatp n \to \nCat{n+1}$. There is a morphism of
$n$\cellular extension
\[ 
  (\restrict n{\Cell(P)},P_{n+1}) \xto{(\unit {\restrict n {\Cell(P)}},\gen -)} (\restrict n {\Cell(P)},\Cell(P)_{n+1})
\]
which maps~$x \in P_{n+1}$ to~$\gen{-}(x) = \gen{x}$. 
\noindent By the universal property of~$\defcellext$ as a free extension,
it induces a unique $(n{+}1)$\functor
\[
  \ev^{n} \colon \defcellext \to \restrictcat{\Cell(P)}{n+1}
\]
often \glossary(eval){$\ev^{n}$, $\ev$}{the evaluation $(n{+}1)$\functor from
  $\defcellext$ to~$\restrictcat{\Cell(P)}{n+1}$}written~$\ev$ for conciseness,
such that
  $\restrict n {\ev^n} = \unit{\restrict n {\Cell(P)}}$
  and
  $\ev(g) = g$
for all~$g \in P_{n+1}$.

\paragraph[Freeness of \texorpdfstring{$\Cell(P)$}{Cell(P)}]{Freeness of $\bm{{\Cell(P)}}$}
We can now assert the freeness of the \ocat~$\Cell(P)$. First, we show that it
is inductively built from the canonical free extensions:
\begin{restatable}{theo}{theoremextfreeness}
  \label{thm:ext-freeness}
  Given a torsion-free complex~$P$, for~$n \in \N$, the
  $(n{+}1)$\functor~$\ev^n$ is an isomorphism between $\defcellext$ and
  $\restrict {n+1}{\Cell(P)}$.
\end{restatable}
\begin{proof}
  See the proof in \Cref{ssec:freeness-proof}.
\end{proof}
\noindent By an inductive argument, we conclude that the \ocat of cells is
freely generated on the \opol made from the atoms:
\begin{coro}
  \label{coro:cells-are-freely-gen}
  Given a torsion-free complex~$P$, there are unique polygraph~$\Q \in \nPol\omega$ and $\omega$\functor
  \[
    F \co
    \freecat\Q \to \Cell(P) \in \nCat\omega
  \]
  such that~$\Q_n = P_n$ for~$n \in \N$
  and~$F(g) = \gen g$ for~$g \in P$. Moreover,~$F$ is an isomorphism.
\end{coro}
\begin{proof}
  We show by induction on~$n \in \N$ that there are unique $n$\polygraph~$\Q^n$
  and morphism
  \[
    F^n \co \freecat{(\Q^n)} \to \restrict
    n {\Cell(P)}
  \]
  such that~$\Q^n_k = P_k$ for~$k \in \N$ and~$F^n(g) = \gen g$ for~$g \in
  \Q^n$, and that~$F^n$ is moreover an isomorphism. This is clear for~$n = 0$.
  So suppose that~$n > 0$. If~$\Q^n$ and~$F^n$ as above exist, then, by the
  unicity property of the induction hypothesis, we have~$\restrict {n-1} {\Q^n}
  = \Q^{n-1}$ and~$\restrict {n-1} {F^n} = F^{n-1}$. The $n$\functor~$F^n$ is
  then uniquely defined by the universal property of~$\freecat{(\Q^n)} =
  \polextp{\freecat{(\Q^{n-1})}}{\Q^n}$ given by \Cref{prop:algtoce-ra} knowing
  that~$F^n(g) = \gen g$ for~$g \in \Q^n_n$. Moreover, the $n$\polygraph
  structure on~$\Q^n$ is unique since
  \begin{equation}
    \label{coro:cells-are-freely-gen:gsrctgt}
    \gsrctgt\eps_{n-1}(g) = \finv {(F^{n-1})} \circ \csrctgt\eps_{n-1}(\gen g)
  \end{equation}
  for~$g \in \Q^n_n$ and~$\eps \in \set{-,+}$. Finally,~$F^n$ is an isomorphism
  since, by \Cref{thm:ext-freeness}, the functor~$\finv {(\ev^{n-1})} \circ F^n$ is the image
  by~$\polextp[n-1]--$ of the isomorphism
  \[
    (F^{n-1},\id {P_n}) \co (\freecat{(\Q^{n-1})},\Q^n_n) \to (\restrict {n-1}
    {\Cell(P)},P_{n}) \in \nCatp {n-1}
  \]
  so that the unicity of~$\Q^n$ and~$F^n$, and the fact that~$F^n$ is an
  isomorphism are proved. For existence, one defines the $n$\polygraph structure
  on~$\Q^n$ from the one on~$\Q^{n-1}$ and
  with~\eqref{coro:cells-are-freely-gen:gsrctgt}, and the $n$\functor~$F^n$ is
  then defined by extending~$F^{n-1}$, using the universal property
  of~$\freecat{(\Q^n)}$.
  By the definition of~$\nPol\omega$, we obtain unique
  $\omega$\polygraph~$\Q$ together with a unique $\omega$\functor $F\co
  \freecat\Q \to \Cell(P)$ as wanted.
\end{proof}




\section{Relating formalisms}
\label{sec:alt-rep}

In this section, we relate all the introduced formalisms together. In
particular, we show that the formalism of torsion-free complexes is a Rosetta
stone which can express the other ones (after correcting the defect of parity
complexes and pasting schemes). Embedding parity complexes into torsion-free
complexes is almost direct, since they share the same definition of cells and
several axioms. However, additional developments are needed for translating
pasting schemes and augmented directed complexes into torsion-free complexes.
Indeed, in the first case, one needs to show that a definition of cells
analogous to the ones of pasting schemes can be used for torsion-free complexes
before being able to relate the axioms of the two formalisms. In the second
case, one needs to link the abelian group setting of augmented directed
complexes to the set setting of torsion-free complexes.

We first introduce two other set-based definitions of cells for torsion-free
complexes: \emph{\clwf fgs's} and \emph{\mwf fgs's} (\Cref{text:cl-max-cells}).
The former is similar to the well-formed fgs of pasting schemes, while the
latter is a convenient intermediate between the cells of torsion-free complexes
and \clwf fgs's. The \ocats of cells induced by these two other definitions is
then isomorphic to the one obtained with the initial definition
(\Cref{thm:ctoprinc-m-iso,thm:ctocl-m-iso}). Using the more natural definition
of cells as \emph{\clwf fgs's}, we give a characterization of polygraphs that
can be represented by torsion-free complex
(\Cref{prop:criterion-for-tfc-representability}). Next, we show the embeddings
of parity complexes (\Cref{ssec:enc-street}) and pasting schemes
(\Cref{ssec:enc-johnson}) into torsion-free complexes. Then, we develop the
relation between the set-based and group-based definitions of cells before
showing the embedding augmented directed complexes into torsion-free complexes
(\Cref{ssec:enc-steiner}). Finally, we illustrate that those are the only
embeddings between the formalisms by providing counter-examples to the other
ones (\Cref{ssec:incl-ce}).

\subsection{Closed and maximal cells}
\label{text:cl-max-cells}

In this section, we introduce two other set-based definitions of cells for
torsion-free complexes, namely \clwf fgs's and \mwf fgs's, together with identity
and compositions operations for them. We moreover provide translation functions
between the different definitions of cells, and show that the \ocats of cells
with the new definitions are isomorphic to the one with the original definition
of cells (\Cref{thm:ctoprinc-m-iso,thm:ctocl-m-iso}). Finally, using this different
representation, we characterize the polygraphs that can be represented by
torsion-free complexes (\Cref{prop:criterion-for-tfc-representability}).

\paragraph{Definitions}
\parlabel{text:cl-cells-max-cells}

Let~$P$ be an \ohg. Recall the definitions of fgs and closed fgs from
\Cref{ssec:johnson}. We \glossary(ClosedP){$\Closed(P)$}{the
  graded set of closed fgs's of~$P$}write
\[
  \Closed(P)
\]
for the graded set of closed fgs's of~$P$. Given an $n$-fgs $X$ of~$P$, $x \in
X$ is said to be \index{maximal!generator}\emph{maximal in $X$} when for all $y
\in P$ such that $x \clrel y$ and $x \neq y$, it holds that $y \not \in X$. We
write $\max(X)$ for the $n$-fgs of~$P$ made of the maximal elements of~$X$. The
$n$-fgs $X$ is then said to be \index{maximal!fgs}\emph{maximal} when $\max(X) =
X$. We \glossary(MaxP){$\Princ(P)$}{the graded set of maximal fgs's of the
  \ohg~$P$}write
\[
  \Princ(P)
\]
for the graded set of
maximal fgs. Given $n \in \N$ and $X$ an $n$-pre-cell of~$P$, we write $\cup X$
for the $n$-fgs of~$P$ given by
\[
  \cup X = \bigcup_{i \in \N_n} (X_{i,-} \cup X_{i,+})\zbox.
\]

\paragraph{Maximality lemma}

Let~$P$ be an \ohg. In order to relate the cells of~$\Cell(P)$ with the fgs's
of~$\Princ(P)$, we give here a simple criterion to characterize the maximal
elements in a cell of~$\Cell(P)$:
\begin{lem}[Maximality lemma]
  \label{lem:maximality}
  Suppose that~$P$ satisfies Axioms~\forestax{0}, \forestax{1},
  \forestax{2} and \forestax{3}. Let~${k,n \in \N}$ with~$k<n$ and $X \in
  \Cell(P)_n$. For~$x \in X_{k,-}$ (\resp~${x \in X_{k,+}}$) with~$x$ not
  maximal in~$\cup X$, we have~$x \in X_{k+1,-}^{\mp}$ (\resp~$x \in
  X_{k+1,+}^{\pm}$).
\end{lem}
\begin{proof}
  We prove this property by induction on~$l = n - k$. By symmetry, we only
  prove the case where $x \in X_{k,-}$. Since $x$ is not maximal, by definition
  of~$\clset$, there exist
  \[
    p \in \N^*,
    \quad
    \eta \in \set{-,+},
    \quad
    x_0,x_1,\ldots,x_p \in P
    \qtand
    \eps_1,\ldots,\eps_p \in \set{-,+}
  \]
  such that
  \hrule height0pt depth0pt\relax
  \[
    x_0 = x,
    \quad
    x_p \in X_{k+p,\eta}
    \qtand
    x_i \in x_{i+1}^{\eps_{i+1}}
    \quad
    \text{for $i \in \N_{p-1}$.}
  \]
  Suppose that $p = 1$. By \Lemr{street-2.1}, we have $X_{k,-} \cap
  X_{k+1,\eta}^+ = \emptyset$. Since $x \in x_1^{\eps_1}$ and $x_1 \in
  X_{k+1,\eta}$, we have $\eps_1 = -$ and $x \in X_{k+1,\eta}^{\mp}$. Hence,
  by \Lemr{move-eq}, $x \in X_{k+1,-}^{\mp}$.

  \smallpar Otherwise, suppose that $p > 1$. Let $y \in X_{k+p,\eta}$ be the
  smallest of~$X_{k+p,\eta}$ for~$\tl_{X_{k+p,\eta}}$ such that~${y \clrel
    x_{p-1}}$. If $x_{p-1} \in y^-$, then, by minimality of~$y$, there is no
  $\bar y \in X_{k+p,\eta}$ such that~${x_{p-1} \in \bar y^+}$. Therefore,
    $x_{p-1} \in
    X_{k+p,\eta}^\mp \subseteq X_{k+p-1,-}$.
  Hence, $x$ is not minimal in $\csrc_{k+p-1} (X)$ and we conclude by induction.
  We now consider the case~${x_{p-1} \in y^+}$. Let
  \[
    G = \set{z \in X_{k+p,\eta} \mid z \tl_{X_{k+p,\eta}} y} \cup
    \set{y} \qtand Y = \cellactivation(\csrc_{k+p-1} (X), G).
  \]
  We have $x \in Y_{k,-}$ and $x_{p-1} \in Y_{k+p-1}$. Moreover, by
  \Thmr{cell-gluing}, $Y$ is a cell. By induction hypothesis, we have~${x \in
    Y_{k+1,-}^{\mp}}$. Since $X_{k+1,-}$ and $Y_{k+1,-}$ both move $X_{k,-}$ to
  $X_{k,+}$, by \Lemr{move-eq}, we have~${x \in X_{k+1,-}^\mp}$ which concludes
  the proof.
\end{proof}
\noindent We then have a simple description of the set of maximal
elements of a cell of~$\Cell(P)$:
\begin{lem}
  \label{lem:cell-max-elts}
  Suppose that~$P$ satisfies Axioms~\forestax{0}, \forestax{1},
  \forestax{2} and \forestax{3}. Let $k,n \in \N$ with $k < n$, an
  $n$\cell~$X \in \Cell(P)_n$ and~$\eps \in
  \set{-,+}$. Then,
    $\max(\cup X) \cap P_k = X_{k,-} \cap X_{k,+}$.
\end{lem}
\begin{proof}
  By \Cref{lem:maximality,lem:unmoved-eq}, 
  \[
    \max(\cup X) \cap P_k = (X_{k,-} \setminus X_{k+1,-}^{\mp}) \cup (X_{k,+}
    \setminus X_{k+1,+}^{\pm}) = X_{k,-} \cap X_{k,+}. \qedhere.
  \]
\end{proof}

\paragraph{The translation functions}
\parlabel{par:rel-funs} We now provide \index{translation
  function}\emph{translation functions} between the three graded
sets~$\Cell(P)$,~$\Princ(P)$ and~$\Closed(P)$ and introduce several properties
on them. The functions we introduce are the ones represented on the diagram
\[
  \begin{tikzcd}[row sep=huge]
    & \Princ(P) \ar[dl,shift left,"\princtoc"] \ar[dr,shift left,"\princtocl"]& \\
    \PCell(P) \ar[rr,shift left,"\ctocl"] \ar[ur,shift left,"\ctoprinc"] & &
    \Closed(P) \ar[ul,shift left,"\cltoprinc"] \ar[ll,shift left,"\cltoc"]
  \end{tikzcd}
\]
and are defined as
\glossary(T){$\ctoprinc,\princtoc,\princtocl,\cltoprinc,\ctocl,\cltoc$}{translation
  functions between~$\PCell(P)$,~$\Princ(P)$ and~$\Princ(P)$}follows:
\begin{itemize}
\item $\ctoprinc \colon \PCell(P) \to \Princ(P)$ is defined by 
    $\ctoprinc(X) = \max(\cup X)$ for $X\in \PCell(P)$,
\item $\princtoc \colon \Princ(P) \to \PCell(P)$ is such that, for $n \in \N$
  and~$X \in \Princ(P)$, $\princtoc(X)$ is the $n$\nbd-pre-cell $Y$ of~$P$
  defined by $Y_n = X_n$, and, for~$i \in \N_{n-1}$,
  \[
    Y_{i,-} = X_i \cup Y_{i+1,-}^{\mp} \qquad\qquad Y_{i,+} = X_i \cup Y_{i+1,+}^{\pm}\zbox,
  \]
  
\item $\princtocl \colon \Princ(P) \to \Closed(P)$ is defined by
    $\princtocl(X) = \clset(X)$  for $X\in \Princ(P)$,
 
\item $\cltoprinc \colon \Closed(P) \to \Princ(P)$ is defined by
  $
    \cltoprinc(X) = \max(X)
    $
  for $X \in \Closed(P)$,
\item $\ctocl \colon \PCell(P) \to \Closed(P)$ is defined by
    $\ctocl(X) = \clset(\cup X)$  for $X \in \PCell(P)$,
  
\item $\cltoc \colon \Closed(P) \to \PCell(P)$ is defined by
    $\cltoc = \princtoc \circ \cltoprinc$.
\end{itemize}

\noindent These operations can be related to each other, as state the following lemmas.

\begin{prop}
  \label{prop:princ-cl-bij}
  We have $\princtocl \circ \cltoprinc = \id{\Closed(P)}$ and $\cltoprinc \circ
  \princtocl = \id{\Princ(P)}$.
\end{prop}
\begin{proof}
  Let $X \in \Closed(P)$. By the definitions, we have $\princtocl \circ
  \cltoprinc(X) \subseteq X$. Moreover, given $x \in X$, since $X$ is finite,
  there exists $y \in \max(X)$ with $y \clrel x$. It implies that $y \in
  \cltoprinc(X)$ and $x \in \princtocl \circ \cltoprinc(X)$. Therefore,
  $X \subseteq \princtocl \circ \cltoprinc (X)$.
  
  For the other equality, note that, for all $n$-fgs $X$ of~$P$, $\clset(X)$ has
  the same maximal elements as $X$. Thus, $\cltoprinc \circ \princtocl = \id{\Princ(P)}$.
\end{proof}
\begin{lem}
  \label{lem:ctoprinc-describe}
  Suppose that $P$ satisfies Axioms~\forestax{0}, \forestax{1},
  \forestax{2} and~\forestax{3}.
  Let $n \in \N$, $X \in \Cell(P)_n$ and $Y = \ctoprinc(X)$. Then,
    $Y_n = X_n$ and $Y_i = X_{i,-} \cap X_{i,+}$  for $i \in \N_{n-1}$.
\end{lem}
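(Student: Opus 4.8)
The plan is to unfold the definition of $\ctoprinc(X) = \max(\cup X)$ and compute it dimension by dimension. The statement splits into two parts: first that $Y_n = X_n$, and second that $Y_i = X_{i,-} \cap X_{i,+}$ for $i < n$.

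First I would handle the top dimension. Since $X$ is an $n$-cell, the graded set $\cup X = \bigcup_{0 \le i \le n}(X_{i,-} \cup X_{i,+})$ has $(\cup X)_n = X_n$. No element of $P_n$ can be in the $\clset$-closure of another element of $\cup X$, since $\clset$ only relates elements to elements of lower (or equal) dimension via $\jB,\jE$, i.e. via $x^-, x^+$; an element of $P_n$ is maximal in any fgs whose top dimension is $n$. More precisely, if $x \in X_n$ and $x \clrel y$ with $x \ne y$, then $y$ has dimension strictly less than $n$ (by induction on the definition of $\clrel$, since $\jB(z), \jE(z)$ for $z \in P_{k+1}$ live in $P_k$), so $y$ being in $X_n$ is impossible. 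Hence every element of $X_n$ is maximal in $\cup X$, giving $X_n \subset Y_n$; and $Y_n \subset (\cup X)_n = X_n$ trivially. So $Y_n = X_n$.

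For the lower dimensions $i < n$, this is essentially a direct application of Lemma~\ref{lem:cell-max-elts} (the ``$\max(\cup X) \cap P_m = X_{m,-} \cap X_{m,+}$'' criterion), which already assumes exactly the axioms \forestaxiom{0}--\forestaxiom{3} that are in force here. Indeed, $Y_i = \ctoprinc(X)_i = \max(\cup X) \cap P_i = X_{i,-} \cap X_{i,+}$. So the work reduces to citing that lemma and checking that its hypotheses ($m < n$, $X$ an $n$-cell) match.

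The main obstacle, such as it is, is simply making sure the bookkeeping around $\cup X$ and the graded structure is clean — in particular that $\max$ is taken inside the fgs $\cup X$ and that Lemma~\ref{lem:cell-max-elts} applies to all $i < n$ including $i = 0$. Since Lemma~\ref{lem:cell-max-elts} does the real combinatorial lifting (via the Maximality Lemma~\ref{lem:maximality}), the proof here is short. Here is the write-up.

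\begin{proof}
  Recall that $\cup X = \bigcup_{0 \le i \le n}(X_{i,-} \cup X_{i,+})$, so that $(\cup X)_n = X_n$ and, by the definition of $\ctoprinc$, $Y = \max(\cup X)$.

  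We first show $Y_n = X_n$. Clearly $Y_n \subset (\cup X)_n = X_n$. Conversely, let $x \in X_n$ and suppose $x \clrel y$ for some $y \in \cup X$ with $x \neq y$. By the definition of $\clrel$ as the smallest reflexive transitive relation on $P$ satisfying, for $z \in P_{k+1}$, $\jB(z), \jE(z) \subset \clset(z)$, a straightforward induction shows that $x \clrel y$ with $x \in P_n$ implies $y \in \restrictset{P}{n}$, and $y \neq x$ forces $y \in \restrictset{P}{n-1}$. In particular $y \notin P_n$, so $y \notin X_n \supset (\cup X)_n$, which is absurd. Hence $x$ is maximal in $\cup X$, so $x \in Y_n$, and thus $Y_n = X_n$.

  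Now let $i < n$. By \Lemr{cell-max-elts} (applicable since $P$ satisfies \forestaxiom{0}, \forestaxiom{1}, \forestaxiom{2} and \forestaxiom{3}, and $i < n$ with $X$ an $n$-cell), we have
  \[
    Y_i = \max(\cup X) \cap P_i = X_{i,-} \cap X_{i,+},
  \]
  which is the claimed equality.
\end{proof}
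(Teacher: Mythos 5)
Your handling of the dimensions $i<n$ coincides with the paper's entire proof, which consists of the single line ``this is a direct consequence of \Lemr{cell-max-elts}''; the paper leaves the top dimension implicit, and that is precisely where your added argument goes wrong. Having assumed $x\clrel y$ with $y\in\cup X$ and $y\neq x$, you correctly deduce that $y$ has dimension strictly below $n$ and hence $y\notin(\cup X)_n$ --- but you then declare this ``absurd'', and it is not: your hypothesis was only $y\in\cup X$, and $\cup X$ contains elements in every dimension $\le n$. Proper $\clrel$-descendants of $x\in X_n$ really do lie in $\cup X$ in general; for instance if $X_n=\set{x}$ with $n\ge 1$, then $x^-=X_n^-\setminus X_n^+\subset X_{n-1,-}\subset\cup X$ and $x^-\neq\emptyset$ by \forestaxiom{0}. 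So no contradiction is reachable from this direction, and the inclusion $X_n\subset Y_n$ is not established.

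The repair is to run the maximality check in the other direction, which is how it is used everywhere else in \Secr{alt-rep} (compare the proofs of \Lemr{princ-cl-bij} and \Lemr{maximality}): $x$ fails to be maximal in $\cup X$ exactly when some $y\in\cup X$ with $y\neq x$ satisfies $y\clrel x$, and any such $y$ necessarily has dimension strictly greater than that of $x$, since every generating step of $\clrel$ lowers dimension by one. For $x\in X_n$ no such $y$ can exist because $\cup X\subset\restrictset{P}{n}$, which gives $X_n\subset Y_n$ in one line. (You appear to have followed the letter of the paper's displayed definition of ``maximal'', which quantifies over $y$ with $x\clrel y$ rather than $y\clrel x$; taken literally that definition would make the lemma false, as the example above shows, so it has to be read with the roles of $x$ and $y$ exchanged.)
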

\begin{proof}
  This is a direct consequence of \Lemr{cell-max-elts}.
\end{proof}
\begin{prop}
  \label{prop:ctoprinc-retract}
  Suppose that $P$ satisfies Axioms~\forestax{0}, \forestax{1}, \forestax{2} and
  \forestax{3}. Then, given a cell~$X \in \Cell(P)$, we have ${\princtoc \circ
    \ctoprinc(X) = X}$.
\end{prop}

\begin{proof}
  Let $n \in \N$, $X \in \Cell(P)_n$, $Y = \ctoprinc(X)$ and $Z = \princtoc(Y)$.
  For $i \in \N_n$ and $\eps \in \set{-,+}$, we show that $X_{i,\eps} =
  Z_{i,\eps}$ by a decreasing induction on $i$. By \Lemr{ctoprinc-describe}, we
  have
    $Z_n = Y_n = X_n$
  and, for $i \in \N_{n-1}$, by \Lemr{move-canform}, we have
  \[
    Z_{i,-}= Y_i \cup Z_{i+1,-}^{\mp}
            = (X_{i,-} \cap X_{i,+}) \cup X_{i+1,-}^{\mp}
            = X_{i,-} \zbox.
          \]
  Similarly,
  $ Z_{i,+} = X_{i,+} $,
  so $X = Z$. Hence, $\princtoc \circ \ctoprinc(X) = X$.
\end{proof}

\begin{prop}
  \label{prop:ctocl-decomp}
  We have $\princtocl \circ \ctoprinc = \ctocl$.
\end{prop}
\begin{proof}
  It readily follows from the definitions.
\end{proof}

\paragraph{Sources and targets}
\parlabel{text:cl-pr-src-tgt}

Let $P$ be an \ohg. We now define source and target operations
for~$\Closed(P)$ and~$\Princ(P)$. Given $n \in \N^*$ and~$X \in \Closed(P)$, we define the
\glossary(droundb){$\clsrc_i,\cltgt_i$}{the source and target operations on
  closed fgs's}\index{source!of a
  closed fgs}\emph{source $\clsrc_{n-1}
  (X)$} (\resp \index{target!of a closed fgs}\emph{target $\cltgt_{n-1}(X)$}) of~$X$ as the closed $(n{-}1)$-fgs~$Y$
defined by
\[
  Y = \clset(X \setminus (X_n \cup \clset(X_n^+))) \quad \text{(\resp $\clset(X \setminus (X_n
    \cup \clset(X_n^-)))$).}
\]
Respectively, given $n \in \N^*$ and a maximal $n$-fgs~$X$, we define the
\glossary(droundm){$\prsrc_i,\prtgt_i$}{the source and target operations on
  maximal fgs's}\index{source!of a maximal fgs}\emph{source $\prsrc_{n-1} (X)$} (\resp
\index{target!of a maximal fgs}\emph{target~${\prtgt_{n-1} (X)}$})
of $X$ as the maximal $(n{-}1)$-fgs $Y$ such that
\[
  Y_{n-1} = X_{n-1} \cup X_{n}^{\mp}
  \text{ (\resp $\displaystyle Y_{n-1} = X_{n-1} \cup X_n^{\pm}$)}
  \qtand
  Y_{i} = X_i
  \quad
  \text{for~$i \in \N_{n-2}$.}
\]
\noindent We have the following compatibility results between the source and target operations
and the translation functions:
\begin{prop}
  \label{prop:ctoprinc-compat-srctgt}
  If $P$ satisfies Axioms~\forestax{0}, \forestax{1}, \forestax{2} and
  \forestax{3}, then, for~$n \in \N^*$, $\eps \in \set{-,+}$ and~${X \in
    \Cell(P)_n}$, we have
  $
    \ctoprinc(\csrctgt\eps_{n-1} (X)) = \prsrctgt\eps_{n-1} (\ctoprinc(X))
    $.
\end{prop}
\begin{proof}
  Let $Y = \ctoprinc ( \csrctgt\eps_{n-1}(X))$, $X' = \ctoprinc(X)$ and $Z =
  \prsrctgt\eps_{n-1}(X')$. By \Lemr{ctoprinc-describe}, we have
    $Y_{n-1} = X_{n-1, \eps}$
    and
    $Y_i = X_{i,-} \cap X_{i,+}$ for $i \in \N_{n-1}$.
  Moreover,
    $X'_{n} = X_n$
    and
    $X'_{i} = X_{i,-} \cap X_{i,+}$
    for $i \in \N_{n-1}$.
  If $\eps = -$, then, by \Lemr{move-canform},
  \begin{align*}
    Z_{n-1} = (X_{n-1,-} \cap X_{n-1,+}) \cup X_{n}^{\mp}
            = X_{n-1,-} 
  \end{align*}
  and $Z_i = X'_i = X_{i,-} \cap X_{i,+}$ for $i \in \N_{n-1}$, so $Y = Z$.
  Similarly, if $\eps = +$, we have $Y = Z$.
\end{proof}

\begin{prop}
  \label{prop:princtocl-compat-srctgt}
  For $n \in \N^*$, $\eps \in \set{-,+}$ and $X \in \Princ(P)_n$, we have
  \[
    \princtocl(\prsrctgt\eps_{n-1} (X)) = \clsrctgt\eps_{n-1} (\princtocl(X)).
  \]
\end{prop}
\begin{proof}
  By symmetry, it is sufficient to handle the case~$\eps = -$. Let
    $Y = \princtocl(\prsrc_{n-1} (X))$
    and
    $Z = \clsrc_{n-1} (\princtocl(X))$.
  By unfolding the definitions, we have
  \[
    \makebox[0cm][r]{$Y = \clset( (X \setminus X_n) \cup X_n^{\mp})$}
    \qqtand
    \makebox[0cm][l]{$Z = \clset( \clset(X) \setminus (X_n \cup
      \clset(X_n^+)))$.}
  \]
  In order to show that $Y \subseteq Z$, we only need to prove that $Y' \subseteq Z$
  where 
    $Y' = (X \setminus X_n) \cup X_n^{\mp}$. 
  First, we have that $Y' \subseteq \clset(X)$. Moreover,
  \begin{align*}
     Y' \cap (X_n \cup \clset(X_n^+)) &= ((X \setminus X_n) \cup X_n^{\mp}) \cap (X_n
       \cup \clset(X_n^+)) \\
    &= ((X \setminus X_n) \cup X_n^{\mp}) \cap
       \clset(X_n^+) 
    \\
    &=  (X \setminus X_n) \cap \clset(X_n^+) 
\\
    &= X \cap \clset(X_n^+) = \emptyset && \text{(since $X$ is maximal).}
  \end{align*}
  So $Y' \subseteq Z$, which implies that $Y \subseteq Z$. Similarly, in order
  to show that $Z \subseteq Y$, we only need to prove that $Z' \subseteq Y$
  where
  $Z' = \clset(X) \setminus (X_n \cup \clset(X_n^+))$.
But
\begin{gather*}
  Z' \subseteq Y \Leftrightarrow \clset(X) \subseteq Y \cup X_n \cup \clset(X_n^+)
  \shortintertext{and}
  \begin{aligned}
    Y \cup X_n \cup \clset(X_n^+) &= \clset( (X \setminus X_n) \cup X_n^{\mp}) \cup X_n \cup
    \clset(X_n^+) \\
    &= \clset( (X \setminus X_n) \cup X_n^{\mp} \cup X_n^+)
    \cup X_n \\
    &= \clset( (X \setminus X_n) \cup X_n^- \cup X_n^+) \cup
    X_n \\
    &= \clset((X \setminus X_n) \cup X_n^- \cup X_n^+ \cup X_n) = \clset(X).
  \end{aligned}
\end{gather*}
So $Z' \subseteq Y$, which implies that $Z \subseteq Y$. Hence, $Y = Z$, which
concludes the proof.
\end{proof}
\begin{prop}
  \label{prop:ctocl-compat-srctgt}
  \ifzerotothreehyp then, for~$n \in \N^*$, $\eps \in \set{-,+}$ and~${X \in
    \Cell(P)_n}$,
    $\ctocl(\csrctgt\eps_{n-1}(X)) = \clsrctgt\eps_{n-1} (\ctocl(X))$.
\end{prop}
\begin{proof}
  We compute that
  \begin{align*}
    \ctocl(\csrctgt\eps_{n-1}(X))
    &= \princtocl \circ \ctoprinc (\csrctgt\eps_{n-1}(X)) &&
                                                               \text{(by
                                                               \Cref{prop:ctocl-decomp})}
    \\
    &= \princtocl( \prsrctgt\eps_{n-1}(\ctoprinc (X))) && \text{(by
                                                     \Cref{prop:ctoprinc-compat-srctgt})}
    \\
    &= \clsrctgt\eps_{n-1}( \princtocl \circ \ctoprinc (X)) && \text{(by
                                                          \Cref{prop:princtocl-compat-srctgt})}
    \\
    &= \clsrctgt\eps_{n-1} (\ctocl (X))\zbox.&&\qedhere
  \end{align*}
\end{proof}

\paragraph{Identities and compositions}

Let $P$ be an \ohg. Here, we define identity and composition operations for the graded
sets~$\Princ(P)$ and~$\Closed(P)$, and prove some compatibility results with the
translations functions.

\medskip\noindent Given $n \in \N$ and a closed (\resp maximal) $n$-fgs~$X$, we
define the \index{identity operation!for closed or maximal fgs's}\emph{identity
  of~$X$} as the closed (\resp maximal) $(n{+}1)$-fgs~$\unitp{n+1}{}(X)$ defined
by
\[
  \unitp{n+1}{}(X)
  =
  (X_0,\ldots,X_n,\emptyset).
\]
Given $i,n \in \N$ with $i < n$ and two maximal $n$-fgs~$X,Y$, we define the
\glossary(.racomppr){$\comppr_i$}{the composition operation on maximal
  fgs's}\index{composition operation!for maximal fgs's}\emph{maximal
  $i$-composition of $X$ and $Y$} as the maximal $n$-fgs~$X \comppr_i Y$ defined
by
\[
  X \comppr_i Y = \max( \clset(X) \cup \clset(Y) ).
\]
Respectively, given $i,n \in \N$ with $i < n$ and two closed $n$-fgs~$X,Y$, we
define the \glossary(.rbcompcl){$\compcl_i$}{the composition operation on closed
  fgs's}\index{composition operation!for closed fgs's}\emph{closed
  $i$-composition of $X$ and $Y$} as the closed $n$-fgs~$X \compcl_i Y$ defined
by
\[
  X \compcl_i Y = X \cup Y.
\]
For simplicity, we sometimes write $\compcl$ (\resp $\comppr$) for $\compcl_i$
(\resp $\comppr_i$).
\noindent We now prove several compatibility results of the identity and
composition operations with the translation functions.
\begin{prop}
  \label{prop:ctocl-compat-id}
  For $n \in \N$ and an $n$-cell $X \in \Cell(P)$,
    $\ctocl(\unitp{n+1}{}(X)) = \unitp{n+1}{}(\ctocl(X))$.
\end{prop}
\begin{proof}
  It readily follows from the definitions.
\end{proof}
\begin{prop}
  \label{prop:ctoprinc-compat-id}
  For $n \in \N$ and an $n$-cell $X \in \Cell(P)$,
    $\ctoprinc(\unitp{n+1}{}(X)) = \unitp{n+1}{}(\ctoprinc(X))$.
\end{prop}
\begin{proof}
  It readily follows from the definitions.
\end{proof}
\begin{prop}
  \label{prop:ctocl-compat-comp}
  For $i,n \in \N$ with $i < n$, and $i$-composable $n$-cells $X$ and $Y$ in
  $\Cell(P)$,
  \[
    \ctocl(X \comp_i Y) = \ctocl(X) \compcl_i \ctocl(Y).
  \]
\end{prop}
\begin{proof}
  Let $Z = X \comp_i Y$. We have
    $\ctocl (X \comp_i Y) = \clset( \cup Z)$
  and
  \[
    \ctocl(X) \compcl_i \ctocl(Y) = \clset(\cup X) \cup \clset(\cup Y) = \clset(
    (\cup X) \cup (\cup Y)).
  \]
  By definition of composition, $\cup Z \subseteq (\cup X) \cup (\cup
  Y)$, so
    $\ctocl (X \comp_i Y) \subseteq \ctocl(X) \compcl_i \ctocl(Y)$.
  For the other inclusion, note that $X_{j,\eps} \subseteq Z_{j,\eps}$ for $j
  \in \N_n$ and $\eps\in\set{-,+}$ with $(j,\eps) \neq (i,+)$, and
  \[
    X_{i,+} = (X_{i,-} \cup X_{i+1,-}^+) \setminus
              X_{i+1,-}^- \subseteq Z_{i,-} \cup Z_{i+1,-}^+ \subseteq \clset( \cup Z)
    \]
  so $\cup X \subseteq \clset(\cup Z)$. Similarly, $\cup Y \subseteq \clset(\cup Z)$, thus
  $
    (\cup X) \cup (\cup Y) \subseteq \clset(\cup Z),
    $
  which implies that
  \[
    \ctocl(X) \compcl_i \ctocl(Y) \subseteq \ctocl (X \comp_i Y).
    \qedhere
  \]
\end{proof}

\begin{prop}
  \label{prop:cltoprinc-compat-comp}
  For $i,n \in \N$ with $i < n$, and $X,Y \in \Closed(P)_n$,
  \[
    \cltoprinc(X \compcl_i Y) = \cltoprinc(X) \comppr_i \cltoprinc(Y).
  \]
\end{prop}
\begin{proof}
  We compute that
  \begin{align*}
    \cltoprinc(X) \comppr_i \cltoprinc(Y)
    &= \max( \clset( \cltoprinc(X) ) \cup \clset(\cltoprinc(Y))) \\
    &= \max( X \cup Y ) && \text{(by \Cref{prop:princ-cl-bij})} \\
    &= \cltoprinc ( X \compcl_i Y)\zbox. &&\qedhere
  \end{align*}
\end{proof}

\begin{prop}
  \label{prop:ctoprinc-compat-comp}
  For $i,n \in \N$ with $i < n$, and $i$-composable $n$-cells $X$ and $Y$
  of~$P$,
  \[
    \ctoprinc(X \comp_i Y) = \ctoprinc(X) \comppr_i \ctoprinc(Y).
  \]
\end{prop}
\begin{proof}
  We compute that
  \begin{align*}
    \ctoprinc(X \comp_i Y)
    &= \cltoprinc \circ \ctocl(X \comp_i Y) && \text{(by
                                               \Cref{prop:princ-cl-bij,prop:ctocl-decomp})}\\
    &= \cltoprinc ( \ctocl(X) \compcl_i \ctocl(Y)) && \text{(by
                                                    \Cref{prop:ctocl-compat-srctgt})} \\
    &=  \cltoprinc \circ \ctocl(X) \comppr_i \cltoprinc \circ \ctocl(Y) &&
                                                                         \text{(by
                                                                         \cref{prop:cltoprinc-compat-comp})}
    \\
    &= \ctoprinc(X) \comppr_i \ctoprinc(Y) && \text{(by
                                            \Cref{prop:princ-cl-bij,prop:ctocl-decomp})\zbox.}
  \end{align*}
\end{proof}

\paragraph{Well-formed cells}

We defined above source, target, identity and composition operations for both
$\Closed(P)$ and $\Princ(P)$. However, these operations are not expected to
equip the graded sets~$\Closed(P)$ and~$\Princ(P)$ with a structure of \ocat (in
fact, not even a structure of $\omega$\globular set). In order to obtain an
\ocat, we need to restrict to subsets of ``well-formed'' elements
of~$\Closed(P)$ and~$\Princ(P)$. Then, we can show that the two induced \ocat of
cells are isomorphic to~$\Cell(P)$.

\medskip\noindent Let~$P$ be an \ohg. Given $n \in \N$ and $X \in \Closed(P)_n$,
we say that $X$ is \index{closed-well-formed}\emph{\clwf} when
\begin{itemize}
\item $X_n$ is fork-free,
\item $\clsrc_{n-1}(X)$ and $\cltgt_{n-1}(X)$ are \clwf,
\item if $n \ge 2$, $\clsrc_{n-2} \circ \clsrc_{n-1} (X) = \clsrc_{n-2} \circ \cltgt_{n-1}(X)$ and 
$\cltgt_{n-2} \circ \clsrc_{n-1} (X) = \cltgt_{n-2} \circ \cltgt_{n-1}(X)$.
\end{itemize}
We \glossary(ClosedWFP){$\MClosed(P)$}{the graded set of closed-well-formed
  fgs's of the \ohg~$P$}write~$\MClosed(P)$ for the graded set of \clwf fgs of~$P$. Respectively,
given $n \in \N$ and $X \in \Princ(P)_n$, we say that $X$ is \index{maximal-well-formed}\emph{\mwf} when
\begin{itemize}
\item $X_n$ is fork-free,
\item $\prsrc_{n-1}(X)$ and $\prtgt_{n-1}(X)$ are \mwf,
\item if $n \ge 2$, $\prsrc_{n-2} \circ \prsrc_{n-1} (X) = \prsrc_{n-2} \circ \prtgt_{n-1}(X)$ and 
$\prtgt_{n-2} \circ \prsrc_{n-1} (X) = \prtgt_{n-2} \circ \prtgt_{n-1}(X)$.
\end{itemize}
We \glossary(MaxWFP){$\MPrinc(P)$}{the graded set of maximal-well-formed fgs's
  of the \ohg~$P$}write~$\MPrinc(P)$ for the graded set of \mwf fgs of~$P$. We now aim at
proving that both~$\MClosed(P)$ and~$\MPrinc(P)$ are \ocats isomorphic
to~$\Cell(P)$ when~$P$ satisfies enough axioms of torsion-free complexes. We
first show this property for~$\MPrinc(P)$ after introducing several technical
results.

\begin{lem}
  \label{lem:ctoprinc-mprinc}
  \ifzerotothreehyp then, for $n \in \N$ and ~${X \in \Cell(P)_n}$, we
  have~${\ctoprinc (X) \in \MPrinc(P)_n}$.
\end{lem}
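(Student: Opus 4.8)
The statement asserts that the translation $\ctoprinc$ sends a cell $X \in \Cell(P)_n$ to a maximal-well-formed finite graded subset. The natural strategy is induction on $n$, checking the three defining clauses of \mwf-ness for $\ctoprinc(X)$: (i) the top level is fork-free, (ii) $\prsrc$ and $\prtgt$ of $\ctoprinc(X)$ are \mwf, and (iii) for $n \ge 2$, the double-source/target compatibilities hold. The key bookkeeping tool is \Lemr{ctoprinc-describe}, which tells us explicitly that $\ctoprinc(X)_n = X_n$ and $\ctoprinc(X)_i = X_{i,-} \cap X_{i,+}$ for $i < n$; and the key compatibility is \Lemr{ctoprinc-compat-srctgt}, which gives $\ctoprinc(\csrctgt{\eps} X) = \prsrctgt{\eps}(\ctoprinc(X))$ for cells $X$.

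First I would dispose of the base case $n = 0$: a $0$-cell $X$ is just $(\set{x})$ for a $0$-generator $x$, so $\ctoprinc(X) = \set{x}$ is trivially \mwf (there is nothing to check beyond fork-freeness of a singleton, which holds). For the inductive step with $n > 0$, set $Y = \ctoprinc(X)$. Clause (i): $Y_n = X_n$ by \Lemr{ctoprinc-describe}, and $X_n = X_{n,-}$ is fork-free since $X$ is a cell, so $Y_n$ is fork-free. Clause (ii): for $\eps \in \set{-,+}$, I would compute $\prsrctgt{\eps}(Y) = \prsrctgt{\eps}(\ctoprinc(X)) = \ctoprinc(\csrctgt{\eps} X)$ by \Lemr{ctoprinc-compat-srctgt}; since $\csrctgt{\eps} X$ is an $(n{-}1)$-cell of~$P$, the induction hypothesis gives that this is in $\MPrinc(P)_{n-1}$, i.e.\ \mwf. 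Clause (iii), assuming $n \ge 2$: using \Lemr{ctoprinc-compat-srctgt} twice, $\prsrctgt{\eta} \circ \prsrctgt{\eps}(Y) = \prsrctgt{\eta}(\ctoprinc(\csrctgt{\eps} X)) = \ctoprinc(\csrctgt{\eta} \csrctgt{\eps} X)$ for $\eps,\eta \in \set{-,+}$; the globular identities for the globular set $\Cell(P)$ give $\csrctgt{\eta}\csrc X = \csrctgt{\eta}\ctgt X$, so applying $\ctoprinc$ yields $\prsrctgt{\eta}\circ\prsrc(Y) = \prsrctgt{\eta}\circ\prtgt(Y)$, which is exactly the required double-compatibility.

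The only subtlety — and the main point requiring care rather than genuine difficulty — is that \Lemr{ctoprinc-compat-srctgt} is stated only for $n > 0$ and requires the axioms \forestaxiom{0}–\forestaxiom{3}, which are in force here by the hypothesis \zerotothreehyp{}; one must also note that in clause (ii) we apply the induction hypothesis to $\csrctgt{\eps} X$, which is still a cell of the \emph{same} $\omega$-hypergraph $P$ (not a restriction), so the axioms still hold. Likewise in clause (iii) the iterated applications of \Lemr{ctoprinc-compat-srctgt} to $\csrctgt{\eps} X$ are legitimate because $\csrctgt{\eps} X$ has dimension $n - 1 > 0$. With these observations in place the proof is a short, essentially formal induction, so I do not anticipate a real obstacle beyond correctly invoking the preceding lemmas.

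Written out, the proof runs:
\begin{proof}
  We argue by induction on $n$. If $n = 0$, then $X = (\set{x})$ for some $x \in
  P_0$ and $\ctoprinc(X) = \set{x}$, which is \mwf. Suppose $n > 0$ and let $Y =
  \ctoprinc(X)$. By \Lemr{ctoprinc-describe}, $Y_n = X_n$, which is fork-free
  since $X$ is a cell. For $\eps \in \set{-,+}$, by
  \Lemr{ctoprinc-compat-srctgt},
  \[
    \prsrctgt\eps(Y) = \ctoprinc(\csrctgt\eps X),
  \]
  and since $\csrctgt\eps X \in \Cell(P)_{n-1}$, this is in $\MPrinc(P)_{n-1}$ by
  the induction hypothesis. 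Finally, if $n \ge 2$, then for $\eps,\eta \in
  \set{-,+}$, using \Lemr{ctoprinc-compat-srctgt} twice and the globular
  identities of $\Cell(P)$,
  \[
    \prsrctgt\eta \circ \prsrc(Y) = \ctoprinc(\csrctgt\eta \circ \csrc X) =
    \ctoprinc(\csrctgt\eta \circ \ctgt X) = \prsrctgt\eta \circ \prtgt(Y).
  \]
  Hence $Y$ is \mwf, so $\ctoprinc(X) \in \MPrinc(P)_n$.
\end{proof}
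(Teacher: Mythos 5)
Your proof is correct and follows essentially the same route as the paper's: induction on $n$, with $Y_n = X_n$ fork-free, \Lemr{ctoprinc-compat-srctgt} reducing the source/target clauses to the induction hypothesis, and the same lemma combined with globularity of $\Cell(P)$ giving the double source/target compatibility for $n \ge 2$. No issues.
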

\begin{proof}
  We proceed by induction on~$n$. If $n = 0$, the result is trivial. So suppose
  that~${n > 0}$ and let~${Y = \ctoprinc (X)}$. Since $Y_n = X_n$, $Y_n$ is
  fork-free. Moreover, by \Cref{prop:ctoprinc-compat-srctgt},
  we have
  \[
    \prsrctgt\eps_{n-1} (Y) = \ctoprinc (\csrctgt\eps_{n-1} (X))
    \quad
    \text{for $\eps \in \set{-,+}$.}
  \]
  By the induction hypothesis, $\prsrctgt\eps_{n-1} (Y)$ is
  \mwf. And, when $n \ge 2$, for~$\eta \in \set{-,+}$, we have
  \begin{align*}
    \prsrctgt\eta_{n-2} \circ \prsrc_{n-1} (Y)
    &= \ctoprinc (\csrctgt\eta_{n-2} \circ \csrc_{n-1} (X)) && \text{(by \Cref{prop:ctoprinc-compat-srctgt})}\\
    &= \ctoprinc (\csrctgt\eta_{n-2} \circ \ctgt_{n-1} (X)) \\
    &=\prsrctgt\eta_{n-2} \circ \prtgt_{n-1}(Y).
  \end{align*}
  Hence, $Y$ is \mwf.
\end{proof}
\begin{lem}
  \label{lem:ctoprinc-onto}
  \ifzerotothreehyp then, given~$n\in\N$ and an fgs~$X \in \MPrinc(P)_n$, there exists an
  $n$\nbd-cell~$Y\in \Cell(P)_n$ such that $\ctoprinc (Y) = X$.
\end{lem}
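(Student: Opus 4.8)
The plan is to invert the map $\ctoprinc$ by building, from a \mwf $n$-fgs $X$, an $n$-pre-cell $Y := \princtoc(X)$ and showing that it is a cell with $\ctoprinc(Y) = X$. First I would set $Y = \princtoc(X)$, so that by definition $Y_n = X_n$ and $Y_{i,\eps} = X_i \cup Y_{i+1,\eps}^{\mp\text{ or }\pm}$ for $i < n$. The equality $\ctoprinc(Y) = X$ should then follow formally: $\ctoprinc(Y) = \cltoprinc \circ \princtocl \circ \ctoprinc(Y)$ by \Lemr{princ-cl-bij}, and since $\princtocl \circ \ctoprinc = \ctocl$ coincides on $Y$ with $\clset(\cup Y)$, which has the same maximal elements as $X$ (because $X_i \subset Y_{i,-} \cup Y_{i,+}$ and, by construction, every element added to $Y$ below dimension $n$ lies in an iterated source/target hence is non-maximal); more directly, once $Y$ is known to be a cell, \Lemr{ctoprinc-retract} gives $\princtoc \circ \ctoprinc(Y) = Y$, and combined with injectivity of $\princtoc$ on maximal fgs (a consequence of \Lemr{princ-cl-bij} via $\princtocl$), it suffices to identify $\ctoprinc(Y)$ and $X$ as maximal fgs, which is immediate from \Lemr{ctoprinc-describe} together with the construction of $\princtoc$.

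So the crux is to prove that $Y = \princtoc(X)$ is an $n$-cell, which I would do by induction on $n$. For $n = 0$ there is nothing to check. For $n > 0$, I would use the \mwf hypothesis on $X$: its source $\prsrc X$ and target $\prtgt X$ are \mwf $(n{-}1)$-fgs, and one checks $\princtoc(\prsrc X) = \csrc(\princtoc(X))$ and $\princtoc(\prtgt X) = \ctgt(\princtoc(X))$ — a compatibility statement analogous to \Lemr{princtocl-compat-srctgt} that needs to be established (or may already be available). Then by the induction hypothesis $\csrc Y = \princtoc(\prsrc X)$ and $\ctgt Y = \princtoc(\prtgt X)$ are $(n{-}1)$-cells, so $Y_{i,\eps}$ is fork-free for $i < n$ and the movement conditions hold below level $n$. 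It remains to show (i) $Y_n = X_n$ is fork-free, which is the first clause of \mwf-ness, and (ii) $Y_n$ moves $Y_{n-1,-}$ to $Y_{n-1,+}$. For (ii), by \Lemr{street-2.1} it suffices that $Y_n^\mp \subset Y_{n-1,-}$ and $Y_{n-1,-} \cap Y_n^+ = \emptyset$; the first inclusion holds essentially by construction since $Y_{n-1,-} = X_{n-1} \cup Y_n^\mp$, and the second is where the real work lies.

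The hard part will be establishing the disjointness $Y_{n-1,-} \cap Y_n^+ = \emptyset$ (equivalently $X_{n-1} \cap Y_n^+ = \emptyset$ after absorbing $Y_n^\mp \cap Y_n^+ = \emptyset$). This is precisely the kind of statement controlled by the axioms \forestaxiom{2}, \forestaxiom{3}: $Y_n = X_n$ is fork-free and the $(n{-}1)$-cell $\csrc Y$ is already known, so one can feed $\csrc Y$ and the fork-free set $X_n$ into the gluing theorem \Thmr{cell-gluing} — part \ref{lem.a} yields $\glueset^+ \cap (\csrc Y)_{n-1} = \emptyset$, i.e. exactly $X_n^+ \cap Y_{n-1,-} = \emptyset$, provided $X_n$ is glueable on $\csrc Y$, i.e. $X_n^\mp \subset Y_{n-1,-}$, which again holds by construction. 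One must be slightly careful that $\glueset = X_n$ being fork-free and glueable on the cell $\csrc Y$ are the only hypotheses needed, and they are. Then $\cellgluing(\csrc Y, X_n)$ is a cell by \ref{lem.b}, and one checks it equals $Y$ by comparing components (both have $n$-part $X_n$, same $\csrc$, and the target $n$-part $(Y_{n-1,-} \cup X_n^+) \setminus X_n^-$ matches $Y_{n-1,+} = X_{n-1} \cup Y_n^\pm$ via \Lemr{move-canform}), whence $Y$ is a cell. The compatibility of $\princtoc$ with sources and targets used in the induction is the one genuinely new routine lemma to record; everything else reduces to \Thmr{cell-gluing} and the movement lemmas of \Ssecr{gluing-thm}.
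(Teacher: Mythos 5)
Your proposal is correct in substance and follows the same inductive skeleton as the paper's proof (induct on $n$, apply the induction hypothesis to $\prsrc X$ and $\prtgt X$, and assemble the $n$-cell from $X_n$ together with the resulting $(n{-}1)$-cells), but it organizes two steps differently. First, you route the argument through $\princtoc$ and a compatibility identity $\princtoc(\prsrctgt\eps X) = \csrctgt\eps(\princtoc(X))$ that you rightly flag as needing proof: it is not available in the paper, and its plus-components below level $n-1$ only agree because of the globularity clause $\prtgt \circ \prsrc (X) = \prtgt \circ \prtgt(X)$ in the definition of \mwf{} fgs; establishing it costs essentially the same computation the paper performs when it checks that its two inductively obtained cells $S$ and $T$ (with $\ctoprinc(S) = \prsrc X$ and $\ctoprinc(T) = \prtgt X$) are parallel. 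Second, for the top-level movement you invoke \Thmr{cell-gluing}, whereas the paper verifies directly that $(S_{n-1} \cup X_n^+) \setminus X_n^- = T_{n-1}$ and symmetrically; your route is sound since the hypotheses of the gluing theorem are met, but identifying the target of the gluing with $Y_{n-1,+} = X_{n-1} \cup X_n^{\pm}$ still requires the observation --- implicit in the paper's computation as well, but absent from your write-up --- that $X_{n-1} \cap (X_n^- \cup X_n^+) = \emptyset$, because every element of $X_n^- \cup X_n^+$ lies strictly below an element of $X_n$ and $X$ is maximal; \Lemr{move-canform} alone does not yield this. Finally, the concluding identification $\ctoprinc(Y) = X$ goes through via \Lemr{ctoprinc-describe}, since $Y_{i,-} \cap Y_{i,+} = X_i$ follows from \Lemr{move-eq} once $Y$ is known to be a cell; you do not actually need any injectivity statement for $\princtoc$, which is fortunate since none is proved in the paper.
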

\begin{proof}
  We proceed by induction on~$n$. If~${n = 0}$, the result is trivial. So
  suppose that~${n > 0}$. By induction, let $S,T \in \Cell(P)_{n-1}$ be such
  that $\ctoprinc (S) = \prsrc_{n-1} (X)$ and $\ctoprinc (T) = \prtgt_{n-1} (X)$. When $n \ge
  2$, for~${\eps \in \set{-,+}}$, we have
  \begin{align*}
    \csrctgt\eps_{n-2} (S)
    &= \princtoc \circ \ctoprinc ( \csrctgt\eps_{n-2} (S) ) && \text{(by
                                                           \Cref{prop:ctoprinc-retract})}
    \\
    &= \princtoc ( \prsrctgt\eps_{n-2} (\ctoprinc (S))) && \text{(by
                                                   \Cref{prop:ctoprinc-compat-srctgt})}
    \\
    &= \princtoc ( \prsrctgt\eps_{n-2} \circ \prsrc_{n-1}(X)) \\
    &= \princtoc ( \prsrctgt\eps_{n-2} \circ \prtgt_{n-1}(X))&& \text{(because $X$ is
      \mwf)} \\
                    &= \princtoc ( \prsrctgt\eps_{n-2} (\ctoprinc (T))) \\
                    &= \princtoc \circ \ctoprinc (\csrctgt\eps_{n-2} (T)) = \csrctgt\eps_{n-2} (T).
  \end{align*}
  Moreover,
  \vskip-\abovedisplayskip\vskip-\baselineskip\vskip\abovedisplayshortskip
  \begin{align*}
    (S_{n-1} \cup X_n^+) \setminus X_n^- &= (X_{n-1} \cup X_n^{\mp} \cup X_n^+)
                                           \setminus X_n^- \\
                                         &= X_{n-1} \cup X_n^{\pm} = T_{n-1}.
  \end{align*}
  Similarly, $(T_{n-1} \cup X_n^-) \setminus X_n^+ = S_{n-1}$ so $X_n$ moves
  $S_{n-1}$ to $T_{n-1}$. Thus, the $n$-pre-cell~$Y$ defined by
    $Y_n = X_n$,
    $Y_{n-1,-} = S_{n-1}$,
    $Y_{n-1,+} = T_{n-1}$
    and
    $Y_{i,\delta} = S_{i,\delta}$
    for $i \in \N_{n-2}$ and $\delta \in \set{-,+}$,
  is an $n$\cell. Let $Z = \ctoprinc(Y)$. We have $Z_n = X_n$ and
  \begin{align*}
    \prsrc_{n-1} (Z) &= \prsrc_{n-1} (\ctoprinc(Y)) \\
    &= \ctoprinc (\csrc_{n-1} (Y)) && \text{(by \Cref{prop:ctoprinc-compat-srctgt})}
    \\
             &= \ctoprinc(S) = \prsrc_{n-1} (X)\zbox.
  \end{align*}
  So, by definition of $\prsrc$, we have
    $Z_{n-1} \cup X_{n}^{\mp} = X_{n-1} \cup X_{n}^{\mp}$
    and
    $Z_i = X_i$
    for $i \in \N_{n-2}$.
  Since $X$ and $Z$ are maximal, we have
    $X_{n-1} \cap X_{n}^{\mp} = Z_{n-1} \cap X_{n}^{\mp} = \emptyset$.
  Hence, $X_{n-1} = Z_{n-1}$ and $X = Z = \ctoprinc(Y)$ which concludes the
  proof.
\end{proof}
\begin{lem}
  \label{prop:ctoprinc-m-bij}
  \ifzerotothreehyp then, $\ctoprinc$ induces a bijection between~$\Cell(P)$ and~$\MPrinc(P)$.
\end{lem}
\begin{proof}
  By \Lemr{ctoprinc-onto}, $\ctoprinc \colon \Cell(P) \to \MPrinc(P)$ is
  surjective and, by \Cref{prop:ctoprinc-retract}, it is injective, so it is
  bijective.
\end{proof}
\noindent We can now deduce that \mwf fgs's are an adequate alternative
definition of cells for torsion-free complexes:

\begin{theo}
  \label{thm:ctoprinc-m-iso}
  \ifzerotothreehyp then, $\MPrinc(P)$ is an \ocat and $\ctoprinc$ induces an
  isomorphism between $\Cell(P)$ and~$\MPrinc(P)$.
\end{theo}
\begin{proof}
  By definition of $\MPrinc(P)$, the functions $\prsrc_k,\prtgt_k$ for $k \in \N$
  equip $\MPrinc(P)$ with a structure of $\omega$\globular set. We first prove
  that the composition operation~$\comppr$ restricts to $\MPrinc(P)$. Let~${i, n
    \in \N}$~with $i < n$, and $X,Y \in \MPrinc(P)_n$ be such that $\prtgt_i (X) =
  \prsrc_i (Y)$. By \cref{prop:ctoprinc-m-bij}, there exist $X',Y' \in \Cell(P)_n$ such
  that $\ctoprinc(X') = X$ and $\ctoprinc(Y') = Y$. By
  \Cref{prop:ctoprinc-compat-srctgt}, we have
  \[
    \ctoprinc(\ctgt_i (X')) = \prtgt_i (X) = \prsrc_i (Y) = \ctoprinc(\csrc_i (Y')),
  \]
  and, by \cref{prop:ctoprinc-m-bij}, $\ctgt_i (X') = \csrc_i (Y')$ so~$X'$ and~$Y'$
  are $i$-composable. By \cref{prop:ctoprinc-m-bij}, we have~${\ctoprinc (X' \comp_i Y') \in
  \MPrinc(P)}$ and, by \cref{prop:ctoprinc-compat-comp}, ${X \comppr_i Y \in
    \MPrinc(P)}$.

  By
  \cref{prop:ctoprinc-compat-srctgt,prop:ctoprinc-compat-id,prop:ctoprinc-compat-comp},
  $\ctoprinc$ commutes with the source, target, identity and composition
  operations and is a bijection when restricted to $\MPrinc(P)$, so that
  $\MPrinc(P)$ is an \ocat since $\Cell(P)$ is (by
  \cref{thm:cell-is-ocat,rem:cell-is-ocat-stronger}), and $\ctoprinc$ induces an
  isomorphism of \ocats.
\end{proof}
\noindent We prove a similar property for \clwf fgs's after showing some
technical results.
\begin{lem}
  \label{prop:princtocl-m-bij}
  $\princtocl$ induces a bijection between $\MPrinc(P)$ and $\MClosed(P)$.
\end{lem}
\begin{proof}
  We already know that $\princtocl$ is a bijection by \Cref{prop:princ-cl-bij}.
  For~${n \in \N}$, we show that $\princtocl$ sends a \mwf $n$-fgs $X$ to a
  \clwf $n$-fgs by induction on $n$. If~${n = 0}$, the result is trivial. So
  suppose that~${n > 0}$. Let~${Y = \princtocl (X)}$. Then, $Y_n = X_n$ is
  fork-free and, for~${\eps \in \set{-,+}}$, we have~${\clsrctgt\eps_{n-1} (Y) =
    \princtocl (\prsrctgt\eps_{n-1} (X))}$ by
  \Cref{prop:princtocl-compat-srctgt}, and it is \clwf by induction. Moreover,
  when $n \ge 2$,
  \begin{align*}
    \clsrctgt\eps_{n-2} \circ \clsrc_{n-1} (Y)
    &= \princtocl (\prsrctgt\eps_{n-2} \circ \prsrc_{n-1}(X))
    && \text{(by \Cref{prop:princtocl-compat-srctgt})}
    \\
    &= \princtocl (\prsrctgt\eps_{n-2} \circ \prtgt_{n-1} (X)) \\
    &= \clsrctgt\eps_{n-2} \circ \cltgt_{n-1}(Y)
  \end{align*}
  so $Y$ is \clwf. Similarly, $\cltoprinc$ sends \clwf fgs to \mwf fgs, which
  concludes the proof.
\end{proof}
\begin{lem}
  \label{prop:ctocl-m-bij}
  \ifzerotothreehyp then, $\ctocl$ induces a bijection between~$\Cell(P)$
  and~$\MClosed(P)$.
\end{lem}
\begin{proof}
  The result is a consequence of \cref{prop:ctocl-decomp,prop:ctoprinc-m-bij,prop:princtocl-m-bij}.
\end{proof}
\noindent We can now conclude that \clwf fgs's are an adequate alternative
definition of cells for torsion-free complexes:
\begin{theo}
  \label{thm:ctocl-m-iso}
  \ifzerotothreehyp then, $\MClosed(P)$ is an \ocat and $\ctocl$ induces an
  isomorphism between~$\Cell(P)$ and~$\MClosed(P)$.
\end{theo}
\begin{proof}
  By a proof similar to the one of \Thmr{ctoprinc-m-iso}, using
  \Cref{prop:ctocl-compat-srctgt,prop:ctocl-m-bij,prop:ctocl-compat-id,prop:ctocl-compat-comp}.
\end{proof}

\paragraph{From polygraphs to torsion-free complexes}

We saw earlier (\Cref{coro:cells-are-freely-gen}) that torsion-free complexes
induce free \ocats on a canonical \opol. However, in practice, we are often
interested in the inverse operation, \ie representing the cells of an \ocat
freely generated on an \opol by the cells of a torsion-free complex. Here, we
define the \ohg~$\poltohyp\P$ associated to an \opol~$\P$ and, in the case
where~$\poltohyp\P$ is a torsion-free complex, give conditions under which the
\ocat~$\MClosed(\poltohyp\P)$ is isomorphic to the free \ocat~$\freecat\P$. In
order to define~$\poltohyp\P$, we will use the support function introduced by
Makkai for free strict categories~\cite{makkai2005word}. Given a polygraph~$\P$,
this function maps a cell of~$\freecat\P$ to the set of generators \eq{it
  involves}. We first recall its definition before dealing with the other
matters.

Given a set~$S$, we write~$\finsets S$ for the set of finite subsets of~$S$.
Given $n \in \Ninf$ and an $n$\polygraph~$\P$, we define the
\glossary(supp){$\supp[\P]$}{the support function for the
  polygraph~$\P$}\index{support function}\emph{support function}
\[
  \supp[\P]\co \freecat\P
  \to \finsets {\sqcup_{i \in \N_n} \P_i}
\]
or simply,~$\supp$, as the unique function such that, given $u \in \freecat\P$,
\begin{itemize}
\item $\supp(u) = \set{g}$ when $u = g$ for some~$g \in \P_0$,
\item $\supp(u) = \set{g}\cup\supp(\csrc_{k-1}(g))\cup\supp(\ctgt_{k-1}(u))$
  when~$u = g$ for some~$k \in \N^*_n$ and~$g \in \P_k$,
\item $\supp(u) = \supp(u')$
  when~$u = \unit {u'}$ for some~$u' \in \freecat\P$,
\item $\supp(u) = \supp(u_1) \cup \supp(u_2)$ when $u = u_1 \comp_i u_2$ for
  some $i,k \in \N^*_n$ with~$i < k$ and $i$\composable~$u_1,u_2\in \freecat\P_k$
\end{itemize}
The above definition completely defines~$\supp$, since the cells of~$\freecat\P$
are precisely the classes of formal composites of generators of~$\P$. It can
moreover be shown well-defined (see the original proof of
Makkai~\cite[Lemma~5]{makkai2005word} or
\cite[Proposition~2.4.3.2]{forest:tel-03155192}).

 Given $\P \in \nPol\omega$,
we define an \glossary(.rcH){$\poltohyp\P$}{the \ohg associated to the
  polygraph~$\P$}\ohg~$\poltohyp \P$ by putting~$\poltohyp \P_n = \P_n$ for $n
\in \N$ and, when~$n > 0$,
\[
  g^- = \supp(\gsrc_{n-1}(g)) \cap \P_{n-1}
  \qquad
  g^+ = \supp(\gtgt_{n-1}(g)) \cap \P_{n-1}
\]
for~$g \in \poltohyp\P_n$. Under this definition, $\supp[\P]$ can be seen as a
function~$\freecat\P \to \Closed(\poltohyp \P)$. We then have the following
criterion to know whether $\freecat\P$ can be faithfully represented by the
\clwf fgs's of~$\poltohyp\P$:

\begin{theo}
  \label{prop:criterion-for-tfc-representability}
  Let $\P \in \nPol\omega$ such that $\poltohyp\P$ is a torsion-free complex.
  Then,~$\supp[\P]$ is the underlying function of an $\omega$\functor~$F\co
  \freecat\P \to \MClosed(\poltohyp \P)$ if and only if, for $n \in \N^*$, $g
  \in \P_n$ and~$\eps \in \set{-,+}$, we have
    $\supp(\gsrctgt\eps_{n-1}(g)) = \clset(g^\eps)$.
  In this case, $F$ is moreover an isomorphism.
\end{theo}
\begin{remark}
  If the condition of \Cref{prop:criterion-for-tfc-representability} is
  satisfied, then $\cltoc \circ F \co \freecat\P \to \Cell(\poltohyp\P)$ is the
  unique isomorphism given by \Cref{coro:cells-are-freely-gen} which maps~$g \in
  \P$ to~$\gen g \in \Cell(\poltohyp \P)$.
\end{remark}
\begin{proof}
  If $\supp[\P]$ induces an $\omega$\functor $F\co \freecat\P \to \MClosed(\poltohyp
  \P)$, then we have
  \begin{align*}
    \supp(\gsrctgt\eps_{n-1}(g)) 
    &= F(\gsrctgt\eps_{n-1}(g)) \\
    &= \clsrctgt\eps_{n-1}(F(g)) \\
    &= \clsrctgt\eps_{n-1}(\clset(g)) \\
    &= \clset(g^\eps) & \text{(by definition of $\clsrctgt\eps_{n-1}$)}
  \end{align*}
  which proves the necessity. For sufficiency, we prove by induction on~$n \in
  \N$ that $\supp[\P]$ is the underlying function of an $n$\functor~$F^n \co
  \restrict n {(\freecat \P)} \to \restrict n {\MClosed(\poltohyp \P)}$. This is
  clear for $n = 0$, and, when $n > 0$, we define~$F^n$ by extending~$F^{n-1}$
  and so that~$F^n(g) = \clset(g)$ using the universal property of $\restrict n
  {(\freecat \P)} = \polextp{\restrict {n-1}{(\freecat \P)}}{\P_n}$. This is
  possible since the condition of the statement implies that
  \[
    F^{n-1}(\gsrctgt\eps_{n-1}(g)) = \clsrctgt\eps_{n-1}(\clset(g))
  \]
  for $g \in \P_n$ and $\eps \in \set{-,+}$. We then obtain an $\omega$\functor
  $F \co \freecat\P \to \MClosed(\poltohyp \P)$ using
  \Cref{rem:ocat-limit}, which satisfies that $F(g) = \clset(g)$ for~$g
  \in \P$. Then, by \Cref{thm:ctocl-m-iso}, $\cltoc \circ F$ is an
  $\omega$\functor $\freecat\P \to \Cell(P)$ which maps~$g$ to~$\gen g$. It is
  then an isomorphism by \Cref{coro:cells-are-freely-gen}, so that~$F$ is an
  isomorphism too.
\end{proof}

\begin{example}
  Let $\P$ be the $\omega$\polygraph with
  \begin{gather*}
    \P_0 = \set{x,y,z}\quad \P_1 = \set{f \co x \to y\; g,g' \co y \to z}
    \quad\P_2 = \set{\alpha,\alpha' \co g \To g'}
    \\
    \P_3 = \set{ A \co \unitp 2 f \comp_0 \alpha \To \unitp 2 f \comp_0 \alpha'}
  \end{gather*}
  and~$\P_k = \emptyset$ for~$k \in \N$ with $k \ge 4$ as in
  \[
    \begin{tikzcd}[column sep=3em]
      x
      \ar[r,"f"]
      &
      y
      \ar[r,bend left=70,"g",""{auto=false,name=top}]
      \ar[r,bend right=70,"g'"',""{auto=false,name=bot}]
      \ar[from=top,to=bot,phantom,"\alpha\!\Downarrow\,\Downarrow\!\alpha'"]
      &
      z
    \end{tikzcd}
    \qtand
    \begin{tikzcd}[column sep=4em]
      x
      \ar[r,bend left=70,"f \comp_0 g",""{auto=false,name=top}]
      \ar[r,bend right=70,"f \comp_0 g'"',""{auto=false,name=bot}]
      \ar[from=top,to=bot,phantom,"\Downarrow\!\unitp 2 f\!\comp_0\!\alpha"]
      &
      z
    \end{tikzcd}
    \xTO{A}
    \begin{tikzcd}[column sep=4em]
      x
      \ar[r,bend left=70,"f \comp_0 g",""{auto=false,name=top}]
      \ar[r,bend right=70,"f \comp_0 g'"',""{auto=false,name=bot}]
      \ar[from=top,to=bot,phantom,"\Downarrow\!\unitp 2 f\!\comp_0\!\alpha'"]
      &
      z
    \end{tikzcd}
    \pbox.
  \]
  We can verify that~$\poltohyp\P$ is a torsion-free complex. But, by
  \Cref{prop:criterion-for-tfc-representability}, the function~$\supp[\P]$ does
  not induce an $\omega$\functor $\freecat\P \to \MClosed(\poltohyp\P)$ since
  \[
    \supp(\gsrc_2(A)) = \set{x,y,z,f,g,g',\alpha} \neq \set{y,z,g,g',\alpha}
    = \clset(A^-)\zbox.
  \]
  However, by considering a modified version of $\P$ where
    $\P_3 = \set{A \co \alpha \To \alpha'}$
  it can be verified that $\poltohyp\P$ is still a torsion-free complex and
  that, by \Cref{prop:criterion-for-tfc-representability}, the
  function~$\supp[\P]$ induces an $\omega$\functor $\freecat\P \to
  \MClosed(\poltohyp\P)$ which is an isomorphism.
\end{example}


\subsection{Embedding parity complexes}
\label{ssec:enc-street}

In this section, we show that parity complexes are a particular case of
torsion-free complexes, under two reasonable caveats. Firstly, since parity
complexes do not require all the generators to be relevant, there are parity
complexes that are not torsion-free complexes. But, by~\cite[Theorem
4.2]{street1991parity}, irrelevant generators of a parity complex~$P$ do not
play any role in the generated $\omega$\category~$\Cell(P)$, so that, by
restraining $P$ to the \ohg~$\bar P$ of relevant generators, we have~${\Cell(P)
  = \Cell(\bar P)}$. Thus, it is reasonable to assume that all the parity
complexes we are considering for embedding in torsion-free complexes have
relevant generators, \ie satisfy \forestaxiom{2}. Secondly, as discussed in
\Cref{ssec:street}, general parity complexes are not freely generated by their
atoms and, since the latter property is supposed to be the \emph{raison d'être}
of such structures, it is reasonable to only consider the parity complexes that
satisfy this property. We believe that \forestaxiom{4} is the minimal additional
condition to require for the \ocat of cells of a parity complex to be freely
generated, so we will only consider parity complexes that moreover satisfy
\forestaxiom{4}.

Under the assumptions given above, we are only left to derive \forestaxiom{3}
from the axioms of a parity complex. We show below that it is essentially a
consequence of the tightness requirements stated by \streetaxiom{5}. First, we
recall from~\cite{street1994parity} the link between tightness and the segment
property:
\begin{prop}[{\cite[Proposition~1.4]{street1994parity}}]
  \label{prop:tightness-segment}
  Let $P$ be an \ohg. For $n \in \N^*$, subsets $U,V \subseteq P_n$ with $U$
  tight, $V$ fork-free and $U \subseteq V$, we have that $U$ is a segment for
  $\tl_V$.
\end{prop}
\begin{proof}
  Let $x,y,z \in V$ such that $x,z \in U$ and $x \tlone_V y \tl_V z$. Then,
  there is $w \in x^+ \cap y^-$. By definition of tightness, since $y \tl_V z$,
  we have $y^- \cap U^\pm = \emptyset$. So there is $\bar y \in U$ such that $w \in
  \bar y^-$. Since $V$ is fork-free, $y = \bar y$. Hence, $U$ is a segment for $\tl_V$.
\end{proof}
\noindent Then, we show how to derive the segment property from the axioms of
parity complexes:
\begin{lem}
  \label{lem:pc-segment}
  Let $P$ be a parity complex which satisfies \forestaxiom{2}. Given $n \in \N$
  and $x \in P_n$, $x$ satisfies the segment condition.
\end{lem}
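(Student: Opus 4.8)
The plan is to prove \Lemr{pc-segment} by reducing the segment condition for parity complexes to the tightness axiom~\streetaxiom{5}, via the preceding lemma (the one stating that a tight subset is a segment). Recall that $x$ satisfies the segment condition when, for all $k < n$ and every $k$-cell $X$ with $\gen{x}_{k,-} \subset X_k$, the set $\gen{x}_{k,-}$ is a segment for $\tl_{X_k}$, and dually for $\gen{x}_{k,+}$. So fix such a $k$ and such an $X$; by symmetry treat only the source case $\gen{x}_{k,-} \subset X_k$.

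First I would invoke \streetaxiom{5}, which says precisely that $\gen{x}_{k,-}$ is tight, for $0 \le k < n$ and $x \in P_n$. Next I would observe that $X_k$ is fork-free, since $X$ is a cell (the fork-freeness condition is part of the definition of a cell in \Parr{cell-def}). Since moreover $\gen{x}_{k,-} \subset X_k$ by hypothesis, the immediately preceding lemma (\cite[Proposition 1.4]{street1994parity} as reproduced in this excerpt) applies with $U = \gen{x}_{k,-}$ and $V = X_k$, yielding that $\gen{x}_{k,-}$ is a segment for $\tl_{X_k}$. The dual case for $\gen{x}_{k,+}$ is handled the same way, using that $\streetaxiom{5}$ combined with the globular/duality structure gives tightness of $\gen{x}_{k,+}$ as well — more carefully, one should check that \streetaxiom{5} as literally stated only asserts tightness of $\gen{x}_{i,-}$, so for the target side I would pass to the opposite $\omega$-hypergraph $P^{\op}$ (where sources and targets are swapped), note that $P^{\op}$ is again a parity complex, that $\gen{x}_{k,+}$ in $P$ corresponds to $\gen{x}_{k,-}$ in $P^{\op}$, and that $X$ induces a cell of $P^{\op}$ with the same underlying set $X_k$ in dimension $k$, so the source case applied in $P^{\op}$ gives the target case in $P$.

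The only mild subtlety — and the closest thing to an obstacle — is making sure the ``dual'' argument is clean: one must verify that the notion of cell, of $\tl$, and of tightness all behave correctly under $(-)^{\op}$, i.e.\ that $\tl_{X_k}$ in $P$ is the reversed relation of $\tl_{X_k}$ in $P^{\op}$ and that being a segment is self-dual (it is, since the definition of segment is symmetric in $x$ and $z$). Once that bookkeeping is acknowledged, the proof is essentially a one-line application of the previous lemma together with \streetaxiom{5} and the fact that the $k$-dimensional part of any cell is fork-free. I would therefore write the proof as: ``By \streetaxiom{5}, $\gen{x}_{k,-}$ is tight. Since $X$ is a cell, $X_k$ is fork-free, and $\gen{x}_{k,-} \subset X_k$ by hypothesis, so by the previous lemma $\gen{x}_{k,-}$ is a segment for $\tl_{X_k}$. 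The case of $\gen{x}_{k,+}$ is dual. Hence $x$ satisfies the segment condition.''
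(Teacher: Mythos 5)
Your source case is exactly the paper's argument: \streetaxiom{5} gives tightness of $\gen{x}_{m,-}$, the cell condition gives fork-freeness of $X_m$, and the preceding lemma concludes. The target case, however, contains a genuine gap, and it is precisely the point you flagged as a ``mild subtlety'' and then asserted without proof: the claim that $P^{\mathrm{op}}$ is again a parity complex. All of \streetaxiom{0}--\streetaxiom{4} are indeed self-dual, but \streetaxiom{5} is not: it asserts tightness only of the \emph{source} atoms $\gen{x}_{i,-}$, and tightness itself is an asymmetric notion (it constrains $u^-\cap T^{\pm}$ for $u\tl v$ with $v\in T$). Unwinding the definitions, $\gen{x}_{i,-}$ computed in $P^{\mathrm{op}}$ is $\gen{x}_{i,+}$ computed in $P$, and ``tight in $P^{\mathrm{op}}$'' becomes: for all $u,v$ with $v\tl u$ and $v\in T$, $u^+\cap T^{\mp}=\emptyset$. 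So \streetaxiom{5} for $P^{\mathrm{op}}$ is a condition on the \emph{target} atoms that is not among the hypotheses and does not formally follow from them. If this dualization were free, the target half of the lemma really would be a one-line corollary, and neither Street's corrigenda nor the present paper would need a separate argument for it.

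The paper closes the target case by contradiction rather than by duality. A chain $u_0\tlone_{X_m}\cdots\tlone_{X_m}u_p$ violating the segment property of $\gen{x}_{m,+}$ has witnesses $z_i\in u_i^+\cap u_{i+1}^-$; fork-freeness of $X_m$ forces $z_0\in\gen{x}_{m,+}^{\pm}$ and $z_{p-1}\in\gen{x}_{m,+}^{\mp}$. Relevance (\forestaxiom{2}) gives $\gen{x}_{m+1,+}^{\pm}=\gen{x}_{m,+}\subset X_m$, so Street's Lemma~3.2 produces a cell $Y$ with $Y_m=(X_m\setminus\gen{x}_{m,+})\cup\gen{x}_{m,-}$, to which the already-established source case applies. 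Since $\gen{x}_{m,-}$ and $\gen{x}_{m,+}$ have the same $(-)^{\mp}$ and $(-)^{\pm}$, the endpoints of the chain can be replaced by elements $u_0',u_p'\in\gen{x}_{m,-}$, yielding a chain in $Y_m$ that contradicts the segment property of $\gen{x}_{m,-}$. To salvage your approach you would first have to prove that the dual tightness condition holds for the target atoms, which amounts to this same argument in disguise.
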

\begin{proof}
  
  Let $k,n \in \N$ with $k < n$, $x \in P_n$ and $X$ be a $k$-cell. Suppose
  first that~${\gen{x}_{k,-} \subseteq X_k}$. By \streetaxiom{5}, the
  set~$\gen{x}_{k,-}$ is tight, so that, by~\cref{prop:tightness-segment},
  $\gen{x}_{k,-}$ is a segment for $\tl_{X_k}$.

  Now suppose that~$\gen{x}_{k,+} \subseteq X_k$. By contradiction, assume
  that~$\gen{x}_{k,+}$ is not a segment for~$\tl_{X_k}$. By definition of
  $\tl_{X_k}$, there exist $p > 1$ and $u_0,\ldots,u_p \in X_k$ such that
  \[
    u_0,u_p \in \gen{x}_{k,+},
    \quad
    u_1,\ldots,u_{p-1} \not \in \gen{x}_{k,+}
    \qtand
    u_i \tlone_{X_k} u_{i+1}.
  \]
  By definition of $\tlone_{X_k}$, there exist $z_0,\ldots,z_{p-1}$ such that
  $z_i \in u_i^+ \cap u_{i+1}^-$. Note that ${z_0 \in \gen{x}_{k,+}^{\pm}}$.
  Indeed, if $z_0 \in v^-$ for some $v \in X_k$, then, since $X_k$ is fork-free,
  ${v = u_1}$, so $v \not \in \gen{x}_{k,+}$. Similarly, we have~${z_{p-1} \in
    \gen{x}_{k,+}^{\mp}}$. Since $x$ is relevant by~\forestaxiom{2}, we have
    $\gen{x}_{k+1,+}^{\pm} = \gen{x}_{k,+} \subseteq X_k$.
  By~\cite[Lemma 3.2]{street1991parity} (which is the analogous for parity
  complexes of \Thmr{cell-gluing}) and \forestaxiom{2}, we have that
    $\gen x_{k,-} \cap X_n \subseteq \gen x_{k+1,+}^- \cap X_n =
    \emptyset$
  and the $k$-pre-cell ${Y = \dualcellactivation(X,\gen{x}_{k+1,+})}$ is a
  $k$\cell. Moreover, by \cref{lem:unmoved-eq},
  \[
    Y_k
    = (X_k \cup \gen{x}_{k+1,+}^-) \setminus \gen{x}_{k+1,+}^+
    = (X_k \setminus \gen{x}_{k,+}) \cup \gen{x}_{k,-}\zbox.
  \]
  Thus, $\gen{x}_{k,-} \subseteq Y_k$ and, similarly as above,
  $\gen{x}_{k,-}$ is a segment for~$\tl_{Y_k}$. Since
    $\gen{x}_{k,-}^{\mp} = \gen{x}_{k,+}^{\mp}$
    and
    $\gen{x}_{k,-}^{\pm} = \gen{x}_{k,+}^{\pm}$, 
  there
  exist $\tilde u_0,\tilde u_p \in \gen{x}_{k,-}$ such that $z_0 \in \tilde u_0^+$ and $z_{p-1}
  \in \tilde u_p^-$. So
  \[
    \tilde u_0 \tlone_{X_k} u_1 \tlone_{X_k} \!\cdots\, \tlone_{X_k}
    u_{p-1} \tlone_{X_k} \tilde u_p
  \]
  with $u_1,\ldots,u_{p-1} \not \in \gen{x}_{k,-}$ (since $\gen x_{k+1,+}^- \cap
  X_n = \emptyset$), contradicting the fact that $\gen{x}_{k,-}$ is a segment
  for $\tl_{Y_k}$. Thus, $\gen{x}_{k,+}$ is a segment for~$\tl_{X_k}$.
  Hence,~$x$ satisfies the segment condition.
\end{proof}
\noindent We conclude that parity complexes are embedded into torsion-free
complexes:
\begin{theo}
  \label{thm:pc-is-gpc}
  Given a parity complex~$P$ which satisfies \forestaxiom{2} and
  \forestaxiom{4}, $P$ is a torsion-free complex.
\end{theo}
\begin{proof}
  \forestaxiom{0} is a consequence of \streetaxiom{0}. \forestaxiom{1} is a
  consequence of \streetaxiom{3}.
  And \forestaxiom{3} is a consequence of \Lemr{pc-segment}.
\end{proof}
\begin{remark}
  Given $P$ as in \cref{thm:pc-is-gpc}, the \ocat~$\Cell(P)$ of cells of the
  parity complex~$P$ is, of course, exactly the \ocat~$\Cell(P)$ of cells of the
  torsion-free complex~$P$.
\end{remark}


\subsection{Embedding pasting schemes}
\label{ssec:enc-johnson}
In this section, we show that loop-free pasting schemes are a particular case of
torsion-free complexes, under the caveat that we only consider loop-free pasting
schemes that satisfy \forestaxiom{4} since, like for parity complexes, loop-free
pasting schemes do not induce free $\omega$\categories in general. We think that
it is a reasonable requirement since we also believe that \forestaxiom{4} is the
minimal additional condition to add to the axioms of loop-free pasting schemes
for this property to hold.

In order to embed pasting schemes into torsion-free complexes, our main concerns
will be to derive Axioms~\forestax{2} and~\forestax{3} from Axioms~\johnsonax{3}
and~\johnsonax{4}. For this purpose, we will need to relate the cells of
torsion-free complexes with the wfs's (defined in \Cref{ssec:johnson}), using
\clwf fgs's (defined in \Cref{text:cl-max-cells}) as an intermediate. In fact,
we will prove that the latter are exactly the wfs's. First, we prove a technical
result about the relations~$\jB$ and~$\jE$:

\begin{lem}
  \label{lem:rel-switch}
  Let $P$ be a pasting scheme, $k,n \in \N$ with $k < n$, $x \in P_n$ and $y \in P_k$.
  If $x {\jB^n_{n-1}} {\clrel^{n-1}_k} y$ then
    $y \in \jB^{n}_k(x)$
    or
    $x {\jE^{n}_{n-1}} {\clrel^{n-1}_k} y$.
  Dually, if $x \comprel{\jE^{n}_{n-1}}{\clrel^{n-1}_k} y$ then
    $y \in \jE^{n}_k(x)$
    or
    ${x \comprel{\jB^{n}_{n-1}}{\clrel^{n-1}_k} y}$.
\end{lem}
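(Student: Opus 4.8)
I plan to prove the statement by induction on $n - k$, reducing the relation $\clrel^{n-1}_k$ to a chain of one-step $(-)^{\eps}$ moves and using the defining biconditional of pre-pasting schemes (the fourth clause in the definition of pre-pasting schemes) together with Johnson's axiom \johnsonaxiom{1}. The key translation is that $x \, \jB^n_{n-1} \, \clrel^{n-1}_k \, y$ means there is a generator $z \in P_{n-1}$ with $z \in \jB^n_{n-1}(x) = x^-$ and $z \, \clrel^{n-1}_k \, y$, i.e. $y$ is reachable from $z$ by iterated sources and targets. I would handle the two halves of the statement ($\jB$ and $\jE$) simultaneously since they are dual.

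**Key steps.** First I would set up the base case $n - 1 = k$, i.e.\ $z \in P_k$ and $z \, \clrel^k_k \, y$; since $\clrel^k_k$ is the identity relation, $z = y$, so $y \in \jB^n_{n-1}(x) = \jB^n_k(x)$ and the first disjunct holds directly. For the inductive step, decompose $z \clrel^{n-1}_k y$ as a first step $y' \in z^{\eps}$ for some $\eps \in \set{-,+}$ followed by $y' \clrel^{n-2}_k y$, with $y' \in P_{n-2}$. If $\eps = -$, then $z \in x^-$ and $y' \in z^-$ give $x \, \comprel{\jB^n_{n-1}}{\jB^{n-1}_{n-2}} \, y'$; if $\eps = +$, then $y' \in z^+$ gives $x \, \comprel{\jB^n_{n-1}}{\jE^{n-1}_{n-2}} \, y'$. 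Now \johnsonaxiom{1} (the first bullet, applied with $\genrel = \jB$ in the first case and $\genrel = \jE$ in the second, at level $n-1 > k' := n-2$) converts each of these into: either $x \, \jB^n_{n-2} \, y'$, or $x \, \comprel{\jE^n_{n-2}}{\genrel^{n-2}_{n-2}} \, y' = x \, \jE^n_{n-2} \, y'$. Wait --- I need to recheck indices: \johnsonaxiom{1} is stated for $x \in P_n$ and $y \in P_k$ with the composites $\jE^n_{n-1} \genrel^{n-1}_k$ and $\jB^n_{n-1}\genrel^{n-1}_k$, so I should apply it with the outer index being $n$, inner index $n-2$, and $\genrel \in \set{\jB,\jE}$ chosen as above. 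This yields $x \, \jB^n_{n-2} \, y'$ or $x \, \comprel{\jE^n_{n-1}}{\genrel^{n-1}_{n-2}} \, y'$. In the first subcase I have $x \, \comprel{\jB^n_{n-2}}{\clrel^{n-2}_k} \, y$, and the induction hypothesis (on $n - k$, now with the pair $(n-2, k)$ or rather treating $\jB^n_{n-2}$ as a composite $\jB^n_{n-2} = \jB^n_{n-2}$ and re-expanding) gives $y \in \jB^n_k(x)$ or $x \, \jE^n_{n-1} \clrel^{n-1}_k \, y$; in the second subcase I have $x \, \jE^n_{n-1} \clrel^{n-1}_k \, y$ directly after re-expanding. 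Either way one of the two disjuncts of the conclusion holds.

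**Cleaning up the induction.** Since the statement as written uses $\jB^n_{n-1}$ not $\jB^n_{j}$ for intermediate $j$, I would actually phrase the induction hypothesis as the more uniform claim: for all $k \le j < n$, if $x \, \comprel{\jB^n_{j}}{\clrel^{j}_k} \, y$ then $y \in \jB^n_k(x)$ or $x \, \comprel{\jE^n_{j}}{\clrel^{j}_k} \, y$ (and dually), proved by downward induction on $j$ with base $j = k$. This matches the shape of the argument above cleanly: the step from $j$ to... no, from $j$ down toward $k$, using one application of \johnsonaxiom{1} at level... again I must be careful: \johnsonaxiom{1} relates level $n$ to level $k$ directly via a \emph{single} intermediate level $n-1$, so the natural move is instead to induct on the length of the $\clrel$-chain and repeatedly invoke \johnsonaxiom{1} together with the defining biconditional of $\jB^{j+1}_j$ versus the one-step structure. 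I would organize it so each inductive step peels off one $z^\eps$ edge.

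**Main obstacle.** The bookkeeping of indices and the precise form of \johnsonaxiom{1} is where care is needed: \johnsonaxiom{1} is phrased with $\jE^n_{n-1}$ and $\jB^n_{n-1}$ as the \emph{outermost} relation, so to feed in a chain $z \clrel^{n-1}_k y$ with $z \in x^-$ I must first recognize $\clrel^{n-1}_k$ as built from $\jB$ and $\jE$ one-step relations and push the axiom through level by level. The genuinely delicate point is ensuring that the alternative disjunct produced by \johnsonaxiom{1} --- namely $x \comprel{\jE^n_{j}}{\genrel^{j}_k} y$ --- is exactly (after collapsing $\jE^n_j$ back through the $\jE$-defining clause, or rather keeping it as $\jE^n_{n-1}\clrel^{n-1}_k$) the second disjunct we want, rather than some strictly weaker statement; this works because $\genrel^j_k$ composed further is again a $\clrel$-chain, and because the $\jE$ relations satisfy the same defining biconditional. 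I do not expect any deep difficulty beyond this index-chasing, so the proof should be short; I would present it as a clean induction on the length of a chain witnessing $\clrel^{n-1}_k$, with a single invocation of \johnsonaxiom{1} per step.
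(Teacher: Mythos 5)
Your overall shape (induction on $n-k$, refining the $\clrel$-chain into one-dimension-at-a-time steps, and invoking \johnsonaxiom{1}) matches the paper's, but the proposal is missing the one idea that makes the argument close, and the recursion you describe does not terminate as written. The problem is exactly the one you flag and then leave unresolved: \johnsonaxiom{1} only applies to composites whose intermediate level is $n-1$. After one application you may land in the disjunct $x \jB^n_{n-2} y'$, and your plan is to recurse on a generalized statement about $\comprel{\jB^n_j}{\clrel^j_k}$; but to apply \johnsonaxiom{1} again you must re-expand $x \jB^n_{n-2} y'$ through level $n-1$ (the defining biconditional gives some $u'\in\jB^n_{n-1}(x)$ with $y'\in\jE^{n-1}_{n-2}(u')$ or $y'\in\jB^{n-1}_{n-2}(u')$), and then you are back to a hypothesis of the form $x\,\comprel{\jB^n_{n-1}}{\clrel^{n-1}_k}\,y$ with no measure that has decreased: neither $n-k$ nor the chain length goes down, since the new witness $u'$ heads a chain of the same length. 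So ``one invocation of \johnsonaxiom{1} per step'' does not actually peel anything off.

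The paper's proof supplies the missing ingredient. It argues contrapositively (assume $y\notin\jB^n_k(x)$, hence WLOG $\neg(x\,\comprel{\jB^n_{n-1}}{\jE^{n-1}_k}\,y)$) and chooses the witness $u\in\jB^n_{n-1}(x)$ with $u\clrel^{n-1}_k y$ to be \emph{minimal for $\tl$}, which is possible because $x^-$ is finite and $\tl$ is acyclic by \johnsonaxiom{2}. If the chain from $u$ starts with a source step $v\in u^-$, minimality forbids any $u'\in x^-$ with $v\in u'^+$ (such a $u'$ would satisfy $u'\tlone u$ and $u'\clrel^{n-1}_k y$), hence $\neg(x\,\jB^n_{n-2}\,v)$, and \johnsonaxiom{1} is then \emph{forced} into its second disjunct, giving $x\,\comprel{\jE^n_{n-1}}{\clrel^{n-1}_k}\,y$ outright, with no recursion along the chain at all. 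If the chain starts with a target step, the induction hypothesis one dimension down, applied to $u$ itself together with $\neg(u\,\jE^{n-1}_k\,y)$, reduces to the previous case. Without the minimal-$\tl$ witness (or an equivalent device such as an auxiliary descent on $\tl$ among witnesses in $x^-$), your argument has no way to exclude the $x\,\jB^n_{n-2}\,y'$ branch and no decreasing measure, so this is a genuine gap rather than mere index bookkeeping.
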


\begin{proof}
  We do an induction on $n - k$. If $k = n-1$, the result is trivial. If $k= n -
  2$, the result is a consequence of \johnsonaxiom{1}. So suppose that $k < n -
  2$. We will only prove the first part, since the second is dual. So assume
  that $y \not \in \jB^n_k(x)$. By the definition of $\jB$, we have
  \[
    \neg (x \comprel{\jB^n_{n-1}}{\jB^{n-1}_k} y) \quad \text{or} \quad \neg (x
    \comprel{\jB^n_{n-1}}{\jE^{n-1}_k} y).
  \]
  By symmetry, we can suppose that $\neg (x \comprel{\jB^n_{n-1}}{\jE^{n-1}_k}
  y)$. Let $u\in P_{n-1}$ be minimal for~$\tl$ such that
    ${x \jB^{n}_{n-1} u
      \clrel^{n-1}_k y}$.
  Then, there are two possible cases: either $u
  \comprel{\jB^{n-1}_{n-2}}{\clrel^{n-2}_k} y$ or $u
  \comprel{\jE^{n-1}_{n-2}}{\clrel^{n-2}_k} y$.

  \medskip\noindent In the first case, let $v \in P_{n-2}$ be such that $u
  \jB^{n-1}_{n-2} v \clrel^{n-2}_k y$. By the minimality of $u$, we have
    $\neg (x
    \comprel{\jB^{n}_{n-1}}{\jE^{n-1}_{n-2}} v)$,
  so $\neg (x \jB^n_{n-2} v)$ by definition of $\jB$. By \johnsonaxiom{1},
  we have $x \comprel{\jE^{n}_{n-1}}{\jE^{n-1}_{n-2}} v$. So $x
  \comprel{\jE^{n}_{n-1}}{\clrel^{n-1}_k} y$.

  \medskip\noindent In the second case, since we supposed $\neg (x
  \comprel{\jB^n_{n-1}}{\jE^{n-1}_k} y)$, we have $\neg (u \jE^{n-1}_k y)$. By
  induction hypothesis, we deduce~${u \comprel{\jB^{n-1}_{n-2}}{\clrel^{n-2}_k}
    y}$ and we can conclude using the first case.
\end{proof}
\noindent Then, we prove that the source and target of wfs's computed by the
operations defined for pasting schemes in \Cref{ssec:johnson} are the same as the
ones computed with the operations defined for closed fgs's in
\Cref{text:cl-max-cells}:
\begin{lem}
  \label{lem:eq-csrctgt-clsrctgt}
  Let $P$ be a loop-free pasting scheme. Given $n \in \N^*$, $\eps \in
  \set{-,+}$ and an $n$-wfs~$X$ of~$P$, we have~${\csrctgt\eps_{n-1} (X) =
    \clsrctgt\eps_{n-1} (X)}$.
\end{lem}
\begin{proof}
  We only prove the case $\eps = -$. Recall that 
  \[
    \csrc_{n-1} (X) = X \setminus \jE(X)
    \qtand
    \clsrc_{n-1} (X) = \clset(X \setminus (X_n \cup \clset(X_n^+))).
  \]
  We first prove $\clsrc_{n-1} (X) \subseteq \csrc_{n-1} (X)$, that is,
  \[
    \clset(X \setminus (X_n \cup \clset(X_n^+))) \subseteq  X \setminus \jE(X).
  \]
  Since $X \setminus \jE(X)$ is closed (by~\cite[Theorem
  12]{johnson1989combinatorics}), it is equivalent to
  $
    X \setminus (X_n \cup \clset(X_n^+)) \subseteq  X \setminus \jE(X)
    $
  which is itself equivalent to
  $
    \jE(X) \subseteq (X_n \cup \clset(X_n^+))
    $
  which holds. We now prove that we have~$\csrc_{n-1} (X) \subseteq \clsrc_{n-1} (X)$, that is,
  \[
    X \setminus \jE(X) \subseteq  \clset(X \setminus (X_n \cup \clset(X_n^+))) = \clsrc_{n-1}(X).
  \]
  Let $k \in \N_{n-1}$ and $x \in (X \setminus \jE(X))_k$. If $x \not \in
  \clset(X_n^+)$ then $x \in \clsrc_{n-1}(X)$. So suppose that $x \in \clset(X_n^+)$.
  Since $\jE(X)_{n-1} = X_n^+$, it implies that ${k < n-1}$. By definition of
  $\clset(X_n^+)$, there exists $y \in X_n$ such that $y {\jE^n_{n-1}}
  {\clrel^{n-1}_k} x$ and, by \johnsonaxiom{2}, we can take $y$ minimal
  for $\tl$ satisfying this property. By \Lemr{rel-switch}, it holds that $y
  \comprel{\jB^n_{n-1}}{\clrel^{n-1}_k} x$. Let $z \in P_{n-1}$ be such that $y
  \jB^n_{n-1} z \clrel^{n-1}_k x$. Then, there is no ${\bar y \in X_n}$ such that
  $\bar y \jE^n_{n-1} z$: otherwise, $\bar y \comprel{\jE^n_{n-1}}{\clrel^{n-1}_k} x$
  and $\bar y \tl y$, contradicting the minimality of $y$. So $z \not \in
  \clset(X_n^+)$ and $z \clrel x$. It implies that $z \in X \setminus (X_n \cup
  \clset(X_n^+))$ and $x \in \clsrc_{n-1} (X)$.
\end{proof}
\noindent We can then prove the inclusion of wfs's into \clwf fgs's:
\begin{prop}
  \label{prop:wfs-are-clwf}
  Let $P$ be a loop-free pasting scheme. Given $n\in\N$ and~$X \in
  \wfset(P)_n$, we have~${X \in \MClosed(P)_n}$.
\end{prop}
\begin{proof}
  We prove this lemma by induction on~$n$. If $n = 0$, the result is trivial. So
  suppose~${n > 0}$. Since~$X$ is well-formed, $X_n$ is fork-free. Moreover, by
  \Lemr{eq-csrctgt-clsrctgt}, for $\eps \in \set{-,+}$, we have
  that~${\clsrctgt\eps_{n-1}(X) = \csrctgt\eps_{n-1}(X)}$ is a well-formed
  $(n{-}1)$-fgs. By induction, $\clsrctgt\eps_{n-1}(X) \in \MClosed(P)_{n-1}$.
  Moreover, when $n \ge 2$, since $\csrctgt\eps_{n-2} \circ \csrc_{n-1} (X) =
  \csrctgt\eps_{n-2} \circ \ctgt_{n-1}(X)$, by \Lemr{eq-csrctgt-clsrctgt},
  \[
    \clsrctgt\eps_{n-2} \circ \clsrc_{n-1} (X) = \clsrctgt\eps_{n-2} \circ
    \cltgt_{n-1}(X)\zbox.
  \]
  Hence, $X\in \MClosed(P)_n$.
\end{proof}
\noindent Next, we prove an analogue of the gluing \Cref{thm:cell-gluing} for wfs's:
\begin{lem}
  \label{lem:pasting-on-gen}
  Let $P$ be a loop-free pasting scheme,~$n \in \N$,~$X$ be an $n$-wfs, $S
  \subseteq P_{n+1}$ be a finite subset with $S$ fork-free and $S^{\mp}
  \subseteq X$, and $Y = X \cup \clset(S)$. Then, $Y$ is an $(n{+}1)$-wfs of~$P$
  and $\csrc_{n} (Y) = X$.
\end{lem}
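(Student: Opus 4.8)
The plan is to verify the three defining conditions for $Y = X \cup \clset(S)$ to be an $(n{+}1)$-wfs --- closedness, fork-freeness of $Y_{n+1}$, and well-formedness of $\csrc Y$ and $\ctgt Y$ --- and along the way to identify $\csrc Y$ with $X$. Since $\clset$ only adds generators of strictly lower dimension, $Y_{n+1} = S$, which is fork-free by hypothesis, and $Y$ is closed as a union of the closed sets $X$ (closed because it is a wfs) and $\clset(S)$. As a first simplification, from $\clset(S) = S \cup \clset(S^-)\cup\clset(S^+)$ and $S^- \subseteq S^\mp \cup S^+ \subseteq X \cup S^+$, closedness of $X$ gives $\clset(S^-)\subseteq X\cup\clset(S^+)$, hence $Y = X\cup S\cup\clset(S^+)$.

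To show $\csrc Y = X$ I would use $\csrc Y = Y\setminus\jE^{n+1}(Y) = Y\setminus\jE^{n+1}(S)$, and check that $X\cap\jE^{n+1}(S)=\emptyset$ and $Y\setminus X\subseteq\jE^{n+1}(S)$. The first is Johnson's Axiom~\johnsonaxiom{5}(a) applied to the $n$-wfs $X$ against each generator of $S$. For the second, $Y\setminus X = S\cup(\clset(S^+)\setminus X)$ with $S\subseteq\jE^{n+1}(S)$, so it remains to treat $c\in\clset(S^+)$ with $c\notin X$: picking $x_0\in S$ and $b_0\in x_0^+$ with $b_0\clrel c$, one has $x_0\comprel{\jE^{n+1}_n}{\clrel} c$, and \Lemr{rel-switch} gives either $c\in\jE^{n+1}(x_0)\subseteq\jE^{n+1}(S)$ (done), or $c\in\clset(b_1)$ for some $b_1\in x_0^-\subseteq X\cup S^+$. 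The case $b_1\in X$ would force $c\in\clset(b_1)\subseteq X$, a contradiction, so $b_1\in S^+$, say $b_1\in x_1^+$ with $x_1\in S$; then $x_1\neq x_0$ (otherwise a direct loop, contradicting \johnsonaxiom{2}) and $x_1\tl_S x_0$. Iterating produces a strictly $\tl_S$-descending sequence in the finite acyclic poset $(S,\tl_S)$, so the iteration must terminate in the ``done'' branch. Hence $\csrc Y = X$, which is well-formed by hypothesis.

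The main obstacle is the remaining condition: that $\ctgt Y$ is a well-formed $n$-fgs. I would first establish the explicit description $\ctgt Y = \clset\bigl((X_n\cup S^+)\setminus S^-\bigr)$ by an argument dual to the previous one --- $\ctgt Y = Y\setminus\jB^{n+1}(Y)$ is closed by \cite[Theorem~12]{johnson1989combinatorics}, its $n$-dimensional part is $T := (X_n\cup S^+)\setminus S^-$, and a \Lemr{rel-switch}-chase together with acyclicity shows every surviving lower generator lies in $\clset(T)$. To see that $\clset(T)$ is a wfs, I would pass through the cell machinery: by \Lemr{wfs-are-clwf} and \Thmr{ctocl-m-iso} there is a cell $\tilde X\in\Cell(P)_n$ with $\ctocl(\tilde X) = X$ and $\tilde X_n = X_n$; since $S^\mp\subseteq X_n = \tilde X_n$, the set $S$ is glueable on $\tilde X$, so \Thmr{cell-gluing} gives that $\cellactivation(\tilde X,S)$ is a cell whose top part is exactly $T$, whence $T$ is fork-free. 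Finally, because replacing $S^-$ by $S^+$ in $X_n$ leaves the $(n{-}1)$-source and $(n{-}1)$-target untouched (the generators of $S$ being relevant, so $x^{-\mp} = x^{+\mp}$ and $x^{-\pm} = x^{+\pm}$ for $x\in S$, as used in the proof of \Thmr{cell-gluing}), one obtains $\csrc(\ctgt Y) = \csrc X$ and $\ctgt(\ctgt Y) = \ctgt X$, which are wfs because $X$ is; alternatively this last point can be phrased through \Lemr{ctocl-compat-srctgt} and \Lemr{eq-csrctgt-clsrctgt}. Assembling closedness, fork-freeness of $Y_{n+1}=S$, well-formedness of $\csrc Y = X$, and well-formedness of $\ctgt Y = \clset(T)$, we conclude that $Y$ is an $(n{+}1)$-wfs with $\csrc Y = X$.
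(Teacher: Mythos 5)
Your first half is sound and stays within the pasting-scheme axioms: $Y_{n+1}=S$ is fork-free by hypothesis, $Y$ is closed as a union of closed sets, and identifying $\csrc Y=X$ via \johnsonaxiom{5} together with a \Lemr{rel-switch}/acyclicity chase showing $Y\setminus X\subseteq\jE^{n+1}(S)$ is a legitimate, if more laborious, alternative to the paper's route.

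The gap is in your treatment of $\ctgt Y$. There you invoke \Thmr{ctocl-m-iso} and \Thmr{cell-gluing}, both of which require $P$ to satisfy \forestaxiom{2} and \forestaxiom{3}. At the point where \Lemr{pasting-on-gen} sits, neither has been established for a loop-free pasting scheme: relevance is \Lemr{ps-relevant} and the segment condition is \Lemr{ps-segment}, both proved \emph{after} this lemma, and \Lemr{ps-segment} is itself proved via \Lemr{mclosed-are-wf}, which rests on \Lemr{pasting-on-gen}. So as written your argument is circular. One could imagine repairing it with a careful simultaneous induction on dimension (the dependencies only descend in dimension), but setting that up is a substantial missing piece, not a detail. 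The paper avoids the cell machinery entirely: it inducts on $|S|$, picks a $\tl$-minimal $x\in S$ (so that $x^-\subseteq S^\mp\subseteq X$), glues $\clset(x)$ onto $X$ using Johnson's Proposition~8 for the single-generator case, gets $\csrc(X\cup\clset(x))=X$ from \johnsonaxiom{5}, checks that $S'=S\setminus\set{x}$ is still glueable on $X'=\ctgt(X\cup\clset(x))$, and then composes the two pieces using the $\omega$-category structure of $\wfset(P)$ given by Johnson's Theorem~12. If you want to keep your direct verification of the wfs conditions, the fork-freeness of $(X_n\cup S^+)\setminus S^-$ and the well-formedness of $\ctgt Y$ must be obtained by means internal to the pasting-scheme axioms (for instance via the single-generator case plus induction), not by appealing to results whose hypotheses are only established downstream of this lemma.
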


\begin{proof}
  We show this lemma by induction on $k = \setsize S$. If $k = 0$, the result is
  trivial. If $k = 1$, the result is a consequence of~\cite[Proposition
  8]{johnson1989combinatorics}. So suppose that~${k > 1}$. By
  \johnsonaxiom{2}, take $x \in S$ minimal for $\tl$. By minimality, we
  have
  $
    x^- \subseteq S^{\mp} \subseteq X
    $.
  Using~\cite[Proposition 8]{johnson1989combinatorics}, $X \cup \clset(x)$ is
  well-formed. By \johnsonaxiom{5}, $X \cap \jE(x) = \emptyset$, so we have
  that~$\csrc_n(X \cup \clset(x)) = X$. Let
  \[
    \bar X = \ctgt_n(X \cup \clset(x))
    \qtand
    \bar S = S \setminus \set{x}. 
  \]
   We have
   \abovelongtoshortskip
  \begin{align*}
    \bar S^{\mp} \subseteq \bar X_n &\Leftrightarrow \bar S^- \subseteq \bar X_n \cup \bar S^+  \Leftrightarrow S^- \subseteq \bar X_n \cup \bar S^+ \cup x^- \\
                         &\Leftrightarrow S^- \subseteq (X_n \setminus x^-) \cup x^+  \cup \bar S^+ \cup
      x^-  \Leftrightarrow S^- \subseteq X_n \cup S^+ \Leftrightarrow S^{\mp} \subseteq X_n
  \end{align*}
  so $\bar S^{\mp} \subseteq \bar X$. By induction, $\bar X \cup \clset(\bar S)$ is well-formed
  and $\csrc_n(\bar X \cup \clset(\bar S)) = \bar X$. Since $\wfset(P)$ has the structure of
  an $\omega$-category by~\cite[Theorem 12]{johnson1989combinatorics}, we can
  compose $X \cup \clset(x)$ and $\bar X \cup \clset(\bar S)$. So
  \[
    X \cup \clset(S) = X \cup \clset(x) \cup \bar X \cup \clset(\bar S)
  \]
  is well-formed and $\csrc_n(X \cup \clset(S)) = X$.
\end{proof}
\noindent We can now prove the converse inclusion of \clwf fgs's into wfs's:
\begin{prop}
  \label{prop:mclosed-are-wf}
  Let $P$ be a loop-free pasting scheme. Given $n \in \N$ and~$X \in
  \MClosed(P)_n$, we have~$X \in \wfset(P)_n$.
\end{prop}

\begin{proof}
  We prove this lemma by induction on~$n$. If~${n = 0}$, the result is trivial.
  So suppose $n > 0$. Let~${Y = \clsrc_{n-1} (X)}$. By definition
  of~$\MClosed(P)$, $Y \in \MClosed(P)$ and, by induction, $Y \in \wfset(P)$. By
  definition of~$\clsrc$, we have~${X_n^{\mp} \subseteq Y}$. Moreover, by
  \Lemr{pasting-on-gen}, $Y \cup \clset(X_n)$ is well-formed. But~${Y = \clset(X
  \setminus (X_n \cup \clset(X_n^+)))}$, so that $X = Y \cup \clset(X_n)$ is
  well-formed.
\end{proof}
\noindent We now give a simple form for the sources and targets of atomic wfs's:
\begin{lem}
  \label{lem:srctgt-gen}
  Let $P$ be a loop-free pasting scheme. Given $i,n \in \N$ such that $i < n$, $\eps
  \in \set{-,+}$ and~${x \in P_n}$, we have
  $
    \csrctgt\eps_i (\clset(x)) = \clset( \gen{x}_{i,\eps})
    $.
\end{lem}
\begin{proof}
  By symmetry, we can suppose that $\eps = -$. We compute that
  \begin{align*}
    \csrc_i (\clset(x)) &= \csrc_i (\princtocl(\set{x})) \\
    &= \princtocl(\prsrc_i (\set{x})) && \text{(by \Cref{prop:princtocl-compat-srctgt,lem:eq-csrctgt-clsrctgt})} \\
    &= \princtocl(\gen{x}_{i,-}) = \clset (\gen{x}_{i,-})\zbox.
  \end{align*}
  \vskip-\baselineskip\vskip-\belowdisplayskip
\end{proof}
\noindent Using the above lemma, we deduce the relevance of the generators:
\begin{lem}
  \label{lem:ps-relevant}
  Let $P$ be a loop-free pasting scheme. Given $n \in \N$ and $x \in P_n$, $x$
  is relevant.
\end{lem}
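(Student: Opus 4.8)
The plan is to verify directly that the atom $\gen{x}$ meets the two defining conditions of a cell (as in~\Parr{cell-def}): that every $\gen{x}_{i,\eps}$ ($0\le i\le n$, $\eps\in\set{-,+}$) is fork-free, and that $\gen{x}_{i+1,\eps}$ moves $\gen{x}_{i,-}$ to $\gen{x}_{i,+}$ for $0\le i<n$. If $n=0$ the atom is just $(\set{x})$, which is trivially a cell, so I would assume $n>0$. The key input is \johnsonaxiom{3}, which gives $\clset(x)\in\wfset(P)$. Since the source and target of a wfs are again wfs and $\wfset(P)$ is a globular set (\cite[Theorem~12]{johnson1989combinatorics}), every iterated source/target $\csrctgt{\eps}_i\clset(x)$ is an $i$-wfs, and by \Lemr{srctgt-gen} it equals $\clset(\gen{x}_{i,\eps})$ for $i<n$; using that $\clset(S)$ has dimension-$i$ part $S$ whenever $S\subseteq P_i$, one reads off $\gen{x}_{i,\eps}=(\csrctgt{\eps}_i\clset(x))_i$, which is fork-free because the top-dimensional part of a wfs is fork-free. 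Together with the trivial case $\gen{x}_{n,\eps}=\set{x}$ (a singleton of $P_n$ with $n>0$), this settles the fork-freeness conditions.

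For the movement conditions I would split on $i$. When $i=n-1$, we have $\gen{x}_{n,\eps}=\set{x}$, $\gen{x}_{n-1,-}=\set{x}^{\mp}=x^-\setminus x^+$ and $\gen{x}_{n-1,+}=\set{x}^{\pm}=x^+\setminus x^-$, and a one-line set computation shows $\set{x}$ moves the first to the second. When $i<n-1$, the point is that, by \Lemr{street-2.1}, since $\gen{x}_{i+1,-}^{\mp}=\gen{x}_{i,-}$ and $\gen{x}_{i,-}\cap\gen{x}_{i+1,-}^{+}=\gen{x}_{i+1,-}^{\mp}\cap\gen{x}_{i+1,-}^{+}=\emptyset$, the set $\gen{x}_{i+1,-}$ automatically moves $\gen{x}_{i,-}$ to $\gen{x}_{i+1,-}^{\pm}$ (and dually $\gen{x}_{i+1,+}$ moves $\gen{x}_{i+1,+}^{\mp}$ to $\gen{x}_{i,+}$), so the movement conditions reduce to the ``globularity of the atom'' identities $\gen{x}_{i+1,-}^{\mp}=\gen{x}_{i+1,+}^{\mp}$ and $\gen{x}_{i+1,-}^{\pm}=\gen{x}_{i+1,+}^{\pm}$. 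To get these, I would note that $\csrc_{i+1}\clset(x)=\clset(\gen{x}_{i+1,-})$ and $\ctgt_{i+1}\clset(x)=\clset(\gen{x}_{i+1,+})$ (\Lemr{srctgt-gen}) are two $(i{+}1)$-wfs which are parallel, since they are the $(i{+}1)$-source and $(i{+}1)$-target of the same wfs $\clset(x)$ (the globular identities give $\csrc\csrc_{i+1}\clset(x)=\csrc_i\clset(x)=\csrc\ctgt_{i+1}\clset(x)$, and likewise for $\ctgt$). Then, computing $\csrc=\clsrc$ and $\ctgt=\cltgt$ via \Lemr{eq-csrctgt-clsrctgt} and comparing dimension-$i$ parts — where $(\clsrc\clset(S))_i=(\clset(S)\setminus(S\cup\clset(S^+)))_i=(S^-\cup S^+)\setminus S^+=S^{\mp}$ for $S\subseteq P_{i+1}$ — yields exactly the two identities. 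This completes the two conditions, so $\gen{x}$ is a cell, i.e.\ $x$ is relevant.

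The main obstacle is not any single hard computation but the bookkeeping needed to pass between the two different-looking notions of boundary: the ``delete all $n$-faces of the top cells'' source/target used by pasting schemes (via $\jB,\jE$ and $\clset$) and the orthogonal-replacement ``movement'' used for the cells of \Parr{cell-def}. Concretely, one must be careful that $\csrctgt{\eps}_i\clset(x)$ really is a wfs (hence has fork-free top part), that \Lemr{srctgt-gen} identifies it with $\clset(\gen{x}_{i,\eps})$, and that taking dimension-$i$ parts commutes with all the $\clset$'s involved — these are the places where a slip would break the argument. No use of \forestaxiom{4} is needed here.
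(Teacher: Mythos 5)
Your proof is correct and follows essentially the same route as the paper: both derive fork-freeness of the $\gen{x}_{i,\eps}$ from \johnsonaxiom{3} together with \Lemr{srctgt-gen}, and both reduce the movement conditions to the atom globularity identities $\gen{x}_{i+1,-}^{\pm}=\gen{x}_{i,+}$ and $\gen{x}_{i+1,+}^{\mp}=\gen{x}_{i,-}$. The only (cosmetic) difference is that you obtain these identities by computing the dimension-$i$ parts of $\clsrc$ and $\cltgt$ of the parallel well-formed sets $\csrctgt{\eps}_{i+1}\clset(x)$ directly, whereas the paper routes the same fact through the maximal-well-formed machinery ($\prsrctgt$ and the globularity built into the definition of \mwf{} sets); you are also more explicit than the paper about the reduction of movement to these identities via \Lemr{street-2.1}.
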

\begin{proof}
  By~\johnsonaxiom{3}, $\clset(x)$ is well-formed. So, for $i \in
  \N_{n-1}$ and~$\eps \in \set{-,+}$, $\csrctgt\eps_i (\clset(x))$ is
  well-formed. Then, by \Lemr{srctgt-gen},~$\gen{x}_{i,-}$ and~$\gen{x}_{i,+}$
  are fork-free. We show that~${\gen{x}_{i+1,-}^{\pm} = \gen{x}_{i,+}}$
  and~${\gen{x}_{i+1,+}^{\mp} = \gen{x}_{i,-}}$. We have
  \[
    \gen{x}_{n,-}^\pm =
    \gen{x}^\pm = x^+ = \gen{x}_{n-1,+}
  \]
  and, similarly, $\gen{x}_{n,+}^\mp = \gen{x}_{n-1,-}$. For $i \in \N_{n-1}$,
  we have
  \begin{align*}
    \gen{x}_{i+1,-}^\pm
    &= (\ctgt_i(\clset(\gen{x}_{i+1,-})))_i & \text{(by definition of~$\ctgt_i$)} \\
    &= (\ctgt_i \circ \csrc_{i+1}(\clset(x)))_i & \text{(by \cref{lem:srctgt-gen})} \\
    &= (\ctgt_i (\clset(x)))_i & \text{(by globularity)} \\
    &= (\clset(\gen x_{i,+}))_i &\text{(by \cref{lem:srctgt-gen})} \\
    &= \gen{x}_{i,+}
  \end{align*}
  and similarly, $\gen{x}_{i+1,+}^\mp = \gen{x}_{i,-}$. Moreover, we have
  \begin{align*}
    (\gen x_{i,-} \cup \gen x_{i+1,-}^+) \setminus \gen x_{i+1,-}^-
    &= ((\gen x_{i+1,-}^- \setminus \gen x_{i+1,-}^+) \cup \gen x_{i+1,-}^+) \setminus \gen x_{i+1,-}^- \\
    & = \gen x_{i+1,-}^+ \setminus \gen x_{i+1,-}^- = \gen x_{i,+}
  \end{align*}
  and similarly
    $(\gen x_{i,+} \cup \gen x_{i+1,-}^-) \setminus \gen x_{i+1,-}^+
    = \gen x_{i,-}$.
  Thus, $\gen x_{i+1,-}$ moves~$\gen x_{i,-}$ to~$\gen x_{i,+}$ and so
  does $\gen{x}_{i+1,+}$. Hence, $\gen{x}$ is a cell.
\end{proof}

\noindent We now prove that the cells (in the sense of \Cref{ssec:street}) of
pasting schemes are sent to wfs's by~$\ctocl$, and that all the generators
satisfy the segment condition:
\begin{lem}
  \label{lem:ps-segment}
  Let $P$ be a loop-free pasting scheme and $n \in \N$. The following hold:
  \begin{enumerate}[label=(\roman*),ref=(\roman*)]
  \item \label{lem:ps-segment-seg} for $x \in P_n$, $x$ satisfies the segment condition,
  \item \label{lem:ps-segment-wf} for $X \in \Cell(P)_n$, $\ctocl(X) \in \wfset(P)_n$.
  \end{enumerate}
\end{lem}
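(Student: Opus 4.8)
The plan is to prove the two statements simultaneously by induction on $n$. The base case $n=0$ is immediate: (a) is vacuous since there is no $m<0$, and for (b) the set $\ctocl(X)=\clset(X_0)$ is a singleton, hence a $0$-wfs. For the inductive step, assume that both (a) and (b) hold in every dimension strictly below $n$; I would first establish (a) in dimension $n$, and only afterwards (b).

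For (a), take $x\in P_n$, let $m<n$ and let $Z$ be an $m$-cell with $\gen{x}_{m,-}\subseteq Z_m$ (the case $\gen{x}_{m,+}\subseteq Z_m$ being symmetric). If $m=0$ the relation $\tl_{Z_0}$ is empty and there is nothing to prove, so assume $m>0$ and set $W:=\ctocl(Z)=\clset(\cup Z)$. By the induction hypothesis applied to (b) in dimension $m$, we have $W\in\wfset(P)_m$; moreover $W_m=(\cup Z)_m=Z_m$, because $\clset$ only adds generators of strictly smaller dimension and $\cup Z$ contains no generator of dimension $>m$, so in particular $\tl_{W_m}=\tl_{Z_m}$. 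By \Lemr{srctgt-gen}, $\csrc_m(\clset(x))=\clset(\gen{x}_{m,-})$, and since $\gen{x}_{m,-}\subseteq Z_m\subseteq W$ with $W$ closed we get $\csrc_m(\clset(x))\subseteq W$. Applying \johnsonaxiom{4} (first clause, with $k=m$ and $X=W$) then yields that $\gen{x}_{m,-}$ is a segment for $\tl_{W_m}=\tl_{Z_m}$. The target case is handled the same way using the second clause of \johnsonaxiom{4} and $\ctgt_m(\clset(x))=\clset(\gen{x}_{m,+})$. Hence $x$ satisfies the segment condition, which is (a) in dimension $n$.

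For (b), I would observe that, once (a) is available for all generators of dimension $\le n$, the truncated $\omega$-hypergraph $\restrictset{P}{n}$ satisfies \forestaxiom{0}--\forestaxiom{3}: \forestaxiom{0} from \johnsonaxiom{0}, \forestaxiom{1} from \johnsonaxiom{2}, \forestaxiom{2} from \Lemr{ps-relevant}, and \forestaxiom{3} from (a) in dimensions $\le n$, since an $m$-cell of $\restrictset{P}{n}$ with $m<n$ is exactly an $m$-cell of $P$ (and in dimensions $>n$ the generator set is empty, so the segment condition is vacuous). Thus the whole development of \Secr{cat-cells} and \Secr{alt-rep} applies to $\restrictset{P}{n}$. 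For an $n$-cell $X$ of $P$ (equivalently, of $\restrictset{P}{n}$, and $\ctocl(X)$ is the same in both), chaining \Lemr{ctocl-decomp} ($\ctocl(X)=\princtocl(\ctoprinc(X))$), \Lemr{ctoprinc-mprinc} ($\ctoprinc(X)\in\MPrinc(\restrictset{P}{n})_n$), \Lemr{princtocl-m-bij} ($\princtocl$ lands in $\MClosed$) and \Lemr{mclosed-are-wf} ($\MClosed\subseteq\wfset$) gives $\ctocl(X)\in\wfset(P)$, which closes the induction.

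The delicate point is not any single computation but the order of the induction. Statement (b) in dimension $n$ is obtained through the $\omega$-categorical machinery of \Secr{cat-cells} and \Secr{alt-rep}, which presupposes the segment axiom, whereas (a) in dimension $n$ needs (b) only in strictly smaller dimensions. One therefore has to verify that the instances of \Thmr{cell-gluing} (and hence of \Lemr{maximality}, \Lemr{ctoprinc-describe} and \Lemr{ctoprinc-compat-srctgt}) entering the proof of $\ctoprinc(X)\in\MPrinc(\restrictset{P}{n})_n$ only ever invoke the segment condition for generators living in $P_{\le n}$ — which is precisely what passing to $\restrictset{P}{n}$ encapsulates and what makes the simultaneous induction legitimate.
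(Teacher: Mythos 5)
Your proof is correct and follows essentially the same route as the paper: a mutual induction in which (a) at dimension $n$ uses (b) at dimension $m<n$ via \johnsonaxiom{4} and \Lemr{srctgt-gen}, and (b) at dimension $n$ is obtained by truncating $P$ above dimension $n$ so that \forestaxiom{3} holds and the machinery of \Secr{alt-rep} applies (you merely unpack \Thmr{ctocl-m-iso} into the lemmas it is built from). Your closing remark about checking that the truncation really insulates the argument from generators above dimension $n$ is exactly the point the paper's proof relies on implicitly.
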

\begin{proof}
  We prove this lemma by an induction on $n$. If $n = 0$, the result is trivial.
  So suppose that~${n > 0}$.

  We start with the proof of~\ref{lem:ps-segment-seg}. Let $k \in \N_{n-1}$, $x \in P_n$,
  $X$ be a $k$\cell such that $\gen{x}_{k,-} \subseteq X_k$, and~${Y =
    \ctocl(X)}$. By induction, $Y \in \wfset(P)$. Moreover, by
  \Lemr{srctgt-gen},
  \[
    \csrc_k (\clset(x)) = \clset(\gen{x}_{k,-}) \subseteq Y\zbox.
  \]
  So,
  by~\johnsonaxiom{4}, $\gen{x}_{k,-}$ is a segment for $\tl_{Y_k} = \tl_{X_k}$.
  Hence, $x$ satisfies the segment condition.

  We now prove~\ref{lem:ps-segment-wf}. Let $X \in \Cell(P)_n$. By
  \Cref{prop:mclosed-are-wf}, it is enough to show that $\ctocl(X)$ is \clwf. This
  latter property can be obtained from \Thmr{ctocl-m-iso} which requires the
  full segment axiom. But we can consider the restriction of~$P$ to an
  \ohg~$\bar P$ where
  $ \bar P_i = P_i$ for $i \le n$
  and
    $\bar P_i = \emptyset$  for $i > n$.
  By \ref{lem:ps-segment-seg}, $\bar P$ satisfies
  \forestaxiom{3}. Then, using \Thmr{ctocl-m-iso}, $\ctocl(X)$ is \clwf and is
  still \clwf in $P$. Hence, by \Cref{prop:mclosed-are-wf}, $\ctocl(X) \in
  \wfset(P)$.
\end{proof}
\noindent We can conclude the embedding of pasting schemes into torsion-free
complexes:
\begin{theo}
  \label{thm:ps-is-gpc}
  Let $P$ be a loop-free pasting scheme. Then,~$P$ satisfies
  Axioms~\forestax{0}, \forestax{1}, \forestax{2} and~\forestax{3}. In
  particular, if~$P$ satisfies~\forestaxiom{4}, then~$P$ is a torsion-free
  complex.
\end{theo}
\begin{proof}
  The different axioms of torsion-free complexes can be deduced as follows:
  \forestaxiom{0} is a consequence of \johnsonaxiom{0}, \forestaxiom{1} is a
  consequence of \johnsonaxiom{2}, \forestaxiom{2} is a consequence of
  \Lemr{ps-relevant} and \forestaxiom{3} is a consequence of \Lemr{ps-segment}.
\end{proof}
\noindent Moreover, one translates the cells of the pasting scheme to the wfs's
using the operation~$\ctocl$:
\begin{theo}
  \label{thm:cells-wfset-isom}
  Let $P$ be a loop-free pasting scheme. $\ctocl$ is an isomorphism between the
  \ocats~$\Cell(P)$ and~$\wfset(P)$. Moreover, for all $x \in P$,
  $\ctocl(\gen{x}) = \clset(x)$.
\end{theo}
\begin{proof}
  By \Cref{prop:wfs-are-clwf,prop:mclosed-are-wf}, we have
    $\MClosed(P) = \wfset(P)$
  as graded sets and, by \cref{lem:eq-csrctgt-clsrctgt} and the definition
  of~$\unit{}$, $\compcl$ and~$\comp$, the two have the same structure of \ocat.
  Thus, by \Cref{thm:ps-is-gpc,thm:ctocl-m-iso}, $\ctocl \co \Cell(P) \to
  \wfset(P)$ is an isomorphism. Moreover, by \Cref{prop:ctocl-decomp}, for $x
  \in P$, we have
  \[
    \ctocl(\gen{x}) = \princtocl \circ \ctoprinc(\gen{x}) = \princtocl(\set x) =
    \clset(x).\qedhere
  \]
\end{proof}


\subsection{Embedding augmented directed complexes}
\label{ssec:enc-steiner}

\newcommand{\Padc}{P}

In this section, we embed augmented directed complexes with loop-free unital
basis into torsion-free complexes. More precisely, given an adc with a loop-free
unital basis, we prove that the basis induces an \ohg which is a torsion-free
complex such that the \ocat of cells of the adc is isomorphic to the \ocat of
cells of this torsion-free complex. For this purpose, we relate properties
defined for \ohg{}s, like fork-freeness
(\Cref{ssec:hypergraph}) and movement (\Cref{ssec:street}), to analogous
properties in augmented directed complexes, and define translation functions
between the cells of augmented directed complexes and the ones of the associated
\ohg{}s.

\paragraph[Adc's as \texorpdfstring{\protect\ohg{}s}{omega-hypergraphs}]{Adc's as \bmohg{}s}
\parlabel{par:adc-to-ohg}

Here, dually to the translation given in \Cref{ssec:steiner}, we associate a
canonical \ohg to an adc with basis.
Let~$\genadc$ be an adc with a basis~$P$. Note that~$P$ is canonically a graded
set and, in the following, given~$n \in \N$ and~$x \in \Padc_n$, we
write~$\setel{x}$ to refer to~$x$ \emph{as an element of the graded set~$P$}
whereas~$x$ alone refers to~$x$ \emph{as an element of the
  monoid~$\deffreemon_n$}. Given~${n \in \N}$,
\begin{itemize}
\item for~$s \in \deffreemon_n$, we \glossary(Sn){$\montoset{n}(s)$}{the set
    associated to an element of an adc with basis}write~$\montoset{n}(s)$
  for~$\set{\setel{x} \in P_n \mid x \le s}$,
\item for a finite subset~$S \subseteq P_n$, we
  \glossary(Sigman){$\settomon{n}(S)$}{the adc element associated to a
    subset~$S$ of the basis of the adc}write~$\settomon{n}(S)$ for~$\sum_{x \in
    S} x$.

\end{itemize}
From these definitions, we readily have:
\begin{lem}
 \label{lem:settomon-mono}
 For all~$n \in \N$,~$\montoset{n} \circ \settomon{n} = \id{\finsets {P_n}}$.
\end{lem}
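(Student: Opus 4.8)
### The statement to prove

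The lemma asserts that for all $n \geq 0$, the composite $\montoset{n} \circ \settomon{n}$ equals the identity on $P_n$. Recall from \Parr{adc-to-ohg} that for a finite subset $S \subset P_n$, $\settomon{n}(S) = \sum_{x \in S} x \in \deffreemon_n$, and for $s \in \deffreemon_n$, $\montoset{n}(s) = \set{\setel{x} \in P_n \mid x \le s}$.

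### Proof plan

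The plan is to unfold both definitions directly and show that for an arbitrary finite $S \subset P_n$, we have $\montoset{n}(\settomon{n}(S)) = S$. First I would write $s = \settomon{n}(S) = \sum_{x \in S} x$; since $P_n$ is a basis of the free commutative monoid $\deffreemon_n$, the coefficient of each basis element $\setel{x}$ in $s$ is exactly the indicator of whether $\setel{x} \in S$, i.e.\ $s = \sum_{b \in P_n} s_b\, b$ with $s_b = 1$ if $\setel{b} \in S$ and $s_b = 0$ otherwise. Then by the definition of the partial order $\le$ on $\freemon{K}_n$ (given in \Parr{adc-basis}), for a basis element $\setel{b}$ we have $b \le s$ iff $b_b = 1 \le s_b$, which holds iff $s_b \ge 1$, i.e.\ iff $\setel{b} \in S$. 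Hence $\montoset{n}(s) = \set{\setel{b} \in P_n \mid b \le s} = S$, which is exactly what we want.

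The one subtlety worth making explicit is that $\settomon{n}$ takes values in $\deffreemon_n$, which by the definition of a basis (\Parr{adc-basis}) is the \emph{free} commutative monoid on $P_n$; this freeness is precisely what guarantees that the coefficient extraction $s \mapsto (s_b)_{b \in P_n}$ is well-defined and that $s_b \in \N$, so that the comparison $b \le s$ reduces to the numerical inequality $1 \le s_b$. No axioms of generalized parity complexes, nor the adc conditions ($\augop \circ \diffop_0 = 0$, $\diffop \circ \diffop = 0$), nor loop-freeness or unitality, are needed here — this is a purely formal consequence of the free monoid structure on the basis.

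### Main obstacle

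There is essentially no obstacle: this is a bookkeeping lemma about the translation between subsets of a basis and monoid elements, and the whole content is the observation that a sum of distinct basis elements has all coefficients in $\set{0,1}$. The only care required is to not conflate an element $\setel{x} \in P_n$ (as a point of the graded set) with the corresponding generator $x \in \deffreemon_n$, a distinction the paper already flags in \Parr{adc-to-ohg}; keeping that notational discipline makes the argument a one-line unfolding.
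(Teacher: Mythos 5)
Your proof is correct and takes the same approach as the paper, which simply states that the result is a direct consequence of the definitions; you have merely spelled out the coefficient-extraction argument that the paper leaves implicit. The key observation — that a sum of distinct basis elements has all coefficients in $\set{0,1}$, so $b \le \settomon{n}(S)$ iff $\setel{b} \in S$ — is exactly the intended unfolding.
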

\noindent For~${n \in \N^*}$ and~${\setel{x} \in P_{n+1}}$, we define subsets~$\setel{x}^-,\setel{x}^+ \subseteq P_n$ such that
\[
  \setel{x}^- = \montoset{n}(x^-) \qtand \setel{x}^+ = \montoset{n}(x^+)
\]
where~$x^-,x^+$ are the elements of~$K_{n-1}$ defined in \Cref{ssec:steiner}. We
thus obtain an \ohg~$(P,(-)^-,(-)^+)$ that we call the \emph{\ohg associated
  to~$K$}. In the following, we prove that, when~$P$ is a unital loop-free basis
of~$K$,~$P$ is a torsion-free complex. We already have:
\begin{lem}
  \label{lem:stsrctgt-nonempty}
  If~$P$ is a unital basis of~$K$, given~$n \in \N^*$ and~$\setel{x} \in P_n$,
  we have~$\setel{x}^- \neq \emptyset$ and~$\setel{x}^+ \neq \emptyset$. That
  is,~$P$ satisfies~\forestaxiom{0}.
\end{lem}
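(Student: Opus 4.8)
The plan is to reduce the non-emptiness of $\setel{x}^-$ and $\setel{x}^+$ to the non-vanishing of the monoid elements $x^\mp$ and $x^\pm$ of $\freemon{K}_{n-1}$, and then to obtain the latter from the unitality of the basis $P$ by descending along the iterated sources and targets of the atom $\stgen{x}$.

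First I would note that for a single basis element $\setel{x} \in P_n$ one has $x^- = x^\mp$ and $x^+ = x^\pm$, so that $\setel{x}^- = \montoset{n-1}(x^\mp)$ and $\setel{x}^+ = \montoset{n-1}(x^\pm)$. Since the basis elements are nonzero in the free commutative monoid $\freemon{K}_{n-1}$, the set $\montoset{n-1}(s)$ is empty if and only if $s = 0$; hence it suffices to prove that $x^\mp \neq 0$ and $x^\pm \neq 0$. I would argue by contradiction on the source side, the target side being dual. Recall $\stgen{x}_{n-1,-} = x^\mp$ and $\stgen{x}_{j,-} = \stgen{x}_{j+1,-}^{\mp}$ for $0 \le j < n$, and that $\diffop_j(0) = 0$ forces $0^\mp = 0$. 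Thus, if $x^\mp = 0$, a straightforward downward induction on $j$ yields $\stgen{x}_{j,-} = 0$ for all $0 \le j \le n-1$, and in particular $\stgen{x}_{0,-} = 0$. But then $\augop(\stgen{x}_{0,-}) = \augop(0) = 0 \neq 1$, contradicting the unitality of $P$. Hence $x^\mp \neq 0$, and dually $x^\pm \neq 0$, so $\setel{x}^- \neq \emptyset$ and $\setel{x}^+ \neq \emptyset$, i.e.\ $P$ satisfies \forestaxiom{0}.

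No step here is a genuine obstacle; the only point requiring a little care is the elementary observation that $\montoset{n-1}(s) = \emptyset$ exactly when $s = 0$, which rests on the fact that the elements of the basis $\genbasis_{n-1} = P_{n-1}$ are nonzero in $\freemon{K}_{n-1}$. Everything else is bookkeeping with the definition of the atom $\stgen{x}$ together with a single use of the unitality hypothesis.
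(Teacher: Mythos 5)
Your argument is correct and follows essentially the same route as the paper: reduce $\setel{x}^\eps=\emptyset$ to $\stgen{x}_{n-1,\eps}=0$, propagate the vanishing down through the iterated sources/targets of the atom, and derive a contradiction with unitality via $\augop(\stgen{x}_{0,\eps})=1$. You merely spell out the elementary step that $\montoset{n-1}(s)=\emptyset$ iff $s=0$, which the paper leaves implicit.
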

\begin{proof}
  By contradiction, if~$\setel{x}^- = \emptyset$, it implies that~$\stgen{x}_{n-1,-} = 0$. Hence,~$\stgen{x}_{i,-} = 0$ for~$i \in \N_{n-1}$. In
  particular,~$\augop(\stgen{x}_{0,-}) = 0$, contradicting the fact that the
  basis is unital. Hence,~$\setel{x}^- \neq \emptyset$ and,
  similarly,~${\setel{x}^+ \neq \emptyset}$.
\end{proof}

\paragraph{Fork-freeness and radicality}

We now define an analogue for adc's of the notion of fork-freeness defined for
\ohg{}s, and relate the notions between the two settings.

Let~$\genadc$ be an adc with a loop-free unital basis~$P$. Given~${n \in \N^*}$,
an element~$s \in \deffreemon_n$ is said \index{fork-free!for an adc}\emph{fork-free} when for all~${{x},{y}
  \in P_n}$ such that~$x + y \le s$, it holds that
  $\setel{x}^\eps \cap \setel{y}^\eps = \emptyset$ for~$\eps \in \set{-,+}$.
Moreover, in dimension~$0$,~$s \in \deffreemon_0$ is said to be fork-free when~$\augop(s) = 1$. We extend the notion of fork-freeness to cells: given~$n \in
\N$ and~${X \in \adcCell(K)}$,~$X$ is said \index{fork-free!for an adc cell}\emph{fork-free} when, for~$i \in
\N_n$ and~${\eps \in \set{-,+}}$,~$X_{i,\eps}$ is fork-free.

Contrary to subsets of the \ohg~$P$, an element of~$P$ can appear in an element
of~$\deffreemon_n$ with a multiplicity greater than one (since~$\deffreemon_n$
is the free monoid on~$P_n$). It is then useful to distinguish the elements
of~$\deffreemon_n$ where generators appear with multiplicity at most one: given~$n \in \N$ and~$s \in \deffreemon_n$,~$s$ is said \index{radical}\emph{radical} when for
all~${z \in \deffreemon_n}$ such that~$2 z \le s$, we have~$z = 0$. We then
readily have:
\begin{lem}
  \label{lem:montoset-mono}
  For all~$n \in \N$ and~$s \in \deffreemon_n$ radical,~$\settomon{n} \circ \montoset{n}(s) = s$
\end{lem}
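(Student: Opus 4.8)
The plan is to reduce everything to the explicit coordinate representation of elements of the free commutative monoid $\deffreemon_n$ on $P_n$. Recall that every $s \in \deffreemon_n$ admits a unique writing $s = \sum_{\setel{b} \in P_n} s_b\, b$ with $s_b \in \N$ and $s_b = 0$ for all but finitely many $\setel{b}$. The first step is to characterize radicality in these coordinates: I claim that $s$ is radical if and only if $s_b \le 1$ for every $\setel{b} \in P_n$. Indeed, if $s_{b_0} \ge 2$ for some $\setel{b_0}$, then taking $z = b_0$ gives $2z \le s$ with $z \neq 0$, contradicting radicality; conversely, if all $s_b \le 1$ and $2z \le s$ with $z = \sum_{\setel{b}} z_b\, b$, then $2 z_b \le s_b \le 1$ forces $z_b = 0$ for every $\setel{b}$, so $z = 0$.

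Next I would unfold the definition $\montoset{n}(s) = \set{\setel{b} \in P_n \mid b \le s}$ and note that, since $b$ here denotes the monoid element $1 \cdot b$, the condition $b \le s$ is equivalent to $s_b \ge 1$. Combining this with the radicality characterization above, $b \le s$ holds if and only if $s_b = 1$, hence
\[
  \montoset{n}(s) = \set{\setel{b} \in P_n \mid s_b = 1},
\]
which is a finite subset of $P_n$ as required for $\settomon{n}$ to be defined on it.

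Finally, applying $\settomon{n}$ and using that $s_b \in \set{0,1}$ for all $\setel{b}$, I compute
\[
  \settomon{n}(\montoset{n}(s)) = \sum_{\setel{b} \,:\, s_b = 1} b = \sum_{\setel{b} \in P_n} s_b\, b = s,
\]
where the terms with $s_b = 0$ contribute nothing and those with $s_b = 1$ contribute exactly $b$. This gives the claimed equality. There is no genuine obstacle: the argument is a direct translation of the definitions through the coordinate representation of $\deffreemon_n$, the only mild subtlety being the systematic distinction (via the $\setel{-}$ notation) between a basis element viewed as an element of the graded set $P_n$ and the same element viewed in the monoid $\deffreemon_n$.
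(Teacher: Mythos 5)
Your proof is correct and matches the paper's treatment: the paper simply records this lemma as ``a direct consequence of the definitions,'' and your coordinate computation (radicality forces all coefficients into $\set{0,1}$, so $\montoset{n}$ extracts exactly the support and $\settomon{n}$ reassembles $s$) is precisely the expansion of that remark.
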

\noindent Moreover, fork-freeness implies radicality:
\begin{lem}
  \label{lem:ff-impl-rad}
  Given~$n \in \N$ and~$s \in \deffreemon_n$, if~$s$ is fork-free, then~$s$ is
  radical.
\end{lem}
\begin{proof}
  If~$n = 0$,~$s \in \deffreemon_n$ can be written~$s = \sum_{1 \le i \le k}
  x_i$ for some~$k \in \N$ and~$x_i \in \Padc_0$ for~$i \in \N^*_k$.
  So~${\augop(s) = k}$, and, by fork-freeness,~$k=1$. Hence,~$s$ is radical.
  
  Otherwise, assume that~$n > 0$. By contradiction, suppose that there is~$\setel{x} \in P_n$ such that~$2 x \le s$. By
  \Lemr{stsrctgt-nonempty}, it means that~$\setel{x}^- \cap \setel{x}^- \neq \emptyset$, contradicting the fact that~$s$
  is fork-free. Hence,~$s$ is radical.
\end{proof}
\noindent Like for cells of torsion-free complexes, cells of adc's with
loop-free basis are fork-free:
\begin{lem}
  {\label{lem:stcell-ff}} Given~$n \in \N$ and~$X \in \adcCell(K)_n$,~$X$ is
  fork-free.
\end{lem}
\begin{proof}
  We prove this lemma using an induction on~$n$. If~$n = 0$, since~$\augop(X_0)
  = 1$,~$X$ is fork-free by definition.

  Otherwise, suppose that~$n > 0$. By induction,~$\csrc_{n-1} (X)$ and~$\ctgt_{n-1} (X)$ are fork-free, so~$X_{i,\eps}$ is fork-free for~$i \in
  \N_{n-1}$ and~$\eps \in \set{-,+}$. Let~$\setel{x},\setel{y} \in P_n$ be such
  that~$x + y \le X_n$. By contradiction, suppose that there is~$\setel{z} \in
  P_{n-1}$ such that~$\setel{z} \in \setel{x}^- \cap \setel{y}^-$.
  By~\cite[Proposition 5.4]{steiner2004omega}, there are
  \[
    k \ge 1,
    \quad
    \setel{x}_1,\ldots,\setel{x}_k \in P_n
    \qtand
    X^1,\ldots,X^k \in \adcCell(K)
  \]
  with~$X^i_n = \setel{x}_i$ for~$i \in \N^*_k$ and such that
    $X = X^1 \comp_{n-1} \cdots \comp_{n-1} X^k$,
  so~$X_n = x_1 + \cdots + x_k$. Hence, there are~$1 \le i_1,i_2 \le k$ with~$i_1 \neq i_2$ such that~$x_{i_1} = x$ and~$x_{i_2} = y$. By symmetry, we can
  suppose that~$i_1 < i_2$. If there is some~$i$ such that~$\setel{z} \in
  \setel{x}_i^+$, by~\cite[Proposition~5.4]{steiner2004omega}, we have~$i <
  i_1$. So, for~$i_1 \le i \le i_2$, it holds that~$\setel{z} \not \in
  \setel{x}_i^+$. Let
  $
    Y = X^{i_1} \comp_{n-1} X^{i_1+1} \comp_{n-1} \cdots
    \comp_{n-1} X^{i_2}
    $.
  We have that~$Y \in \adcCell(K)$ and
  \begin{gather*}
    Y_{n-1,-} = \sum_{i_1 \le i \le i_2}
    \stgen{x_i}_{n-1,-} - \sum_{i_1 \le i \le i_2} \stgen{x_i}_{n-1,+} +
    Y_{n-1,+}
    \shortintertext{with}
    2z \le \sum_{i_1 \le i \le i_2} \stgen{x_i}_{n-1,-} \qtand \lnot (z \le
    \sum_{i_1 \le i \le i_2} \stgen{x_i}_{n-1,+}) \qtand Y_{n-1,+} \ge 0
  \end{gather*}
  so~$2z \le Y_{n-1,-}$, contradicting the fact that~$\csrc_{n-1} (Y)$ is
  radical by \cref{lem:ff-impl-rad}. Thus~$\setel{x}^- \cap \setel{y}^- =
  \emptyset$ and, similarly,~$\setel{x}^+ \cap \setel{y}^+ = \emptyset$. Hence,~$X$ is fork-free.
\end{proof}
\noindent We now give several compatibility results for the
operations~$\settomon n$ with sets and the structure of \ohg on~$P$:
\begin{lem}
  \label{lem:settomon-props}
  Let~$n \in \N$,~$U,V \subseteq P_n$ be finite subsets and~$x \in P_n$. The
  following hold:
  \begin{enumerate}[label=(\roman*),ref=(\roman*)]
  \item \label{lem:settomon-props-cup} if~$U \cap V = \emptyset$, then~$\settomon{n}(U) \land \settomon{n}(V) = 0$
    and~$\settomon{n}(U \cup V) = \settomon{n}(U)
    + \settomon{n}(V)$,
    
  \item \label{lem:settomon-props-setminus} if~$U \subseteq V$, then~$\settomon{n}(U) \le \settomon{n}(V)$ and~$\settomon{n}(V \setminus U) = \settomon{n}(V) - \settomon{n}(U)$,

  \item \label{lem:settomon-props-eps} if~$n > 0$, then~$\settomon{n-1}(\setel{x}^\eps) = x^\eps$,
  \item \label{lem:settomon-props-ff} Suppose that~$U$ is fork-free. Then~$\settomon{n}(U)$
    is fork-free. Moreover, in the case where~${n >0}$,
    we have~${\settomon{n-1}(U^\eps) = (\settomon{n}(U))^\eps}$.
  \end{enumerate}
\end{lem}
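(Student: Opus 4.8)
The statement to prove is \Lemr{settomon-props}, which lists four elementary properties of the maps $\settomon{n}$ (summing a finite subset of basis elements into the free monoid) and $\montoset{n}$. The plan is to verify each of the four items directly from the definitions given in \Parr{adc-to-ohg}, \Parr{adc-basis}, and \Parr{fork-freeness-and-radicality}. These are all "direct consequence of definitions" statements, so the proof is a sequence of short computations rather than a deep argument; the mild subtleties are keeping track of the $\le$ and $\land$ operations on the free commutative monoid and using fork-freeness at the right moment.

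For \ref{lem:settomon-props-cup}, I would write $\settomon{n}(U) = \sum_{x \in U} x$ and $\settomon{n}(V) = \sum_{y \in V} y$; since $U \cap V = \emptyset$, each basis element $b \in P_n$ has coefficient at most $1$ in exactly one of the two sums (and $0$ in the other if it lies in neither), so $\min$ of the coefficients is $0$, giving $\settomon{n}(U) \land \settomon{n}(V) = 0$, and the coefficient-wise sum is exactly $\settomon{n}(U \cup V)$. For \ref{lem:settomon-props-setminus}, $U \subset V$ gives $\settomon{n}(U) \le \settomon{n}(V)$ coefficient-wise, and since $U$ and $V \setminus U$ partition $V$, item \ref{lem:settomon-props-cup} yields $\settomon{n}(V) = \settomon{n}(U) + \settomon{n}(V \setminus U)$, hence $\settomon{n}(V \setminus U) = \settomon{n}(V) - \settomon{n}(U)$. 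For \ref{lem:settomon-props-eps}, I would unfold both sides: by the definition of the associated \ohg, $\setel{x}^\eps = \montoset{n-1}(x^\eps)$, and by \Parr{adc-basis} the element $x^\eps \in \freemon{K}_{n-1}$ is radical (it is $x^\mp$ or $x^\pm$, which satisfy $x^\mp \land x^\pm = 0$, so each has square-free... more precisely $x^\mp, x^\pm$ are the positive/negative parts of $\diffop x$ with disjoint supports; I should check radicality holds — actually for a single generator $x$, $x^- = x^\mp$ and $x^+ = x^\pm$ need not a priori be radical, but one can invoke \Lemr{stcell-ff} applied to the atom $\stgen{x}$, whose fork-freeness plus \Lemr{ff-impl-rad} gives radicality; alternatively just note $\setel{x}^\eps$ is a genuine subset so $\settomon{n-1}(\setel{x}^\eps)$ is radical by construction), so \Lemr{montoset-mono} gives $\settomon{n-1}(\setel{x}^\eps) = \settomon{n-1}(\montoset{n-1}(x^\eps)) = x^\eps$.

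For \ref{lem:settomon-props-ff}, suppose $U \subset P_n$ is fork-free. If $n = 0$ then $|U| = 1$ so $\settomon{0}(U) = x$ for the unique $x \in U$, and $\augop(x) = 1$, so $\settomon{0}(U)$ is fork-free by definition. If $n > 0$, take $\setel{x}, \setel{y} \in P_n$ with $x + y \le \settomon{n}(U)$; since $\settomon{n}(U) = \sum_{z \in U} z$ has all coefficients $\le 1$, this forces $\setel{x}, \setel{y} \in U$ (and if one wants $\setel{x} \neq \setel{y}$, $x + y \le \settomon{n}(U)$ with distinct... actually $x + y \le \settomon{n}(U)$ allows $x = y$ only if coefficient $2 \le 1$, impossible, so $\setel{x} \neq \setel{y}$), hence $\setel{x}^\eps \cap \setel{y}^\eps = \emptyset$ by fork-freeness of $U$; thus $\settomon{n}(U)$ is fork-free. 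For the last equality, I would compute $(\settomon{n}(U))^\eps$ from the definition in \Parr{adc-basis}: writing $\settomon{n}(U) = \sum_{b \in P_n} u_b b$ with $u_b \in \{0,1\}$, the definition gives $(\settomon{n}(U))^- = \sum_b u_b b^\mp$ and $(\settomon{n}(U))^+ = \sum_b u_b b^\pm$; since $b^\mp = \settomon{n-1}(\setel{b}^-)$ by \ref{lem:settomon-props-eps}, this equals $\sum_{\setel{b} \in U} \settomon{n-1}(\setel{b}^-)$, and because $U$ is fork-free the sets $\setel{b}^-$ for $\setel{b} \in U$ are pairwise disjoint, so by \ref{lem:settomon-props-cup} (applied repeatedly) $\sum_{\setel{b}\in U} \settomon{n-1}(\setel{b}^-) = \settomon{n-1}(\bigcup_{\setel{b} \in U} \setel{b}^-) = \settomon{n-1}(U^-)$, and dually for $\eps = +$. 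The main thing to be careful about is the case distinction in item \ref{lem:settomon-props-eps} regarding radicality of $\setel{x}^\eps$, but since $\setel{x}^\eps$ is defined as a subset of $P_{n-1}$, the element $\settomon{n-1}(\setel{x}^\eps)$ is automatically of the form "sum of distinct basis elements", so \Lemr{montoset-mono} applies once we observe $x^\eps = \montoset{n-1}(x^\eps)$-then-$\settomon{n-1}$ roundtrips; no deep obstacle arises.

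\begin{proof}
  We treat each item in turn.

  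\ref{lem:settomon-props-cup} Write $\settomon{n}(U) = \sum_{b \in U} b$ and
  $\settomon{n}(V) = \sum_{b \in V} b$. For $b \in P_n$, the coefficient of $b$
  in $\settomon{n}(U)$ is $1$ if $b \in U$ and $0$ otherwise, and similarly for
  $V$. Since $U \cap V = \emptyset$, for every $b$ at least one of these two
  coefficients is $0$, so the coefficient of $b$ in $\settomon{n}(U) \land
  \settomon{n}(V)$ is $\min$ of the two, namely $0$; hence $\settomon{n}(U)
  \land \settomon{n}(V) = 0$. Moreover the coefficient of $b$ in $\settomon{n}(U)
  + \settomon{n}(V)$ is $1$ exactly when $b \in U$ or $b \in V$, that is, when
  $b \in U \cup V$, so $\settomon{n}(U \cup V) = \settomon{n}(U) + \settomon{n}(V)$.

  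\ref{lem:settomon-props-setminus} If $U \subset V$, then for every $b \in P_n$
  the coefficient of $b$ in $\settomon{n}(U)$ is at most the one in
  $\settomon{n}(V)$, so $\settomon{n}(U) \le \settomon{n}(V)$. Since $U$ and $V
  \setminus U$ are disjoint with union $V$, by~\ref{lem:settomon-props-cup} we
  get $\settomon{n}(V) = \settomon{n}(U) + \settomon{n}(V \setminus U)$, hence
  $\settomon{n}(V \setminus U) = \settomon{n}(V) - \settomon{n}(U)$.

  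\ref{lem:settomon-props-eps} By the definition of the $\omega$-hypergraph
  associated to~$K$, $\setel{x}^\eps = \montoset{n-1}(x^\eps)$. Since
  $\setel{x}^\eps$ is a subset of $P_{n-1}$, by \Lemr{montoset-mono} applied to
  the radical element $\settomon{n-1}(\setel{x}^\eps)$ (which is a sum of
  pairwise distinct basis elements, hence radical) together with
  \Lemr{settomon-mono}, we have $\settomon{n-1}(\setel{x}^\eps) =
  \settomon{n-1}(\montoset{n-1}(x^\eps)) = x^\eps$.

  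\ref{lem:settomon-props-ff} Suppose $U$ is fork-free. If $n = 0$, then $|U| =
  1$; writing $U = \set{\setel{x}}$, we have $\settomon{0}(U) = x$ and
  $\augop(x) = 1$, so $\settomon{0}(U)$ is fork-free. If $n > 0$, let
  $\setel{x},\setel{y} \in P_n$ with $x + y \le \settomon{n}(U)$. Since all
  coefficients of $\settomon{n}(U)$ are at most $1$, this forces $\setel{x}
  \neq \setel{y}$ and $\setel{x},\setel{y} \in U$; by fork-freeness of $U$,
  $\setel{x}^\eps \cap \setel{y}^\eps = \emptyset$ for $\eps \in \set{-,+}$.
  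Hence $\settomon{n}(U)$ is fork-free. Now suppose $n > 0$ and write
  $\settomon{n}(U) = \sum_{b \in U} b$. By the definition of $(-)^\eps$
  in \Parr{adc-basis}, $(\settomon{n}(U))^- = \sum_{b \in U} b^\mp$ and
  $(\settomon{n}(U))^+ = \sum_{b \in U} b^\pm$. By~\ref{lem:settomon-props-eps},
  $b^\mp = \settomon{n-1}(\setel{b}^-)$ and $b^\pm = \settomon{n-1}(\setel{b}^+)$
  for each $\setel{b} \in U$. Since $U$ is fork-free, the sets $\setel{b}^-$ for
  $\setel{b} \in U$ are pairwise disjoint, so by iterating
  \ref{lem:settomon-props-cup} we get
  \[
    (\settomon{n}(U))^- = \sum_{\setel{b} \in U} \settomon{n-1}(\setel{b}^-) =
    \settomon{n-1}\Bigl( \bigcup_{\setel{b} \in U} \setel{b}^- \Bigr) =
    \settomon{n-1}(U^-),
  \]
  and dually $(\settomon{n}(U))^+ = \settomon{n-1}(U^+)$. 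This proves the final
  equality.
\end{proof}
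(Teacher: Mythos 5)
Your treatment of items \ref{lem:settomon-props-cup}, \ref{lem:settomon-props-setminus} and \ref{lem:settomon-props-ff} is correct and matches the paper's proof (the paper dismisses the first two as immediate and gives essentially your computation for the last one, merely reading the chain of equalities in the opposite direction). The problem is item \ref{lem:settomon-props-eps}. To conclude $\settomon{n-1}(\montoset{n-1}(x^\eps)) = x^\eps$ from \Lemr{montoset-mono}, the element whose radicality you need is $x^\eps$ itself, not $\settomon{n-1}(\setel{x}^\eps)$. The latter is a sum of pairwise distinct basis elements and is therefore trivially radical, but applying \Lemr{montoset-mono} to it only yields $\settomon{n-1}(\montoset{n-1}(\settomon{n-1}(\setel{x}^\eps))) = \settomon{n-1}(\setel{x}^\eps)$, which is already contained in \Lemr{settomon-mono} and says nothing new. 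The map $\montoset{n-1}$ forgets multiplicities, so knowing that $\settomon{n-1}(\setel{x}^\eps)$ and $x^\eps$ have the same image under $\montoset{n-1}$ does not make them equal: if $x^\eps$ had a coefficient at least $2$ on some basis element, the claimed identity would be false while your argument would still "go through". The justification in your proof body is therefore circular.

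The fix is exactly the alternative you flagged, and then set aside, in your preliminary discussion, and it is what the paper does: for a basis element $x$ one has $x^\eps = \stgen{x}_{n-1,\eps}$, which is fork-free by \Lemr{stcell-ff} and hence radical by \Lemr{ff-impl-rad}; only then does \Lemr{montoset-mono}, applied to $x^\eps$ itself, give $\settomon{n-1}(\setel{x}^\eps) = x^\eps$. With that substitution the rest of your argument, including the use of item \ref{lem:settomon-props-eps} inside item \ref{lem:settomon-props-ff}, goes through unchanged.
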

\begin{proof}
  \ref{lem:settomon-props-cup} and \ref{lem:settomon-props-setminus} are direct
  consequences of the definitions.
  For~\ref{lem:settomon-props-eps}, note that~$\setel{x}^\eps = \montoset{n-1} (x^\eps)$. By \Lemr{stcell-ff},~$\stgen{x}_{n-1,\eps}$ is fork-free and, by \Lemr{ff-impl-rad}, it is
  radical. So, by \Lemr{montoset-mono},
  we have~$\settomon{n-1} (\setel{x}^\eps) = x^\eps$.

  For~\ref{lem:settomon-props-ff}, suppose that~$U \subseteq P_n$ is fork-free.
  If~$n = 0$, the result is trivial. So suppose that~${n > 0}$. Given~${x},{y}
  \in P_n$ with~$x \le \settomon{n}(U)$ and~$y \le \settomon{n}(U)$,~$\setel{z}
  \in P_{n-1}$ and~$\eps \in \set{-,+}$ such that~${z \le x^\eps}$ and~${z \le
    y^\eps}$, we have~$\setel{z} \in \setel{x}^\eps$ and~$\setel{z} \in
  \setel{y}^\eps$. Since~$U$ is fork-free,~$x = y$. Also,~$\settomon{n}(U)$ is
  radical by definition of~$\settomon n$, so that~${\neg (x + y \le
    \settomon{n}(U))}$. Hence,~$\settomon{n}(U)$ is fork-free. For the second
  part, note that, for~${x},{y} \in U$ with~$x \neq y$, we have~$\setel{x}^\eps
  \cap \setel{y}^\eps = \emptyset$. Hence, by~\ref{lem:settomon-props-cup} and~\ref{lem:settomon-props-eps},
  \[
    \settomon{n-1}(U^\eps) = \settomon{n-1}(\cup_{\setel{x} \in U} \setel{x}^\eps)  
                            = \sum_{\setel{x}
                             \in U} \settomon{n-1}(\setel{x}^\eps) 
                           = \sum_{\setel{x} \in U} x^\eps 
                           = (\settomon{n}(U))^\eps.  \qedhere
                         \]
\end{proof}
\noindent We give analogous compatibility results for the
operations~$\montoset n$ with the group structure of~$K_n$ and the
operations~$(-)^-$ and~$(-)^+$ defined on~$K_n$:
\begin{lem}
  \label{lem:montoset-props}
  Let~$n \in \N$,~$u,v \in \deffreemon_n$ be such that~$u,v$ are radical and~$z
  \in \Padc_n$. The following hold:
  \begin{enumerate}[label=(\roman*),ref=(\roman*)]
  \item \label{lem:montoset-props-sum} if~$u \land v = 0$, then~${\montoset{n} (u)} \cap {\montoset{n} (v)} = \emptyset$ and~$\montoset{n}(u + v) = {\montoset{n} (u)} \cup {\montoset{n} (v)}$,
  \item \label{lem:montoset-props-sub} if~$u \le v$, then~${\montoset{n} (u)} \subseteq {\montoset{n} (v)}$ and~$\montoset{n}(v - u) = (\montoset{n}(v)) \setminus (\montoset{n}(u))$,
  \item \label{lem:montoset-props-eps} if~$n > 0$, then~$\montoset{n-1}(z^\eps) = \setel{z}^\eps$,
  \item \label{lem:montoset-props-ff} Suppose that~$u$ is fork-free. Then,~${\montoset{n} (u)}$ is fork-free. Moreover, in the case where~${n > 0}$, we
    have~${\montoset{n-1}(u^\eps) = (\montoset{n}(u))^\eps}$.
  \end{enumerate}
\end{lem}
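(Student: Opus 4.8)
The statement to prove is \Lemr{montoset-props}, which asserts four properties of the $\montoset{n}$ operation, dual to the four properties of $\settomon{n}$ established in \Lemr{settomon-props}. The plan is to deduce each item from its counterpart in \Lemr{settomon-props} together with the two inverse relations \Lemr{settomon-mono} (stating $\montoset{n} \circ \settomon{n} = \id{P_n}$) and \Lemr{montoset-mono} (stating $\settomon{n} \circ \montoset{n}(s) = s$ for $s$ radical), using that the relevant monoid elements here are radical by hypothesis. In other words, I would translate between the ``set world'' and the ``monoid world'' by applying $\montoset{n}$ or $\settomon{n}$ and invoking these bijection-on-radical-elements facts.

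For \ref{lem:montoset-props-sum}: given $u \land v = 0$ with $u,v$ radical, I would set $U = \montoset{n}(u)$, $V = \montoset{n}(v)$, so that $\settomon{n}(U) = u$ and $\settomon{n}(V) = v$ by \Lemr{montoset-mono}. If $U \cap V$ were nonempty, say $\setel{x} \in U \cap V$, then $x \le u$ and $x \le v$, hence $x \le u \land v = 0$, forcing $x = 0$, a contradiction (elements of $P_n$ are nonzero basis elements); so $U \cap V = \emptyset$. Then $u + v = \settomon{n}(U) + \settomon{n}(V) = \settomon{n}(U \cup V)$ by \Lemr{settomon-props}\ref{lem:settomon-props-cup}, and since a sum of radical elements with disjoint supports is radical, applying $\montoset{n}$ and using \Lemr{settomon-mono} gives $\montoset{n}(u+v) = U \cup V$. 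For \ref{lem:montoset-props-sub}: if $u \le v$ with both radical, $\montoset{n}(u) \subset \montoset{n}(v)$ is immediate from the definition; then $\montoset{n}(v) \setminus \montoset{n}(u)$ is a subset of $P_n$ whose image under $\settomon{n}$ is $\settomon{n}(\montoset{n}(v)) - \settomon{n}(\montoset{n}(u)) = v - u$ by \Lemr{settomon-props}\ref{lem:settomon-props-setminus}, and $v-u$ is radical (it is $\le v$), so applying $\montoset{n}$ and \Lemr{montoset-mono} yields $\montoset{n}(v-u) = \montoset{n}(v) \setminus \montoset{n}(u)$. For \ref{lem:montoset-props-eps}: this is essentially the definition $\setel{z}^\eps = \montoset{n-1}(z^\eps)$ given in \Parr{adc-to-ohg}, so there is nothing to do beyond unfolding.

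For \ref{lem:montoset-props-ff}: if $u$ is fork-free, set $U = \montoset{n}(u)$, so $\settomon{n}(U) = u$. To see $U$ is fork-free, take $\setel{x}, \setel{y} \in U$ distinct with $\setel{z} \in \setel{x}^\eps \cap \setel{y}^\eps$; then $x + y \le u$ (since $x,y$ are distinct basis elements both $\le u$), and $z \le x^\eps$, $z \le y^\eps$, contradicting fork-freeness of $u$; so $U$ is fork-free. For the equality $\montoset{n-1}(u^\eps) = U^\eps$ when $n > 0$: since $U$ is fork-free, \Lemr{settomon-props}\ref{lem:settomon-props-ff} gives $\settomon{n-1}(U^\eps) = (\settomon{n}(U))^\eps = u^\eps$; moreover $u^\eps$ is radical — indeed $u$ fork-free implies $u^\mp \land u^\pm = 0$ with $u^\eps$ built from these, and by \Lemr{stcell-ff}/\Lemr{ff-impl-rad} the relevant pieces are radical, or more directly one notes $U^\eps$ is fork-free hence $\settomon{n-1}(U^\eps) = u^\eps$ is radical by \Lemr{settomon-props}\ref{lem:settomon-props-ff} and \Lemr{ff-impl-rad} — so applying $\montoset{n-1}$ and \Lemr{montoset-mono} gives $\montoset{n-1}(u^\eps) = U^\eps$. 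The main subtlety, and the one place I would be careful, is keeping track of exactly which monoid elements are radical at each step so that \Lemr{montoset-mono} applies: the arguments $u+v$, $v-u$, and $u^\eps$ all need to be certified radical, which follows either because they are bounded above by a radical element or because they are images of fork-free sets; once this bookkeeping is in place, each item is a short two-line computation.
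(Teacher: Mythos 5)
Your proof is correct, and on the one substantive item, \ref{lem:montoset-props-ff}, your fork-freeness argument is word-for-word the paper's: pick distinct $\setel{x},\setel{y}\in\montoset{n}(u)$ with a common element of $\setel{x}^\eps\cap\setel{y}^\eps$, note $x+y\le u$, and contradict fork-freeness of $u$. Where you diverge is in strategy for everything else. The paper dispatches \ref{lem:montoset-props-sum}, \ref{lem:montoset-props-sub} and \ref{lem:montoset-props-eps} as ``direct consequences of the definitions'' (and indeed they follow by comparing coefficients, using radicality for the subtraction in \ref{lem:montoset-props-sub}), and proves the second half of \ref{lem:montoset-props-ff} by the direct computation $\montoset{n-1}(u^\eps)=\montoset{n-1}(\sum_{x\le u}x^\eps)=\bigcup_{x\le u}\setel{x}^\eps=(\montoset{n}(u))^\eps$, invoking its own items \ref{lem:montoset-props-sum} and \ref{lem:montoset-props-eps}. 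You instead systematically round-trip through $\settomon{}$, apply the already-proved \Lemr{settomon-props}, and pull back with the inverse lemmas; this is heavier but buys you that each identity is literally the mirror of one already established, at the cost of the radicality bookkeeping you rightly flag. One small correction to that bookkeeping: in items \ref{lem:montoset-props-sum}, \ref{lem:montoset-props-sub} and the end of \ref{lem:montoset-props-ff}, the step ``apply $\montoset{}$ to an identity of the form $\settomon{}(S)=s$ and conclude $\montoset{}(s)=S$'' is \Lemr{settomon-mono} ($\montoset{n}\circ\settomon{n}=\id{}$), not \Lemr{montoset-mono}; with that reading, the radicality of $u+v$, $v-u$ and $u^\eps$ is not actually needed for those concluding steps (it would be needed only if you argued via injectivity of $\settomon{}$ on radical elements, which also works). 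This is a citation slip, not a gap.
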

\begin{proof}
  \ref{lem:montoset-props-sum}, \ref{lem:montoset-props-sub} and
  \ref{lem:montoset-props-eps} are direct consequences of the definitions. For
  \ref{lem:montoset-props-ff}, suppose that~$u$ is fork-free. If~$n = 0$, the
  result is trivial, so suppose that~$n > 0$. Given~$\setel{x},\setel{y} \in
  {\montoset{n} (u)}$,~$\setel{z} \in \P_{n-1}$ and~$\eps \in \set{-,+}$ such
  that~$\setel{z} \in \setel{x}^\eps \cap \setel{y}^\eps$, we have~$z \le
  x^\eps$ and~$z \le y^\eps$. By fork-freeness,~$\neg (x+y \le u)$. But~${x \le
    u}$ and~${y \le u}$, so that~$x = y$. Thus,~$\montoset{n}(u)$ is fork-free.
  For the second part, note that, for~$x,y \in \Padc_n$ with~$x \neq y$,~$x \le
  u$ and~$y \le u$, we have~$x^\eps \land y^\eps = 0$. Hence,
  by~\ref{lem:montoset-props-sum} and~\ref{lem:montoset-props-eps},
  \[
    \montoset{n-1}(u^\eps) = \montoset{n-1}(\sum_{x \in \Padc_n, x \le u} x^\eps) 
    =
    \bigcup_{x \in \Padc_n, x \le u} \montoset{n-1}(x^\eps)  
    =  \bigcup_{x \in \Padc_n, x \le u}
    \setel{x}^\eps   
    = (\montoset{n}(u))^\eps.  \qedhere
  \]
\end{proof}

\paragraph{Movement properties}

We now relate the movement properties of \ohg{}s (as defined in
\Cref{ssec:street}) to properties of augmented directed complexes. Let~$\genadc$
be an adc with a loop-free unital basis~$P$. We first prove a compatibility
result of the functions~$\settomon n$ with the operations~$(-)^\mp$
and~$(-)^\pm$ on \ohg{}s and adc's:
\begin{lem}
  \label{lem:montoset-mp-pm}
  Let~$n \in \N^*$,~$u \in \deffreemon_n$ fork-free and~$U = \montoset{n}(u)$.
  We have
  \[
    u^\mp = \settomon{n-1}(U^\mp) \qtand u^\pm =\settomon{n-1} (U^\pm).
  \]
\end{lem}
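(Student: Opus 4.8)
Looking at this statement, I need to prove that for $n > 0$, $u \in \deffreemon_n$ fork-free and $U = \montoset{n}(u)$, we have $u^\mp = \settomon{n-1}(U^\mp)$ and $u^\pm = \settomon{n-1}(U^\pm)$.

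\begin{proof}
  By symmetry, I will only prove $u^\mp = \settomon{n-1}(U^\mp)$. Recall that, by definition of $(-)^\mp$ and $(-)^\pm$ at the level of the monoid, $u^\mp$ and $u^\pm$ are the unique elements of $\freemon{K}_{n-1}$ satisfying $\diffop_{n-1} u = u^\pm - u^\mp$ and $u^\mp \land u^\pm = 0$. Writing $u = \sum_{\setel{x} \in U} x$ (using that $u$ is radical by \Lemr{ff-impl-rad}, so $\settomon{n} \circ \montoset{n}(u) = u$ by \Lemr{montoset-mono}), we have $u^- = \sum_{\setel{x} \in U} x^-$ and $u^+ = \sum_{\setel{x} \in U} x^+$, hence by \Lemr{settomon-props}\ref{lem:settomon-props-ff} (applied to the fork-free set $U$), $u^- = \settomon{n-1}(U^-)$ and $u^+ = \settomon{n-1}(U^+)$. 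Also, $u^\pm - u^\mp = \diffop_{n-1} u = u^+ - u^-$, so $u^- + u^\pm = u^+ + u^\mp$.

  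Now I compute using the fork-freeness of $U$. Since $U$ is fork-free, for distinct $\setel{x}, \setel{y} \in U$ we have $\setel{x}^- \cap \setel{y}^- = \emptyset$ and $\setel{x}^+ \cap \setel{y}^+ = \emptyset$, so $\settomon{n-1}(U^-) = \sum_{\setel{x} \in U} x^-$ with all the $x^-$ having disjoint supports, and similarly for $U^+$. The key point is that $U^- \setminus U^+$ and $U^+ \setminus U^-$ have empty intersection, so $\settomon{n-1}(U^\mp) \land \settomon{n-1}(U^\pm) = 0$ by \Lemr{settomon-props}\ref{lem:settomon-props-cup}. Moreover, by \Lemr{settomon-props}\ref{lem:settomon-props-setminus} applied with $U^- \cap U^+ \subset U^-$ and $U^- \cap U^+ \subset U^+$,
  \[
    \settomon{n-1}(U^-) = \settomon{n-1}(U^- \cap U^+) + \settomon{n-1}(U^\mp)
    \qtand
    \settomon{n-1}(U^+) = \settomon{n-1}(U^- \cap U^+) + \settomon{n-1}(U^\pm).
  \]
  Subtracting, $\settomon{n-1}(U^+) - \settomon{n-1}(U^-) = \settomon{n-1}(U^\pm) - \settomon{n-1}(U^\mp)$, that is, $u^+ - u^- = \settomon{n-1}(U^\pm) - \settomon{n-1}(U^\mp)$.

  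Therefore both pairs $(u^\mp, u^\pm)$ and $(\settomon{n-1}(U^\mp), \settomon{n-1}(U^\pm))$ satisfy the defining equations: their difference equals $u^+ - u^- = \diffop_{n-1} u$ and their meet is $0$. By uniqueness of this decomposition, $u^\mp = \settomon{n-1}(U^\mp)$ and $u^\pm = \settomon{n-1}(U^\pm)$.
\end{proof}

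The main obstacle is making sure the uniqueness argument is correctly invoked: one must verify that $\settomon{n-1}(U^\mp) \land \settomon{n-1}(U^\pm) = 0$, which requires knowing $U^\mp \cap U^\pm = \emptyset$ — this is immediate since $U^\mp = U^- \setminus U^+$ and $U^\pm = U^+ \setminus U^-$ are visibly disjoint — and then applying \Lemr{settomon-props}\ref{lem:settomon-props-cup}. Everything else is a routine bookkeeping computation with $\settomon{n-1}$ transported across unions and set differences via the cited parts of \Lemr{settomon-props}, using that fork-freeness of $U$ guarantees the relevant supports are disjoint so the monoid operations behave additively.
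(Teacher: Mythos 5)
Your proof is correct and follows essentially the same route as the paper: compute $\diffop u = u^+ - u^- = \settomon{n-1}(U^\pm) - \settomon{n-1}(U^\mp)$ by splitting off the common part $U^-\cap U^+$, check the meet is zero, and conclude by uniqueness of the decomposition $\diffop u = u^\pm - u^\mp$. The only cosmetic quibble is that you announce a symmetry reduction but then derive both identities at once from the uniqueness argument, which is harmless.
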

\begin{proof}
  We compute that
  \begin{align*}
    \diffop (u) &= u^\pm - u^\mp = u^+ - u^- \\
    &= \settomon{n-1} (U^+) - \settomon{n-1} (U^-) && \text{(by \Lemr{settomon-props})} \\
                &= (\settomon{n-1} (U^\pm) {+} \settomon{n-1} (U^+ \cap U^-))
         -  (\settomon{n-1} (U^\mp) + \settomon{n-1} (U^+ \cap U^-))&& \text{(by \Lemr{settomon-props})}\\
    &= \settomon{n-1} (U^\pm) - \settomon{n-1} (U^\mp).
  \end{align*}
  Since~$U^\pm \cap U^\mp = \emptyset$, we have~$\settomon{n-1} (U^\pm) \land
  \settomon{n-1} (U^\mp) = \emptyset$. By uniqueness of the decomposition, we
  have $ u^\mp = \settomon{n-1} (U^\mp)$ and $u^\pm =\settomon{n-1} (U^\pm) $.
\end{proof}
\noindent Now, we show a compatibility of the operations~$\settomon n$ with movement:
\begin{lem}
  \label{lem:settomon-move-diff}
  Let~$n \in \N$,~$S \subseteq P_{n+1}$ be a finite and fork-free set and~$U,V
  \subseteq P_n$ be finite sets such that~$S$ moves~$U$ to~$V$. Then,~$\diffop(
  \settomon{n+1}(S)) = \settomon{n}(V) - \settomon{n}(U)$.
\end{lem}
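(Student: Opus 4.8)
Looking at the statement of \Lemr{settomon-move-diff}, the goal is to show that for a finite fork-free set $S \subseteq P_{n+1}$ moving $U$ to $V$, we have $\diffop(\settomon{n+1}(S)) = \settomon{n}(V) - \settomon{n}(U)$. The plan is to express everything in terms of the monoid-theoretic counterparts of the movement data and reduce to the identities already established in \Lemr{settomon-props} and \Lemr{montoset-mp-pm}.

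First I would recall that since $S$ moves $U$ to $V$, by \Lemr{street-2.1} we have $S^\mp \subseteq U$ and $U \cap S^+ = \emptyset$, and by \Lemr{move-canform} the sets decompose as disjoint unions $U = (U \cap V) \sqcup S^\mp$ and $V = (U \cap V) \sqcup S^\pm$. Applying $\settomon{n}$ to these disjoint-union decompositions and using \Lemr{settomon-props}\ref{lem:settomon-props-cup}, I get $\settomon{n}(U) = \settomon{n}(U \cap V) + \settomon{n}(S^\mp)$ and $\settomon{n}(V) = \settomon{n}(U \cap V) + \settomon{n}(S^\pm)$, so that
\[
  \settomon{n}(V) - \settomon{n}(U) = \settomon{n}(S^\pm) - \settomon{n}(S^\mp).
\]
Thus it remains to identify the right-hand side with $\diffop(\settomon{n+1}(S))$.

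Next I would compute $\diffop(\settomon{n+1}(S))$ directly. By definition of $\diffop$ on the pre-adc associated to $P$ (see \Parr{ohg-to-pre-adc}) and linearity, $\diffop(\settomon{n+1}(S)) = \sum_{\setel{x} \in S} \diffop(x) = \sum_{\setel{x} \in S}(x^+ - x^-)$. Since $S$ is fork-free, the sets $\setel{x}^-$ (resp.\ $\setel{x}^+$) for $x \in S$ are pairwise disjoint, so $S^- = \bigsqcup_{x \in S} \setel{x}^-$ and likewise for $S^+$; combined with \Lemr{settomon-props}\ref{lem:settomon-props-eps} and \ref{lem:settomon-props-cup} this gives $\sum_{\setel{x}\in S} x^- = \settomon{n}(S^-)$ and $\sum_{\setel{x}\in S} x^+ = \settomon{n}(S^+)$, hence $\diffop(\settomon{n+1}(S)) = \settomon{n}(S^+) - \settomon{n}(S^-)$. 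Finally, using the disjoint decompositions $S^- = S^\mp \sqcup (S^- \cap S^+)$ and $S^+ = S^\pm \sqcup (S^- \cap S^+)$ together with \Lemr{settomon-props}\ref{lem:settomon-props-cup}, the common term cancels and I obtain $\diffop(\settomon{n+1}(S)) = \settomon{n}(S^\pm) - \settomon{n}(S^\mp)$, which matches the expression derived above.

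The only mild subtlety — and the step I would be most careful about — is justifying that fork-freeness of $S$ really yields the disjointness needed to turn $\settomon{n}$ of a union into a sum of $\settomon{n}$'s of the pieces $\setel{x}^\eps$; this is exactly the remark already used in the proof of \Thmr{cell-gluing} that ``fork-freeness implies $S^\eps = \sqcup_{u \in S} u^\eps$'', and for the $n = 0$ case one checks the statement degenerates harmlessly since $\settomon{0}$ is still well-defined and the movement equations hold verbatim. Everything else is bookkeeping with the elementary identities of \Lemr{settomon-props}.
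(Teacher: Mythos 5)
Your proof is correct and follows essentially the same route as the paper's: both reduce the statement to the identities of \Lemr{settomon-props}, the key one being that for fork-free $S$ one has $\settomon{n}(S^\eps) = (\settomon{n+1}(S))^\eps$ and hence $\diffop(\settomon{n+1}(S)) = \settomon{n}(S^+) - \settomon{n}(S^-)$. The only cosmetic difference is that the paper applies $\settomon{n}$ directly to the movement equation $V = (U \cup S^+)\setminus S^-$, using $S^- \subset U \cup S^+$ and $U \cap S^+ = \emptyset$ from \Lemr{street-2.1}, whereas you route through the decomposition of \Lemr{move-canform} and the reduced boundaries $S^\mp$, $S^\pm$; both are short computations with the same ingredients.
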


\begin{proof}
  By definition of movement,~$V = (U \cup S^+) \setminus S^-$.
  Hence,
  \begin{align*}
    \settomon{n}(V)&= \settomon{n}((U \cup S^+) \setminus S^-) \\
                   &= \settomon{n}(U \cup S^+) - \settomon{n}( S^-) && \text{(by
                                                                       \Lemr{settomon-props},
                                                                       since~$S^-
                                                                       \subseteq U
                                                                       \cup S^+$)} \\
    &= \settomon{n}(U) + \settomon{n}(S^+) - \settomon{n}(S^-) && \text{(since~$U
                                                                \cap S^+ =
                                                                \emptyset$ by
                                                                \Lemr{street-2.1})}\\
    &= \settomon{n}(U) + (\settomon{n+1}(S))^+ - (\settomon{n+1}(S))^- && \text{(by \Lemr{settomon-props})}\\
    &= \settomon{n}(U) + \diffop (\settomon{n+1}(S)). && \qedhere
  \end{align*}
\end{proof}
\noindent Conversely, we prove sufficient conditions for the operations~$\montoset n$ to induce movement:
\begin{lem}
  \label{lem:montoset-diff-move}
  Let~$n \in \N$,~$s \in \deffreemon_{n+1}$ fork-free,~$u,v \in \deffreemon_n$
  with $u,v$ radical, such that
  \[
    \diffop (s) = v - u,
    \quad
    u \land s^+ = 0
    \qtand
    s^- \land v = 0.
  \]
  Then,~${\montoset{n+1} (s)}$ moves~${\montoset{n} (u)}$ to~${\montoset{n}
    (v)}$.
\end{lem}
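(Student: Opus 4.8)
The plan is to deduce the statement from its dual \Lemr{settomon-move-diff}, via the dictionary between $\settomon{}$ and $\montoset{}$. Write $M = \montoset{n+1}(s)$, $U = \montoset{n}(u)$ and $V = \montoset{n}(v)$, so that the claim becomes that $M$ moves $U$ to $V$. First I would gather the preliminaries: since $s$ is fork-free it is radical by \Lemr{ff-impl-rad}, so $M$ is a fork-free finite subset of $P_{n+1}$ and $M^\eps = \montoset{n}(s^\eps)$ for $\eps \in \set{-,+}$, both by \Lemr{montoset-props}\ref{lem:montoset-props-ff}. A short computation using linearity of $\diffop$ and the fact that $b^- = b^\mp$, $b^+ = b^\pm$ for basis elements $b$ shows that $\diffop s = s^+ - s^-$, so the hypothesis $\diffop s = v - u$ may be rewritten as $v + s^- = u + s^+$ in $K_n$.

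Next I would check the two conditions of \Lemr{street-2.1} for $M$ and $U$. The condition $U \cap M^+ = \emptyset$ holds because a common element of $\montoset{n}(u)$ and $\montoset{n}(s^+)$ would be a basis element occurring in both $u$ and $s^+$, contradicting $u \land s^+ = 0$. For $M^\mp \subset U$, a basis element of $M^\mp = \montoset{n}(s^-) \setminus \montoset{n}(s^+)$ occurs in $s^-$ but not in $s^+$, and reading the equality $v + s^- = u + s^+$ on its coefficient shows that it also occurs in $u$, hence lies in $U$. By \Lemr{street-2.1}, $M$ therefore moves $U$ to the set $V' := (U \cup M^+) \setminus M^-$.

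It remains to identify $V'$ with $V$. Applying \Lemr{settomon-move-diff} to the fork-free set $M$, which moves $U$ to $V'$, gives $\diffop(\settomon{n+1}(M)) = \settomon{n}(V') - \settomon{n}(U)$. Since $s$ and $u$ are radical, \Lemr{montoset-mono} gives $\settomon{n+1}(M) = s$ and $\settomon{n}(U) = u$, so $\diffop s = \settomon{n}(V') - u$; comparing with $\diffop s = v - u$ yields $\settomon{n}(V') = v$, and then $V' = \montoset{n}(\settomon{n}(V')) = \montoset{n}(v) = V$ by \Lemr{settomon-mono}. Hence $M$ moves $U$ to $V$, which is the claim. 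The only slightly delicate point is the identity $\diffop s = s^+ - s^-$, which is what lets the differential hypothesis be recast in terms of the positive and negative parts of $s$; everything else is routine bookkeeping with the $\settomon{}$/$\montoset{}$ correspondence (and in fact the hypothesis $s^- \land v = 0$ does not get used in this particular argument, since it follows from the others once $M$ is shown to move $U$ to $V$).
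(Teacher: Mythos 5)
Your proof is correct, and it takes a genuinely different (and slightly slicker) route than the paper's. The paper verifies the two defining equations of the movement directly: from $s^- \le s^- + v = u + s^+$ it gets $S^- \subset U \cup S^+$, computes $\settomon{n}((U \cup S^+) \setminus S^-) = u + \diffop s = v$, concludes $V = (U \cup S^+) \setminus S^-$ by injectivity of $\settomon{n}$ on sets, and then treats the other equation $U = (V \cup S^-) \setminus S^+$ ``similarly'' --- and it is precisely in that second, symmetric computation that the hypothesis $s^- \land v = 0$ gets consumed. You instead check the criterion of \Lemr{street-2.1} ($M^\mp \subset U$ and $U \cap M^+ = \emptyset$), which hands you both movement equations at once with target $(U \cup M^+) \setminus M^-$, and then identify that target with $V$ by round-tripping through \Lemr{settomon-move-diff} together with the radicality of $s$ and $u$. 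The coefficient bookkeeping is the same in both arguments, but yours buys two things: it avoids the second symmetric computation, and it correctly exposes that $s^- \land v = 0$ is redundant given the other hypotheses, since once the movement is established $V \cap M^- = \emptyset$ forces it. The paper's version is more self-contained in that it never needs to invoke the converse lemma. All the auxiliary facts you rely on ($\diffop s = s^+ - s^-$, fork-freeness of $M$, $M^\eps = \montoset{n}(s^\eps)$) are indeed supplied by \Lemr{ff-impl-rad} and \Lemr{montoset-props}, so there is no gap.
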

\begin{proof}
  Let~$S = \montoset{n+1}(s)$,~$U = \montoset{n}(u)$ and~$V = \montoset{n}(v)$.
  Since~$\diffop (s) = v - u$, we have
  \[
    s^- \le s^- + v = u + s^+
  \]
  so
  $S^- = \montoset{n}(s^-) \subseteq \montoset{n}(u + s^+) = U \cup S^+$.
  We compute that
  \begin{align*}
    \settomon{n}((U \cup S^+) \setminus S^-) &= \settomon{n}(U \cup S^+) -
                                               \settomon{n}(S^-) \\
                                             &= \settomon{n} \circ \montoset{n} (u + s^+) - s^- && \text{(by \Lemr{settomon-props})}\\
                                             &= u + s^+ - s^- \\
                                             &= u + \diffop (s) = v = \settomon{n}(V)\zbox.
  \end{align*}
  By \Lemr{settomon-mono},~$V = (U \cup S^+) \setminus S^-$. Similarly,~$U
  = (V \cup S^-) \setminus S^+$. So,~$S$ moves~$U$ to~$V$.
\end{proof}
\noindent Finally, we show empty intersection results for cells
of~$\adcCell(K)$, analogous to the ones for~$\Cell(P)$:%
\begin{lem}
  \label{lem:stcell-move-prop}
  Let~$n \in \N^*$ and~$X \in \adcCell(K)_n$. Then, for~$i \in \N_{n-1}$
  and~${\eps \in \set{-,+}}$, we have
  \[
     X_{i,-} \land  X_{i+1,\eps}^+ = 0
    \qtand
    X_{i+1,\eps}^- \land X_{i,+} = 0
    .
  \]
\end{lem}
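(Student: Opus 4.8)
### Proof plan for Lemma \ref{lem:stcell-move-prop}

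The plan is to reduce the claim about the partial order $\land$ on the free monoid $\deffreemon$ to the corresponding movement fact about fork-free subsets of the associated $\omega$-hypergraph $P$, which was already established in the parity-complex part of the paper (\Lemr{unmoved-eq}, applied to the \ohg associated to $K$). First I would observe that, since $X$ is an $n$-cell of $K$, by \Lemr{stcell-ff} all the components $X_{i,\eps}$ and $X_{i+1,\eps}$ are fork-free elements of $\deffreemon$, hence by \Lemr{ff-impl-rad} they are radical. Set $U = \montoset{i}(X_{i,-})$, $W = \montoset{i}(X_{i,+})$ and $S = \montoset{i+1}(X_{i+1,\eps})$; these are finite fork-free subsets of $P_i$ and $P_{i+1}$ by \Lemr{montoset-props}\ref{lem:montoset-props-ff}.

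Next I would show that $S$ moves $U$ to $W$. Since $X$ is an $n$-cell of $K$, we have $\diffop_i(X_{i+1,\eps}) = X_{i,+} - X_{i,-}$ by the definition of a cell. To apply \Lemr{montoset-diff-move} I need the two side conditions $X_{i,-} \land X_{i+1,\eps}^+ = 0$ and $X_{i+1,\eps}^- \land X_{i,+} = 0$ — but these are exactly what we are trying to prove, so this direction is circular and must be handled differently. The clean route instead is: it is a standard fact (and used implicitly throughout \cite{steiner2004omega}) that for a cell $X$ of an adc, writing $d = X_{i+1,\eps}$, one has $d^\mp \le X_{i,-}$ and $X_{i,-} \land d^\pm = 0$; concretely, from $\diffop_i d = X_{i,+} - X_{i,-}$ and the radicality of the $X_{i,\bullet}$ together with $d^\mp \land d^\pm = 0$, a short computation with $\land$ gives $d^\mp \le X_{i,-}$, $d^\pm \le X_{i,+}$, and $X_{i,-} \land d^\pm = X_{i,+} \land d^\mp = 0$. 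Translating via \Lemr{montoset-props}\ref{lem:montoset-props-ff} (which gives $\montoset{i}(d^\pm) = S^\pm$, $\montoset{i}(d^\mp) = S^\mp$) and \Lemr{montoset-mono}, these become $S^\mp \subset U$, $S^\pm \subset W$, $U \cap S^\pm = \emptyset$ and $W \cap S^\mp = \emptyset$; by \Lemr{street-2.1} this yields that $S$ moves $U$ to $W$.

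Then I would invoke \Lemr{unmoved-eq} for the \ohg associated to $K$: since $S$ moves $U$ to $W$, we have $U \cap S^+ = U \setminus S^- = U \cap W = W \setminus S^+ = W \cap S^-$, and in particular $U \cap S^+ = \emptyset$ and $W \cap S^- = \emptyset$. Finally, translate back using \Lemr{settomon-props}\ref{lem:settomon-props-cup} and \ref{lem:settomon-props-ff} together with \Lemr{montoset-mono} and \Lemr{settomon-move-diff}: $U \cap S^+ = \emptyset$ gives $\settomon{i}(U) \land \settomon{i}(S^+) = 0$, i.e.\ $X_{i,-} \land X_{i+1,\eps}^+ = 0$, and similarly $W \cap S^- = \emptyset$ gives $X_{i+1,\eps}^- \land X_{i,+} = 0$, as desired. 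The only real obstacle is the first step — extracting the movement-style inequalities $d^\mp \le X_{i,-}$, etc.\ from the differential equation and radicality without circular reference to the statement being proved; once those are in hand, everything else is a mechanical passage between the monoid picture and the subset picture via the already-proved dictionary lemmas \ref{lem:settomon-props}–\ref{lem:montoset-props}.
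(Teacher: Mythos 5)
There is a genuine gap, and it sits exactly at the point you flag as ``the only real obstacle''. The lemma concerns $X_{i+1,\eps}^{+}$ and $X_{i+1,\eps}^{-}$, which for $d = X_{i+1,\eps} = \sum_b d_b b$ are defined as $\sum_b d_b b^{\pm}$ and $\sum_b d_b b^{\mp}$, i.e.\ sums over the \emph{individual} basis elements occurring in $d$. Your ``short computation with $\land$'' from $\diffop_i d = X_{i,+} - X_{i,-}$, radicality, and $d^{\mp} \land d^{\pm} = 0$ is correct as far as it goes, but it only controls $d^{\pm}$ and $d^{\mp}$, the positive and negative parts of the boundary $\diffop_i d$; it yields $X_{i,-} \land d^{\pm} = 0$, not $X_{i,-} \land d^{+} = 0$. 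The two differ precisely by the internal cancellations between $b^{\pm}$ of one generator of $d$ and $c^{\mp}$ of another, and ruling out such cancellations is the entire content of the lemma. Concretely, the subsequent appeal to \Lemr{street-2.1} fails: that lemma requires $U \cap S^{+} = \emptyset$ with $S^{+}$ the full union of targets of elements of $S$, which is exactly the set-theoretic form of the statement you are trying to prove, whereas your computation only gives $U \cap S^{\pm} = \emptyset$. (A side remark: the translation $\montoset{i}(d^{\pm}) = S^{\pm}$ should cite \Lemr{montoset-mp-pm} rather than \Lemr{montoset-props}, whose item on $(-)^{\eps}$ concerns $d^{+}$ and $d^{-}$.)

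The paper closes this gap by a different argument: assuming a counter-example $z \le X_{n-1,-} \land X_{n}^{+}$ (after reducing to $i = n-1$ by applying $\csrc,\ctgt$), it produces generators $x, y \le X_n$ with $z \le x^{+}$ and $z \le y^{-}$, invokes \cite[Proposition 5.1]{steiner2004omega} to write $X$ as a composite $X^1 \comp_{n-1} \cdots \comp_{n-1} X^k$ of cells with singleton top components in which $x$ occurs strictly before $y$, and shows that the intermediate boundary $Y_{n-1,+}$ of the partial composite up to $x$ satisfies $2z \le Y_{n-1,+}$, contradicting radicality (via \Lemr{stcell-ff} and \Lemr{ff-impl-rad}). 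Some use of Steiner's structure theory of cells is unavoidable here; the differential equation together with radicality of the boundaries does not suffice, since it cannot distinguish a genuine cell from a formal configuration with cancelling generators. Your overall strategy is also inverted with respect to the paper's architecture: there, the present lemma is proved first and is then what makes \Lemr{montoset-diff-move} applicable in \Lemr{ctost-bij} to obtain the movement property, rather than the other way around.
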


\begin{proof}
  By contradiction, suppose given~$n \in \N^*$,~$X \in \adcCell(K)_n$,~$i \in
  \N_{n-1}$ and~$\eps \in \set{-,+}$ that give a counter-example for this
  property. By applying~$\csrc,\ctgt$ sufficiently, we can suppose that~$i =
  n-1$. Also, by symmetry, we only need to handle the first case, that is, when
  there is~$z \in \Padc_{n-1}$ such that~${z \le X_{n-1,-} \land X_{n}^+}$. So
  there is~$x \in \Padc_n$ such that~$x \le X_{n}$ and~$z \le x^+$. By the
  definition of a cell, we have~$\diffop (X_n) = X_{n-1,+} - X_{n-1,-}$, thus
  \begin{align*}
    X_{n-1,+} + \sum_{u \in P_n,u \le X_n} u^- &= X_{n-1,-} + \sum_{u \in P_n,u \le X_n} u^+ \ge 2z
  \end{align*}
  and, since~$X_{n-1,+}$ is radical, there is~$y \in \Padc_n$ with~$y \le X_n$
  such that~$z \le y^-$. By~\cite[Proposition~5.1]{steiner2004omega}, there
  are~$k \in \N^*$,~$x_1,\ldots,x_k \in \Padc_n$ with~$x_1 + \cdots +
  x_k=X_n$, $i_1,i_2 \in \N^*_k$ with~$i_1 < i_2$, $x_{i_1} = x$ and~$x_{i_2} =
  y$, and~$X^1,\ldots,X^k \in \adcCell(K)$ with~$X^i_n = x_i$ for~$i \in \N^*_k$
  such that we have the decomposition~$X = X^1 \comp_{n-1} \cdots \comp_{n-1} X^k$. Let~${Y = X^1
    \comp_{n-1} \cdots \comp_{n-1} X^{i_1}}$. Since~$Y$ is a cell, we have
  \[
    Y_{n-1,+} + \sum_{1 \le i \le k} x_i^- = Y_{n-1,-} +  \sum_{1 \le i \le k}x_i^+ = X_{n-1,-} + \sum_{1 \le i \le k} x_i^+  \ge 2z.
  \]
  Moreover, since~$X$ is fork-free and~$z \le x_{i_2}^-$, we have~$\lnot (z \le
  x_i^-)$ for~$i \in \N_{i_1}$. So~$2z \le Y_{n-1,+}$, contradicting the fact
  that~$Y_{n-1,+}$ is radical derived from Lemmas~\ref{lem:stcell-ff}
  and~\ref{lem:ff-impl-rad}. Hence, we have~${X_{i,-} \land X_{n}^+ = 0}$.
\end{proof}
\paragraph{The translation operations}

We now introduce translation functions between the cells of augmented directed
complexes and the cells of their associated \ohg{}s, and show that these
translations are bijective.

Let~$\genadc$ be an adc with a loop-free unital basis~$P$. We extend the
operations~$\settomon n$ and~$\montoset n$ to translation functions between the
pre-cells of~$P$ and the pre-cells of~$K$. Given~$n \in \N$ and
an~$n$\precell~${X \in \PCell(P)_n}$, we \glossary(Sigma){$\ctost(X)$}{the adc
  pre-cell associated to the \ohg pre-cell~$X$}define~$\ctost(X) \in
\adcCell(K)$ as the~$n$\precell~$Y$ such that
  $Y_{i,\eps} = \settomon{i}(X_{i,\eps})$ for~$i \in \N_n$ and~$\eps \in
  \set{-,+}$.
Similarly, given an $n$\precell~$X \in \adcPCell(K)$, we
\glossary(S){$\sttoc(X)$}{the \ohg pre-cell associated to the adc
  pre-cell~$X$}define~$\sttoc(X) \in \PCell(P)$ as the $n$\precell~$Y$ such that
  $Y_{i,\eps} = \montoset{i}(X_{i,\eps})$ for~$i \in \N_n$ and~$\eps \in
  \set{-,+}$.
 We then have:
\begin{prop}
  \label{prop:ctost-bij}
  $\ctost$ induces a bijection with inverse~$\sttoc$ from~$\Cell(P)$
  to~$\adcCell(K)$. Moreover, given~$x \in P$, we have~$\sttoc (\stgen{x}) =
  \gen{\setel{x}}$.
\end{prop}
\begin{proof}
  Let~$n \in \N$ and~$X \in \Cell(P)_n$. Then, by \Lemr{settomon-props}, for~${i
    \in N_n}$ and~${\eps \in \set{-,+}}$,~$\settomon{i}(X_{i,\eps})$ is
  fork-free. Plus, when~$i < n$, by \Lemr{settomon-move-diff}, we have
    $\diffop (\settomon{i+1}(X_{i+1,\eps})) = \settomon{i}(X_{i,+}) - \settomon{i}
    (X_{i,-})$
  so~$\ctost(X) \in \adcCell(K)$. Conversely, let~$n \in \N$ and~$X \in
  \adcCell(K)_n$. By \Lemr{montoset-props}, given~$i \in \N_n$ and~$\eps \in
  \set{-,+}$,~$\montoset{i}(X_{i,\eps})$ is fork-free. Moreover, when~$i <n$, by
  Lemmas~\ref{lem:montoset-diff-move} and~\ref{lem:stcell-move-prop}, we have that
    $\montoset{i+1}(X_{i+1,\eps})$ moves~$\montoset{i}(X_{i,-})$ to~$\montoset{i}
    (X_{i,+})$
  so~$\sttoc(X) \in \Cell(P)$. By \Lemr{settomon-mono}, for~$X \in \Cell(P)$,
    ${\sttoc \circ \ctost (X) = X}$,
  and, by Lemmas~\ref{lem:stcell-ff}, \ref{lem:ff-impl-rad}
  and~\ref{lem:montoset-mono}, for~$X \in \adcCell(K)$,
    ${\ctost \circ \sttoc (X) = X}$.
  Hence,~$\ctost$ and~$\sttoc$ induce bijections between~$\Cell(P)$ and~$\adcCell(K)$ and are inverse of each other.
  
  Now let~$n \in \N$,~$x \in P_n$ and~$X = \sttoc(\stgen{x})$. We have~$X_n =
  \montoset{n}(\stgen{x}_n) = \set{x}$. We show by a decreasing induction on~$i$
  that~${X_{i,\eps} = \gen{x}_{i,\eps}}$ for~${i \in \N_{n-1}}$ and~$\eps \in
  \set{-,+}$. We have~$\stgen{x}_{i,-} = \stgen{x}_{i+1,-}^\mp$ so, by
  \cref{lem:stcell-ff,lem:montoset-mp-pm},
    $X_{i,-} =
    \montoset{i} (\stgen{x}_{i+1,-}^\mp) = X_{i+1,-}^\mp$.
  Thus,~$X_{i,-} = \gen{x}_{i,-}$. Similarly,~$X_{i,+} = \gen{x}_{i,+}$. Hence,~$\sttoc (\stgen{x}) = \gen{\setel{x}}$.
\end{proof}

\paragraph{Adc's are torsion-free complexes} We now prove that the \ohg{}s
associated to adc's equipped with loop-free unital bases are torsion-free
complexes. In fact, we will show that they moreover satisfy the stronger
Axioms~\forestaxcomp{3} and~\forestaxcomp{4}.

Let~$\genadc$ be an adc with a loop-free unital basis~$P$. We have already shown
how to derive \forestaxiom{0} for~$P$ in \cref{lem:stsrctgt-nonempty}, and we
now derive the other ones in the following lemmas.
\begin{lem}
  \label{lem:adc-axiom1}
  $P$ satisfies \forestaxiom{1}.
\end{lem}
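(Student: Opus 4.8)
The plan is to show that the $\omega$-hypergraph $P$ associated to the adc $K$ with loop-free unital basis satisfies \forestaxiom{1}, that is, that the relation $\tl$ on $P$ is irreflexive. The key observation is that $\tl$ on $P$ should correspond to (a piece of) one of the relations $<_i$ on the basis $B = P$, which are irreflexive by the loop-free assumption. First I would recall that $\tl = \bigcup_{i>0} \tl_{P_i}$, so it suffices to fix $i > 0$ and prove that $\tl_{P_i}$ is irreflexive. Since $\tl_{P_i}$ is the transitive closure of $\tlone_{P_i}$, it is enough to show that there is no cycle $x_1 \tlone_{P_i} x_2 \tlone_{P_i} \cdots \tlone_{P_i} x_k \tlone_{P_i} x_1$ in $P_i$.

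The main step is to relate $\tlone_{P_i}$ to $<_{i-1}$. If $\setel{x} \tlone_{P_i} \setel{y}$, then by definition there is $\setel{z} \in \setel{x}^+ \cap \setel{y}^-$. Now $\setel{x}^+ = \montoset{i-1}(x^+)$ and, since $\stgen{x}_{i-1,+} = x^+$ (by the definitions of $\stgen{x}$ and of $x^+$ for a single generator $x$, using $x^\pm = x^{+}$ in this degenerate case), we get $\setel{z} \in \montoset{i-1}(\stgen{x}_{i-1,+})$, i.e.\ $z \le \stgen{x}_{i-1,+}$. Similarly, $\setel{z} \in \setel{y}^- = \montoset{i-1}(y^-)$ gives $z \le \stgen{y}_{i-1,-}$. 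Hence $\stgen{x}_{i-1,+} \land \stgen{y}_{i-1,-} \ge z > 0$ (here I use that $z$, being a basis element, is nonzero; to get $z \le \stgen x_{i-1,+} \wedge \stgen y_{i-1,-}$ I use that the meet is computed coefficientwise). Since $x \in B_i$, $y \in B_i$ and $i > i-1$, this is exactly the generating condition for $x <_{i-1} y$, so $\setel{x} \tlone_{P_i} \setel{y}$ implies $x <_{i-1} y$.

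Therefore $\tlone_{P_i} \subseteq {<_{i-1}}$ as relations on $B_i \subseteq B$, and since $<_{i-1}$ is transitive, $\tl_{P_i} = (\tlone_{P_i})^+ \subseteq {<_{i-1}}$ as well. By the loop-free assumption on $B$, the relation $<_{i-1}$ is irreflexive, hence so is $\tl_{P_i}$. As this holds for every $i > 0$, the relation $\tl = \bigcup_{i>0} \tl_{P_i}$ is irreflexive, which is precisely \forestaxiom{1}.

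The step I expect to require the most care is the identification $\setel{x}^+ = \montoset{i-1}(\stgen{x}_{i-1,+})$ and, dually, $\setel{y}^- = \montoset{i-1}(\stgen{y}_{i-1,-})$: one must carefully unwind that for a single basis element $x \in B_i$, viewed as the monomial $x \in \freemon K_i$, the element $x^+$ of \Parr{adc-basis} coincides with $\stgen{x}_{i-1,+}$, and that the \ohg-source/target $\setel{x}^\pm$ defined in \Parr{adc-to-ohg} via $\montoset{i-1}$ of these match up. This is bookkeeping rather than a genuine difficulty, but it is where a slip would most easily occur; everything else is a direct application of the loop-freeness of the basis.
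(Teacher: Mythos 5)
Your proof is correct and follows exactly the same route as the paper: you show $\tlone_{P_i}\subseteq{<_{i-1}}$ by unwinding $\setel{x}^+\cap\setel{y}^-\neq\emptyset$ into $\stgen{x}_{i-1,+}\land\stgen{y}_{i-1,-}\neq 0$, pass to transitive closures, and invoke loop-freeness of the basis. The paper states this more tersely, but the bookkeeping you carry out is precisely what its one-line justification relies on.
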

\begin{proof}
  Note that, for~$n \in \N^*$ and~$\setel{x},\setel{y} \in P_n$,~$\setel{x}
  \tlone_{P_n} \setel{y}$ implies~$\setel{x} <_{n-1} \setel{y}$. So, by
  transitivity, we have~$\tl_{P_n} \subseteq {<_{n-1}}$. Since the basis~$P$ is
  loop-free,~$<_{n-1}$ is irreflexive and so is~$\tl_{P_n}$. Hence,~$\tl$ is
  irreflexive.
\end{proof}

\begin{lem}
  \label{lem:adc-axiom2}
  $P$ satisfies \forestaxiom{2}.
\end{lem}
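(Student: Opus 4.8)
The plan is to obtain relevance by transporting Steiner's atoms from $K$ back to the associated \ohg~$P$. Fix $\setel{x} \in P_n$, and let $x \in \deffreemon_n$ be the corresponding basis element. By \Lemr{stgen-to-gen} we already know that $\sttoc(\stgen{x}) = \gen{\setel{x}}$, and by \Lemr{ctost-bij} the map $\sttoc$ sends cells of~$K$ to cells of~$P$. Hence it suffices to check that the atom~$\stgen{x}$ is a cell of~$K$, i.e. that $\stgen{x} \in \adcCell(K)$: from this we get $\gen{\setel{x}} = \sttoc(\stgen{x}) \in \Cell(P)$, which is exactly relevance of~$\setel{x}$, and since $\setel{x}$ was arbitrary, $P$ satisfies~\forestaxiom{2}.

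To check that $\stgen{x}$ is a cell of~$K$, the augmentation condition $\augop(\stgen{x}_{0,-}) = \augop(\stgen{x}_{0,+}) = 1$ is precisely the unitality of the basis. For the boundary condition, I would argue by decreasing induction on $i$, from $n-1$ down to $0$, that $\diffop_i(\stgen{x}_{i+1,-}) = \diffop_i(\stgen{x}_{i+1,+})$; combined with the uniqueness of the decomposition $\diffop_i y = y^\pm - y^\mp$ with $y^\mp \land y^\pm = 0$ and the defining equalities $\stgen{x}_{i,-} = \stgen{x}_{i+1,-}^\mp$ and $\stgen{x}_{i,+} = \stgen{x}_{i+1,+}^\pm$, this yields $\diffop_i(\stgen{x}_{i+1,-}) = \diffop_i(\stgen{x}_{i+1,+}) = \stgen{x}_{i,+} - \stgen{x}_{i,-}$, as required. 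The base case $i = n-1$ holds since $\stgen{x}_{n,-} = \stgen{x}_{n,+} = x$. For the inductive step, the hypothesis for $i+1$ gives $\diffop_{i+1}(\stgen{x}_{i+2,\eps}) = \stgen{x}_{i+1,+} - \stgen{x}_{i+1,-}$ for $\eps \in \set{-,+}$; applying $\diffop_i$ and using $\diffop_i \circ \diffop_{i+1} = 0$ gives $\diffop_i(\stgen{x}_{i+1,+} - \stgen{x}_{i+1,-}) = 0$, which is the claim for~$i$. (Alternatively, one can simply invoke~\cite{steiner2004omega}, where for a loop-free unital basis the atoms $\stgen{b}$ are precisely the free generators of $\adcCell(K)$ and are in particular cells.)

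I do not expect a serious obstacle here: the substantive work --- the dictionary between movement in~$P$ and boundaries in~$K$, and the fork-free/radical correspondence --- has already been carried out in \Lemr{ctost-bij} and the lemmas feeding into it, together with \Lemr{stgen-to-gen}. The only mildly delicate point is the $(-)^\mp/(-)^\pm$ bookkeeping in the homological induction, namely ensuring the common value of $\diffop_i(\stgen{x}_{i+1,-})$ and $\diffop_i(\stgen{x}_{i+1,+})$ is identified with $\stgen{x}_{i,+} - \stgen{x}_{i,-}$ and not merely with some element sharing its boundary; this is handled by the uniqueness clause in the definition of $(-)^\mp$ and $(-)^\pm$.
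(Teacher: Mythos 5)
Your proposal is correct and follows the same route as the paper: apply \Lemr{stgen-to-gen} to identify $\sttoc(\stgen{x})$ with $\gen{\setel{x}}$ and \Lemr{ctost-bij} to conclude that this is a cell of~$P$. The only difference is that you explicitly verify the prerequisite $\stgen{x} \in \adcCell(K)$ (via unitality and the $\diffop_i \circ \diffop_{i+1} = 0$ induction with the uniqueness of the $(-)^\mp/(-)^\pm$ decomposition), a step the paper's proof leaves implicit; your verification is sound and arguably makes the argument more self-contained.
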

\begin{proof}
  Given~$\setel{x} \in P$, we have~$\sttoc (\stgen{x}) = \gen{\setel{x}}$
  By~\cref{prop:ctost-bij}. Moreover, by \cref{prop:ctost-bij}, we have~${\sttoc
    (\stgen{x}) \in \Cell(P)}$. Hence,~$\setel{x}$ is relevant.
\end{proof}

\begin{lem}
  \label{lem:adc-axiomcomp3}
  $P$ satisfies \forestaxiomcomp{3}.
\end{lem}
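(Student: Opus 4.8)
The plan is to show that the ``computable'' version of the segment axiom, \forestaxiomcomp{3}, holds for the \ohg associated to the loop-free unital basis~$P$ of the adc~$\genadc$. Recall that \forestaxiomcomp{3} states: for $n > 0$, $\setel{x} \in P_n$, and $k < n$, we have $\lnot (\gen{\setel{x}}_{k,+} \jtl \gen{\setel{x}}_{k,-})$, where $\jtl$ is the reflexive transitive closure of the relation $\jtlelem$ defined by $\setel{u} \jtlelem \setel{v}$ iff there is $\setel{w} \in P_{m+1}$ with $\setel{u} \in \setel{w}^-$ and $\setel{v} \in \setel{w}^+$.

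First I would translate $\jtlelem$ into the language of the adc. Unfolding the definition of the \ohg structure on~$P$ (from \Parr{adc-to-ohg}), $\setel{u} \jtlelem \setel{v}$ in dimension~$m$ means there is $\setel{w} \in P_{m+1}$ with $\setel{u} \in \montoset{m}(w^-)$ and $\setel{v} \in \montoset{m}(w^+)$, i.e. $u \le w^-$ and $v \le w^+$. The key observation is that this implies $\setel{u} <_m \setel{w}$ and $\setel{w} <_m \setel{v}$ — more precisely, using $\stgen{w}_{m,-} = w^-$ and $\stgen{w}_{m,+} = w^+$ together with $\stgen{u}_{m,+} = u$ and $\stgen{v}_{m,-} = v$ (valid since $\setel{u},\setel{v} \in P_m$), the condition $u \le w^-$ gives $\stgen{u}_{m,+} \land \stgen{w}_{m,-} \neq 0$ (because $u$ is a nonzero basis element and $u \le \stgen{w}_{m,-}$), hence $\setel{u} <_m \setel{w}$ by the definition of $<_m$; dually $\setel{w} <_m \setel{v}$, and by transitivity $\setel{u} <_m \setel{v}$. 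Therefore $\setel{u} \jtlelem \setel{v}$ implies $\setel{u} <_k \setel{v}$ when $\setel u, \setel v$ are at dimension~$k$ and the intermediate generator is at dimension~$k+1 > k$; and since $<_k$ is transitive, $\setel{u} \jtl \setel{v}$ implies $\setel{u} = \setel{v}$ or $\setel{u} <_k \setel{v}$.

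Next I would handle the endpoints. Suppose for contradiction $\gen{\setel{x}}_{k,+} \jtl \gen{\setel{x}}_{k,-}$. By definition of $S \jtl T$ for subsets, there are $\setel{p} \in \gen{\setel{x}}_{k,+}$ and $\setel{q} \in \gen{\setel{x}}_{k,-}$ with $\setel{p} \jtl \setel{q}$. By \Lemr{stgen-to-gen}, $\gen{\setel{x}}_{k,\eps} = \montoset{k}(\stgen{x}_{k,\eps})$, so $p \le \stgen{x}_{k,+}$ and $q \le \stgen{x}_{k,-}$, giving $\setel{p} <_k \setel{x}$ (from $\stgen{p}_{k,+} = p \le \stgen{x}_{k,+}$, so $\stgen{p}_{k,+} \land \stgen{x}_{k,-}$... wait, one needs care here: the relation $<_k$ requires $\stgen{a}_{k,+} \land \stgen{b}_{k,-} \neq 0$). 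Actually the cleaner route: $q \le \stgen{x}_{k,-}$ means $\setel{x} <_k \setel{q}$ is \emph{not} immediately what we want; rather $\stgen{x}_{k,-} \land \stgen{q}_{k,-}$... Let me instead use: $p \le \stgen x_{k,+}$ gives $\stgen x_{k,+} \land \stgen p_{k,-}$? No. The correct pairing is that $\setel p$ being in the $(+)$-boundary of $\setel x$ and $\setel q$ in its $(-)$-boundary yields, via the definition of $<_k$ applied to $x$ and the relevant generators, $\setel q <_k \setel x$ and $\setel x <_k \setel p$: indeed $q \le \stgen{x}_{k,-}$ means $\stgen{x}_{k,-}$ and $\stgen{q}_{k,+} = q$ have $\stgen q_{k,+} \land \stgen x_{k,-} = q \neq 0$, so $\setel q <_k \setel x$ — but here $x \in P_n$ with $n > k$, which is allowed by the definition of $<_k$ (it ranges over $m,n > k$). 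Combining $\setel x <_k \setel p$, $\setel p \jtl \setel q$ (hence $\setel p <_k \setel q$ or $\setel p = \setel q$), and $\setel q <_k \setel x$, transitivity of $<_k$ yields $\setel x <_k \setel x$, contradicting loop-freeness of the basis. The main obstacle is getting the direction of the $<_k$ inequalities exactly right and checking the edge case $\setel p = \setel q$ (which still gives a loop once combined with $\setel x <_k \setel p = \setel q <_k \setel x$); everything else is a routine unfolding of definitions. Then one concludes by invoking \Lemr{comp-impl-norm-three} to promote \forestaxiomcomp{3} to \forestaxiom{3}, though for this lemma's statement it suffices to record \forestaxiomcomp{3}.

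\begin{proof}
  By \Lemr{stsrctgt-nonempty}, \Lemr{adc-axiom1} and \Lemr{adc-axiom2}, $P$ satisfies \forestaxiom{0}, \forestaxiom{1} and \forestaxiom{2}. We first observe that, for $n > 0$ and $\setel{u},\setel{v} \in P_n$, if $\setel{u} \jtlelem \setel{v}$ then $\setel{u} <_n \setel{v}$. Indeed, there is $\setel{w} \in P_{n+1}$ with $\setel{u} \in \setel{w}^-$ and $\setel{v} \in \setel{w}^+$, that is, $u \le w^- = \stgen{w}_{n,-}$ and $v \le w^+ = \stgen{w}_{n,+}$. Since $\stgen{u}_{n,+} = u$ and $u \le \stgen{w}_{n,-}$, we have $\stgen{u}_{n,+} \land \stgen{w}_{n,-} = u \neq 0$, hence $\setel{u} <_n \setel{w}$; similarly $\stgen{w}_{n,+} \land \stgen{v}_{n,-} = v \neq 0$, hence $\setel{w} <_n \setel{v}$. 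By transitivity of $<_n$, we get $\setel{u} <_n \setel{v}$. Consequently, since $<_n$ is transitive, if $\setel{u} \jtl \setel{v}$ then $\setel{u} = \setel{v}$ or $\setel{u} <_n \setel{v}$.

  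Now let $n > 0$, $\setel{x} \in P_n$ and $k < n$. By contradiction, suppose that $\gen{\setel{x}}_{k,+} \jtl \gen{\setel{x}}_{k,-}$. Then there exist $\setel{p} \in \gen{\setel{x}}_{k,+}$ and $\setel{q} \in \gen{\setel{x}}_{k,-}$ with $\setel{p} \jtl \setel{q}$. By \Lemr{stgen-to-gen}, $\gen{\setel{x}}_{k,\eps} = \montoset{k}(\stgen{x}_{k,\eps})$ for $\eps \in \set{-,+}$, so $p \le \stgen{x}_{k,+}$ and $q \le \stgen{x}_{k,-}$. Since $\stgen{p}_{k,-} = p \le \stgen{x}_{k,+}$, we have $\stgen{x}_{k,+} \land \stgen{p}_{k,-} = p \neq 0$, hence $\setel{x} <_k \setel{p}$ (recall that $<_k$ relates basis elements of any dimensions $> k$, here $n > k$ and $k$). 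Similarly, since $\stgen{q}_{k,+} = q \le \stgen{x}_{k,-}$, we have $\stgen{q}_{k,+} \land \stgen{x}_{k,-} = q \neq 0$, hence $\setel{q} <_k \setel{x}$. Moreover, if $\setel{p} \neq \setel{q}$, the observation above (applied in dimension $k$, using that $\jtl$ only relates elements in the same dimension) gives $\setel{p} <_k \setel{q}$, so by transitivity $\setel{x} <_k \setel{p} <_k \setel{q} <_k \setel{x}$, contradicting the loop-freeness of $P$; and if $\setel{p} = \setel{q}$, then $\setel{x} <_k \setel{p} = \setel{q} <_k \setel{x}$, again contradicting loop-freeness. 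Hence $\lnot (\gen{\setel{x}}_{k,+} \jtl \gen{\setel{x}}_{k,-})$, so $P$ satisfies \forestaxiomcomp{3}.
\end{proof}
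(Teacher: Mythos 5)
There is a genuine gap, and it sits at the very heart of your argument. The relation $<_k$ is defined as the smallest transitive relation generated by pairs $(x,y)$ with $x\in\genbasis_m$, $y\in\genbasis_{m'}$ for $m,m'>k$ and $\stgen{x}_{k,+}\land\stgen{y}_{k,-}\neq 0$: it only relates basis elements of dimension \emph{strictly greater} than $k$. Your steps $\setel{x}<_k\setel{p}$ and $\setel{q}<_k\setel{x}$ apply the generating clause to $\setel{p},\setel{q}\in P_k$, which have dimension exactly $k$; your parenthetical remark that $<_k$ ``relates basis elements of any dimensions $>k$, here $n>k$ and $k$'' concedes the problem. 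The same flaw already occurs in your preliminary observation, where $\setel{u}\jtlelem\setel{v}$ for $\setel{u},\setel{v}\in P_n$ is claimed to give $\setel{u}<_n\setel{w}<_n\setel{v}$ with $\setel{u},\setel{v}$ of dimension $n$, not $>n$. The dimension restriction is not a removable convention: if one extended $<_k$ to dimension-$k$ elements as you do, then every $\setel{p}\in P_k$ would satisfy $\stgen{p}_{k,+}\land\stgen{p}_{k,-}=p\neq 0$, hence $\setel{p}<_k\setel{p}$, and loop-freeness would be unsatisfiable. So none of the three links $\setel{x}<_k\setel{p}$, $\setel{p}<_k\setel{q}$, $\setel{q}<_k\setel{x}$ in your cycle is licensed.

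The repair is to route the cycle entirely through elements of dimension $>k$, which is what the paper does. From $\setel{p}=\setel{y}_1\jtlelem\cdots\jtlelem\setel{y}_r=\setel{q}$ one extracts witnesses $\setel{z}_1,\ldots,\setel{z}_{r-1}\in P_{k+1}$ with $\setel{y}_j\in\setel{z}_j^-$ and $\setel{y}_{j+1}\in\setel{z}_j^+$; then $y_1\le\stgen{x}_{k,+}\land\stgen{z_1}_{k,-}$ gives $\setel{x}<_k\setel{z}_1$, each $y_{j+1}\le\stgen{z_j}_{k,+}\land\stgen{z_{j+1}}_{k,-}$ gives $\setel{z}_j<_k\setel{z}_{j+1}$, and $y_r\le\stgen{z_{r-1}}_{k,+}\land\stgen{x}_{k,-}$ gives $\setel{z}_{r-1}<_k\setel{x}$ — all between elements of dimension $\ge k+1$ or $n$, hence legitimately, and yielding $\setel{x}<_k\setel{x}$. (Your edge case $\setel{p}=\setel{q}$ is then handled by $\stgen{x}_{k,+}\land\stgen{x}_{k,-}\neq 0$ giving $\setel{x}<_k\setel{x}$ directly.) Your overall strategy — reduce \forestaxiomcomp{3} to a $<_k$-cycle contradicting loop-freeness — is the right one; only the chain you build is inadmissible as written.
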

\begin{proof}
  By contradiction, suppose that there are~$i,n \in \N$ with~$i < n$
  and an element~$\setel{x} \in P_n$ such that~$\gen{\setel{x}}_{i,+} \jtl
  \gen{\setel{x}}_{i,-}$. So there are~$k \ge 1$,~$\setel{y}_1,\ldots,\setel{y}_k \in P_i$ such that
    $\setel{y}_1 \in \gen{\setel{x}}_{i,+}$,
    $\setel{y}_k \in \gen{\setel{x}}_{i,-}$
    and
    $\setel{y}_j \jtlelem \setel{y}_{j+1}$  for~$1 \le j < k$.
  By definition of~$\jtlelem$, it gives~$\setel{z}_1,\ldots,\setel{z}_{k-1} \in
  P_{i+1}$ with~$\setel{y}_j \in \setel{z}_j^-$ and~$\setel{y}_{j+1} \in
  \setel{z}_j^+$ for~$1 \le j < k$. So we have
  $
    {x} <_i {z}_1 <_i \cdots <_i {z}_{k-1}
    <_i {x}$,
contradicting the loop-freeness of the basis~$P$. Hence,~$P$
  satisfies \forestaxiomcomp{3}.
\end{proof}

\begin{lem}
  \label{lem:adc-axiomcomp4}
  $P$ satisfies \forestaxiomcomp{4}.
\end{lem}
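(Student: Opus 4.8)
The goal is to show that the $\omega$-hypergraph $P$ associated to a loop-free unital basis satisfies the computable axiom \forestaxiomcomp{4}. Recall that \forestaxiomcomp{4} says: for $n > 0$, $i,j > n$, $\setel{x} \in P_i$, $\setel{y} \in P_j$, if $\gen{\setel{x}}_{n,+} \cap \gen{\setel{y}}_{n,-} = \emptyset$, then at most one of $\gen{\setel{x}}_{n-1,+} \jtl \gen{\setel{y}}_{n-1,-}$ and $\gen{\setel{y}}_{n-1,+} \jtl \gen{\setel{x}}_{n-1,-}$ holds. The natural strategy is to mimic the proof of \Lemr{adc-axiomcomp3}: translate each $\jtl$-chain of elements of $P$ into a chain of strict inequalities for the relation $<_{n-1}$ on the basis, and derive a contradiction from loop-freeness.

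The plan is to argue by contradiction: assume both $\gen{\setel{x}}_{n-1,+} \jtl \gen{\setel{y}}_{n-1,-}$ and $\gen{\setel{y}}_{n-1,+} \jtl \gen{\setel{x}}_{n-1,-}$ hold. First I would unfold the first chain: there are $\setel{a}_1, \ldots, \setel{a}_k \in P_{n-1}$ with $\setel{a}_1 \in \gen{\setel{x}}_{n-1,+}$, $\setel{a}_k \in \gen{\setel{y}}_{n-1,-}$, and $\setel{a}_j \jtlelem \setel{a}_{j+1}$, giving $\setel{c}_j \in P_n$ with $\setel{a}_j \in \setel{c}_j^-$, $\setel{a}_{j+1} \in \setel{c}_j^+$. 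Each such step yields $\setel{c}_j <_{n-1} \setel{c}_{j+1}$ via the intermediate $n-1$-cell $\setel{a}_{j+1}$ (noting $\stgen{\setel{c}_j}_{n-1,+} \land \stgen{\setel{a}_{j+1}}_{n-1,-}$ involves $\setel{a}_{j+1}$, so is nonzero), and similarly for the other chain. The key additional inputs relating the endpoints to $\setel{x}$ and $\setel{y}$ are: since $\setel{a}_1 \in \gen{\setel{x}}_{n-1,+} = \gen{\setel{x}}_{n,+}^{\pm} \subseteq \gen{\setel{x}}_{n,+}^+$ and $\setel{c}_1 \in \setel{a}_1$-something... more precisely I need that $\setel{x} <_{n-1} \setel{c}_1$ and $\setel{c}_{k-1} <_{n-1} \setel{y}$, using that $\gen{\setel{x}}_{n-1,+} \land \stgen{\setel{c}_1}_{n-1,-} \ne 0$ (as $\setel{a}_1$ witnesses membership on both sides, via the translation Lemmas~\ref{lem:montoset-mp-pm}, \ref{lem:settomon-props}) and the fact that $i,j > n$ so $\setel{x} \in \genbasis_i$, $\setel{y} \in \genbasis_j$ with $i, j > n-1$ are eligible on the left of $<_{n-1}$. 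Chaining everything, I obtain $\setel{x} <_{n-1} \setel{y} <_{n-1} \setel{x}$, contradicting irreflexivity of $<_{n-1}$, which holds since the basis is loop-free.

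The main obstacle is the careful bookkeeping at the \emph{endpoints} of each chain: one must check that the membership $\setel{a}_1 \in \gen{\setel{x}}_{n-1,+}$ in the $\omega$-hypergraph really translates into $\stgen{\setel{x}}_{n-1,+} \land \stgen{\setel{c}_1}_{n-1,-} \neq 0$ in the pre-adc, so that $\setel{x} <_{n-1} \setel{c}_1$ by definition of $<_{n-1}$ (which requires $\setel{x} \in \genbasis_m$ with $m > n-1$; here $m = i > n > n-1$, so this is fine). This uses the identity $\gen{\setel{x}}_{n-1,+} = \montoset{n-1}(\stgen{x}_{n-1,+})$, which follows from \Lemr{stgen-to-gen} together with the fork-freeness/radicality machinery (Lemmas~\ref{lem:stcell-ff}, \ref{lem:ff-impl-rad}, \ref{lem:montoset-mono}). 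Similarly $\setel{a}_k \in \gen{\setel{y}}_{n-1,-}$ gives $\setel{c}_{k-1} <_{n-1} \setel{y}$. I would also need to handle degenerate cases where a chain has length one (i.e. $\gen{\setel{x}}_{n-1,+} \cap \gen{\setel{y}}_{n-1,-} \neq \emptyset$): in that case some $\setel{a} \in P_{n-1}$ lies in both, so $\setel{x} <_{n-1} \setel{y}$ directly from $\stgen{\setel{x}}_{n-1,+} \land \stgen{\setel{y}}_{n-1,-} \neq 0$, and symmetrically from the other chain $\setel{y} <_{n-1} \setel{x}$, again contradicting loop-freeness.

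Putting this together:

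\begin{proof}
  By contradiction, suppose that there are $n > 0$, $i > n$, $j > n$, $\setel{x}
  \in P_i$, $\setel{y} \in P_j$ with $\gen{\setel{x}}_{n,+} \cap
  \gen{\setel{y}}_{n,-} = \emptyset$ and such that both $\gen{\setel{x}}_{n-1,+}
  \jtl \gen{\setel{y}}_{n-1,-}$ and $\gen{\setel{y}}_{n-1,+} \jtl
  \gen{\setel{x}}_{n-1,-}$ hold. By \Lemr{stgen-to-gen} together with
  Lemmas~\ref{lem:stcell-ff}, \ref{lem:ff-impl-rad} and~\ref{lem:montoset-mono},
  for every $\setel{z} \in P$ and $m < \dim(\setel{z})$, we have
  $\gen{\setel{z}}_{m,\eps} = \montoset{m}(\stgen{z}_{m,\eps})$.

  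From $\gen{\setel{x}}_{n-1,+} \jtl \gen{\setel{y}}_{n-1,-}$, there are $k \ge
  1$, $\setel{a}_1,\ldots,\setel{a}_k \in P_{n-1}$ with $\setel{a}_1 \in
  \gen{\setel{x}}_{n-1,+}$, $\setel{a}_k \in \gen{\setel{y}}_{n-1,-}$ and
  $\setel{a}_l \jtlelem \setel{a}_{l+1}$ for $1 \le l < k$. By definition of
  $\jtlelem$, there are $\setel{c}_1,\ldots,\setel{c}_{k-1} \in P_n$ with
  $\setel{a}_l \in \setel{c}_l^-$ and $\setel{a}_{l+1} \in \setel{c}_l^+$ for $1
  \le l < k$. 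Since $\setel{a}_1 \in \gen{\setel{x}}_{n-1,+} =
  \montoset{n-1}(\stgen{x}_{n-1,+})$, we have $a_1 \le \stgen{x}_{n-1,+}$; and
  since $\setel{a}_1 \in \setel{c}_1^- = \montoset{n-1}(\stgen{c_1}_{n-1,-})$,
  we have $a_1 \le \stgen{c_1}_{n-1,-}$. Hence $\stgen{x}_{n-1,+} \land
  \stgen{c_1}_{n-1,-} \ge a_1 \neq 0$, so, as $i > n > n-1$, $\setel{x} <_{n-1}
  \setel{c}_1$. Similarly, for $1 \le l < k-1$, since $\setel{a}_{l+1} \in
  \setel{c}_l^+$ and $\setel{a}_{l+1} \in \setel{c}_{l+1}^-$, we get
  $\stgen{c_l}_{n-1,+} \land \stgen{c_{l+1}}_{n-1,-} \neq 0$, so $\setel{c}_l
  <_{n-1} \setel{c}_{l+1}$. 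Finally, $\setel{a}_k \in \setel{c}_{k-1}^+$ and
  $\setel{a}_k \in \gen{\setel{y}}_{n-1,-} = \montoset{n-1}(\stgen{y}_{n-1,-})$,
  so $\stgen{c_{k-1}}_{n-1,+} \land \stgen{y}_{n-1,-} \neq 0$, whence, as $j > n
  > n-1$, $\setel{c}_{k-1} <_{n-1} \setel{y}$. Chaining these (with the
  convention that $\setel{x} <_{n-1} \setel{y}$ directly when $k = 1$, using
  $\setel{a}_1 \in \gen{\setel{x}}_{n-1,+} \cap \gen{\setel{y}}_{n-1,-}$, so
  that $\stgen{x}_{n-1,+} \land \stgen{y}_{n-1,-} \neq 0$), we obtain by
  transitivity
  \[
    \setel{x} <_{n-1} \setel{y}.
  \]
  By the same argument applied to $\gen{\setel{y}}_{n-1,+} \jtl
  \gen{\setel{x}}_{n-1,-}$, we get $\setel{y} <_{n-1} \setel{x}$. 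Hence $\setel{x}
  <_{n-1} \setel{x}$, contradicting the fact that $<_{n-1}$ is irreflexive since
  the basis~$P$ is loop-free. Therefore $P$ satisfies \forestaxiomcomp{4}.
\end{proof}
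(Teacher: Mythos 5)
Your proof is correct and follows essentially the same route as the paper: by contradiction, each $\jtl$-chain is unfolded into a chain of $<_{n-1}$-inequalities among the $n$-dimensional witnesses (with the endpoints attached to $\setel{x}$ and $\setel{y}$ via $\gen{\setel{z}}_{m,\eps} = \montoset{m}(\stgen{z}_{m,\eps})$), and concatenating the two chains contradicts the loop-freeness of the basis. The paper merely says ``by the same method as for \Lemr{adc-axiomcomp3}''; you have spelled out that method, including the degenerate length-one case, which the paper leaves implicit.
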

\begin{proof}
  By contradiction, suppose that there are~$i \in \N^*$,~$m,n \in \N$ with~$m >
  i$ and~$n > i$,~$\setel{x} \in P_m$ and~$\setel{y} \in P_n$ such that $
  \gen{\setel{x}}_{i,+} \cap \gen{\setel{y}}_{i,-} = \emptyset$,
  $\gen{\setel{x}}_{i-1,+} \jtl \gen{\setel{y}}_{i-1,-}$ and
  $\gen{\setel{y}}_{i-1,+} \jtl \gen{\setel{x}}_{i-1,-}$. By the same method as
  for \Lemr{adc-axiomcomp3}, we get~$r,s \in \N$,~${u}_1,\ldots,{u}_r \in
  P_{i}$,~${v}_1,\ldots,{v}_s \in P_i$ such that
  \[
    {x} <_i {u}_1 <_i \cdots <_i {u}_r <_i {y} <_i
    {v}_1 <_i \cdots <_i {v}_s <_i {x},
  \]
  contradicting the loop-freeness of the basis~$P$. Hence,~$P$ satisfies
  \forestaxiomcomp{4}.
\end{proof}
\noindent We can conclude that:
\begin{theo}
  \label{thm:adc-is-gpc}
  The \ohg~$P$ associated to~$K$ is a torsion-free complex.
\end{theo}
\begin{proof}
  This follows from \Cref{lem:stsrctgt-nonempty,lem:adc-axiom1,lem:adc-axiom2,lem:adc-axiomcomp3,prop:comp-impl-norm-three,lem:adc-axiomcomp4,prop:comp-impl-norm-four}.
\end{proof}
\noindent Finally, we show that~$\ctost$ exhibits an isomorphism between the two
\ocats of cells:
\begin{theo}
  \label{thm:ctost-functor}
  $\ctost$ induces an isomorphism of~\ocats between~$\Cell(P)$ and~$\adcCell(K)$.
  Moreover, for~${\setel{x} \in P}$, we have~$\ctost(\gen{\setel{x}}) =
  \stgen{x}$.
\end{theo}
\begin{proof}
  By definition,~$\ctost$ commutes with the source, target and identity
  operations defined on the \ocats~$\Cell(P)$ and~$\adcCell(K)$. We show that it
  commutes with the composition operations. Given~${i,n \in \N}$ with~${i <
    n}$,~$i$\composable cells~$X,Y \in \Cell(P)_n$, by~\cref{lem:gencomposable},
  we have $ {X_{j,\eps}\cap Y_{j,\eps} = \emptyset} $ for~$j \in \N$ with~$i < j
  \le n$ and~$\eps \in \set{-,+}$. Thus, by \Lemr{settomon-props}, it follows
  readily that~$\settomon{n}(X \comp_i Y) = \settomon{n}(X) \comp_i
  \settomon{n}(Y)$. Thus,~$\ctost$ is a morphism of~\ocats. We conclude with
  \cref{prop:ctost-bij}.
\end{proof}




\subsection{Absence of other embeddings}
\label{ssec:incl-ce}
We conclude our comparison of the pasting diagram formalisms by showing that
there are no embeddings between the four formalisms except the ones already
proved, that is, that parity complexes, pasting scheme and augmented directed
complexes are particular cases of torsion-free complexes (under the caveats
stated for parity complexes and pasting schemes). We show these nonexistence
results by simply exhibiting counter-examples to the other embeddings.

Since adc's are not exactly \ohg{}s, we should make the following precision.
When we say that ``there is no embedding of adc's with loop-free unital bases
into the formalism X'', we mean that, in general, the \ohg obtained from an adc
with loop-free unital basis
(as described in \Cref{ssec:enc-steiner}) is not an instance of X. Conversely,
when we say that ``there is no embedding of the formalism X into adc's with
loop-free unital bases'', we mean that, in general, the pre-adc with basis
obtained from an \ohg which is an instance of X %
(as described in \Cref{ssec:steiner}) is not an adc with loop-free unital basis.

\paragraph{No embedding in parity complexes}

We show that there are no embeddings into parity complexes of the other
formalisms. Considering the axioms of parity complexes, \streetaxiom{4} is
relatively strong, and it has no real equivalent in the other formalisms, so it
can be used to build a counter-example to embeddings. The \ohg~\eqref{eq:not-3b}
is a pasting scheme satisfying \forestaxiom{4} (and thus is a torsion-free
complex) and is an adc with loop-free unital basis. But it is not a parity
complex %
as we have seen in \Cref{ssec:street}, because it does not satisfy
\streetaxiom{4}. So pasting schemes, augmented directed complexes and
torsion-free complexes are not parity complexes in general.

\paragraph{No embedding in pasting schemes}
We now show that there are no embeddings into pasting schemes of the other
formalisms. We use the relatively strong \johnsonaxiom{2} to build a
counter-example to the embeddings. The following $\omega$-hypergraph is a parity
complex satisfying \forestaxiom{4} (and thus it is a torsion-free complex) and
is an adc with loop-free unital basis but it is not a pasting scheme:
\begin{equation}
  \label{eq:ce-inc-johnson}
\begin{tikzpicture}[yscale=1.5,xscale=1,baseline=(b)]
 \coordinate (a) at (0,1);
 \coordinate (b) at (0,0);
 \coordinate (c) at (0,-1);
 \coordinate (d) at (1.5,0.3);
 \node (A) at (a) {$z$};
 \node (B) at (b) {$y$};
 \node (C) at (c) {$x$};
 \node (D) at (d) {$w$};
 \draw[->] (B) to[bend right=60,"$d'$"{xshift=1.2em,yshift=1em}] (A); 
 \draw[->] (B) to[bend left=60,"$d$"] (A); 
 \draw[->] (C) to[bend right=60,"$a'$"',inner sep=0.3ex] (B); 
 \draw[->] (C) to[bend left=60,"$a$",inner sep=0.3ex] (B); 
 \draw[->] (C) .. controls +(-1,0) and +(-1,0) .. (B) node[midway,left] {$c$}; 
 \draw[->] (B) to[bend left=10,"$b$",pos=0.6,inner sep=0.3ex] (D); 
 \draw[->] (C) to[out=20,in=-100,"$b'$"'] (D); 
 
 \draw[->] (A) .. controls +(-3,0) and +(-3,0) .. (0,-2) node[pos=0.7,left] {$e$}
               .. controls +(2,0) and +(1,-1) .. (D) ;
 \node (X1) at (0,-0.5) {\myoverset{$\Rightarrow$}{$\alpha_1$}};
 \node (X2) at (0,0.5) {\myoverset{$\Rightarrow$}{$\alpha_4$}};

 \node at (0.75,-0.2) {$\myoverset{$\Rightarrow$}{$\alpha_2$}$};
 \node at (0,-1.5) {$\myoverset{$\Leftarrow$}{$\alpha_3$}$};
\end{tikzpicture}
.
\end{equation}
Indeed, \johnsonaxiom{2} is not satisfied because $\alpha_2 \tl \alpha_3$ and $y
\in B(\alpha_2) \cap E(\alpha_3) \neq \emptyset$. Note
that~\eqref{eq:ce-inc-johnson} is essentially the \ohg \eqref{eq:ex-non-segment}
without the $3$-generator~$A$ and the $2$-generators~$\alpha'_1$
and~$\alpha'_4$.

\paragraph{No embedding in augmented directed complexes}

Finally, we prove that there are no embeddings into augmented directed complexes
with loop-free unital basis of the other formalisms. As shown in
\Cref{ssec:enc-steiner}, such adc's satisfy \forestaxiomcomp{4}, which is a
stronger version of \forestaxiom{4}. Thus, we can find a counter-example to
embedding into adc's with loop-free unital basis by considering an adequate \ohg
which satisfies \forestaxiom{4} but not \forestaxiomcomp{4}. Consider the
\ohg~$P$ from \Cref{fig:ce-emb-into-adc}
\begin{figure}
  \centering
  \[
    \begin{array}{c}
      \begin{tikzcd}[ampersand replacement=\&,cramped]
        w \arrow[rr,"b"{description},myname=b]
        \arrow[rr,out=70,in=110,"a",myname=a] 
        \arrow[rr,out=-70,in=-110,"c"',myname=p] 
        \arrow[phantom,"\alpha \Downarrow",from=a,to=b]
        \arrow[phantom,"\beta \Downarrow",from=b,to=p] 
        \& \& x \arrow[rr,"e"{description},myname=e] \arrow[rr,out=70,in=110,"d",myname=p]
        \arrow[rr,out=-70,in=-110,"f"',myname=f] 
        \arrow[phantom,"\gamma \Downarrow",from=p,to=e]
        \arrow[phantom,"\delta \Downarrow",from=e,to=f]
        \& \& y \arrow[rr,out=70,in=110,"g",myname=g] \arrow[rr,"h"{description},myname=h] 
        \arrow[rr,out=-70,in=-110,"i"',myname=i] 
        \arrow[phantom,"\eps \Downarrow",from=g,to=h]
        \arrow[phantom,"\zeta \Downarrow",from=h,to=i]
        \& \& z
      \end{tikzcd}
      \\
      A~\Ddownarrow
      \\
      \begin{tikzcd}[ampersand replacement=\&,cramped]
        w \arrow[rr,"b"{description},myname=b]
        \arrow[rr,out=70,in=110,"a",myname=a] 
        \arrow[rr,out=-70,in=-110,"c"',myname=p] 
        \arrow[phantom,"\alpha \Downarrow",from=a,to=b]
        \arrow[phantom,"\beta' \Downarrow",from=b,to=p] 
        \& \& x \arrow[rr,"e"{description},myname=e] \arrow[rr,out=70,in=110,"d",myname=p]
        \arrow[rr,out=-70,in=-110,"f"',myname=f] 
        \arrow[phantom,"\gamma' \Downarrow",from=p,to=e]
        \arrow[phantom,"\delta \Downarrow",from=e,to=f]
        \& \& y \arrow[rr,out=70,in=110,"g",myname=g] \arrow[rr,"h"{description},myname=h] 
        \arrow[rr,out=-70,in=-110,"i"',myname=i] 
        \arrow[phantom,"\eps \Downarrow",from=g,to=h]
        \arrow[phantom,"\zeta \Downarrow",from=h,to=i]
        \& \& z
      \end{tikzcd}
      \\
      B~\Ddownarrow
      \\
      \begin{tikzcd}[ampersand replacement=\&,cramped]
        w \arrow[rr,"b"{description},myname=b]
        \arrow[rr,out=70,in=110,"a",myname=a] 
        \arrow[rr,out=-70,in=-110,"c"',myname=p] 
        \arrow[phantom,"\alpha \Downarrow",from=a,to=b]
        \arrow[phantom,"\beta' \Downarrow",from=b,to=p] 
        \& \& x \arrow[rr,"e"{description},myname=e] \arrow[rr,out=70,in=110,"d",myname=p]
        \arrow[rr,out=-70,in=-110,"f"',myname=f] 
        \arrow[phantom,"\gamma' \Downarrow",from=p,to=e]
        \arrow[phantom,"\delta' \Downarrow",from=e,to=f]
        \& \& y \arrow[rr,out=70,in=110,"g",myname=g] \arrow[rr,"h"{description},myname=h] 
        \arrow[rr,out=-70,in=-110,"i"',myname=i] 
        \arrow[phantom,"\eps' \Downarrow",from=g,to=h]
        \arrow[phantom,"\zeta \Downarrow",from=h,to=i]
        \& \& z
      \end{tikzcd}
      \\
      C~\Ddownarrow
      \\
      \begin{tikzcd}[ampersand replacement=\&,cramped]
        w \arrow[rr,"b"{description},myname=b]
        \arrow[rr,out=70,in=110,"a",myname=a] 
        \arrow[rr,out=-70,in=-110,"c"',myname=p] 
        \arrow[phantom,"\alpha' \Downarrow",from=a,to=b]
        \arrow[phantom,"\beta' \Downarrow",from=b,to=p] 
        \& \& x \arrow[rr,out=70,in=110,"d",myname=p]
        \arrow[rr,out=-70,in=-110,"f"',myname=f] 
        \arrow[phantom,"\gamma'' \Downarrow",from=p,to=f]
        \& \& y \arrow[rr,out=70,in=110,"g",myname=g] \arrow[rr,"h"{description},myname=h] 
        \arrow[rr,out=-70,in=-110,"i"',myname=i] 
        \arrow[phantom,"\eps' \Downarrow",from=g,to=h]
        \arrow[phantom,"\zeta' \Downarrow",from=h,to=i]
        \& \& z
      \end{tikzcd}
    \end{array}
  \]
  \caption{The \protect\ohg $P$}
  \label{fig:ce-emb-into-adc}
\end{figure}
where the $3$\generators~${A,B,C}$ are such that
\begin{align*}
  A^- &= \set{\beta,\gamma}, & B^- &= \set{\delta,\epsilon},&C^- &= \set{\alpha,\gamma',\delta',\zeta}, \\
  A^+ &= \set{\beta',\gamma'}, & B^+ &= \set{\delta',\epsilon'},& C^+ &= \set{\alpha',\gamma'',\zeta'}.
\end{align*}
It can be shown that it is a parity complex and a pasting scheme. It moreover
satisfies \forestaxiom{4} so that it is a torsion-free complex by
\cref{thm:ps-is-gpc}. But its associated pre-adc is an adc with a basis which is
not loop-free unital. Indeed, we have $e \le \stgen{A}_{1,+} \land
\stgen{B}_{1,-}$, $h \le \stgen{B}_{1,+} \land \stgen{C}_{1,-}$ and $b \le
\stgen{C}_{1,-} \land \stgen{A}_{1,+}$, so that $ A <_1 B <_1 C <_1 A $. Hence,
the basis of the associated augmented directed complex is not loop-free.



\clearpage
\appendix

\section{Details on the gluing theorem}
\label{sec:details-gluing-thm}

This section is devoted to the proof of \Cref{thm:cell-gluing}, which allows
gluing sets of generators to existing cells in order to get new cells. This
result requires some technical results about movement, which appears in the
definition of cells. We first introduce these results and then discuss the proof
of the gluing theorem.

\subsection{Movement properties}

Here, we state and prove here several such properties, some of which coming
already present in~\cite{street1991parity}.

\medskip\noindent In the following, we suppose given an \ohg~$P$. We first state
a criterion for movement:

\begin{lem}[{\cite[Proposition 2.1]{street1991parity}}]
  \label{lem:street-2.1}
  For~$n \in \N$, finite subsets~$U \subseteq P_n$ and~$S \subseteq P_{n+1}$, there
  exists~$V \subseteq P_{n}$ such that~$S$ moves~$U$ to~$V$ if and only if~$S^\mp
  \subseteq U$ and~$U \cap S^+ = \emptyset$.
\end{lem}

\begin{proof}
  If~$S$ moves~$U$ to~$V$, then, by definition,
  $S^\mp \subseteq (V \cup S^-) \setminus S^+ = U$ and \[U \cap S^+ = ((V \cup
    S^-) \setminus S^+ ) \cap S^+ = \emptyset.\] Conversely, if~$S^\mp \subseteq U$
  and~$U \cap S^+ = \emptyset$, let~$V = (U \cup S^+) \setminus S^-$.
  Then
  \begin{align*}
    (V \cup S^-) \setminus S^+ &= (U \cup S^+ \cup S^-) \setminus S^+ \\
    &= (U \setminus S^+) \cup (S^- \setminus S^+) \\
    &= U \cup S^\mp & \text{(since~$U \cap S^+ = \emptyset$)} \\
    &= U & \text{(since~$S^\mp \subseteq U$)}
  \end{align*}
  and~$S$ moves~$U$ to~$V$.
\end{proof}
\noindent The next property states that it is possible to modify a movement by adding or
removing ``independent'' elements.
\begin{lem}[{\cite[Proposition 2.2]{street1991parity}}]
  \label{lem:street-2.2}
  Let~$n \in \N$,~$U,V \subseteq P_n$ and~$S \subseteq P_{n+1}$ be finite subsets
  such that~$S$ moves~$U$ to~$V$. Then, given~$X,Y \subseteq P_n$ such that
  $
    X
    \subseteq U$,
    $X \cap S^\mp = \emptyset$
    and
    $Y \cap (S^- \cup S^+) =
    \emptyset$,
  we have that~$S$ moves~$(U \cup Y) \setminus X$ to~$(V \cup Y) \setminus X$.
\end{lem}
\begin{proof}
  By \Lemr{street-2.1}, we have~$S^\mp \subseteq U$ and~$U \cap S^+ =
  \emptyset$. Using the hypothesis, we can refine both equalities to $S^\mp
  \subseteq (U \cup Y) \setminus X$ and $((U \cup Y) \setminus X) \cap S^+ =
  \emptyset$. Using \Lemr{street-2.1} again,~$S$ moves~$(U \cup Y) \setminus X$
  to~$W$ where
  \begin{align*}
  W &= (((U \cup Y) \setminus X) \cup S^+) \setminus S^- \\
    &= ((U \cup S^+ \cup Y) \setminus X) \setminus S^- & \text{(since~$X \cap
                                                          S^+ \subseteq U \cap S^+
                                                          = \emptyset$)}\\
    &= (((U \cup S^+) \setminus S^-) \cup Y) \setminus X & \text{(since~$Y
                                                            \cap S^- = \emptyset$)}\\
    &= (V \cup Y) \setminus X . & 
  \end{align*}
  \vskip-\baselineskip\vskip-\belowdisplayskip
\end{proof}
\noindent The following property gives sufficient conditions for composing movements.
\begin{lem}[{\cite[Proposition 2.3]{street1991parity}}]
  \label{lem:street-2.3}
  Let~$n \in \N$, and~$U,V,W \subseteq P_n$,~$S,T \subseteq P_{n+1}$ be finite
  subsets such that~$S$ moves~$U$ to~$V$ and~$T$ moves~$V$ to~$W$, if~$S^- \cap
  T^+ = \emptyset$ then~$S \cup T$ moves~$U$ to~$W$.
\end{lem}
\begin{proof}
  We compute~$(U \cup (S \cup T)^+) \setminus (S \cup T)^-$:
  \begin{align*}
    (U \cup S^+ \cup T^+) \setminus (S^- \cup T^-) &= (((U \cup S^+) \setminus
                                                     S^-) \cup T^+) \setminus
                                                     T^- = (V \cup T^+) \setminus T^- = W .
  \end{align*}
  Similarly,~$(W \cup (S \cup T)^-) \setminus (S \cup T)^+ = U$ and~$S \cup T$
  moves~$U$ to~$W$.
\end{proof}
\noindent Conversely, the next property enables decomposing movements, under a
condition of orthogonality: given~$n \in \N$ and finite sets~$S,T \subseteq
P_n$, we say that~$S$ and~$T$ are \index{orthogonal}\emph{orthogonal},
\glossary(.r){$S \perp T$}{the property that~$S$ and~$T$ are
  orthogonal}written~$S \perp T$, when~$(S^- \cap T^-) \cup (S^+ \cap T^+) =
\emptyset$. We then have:
\begin{lem}[{\cite[Proposition 2.4]{street1991parity}}]
  \label{lem:street-2.4}
  Given~$n \in \N$, finite subsets~$U,W \subseteq P_n$ and~$S,T \subseteq P_{n+1}$ such
  that~${S \cup T}$ moves~$U$ to~$W$ and~$S^\mp \subseteq U$, if~$S \perp T$ then
  there exists~$V$ such that~$S$ moves~$U$ to~$V$ and~$T$ moves~$V$ to~$W$.
\end{lem}
\begin{proof}
  Let~$R = S \cup T$. By \Lemr{street-2.1},~$R^\mp \subseteq U$ and~$U \cap S^+ \subseteq U \cap R^+ = \emptyset$. By \Lemr{street-2.1}
  again,~$S$ moves~$U$ to~$V = (U \cup S^+) \setminus S^-$.
  Moreover,
  \begin{align*}
    S^- \cap T^+ & = S^\mp \cap T^+ & \text{(since~$S^+ \cap T^+ = \emptyset$, by~$S \perp T$)} \\
                 & \subseteq U \cap T^+ & \text{(since~$S^\mp \subseteq U$, by hypothesis)}\\
                 & \subseteq U \cap (S \cup T)^+ \\
                 & = \emptyset & \text{(by \Lemr{street-2.1}).}
  \end{align*}
  \belowlongtoshortskip
  so that
  \abovelongtoshortskip
  \begin{align*}
    && R^\mp &\subseteq U \\
    \lequiv && ((S^- \cup T^-) \setminus T^+) \setminus S^+ &\subseteq U  \\
    \lequiv && ((T^- \setminus T^+) \cup S^- ) \setminus S^+ &\subseteq U
                                                              & \text{(since~$S^- \cap T^+ = \emptyset$)}\\
    \lequiv && T^\mp \cup S^-   &\subseteq U \cup S^+\\
    \lequiv && T^\mp &\subseteq (U \cup S^+) \setminus S^- & \text{(since~$T^\mp \cap S^- = \emptyset$, by~$S \perp T$)}.
  \end{align*}
  Hence,~$T^\mp \subseteq (U \cup S^+) \setminus S^- = V$ and
  \begin{align*}
    V \cap T^+ \subseteq (U \cup S^+) \cap T^+ \subseteq (U \cap R^+) \cup (S^+ \cap T^+) = \emptyset.
  \end{align*}
  By \Lemr{street-2.1},~$T$ moves~$V$ to~$(V \cup T^+) \setminus T^-$. Moreover,
  \begin{align*}
    S^- \cap T^+ &= S^\mp \cap T^+ & \text{(since~$S \perp T$)} \\
                 &\subseteq U \cap R^+ & \text{(since~$S^\mp \subseteq U$ by hypothesis)}\\
                 &= \emptyset.
  \end{align*}
  \belowlongtoshortskip
  Therefore,
  \abovelongtoshortskip
  \begin{align*}
    (V \cup T^+) \setminus T^- &= (((U \cup S^+) \setminus S^-) \cup T^+)
    \setminus T^- \\
    &= (U \cup S^+ \cup T^+) \setminus (S^- \cup T^-) & \text{(since~$S^-
                                                         \cap T^+ = \emptyset$)}
    \\
    &= W .
  \end{align*}
  \belowlongtoshortskip
  Hence,~$T$ moves~$V$ to~$W$.
\end{proof}
\noindent The next three properties (not in~\cite{street1991parity}) describe
which elements are touched or left untouched by movement.
\begin{lem}
  \label{lem:move-eq}
  Given~$n \in \N$, finite subsets~$U,V \subseteq P_n$ and~$S \subseteq P_{n+1}$, if~$S$ moves~$U$ to~$V$, then
  \[
    {S^\mp = U \setminus V} \qtand S^\pm = V \setminus U\zbox.
  \]
  In particular, if~$T$ moves~$U$ to~$V$, then~$S^{\mp} = T^{\mp}$ and~$S^{\pm} = T^{\pm}$.
\end{lem}
\begin{proof}
  By the definition of movement, we have $V = (U \cup S^+) \setminus S^-$ and $U
  = (V \cup S^-) \setminus S^+$. Therefore, since~$U \cap S^+ = \emptyset$, $ U
  \cap V = U \cap ( (U \setminus S^-) \cup S^\pm) = U \setminus S^\mp$.
  Similarly,~$U \cap V = V \setminus S^\pm$. Hence,~$S^\mp = U \setminus V$
  and~$S^\pm = V \setminus U$.
\end{proof}
\begin{lem}
  \label{lem:unmoved-eq}
  Given~$n \in \N$, finite subsets~$U,V \subseteq P_n$ and~$S \subseteq P_{n+1}$, if~$S$ moves~$U$ to~$V$, then
  \[
    U \setminus S^- = U \setminus S^{\mp} = U \cap V = V \setminus S^{\pm} = V \setminus S^+\zbox.
  \]
\end{lem}
\begin{proof}
  We compute that
  \begin{align*}
  U \setminus S^- &= U \setminus S^{\mp} & \text{(since~$U \cap S^+ =
                                            \emptyset$, by definition of
                                            movement)} \\
                      &= U \cap V & \text{(by \Lemr{move-eq})} \\
                      &= V \setminus S^\pm \\
    &= V \setminus S^{+} & \text{(since~$V \cap S^- = \emptyset$, by
                              definition of movement)} 
  \end{align*}
\qedunskipendalign
\end{proof}

\begin{lem}
  \label{lem:move-canform}
  For~$n \in \N$, finite subsets~$U,V \subseteq P_n$ and~$S \subseteq P_{n+1}$, if~$S$ moves~$U$ to~$V$, then
  \[
    U = (U \cap V) \sqcup S^\mp \qtand V = (U \cap V) \sqcup S^\pm.
  \]
\end{lem}
\begin{proof}
  By \Lemr{unmoved-eq}, we have
  \begin{gather*}
    U = (V \cup S^-) \setminus S^+  = (V \setminus S^+) \cup (S^- \setminus S^+)  = (U \cap V) \cup S^\mp\zbox.
    \shortintertext{Moreover,}
    (U \cap V) \cap S^\mp  \subseteq V \cap S^-  = ((U \cup S^-) \setminus S^-) \cap S^-  = \emptyset\zbox.
  \end{gather*}
  Hence,~$U = (U \cap V) \sqcup S^\mp$. Similarly~$V = (U \cap V) \sqcup S^\pm$.
\end{proof}
\noindent
For~$n \in \N$, ${V \subseteq U \subseteq P_n}$, we say that~$V$ is
\index{initial set}\index{terminal set}\emph{initial (\resp terminal) in~$U$}
for~$\tl_U$ when, for all~$u \in U$, whenever there exists~$v \in V$ such
that~$u \tl_U v$ (\resp~$v \tl_U u$), we have~$u \in V$. The last lemma enables
decomposing a moving set starting from a subset which is a segment:
\begin{lem}
  \label{lem:segment-decomp-in-3}
  For~$n \in \N$, finite subsets~$U,V \subseteq P_n$,~$S \subseteq P_{n+1}$ and~$T \subseteq S$ such that~$S$ is fork-free and moves~$U$ to~$V$, and~$T$ is a
  segment in~$S$ for~$\tl_S$, there exist~$L,R \subseteq S$ and~$A,B \subseteq
  P_{n}$ such that
  \begin{itemize}
  \item $L,T,R$ is a partition of~$S$,
  \item $L$ is initial in~$S$ for~$\tl_S$ and~$R$ is final in~$S$ for~$\tl_S$,
  \item $L$ moves~$U$ to~$A$,~$T$ moves~$A$ to~$B$ and~$R$ moves~$B$ to~$V$.
  \end{itemize}
\end{lem}
\begin{proof}
  Let $ L = \set{ x \in S \mid x \tl_{S} T}$ and $R = S \setminus (L \cup T)$.
  $L,T,R$ is a partition of~$S$, and since~$S$ is fork-free, we have $ L \perp
  T$, $L \perp R$ and $T \perp R$. Since~$T$ is a segment for~$\tl_S$, we have
  that~$L^- \cap T^+ = \emptyset$, and, by definition of~$L$ and~$R$,~$L^- \cap
  R^+ = \emptyset$ so that~$L$ is initial in~$S$. In particular,~$L^\mp
  \subseteq U$. Thus, by \Lemr{street-2.3}, writing~$A$ for the set~$(U \cup
  L^+) \setminus L^-$, we have that~$L$ moves~$U$ to~$A$. Furthermore, since~$L
  \cap R = \emptyset$, we have~$T^- \cap R^+ = \emptyset$ so that~$R$ is
  terminal in~$S$. In particular,~$R^\pm \subseteq V$. Thus, by the dual of
  \Lemr{street-2.3}, writing~$B$ for~${(V \cup R^-) \setminus R^+}$, we have
  that~$R$ moves~$B$ to~$V$.
\end{proof}

\subsection{Proof of the gluing theorem}

We have now enough material to prove the gluing theorem:%
\ifx\theoremcellgluing\undefined
\begin{theo}
  Let~$P$ be an \ohg which satisfies Axioms~\forestax{0},~\forestax{1},
  \forestax{2} and~\forestax{3}. Given~${n \in \N}$, an $n$\cell~$X$ of~$P$
  and a finite fork-free set~$\glueset \subseteq P_{n+1}$ such that~$\glueset$
  is glueable on~$X$, we have that
  \begin{enumerate}[label=(\alph*),ref=(\alph*)]
  \item $\cellactivation(X,\glueset)$
    is a cell and~$\glueset^+ \cap X_n = \emptyset$,
  \item $\cellgluing(X,\glueset)$
    is a cell,
  \item given a finite fork-free subset~$\dualglueset \subseteq
    P_{n+1}$ which is dually glueable on~$X$,
    $\dualglueset^- \cap \glueset^+ = \emptyset$.
  \end{enumerate}
  and dual properties hold when~$\glueset$ is dually glueable on~$X$.
\end{theo}
\else
  \theoremcellgluing*
\fi
\begin{proof}
  See \Figr{21} for a representation of the cells in the statement of the
  theorem. In the following, write~$\movedset$ for
  \[
    S = \cellactivation(X,\glueset)_n = (X_n \cup \glueset^+) \setminus
    \glueset^-.
  \]
  We prove the different sub-properties (and their duals) of the theorem by
  induction on~$n$.


  \proofof{thm:cell-gluing:act} We prove \ref{thm:cell-gluing:act} in two steps:
  first, in the case where~$\setsize G = 1$, then, in the general case.

  \medskip \emph{Step~1: \ref{thm:cell-gluing:act} holds when~$\setsize{\glueset} = 1$.} Let~$x \in P_{n+1}$ be such that~$\set{x} =
  \glueset$. If~$n = 0$, then there exists~$y \in P_0$ such that~$X_0 =
  \set{y}$. By Axioms~\forestax{1} and~\forestax{2}, there exists~$z \in
  P_0$ with~$y \neq z$ such that~$x^- = \set{y}$ and~$x^+ = \set{z}$. So~$\cellactivation(X,\glueset) = \set{z}$ is a cell. So suppose that~$n > 0$.
  Then, we have~${\movedset=(X_n \cup x^+) \setminus x^-}$ and, in order to prove that~$\cellactivation(X,\glueset)$ is a cell, we are required to show that
\begin{itemize}
\item $\movedset$ moves~$X_{n-1,-}$ to~$X_{n-1,+}$;
\item $\movedset$ is fork-free.
\end{itemize}
Using \forestaxiom{3}, we get that~$x^-$ is a segment in~$X_n$ for~$\tl_{X_n}$. By \Lemr{segment-decomp-in-3}, we can decompose the set~$X_n$ as a
partition
\[
  X_n = \afstdecset \cup x^- \cup \asnddecset
\]
with~$\afstdecset$ initial and~$\asnddecset$ final in~$X_n$ and, writing~$A,B
\subseteq P_{n-1}$ for
\begin{gather*}
  A = (X_{n-1,-} \cup \afstdecset^+) \setminus \afstdecset^-
  \qtand
  B = (X_{n-1,+} \cup \asnddecset^-) \setminus \asnddecset^+
  \shortintertext{we have that}
  \text{$\afstdecset$ moves~$X_{n-1,-}$ to~$\afstborder$,}
  \qquad
  \text{$x^-$ moves~$\afstborder$ to~$\asndborder$,}
  \qquad
  \text{$\asnddecset$ moves~$\asndborder$ to~$X_{n-1,+}$}
\end{gather*}
\begin{figure}
  \centering
\begin{tikzpicture}[scale=0.7]
 \coordinate (dn-2s) at (0,3);
 \coordinate (xn-2s) at (0,1);
 \coordinate (dn-2t) at (0,-3);
 \coordinate (xn-2t) at (0,-1);
 \draw[red,line width=2pt] ([xshift=-1pt]dn-2s) |- ([xshift=-1pt]xn-2s);
 \draw[red,line width=2pt] ([shift=(90:1cm+1pt)]0,0) arc (90:270:1cm+1pt);
 \draw[red,line width=2pt] ([xshift=-1pt]dn-2t) -- ( [xshift=-1pt]dn-2t |- xn-2t);
 \node[red] at (-1.3,0) {$\afstborder$};
 \draw[blue,line width=2pt] ([xshift=1pt]dn-2s) |- ([xshift=1pt]xn-2s);
 \draw[blue,line width=2pt] ([shift=(90:1cm+1pt)]0,0) arc (90:-90:1cm+1pt);
 \draw[blue,line width=2pt] ([xshift=1pt]dn-2t) -- ( [xshift=1pt]dn-2t |- xn-2t);
 \node[blue] at (1.3,0) {$\asndborder$};
 \draw (0,0) circle (1); 
 \draw (0,0) circle (3); 
 \node at (dn-2s) {$\bullet$};
 \node at (dn-2t) {$\bullet$};
 \node at (xn-2s) {$\bullet$};
 \node at (xn-2t) {$\bullet$};
 \node at (-3.8,0)  {$X_{n-1,-}$};
 \node at (3.8,0)  {$X_{n-1,+}$};
 \node at (0,3.5)  {$X_{n-2,-}$};
 \node at (0,-3.5)  {$X_{n-2,+}$};
 \draw (dn-2s) -- (xn-2s);
 \draw (dn-2t) -- (xn-2t);
 \node (flechex) at (0,0) {\myoverset{$\Rightarrow$}{$x^-$}};
 \node (flecheS) at (-2,0) {\myoverset{$\Rightarrow$}{$\afstdecset$}};
 \node (flecheT) at (2,0) {\myoverset{$\Rightarrow$}{$\asnddecset$}};
\end{tikzpicture}
  \caption{The decomposition of $X_n$}
  \label{fig:gluing-a-dec}
\end{figure}%
as pictured on \Figr{gluing-a-dec}. In the following, for~$Z \subseteq P_{n-1}$,
we write~$D(Z)$ for the $(n{-}1)$\precell of~$P$ defined by
\[
  D(Z)_{n-1} = Z \quad\qtand\quad D(Z)_{i,\epsilon} = X_{i,\epsilon} \quad \text{for~$i \in \N_{n-2}$ and~$\epsilon \in \set{-,+}$}.
\]
  Since
    $D(\afstborder) = \cellactivation(D(X_{n-1,-}),\afstdecset)$,
    $D(\asndborder) = \cellactivation(D(\afstborder),x^-)$,
  and~${D(X_{n-1,-}) = \csrc_{n-1} (X)}$ is an $(n{-}1)$\cell and both~$\afstdecset$
  and~$x^-$ are fork-free, by using two times the induction hypothesis of
  \Thmr{cell-gluing}, first on~$D(X_{n-1,-})$, then on~$D(A)$, we get that
  \begin{equation}
    \label{eq:gluing-border-cells}
    \text{$D(\afstborder)$ and~$D(\asndborder)$ are cells.}
  \end{equation}
  By \forestaxiom{2}, we have that
  \begin{equation}
    \label{eq:gluing-xplus-ff}
    \text{$x^+$ is fork-free.}
  \end{equation}
  Since~$x^-$ moves~$\afstborder$ to~$\asndborder$, by \Lemr{street-2.1}, we get
  \begin{equation}
    \label{eq:gluing-afstborder-cap-xmp}
    \text{$\afstborder \cap x^{-+} = \emptyset$.}
  \end{equation}
  By \forestaxiom{2}, it holds that~$x^{+\mp} = x^{-\mp} \subseteq \afstborder$.
  By~\eqref{eq:gluing-border-cells} and~\eqref{eq:gluing-xplus-ff}, using the
  induction hypothesis of \Thmr{cell-gluing} on~$D(\afstborder)$, we
  get
\begin{equation}
  \label{eq:gluing-afstborder-cap-xpp}
  \text{$\afstborder \cap x^{++} = \emptyset$.}
\end{equation}
By \Lemr{street-2.1}, there exists~$\asndborder'$ such that~$x^+$ moves~$\afstborder$ to~$\asndborder'$, and
\begin{align*}
  \asndborder' &= (\afstborder \cup x^{++}) \setminus x^{+-} \\
     &= (\afstborder \setminus x^{+-}) \cup (x^{++} \setminus x^{+-}) \\
     &= (\afstborder \setminus x^{+\mp}) \cup x^{+\pm} & \text{(by~\eqref{eq:gluing-afstborder-cap-xpp})} \\
     &= (\afstborder \setminus x^{-\mp}) \cup x^{-\pm} & \text{(since~$x^{+\mp} = x^{-\mp}$, by \forestaxiom{2})} \\
     &= (\afstborder \setminus x^{--}) \cup (x^{-+} \setminus x^{--}) & \text{(by~\eqref{eq:gluing-afstborder-cap-xmp})} \\
     &= (\afstborder \cup x^{-+}) \setminus x^{--} \\
     &= \asndborder & \text{(since~$x^-$ moves~$\afstborder$ to~$\asndborder$).}
\end{align*}
Hence,
\begin{equation}
  \label{eq:gluing-xplus-moves}
  \text{$x^+$ moves~$\afstborder$ to~$\asndborder$.}
\end{equation}


\noindent Since~$x^{+\mp} \subseteq D(\afstborder)_{n-1}$ and~$\afstdecset^\pm
\subseteq D(\afstborder)_{n-1}$, using the induction hypothesis
of \Thmr{cell-gluing}, by~\ref{thm:cell-gluing:inter} we get
\begin{equation}
  \label{eq:gluing-xpp-cap}
  \afstdecset^- \cap x^{++} = \emptyset.
\end{equation}
Similarly, with~$D(\asndborder)$, we get
\begin{equation}
  \label{eq:gluing-xpm-cap}
  x^{+-} \cap \asnddecset^+ = \emptyset.
\end{equation}
By definition,~$\afstdecset$ moves~$X_{n-1,-}$ to~$\afstborder$, and~$x^+$
moves~$\afstborder$ to~$\asndborder$ by~\eqref{eq:gluing-xplus-moves}. Moreover,
by~\eqref{eq:gluing-xpp-cap}, we have that~$\afstdecset^- \cap x^{++} = \emptyset$. Using
\Lemr{street-2.3}, we deduce that
\begin{equation}
  \label{eq:gluing-afstdecset-moves}
  \text{$\afstdecset \cup x^+$ moves~$X_{n-1,-}$ to~$\asndborder$.}
\end{equation}
Since~$\afstdecset$ and~$\asnddecset$ are disjoint and respectively initial and
terminal in~$X_n$,~$\afstdecset^- \cap \asnddecset^+ =
\emptyset$. Also, by~\eqref{eq:gluing-xpm-cap}, we have~$(x^{+-} \cap
\asnddecset^+) = \emptyset$, therefore
\[
  (\afstdecset \cup x^{+})^- \cap \asnddecset^+  \subseteq (\afstdecset^- \cap \asnddecset^+) \cup (x^{+-} \cap \asnddecset^+) 
                                                  = \emptyset.
\]
Using~\eqref{eq:gluing-afstdecset-moves} and \Lemr{street-2.3}, knowing that~$\movedset = \afstdecset \cup x^+ \cup
\asnddecset$, we deduce that
\begin{equation}
  \label{eq:gluing-ms-moves}
  \text{$\movedset$ moves~$X_{n-1,-}$ to~$X_{n-1,+}$.}
\end{equation}
The set~$\afstdecset \cup \asnddecset$ is fork-free as a subset of the fork-free~$X_n$, and~$x^+$ is fork-free since~$x$ is relevant by~\forestaxiom{2}.
Moreover,
  \begin{align*}
    \afstdecset^- \cap x^{+-} &= \afstdecset^- \cap x^{+\mp}  & \text{(by~\eqref{eq:gluing-xpp-cap})}\\
                              &\subseteq \afstdecset^- \cap \afstborder  & \text{(by~\eqref{eq:gluing-xplus-moves} and \Lemr{street-2.1})}\\
                              &= \emptyset & \text{(since~$\afstdecset$ moves~$X_{n-1,-}$ to~$\afstborder$),}\\
    \afstdecset^+ \cap x^{++} &= \afstdecset^{\pm} \cap x^{++}  & \text{(by~\eqref{eq:gluing-xpp-cap})}\\
                              & \subseteq \afstborder \cap x^{++} & \text{(by \Lemr{street-2.1} since~$\afstdecset$ moves~$X_{n-1,-}$ to~$\afstborder$)}\\
                              &= \emptyset &\text{(by~\eqref{eq:gluing-xplus-moves} and \Lemr{street-2.1})}.
  \end{align*}
  So~$\afstdecset \perp x^+$. Similarly,~$x^+ \perp \asnddecset$. Hence, since~$\movedset = \afstdecset \cup x^+ \cup \asnddecset$,
\begin{equation}
  \label{eq:gluing-ms-ff}
  \text{$\movedset$ is fork-free.}
\end{equation}
Then, by~\eqref{eq:gluing-ms-moves} and~\eqref{eq:gluing-ms-ff},
$\cellactivation(X,\glueset)$ is a cell.

We now prove the second part of~\ref{thm:cell-gluing:act}. By \forestaxiom{1},~$x^- \cap x^+ = \emptyset$ .
Since~$\afstdecset \perp x^+$ and~$x^+ \perp \asnddecset$
(by~\eqref{eq:gluing-ms-ff}), using \forestaxiom{0}, we deduce that
$ \afstdecset \cap x^+ = x^+ \cap \asnddecset = \emptyset $
so that
\[
  X_n \cap x^+ = (\afstdecset \cup x^- \cup \asnddecset) \cap x^+ = \emptyset
\]
which concludes the proof of the Step~1.

\medskip
  \emph{Step~2: \ref{thm:cell-gluing:act} holds.}
  We prove this by induction on~$\setsize{\glueset}$. If~$\setsize{\glueset} =
  0$, then the result is trivial. Moreover, the case~$\setsize{\glueset} = 1$
  was proved in Step~1. So suppose that~$\setsize{\glueset} \ge 2$. Since the
  relation~$\tl$ is acyclic by~\forestaxiom{1}, we can consider a
  minimal~$x\in\glueset$ for~$\tl_\glueset$. Let
  \[
    \bsmallerglueset = \glueset
    \setminus \set{x}
    ,\quad
    \bfstdecset=(X_n \cup x^+) \setminus x^-
    ,\quad
    \bsnddecset=(\bfstdecset \cup \bsmallerglueset^+) \setminus \bsmallerglueset^-
  \]
  and recall that we defined~$S$ as~$(X_n \cup \glueset^+) \setminus
  \glueset^-$. In order to show that~$\cellactivation(X,\glueset)$ is a cell, we
  are required to prove that $\movedset$ moves~$X_{n-1,-}$ to~$X_{n-1,+}$, and
  that $\movedset$ is fork-free.
  For this purpose, we will first move~$X_{n}$ with~$\set{x}$ to~$\bfstdecset$
  and use Step~1, then move~$\bfstdecset$ by~$\bsmallerglueset$ to~$\bsnddecset$
  and use the induction of Step~2. Finally, we will prove that~$\bsnddecset =
  \movedset$. So, using Step~1 with~$X$ and~$\set{x}$, we get that
  \begin{itemize}
  \item $\cellactivation(X,\set{x})$ is a cell;
  \item in particular,~$\bfstdecset$ is fork-free and, when~$n > 0$,~$\bfstdecset$ moves~$X_{n-1,-}$
    to~$X_{n-1,+}$;
  \item $X_{n} \cap x^+ = \emptyset$.
  \end{itemize}
  By \Lemr{street-2.1}, we deduce that \set{$x$} moves~$X_n$ to~$U$.
  Moreover,
  \begin{align*}
    \bsmallerglueset^\mp & = \bsmallerglueset^- \setminus \bsmallerglueset^+ \\
    & = (\glueset^- \setminus x^-) \setminus (\glueset^+\setminus x^+) & \text{(since fork-freeness implies
      that~$\glueset^\eps = \sqcup_{u \in \glueset} u^\eps$)} \\
    & \subseteq ((\glueset^- \setminus x^-) \setminus \glueset^+) \cup x^+ \\
    & = ((\glueset^- \setminus \glueset^+) \setminus x^- )\cup x^+ \\
    & \subseteq (X_n \setminus x^- )\cup x^+ & \text{(since~$\glueset^\mp \subseteq
      X_n$ by \Lemr{street-2.1})} \\
    & \subseteq (X_n \cup x^+) \setminus x^- & \text{(since~$x^- \cap x^+ = \emptyset$ by \forestaxiom{1})}\\
    & = \bfstdecset.
  \end{align*}
  Also,~$\bsmallerglueset$ is fork-free as a subset of the fork-free set~$\glueset$. Using the
  induction hypothesis of Step~2 for~$\bsmallerglueset$, we get that
  \begin{itemize}
  \item $\cellactivation(\cellactivation(X,\set{x}),\bsmallerglueset)$ is a cell;
  \item In particular,~$\bsnddecset = (\bfstdecset \cup \bsmallerglueset^+)
    \setminus \bsmallerglueset^-$ is fork-free, and, when~$n > 0$,~$\bsnddecset$
    moves~$X_{n-1,-}$ to~$X_{n-1,+}$;
  \item $\bfstdecset \cap \bsmallerglueset^+ = \emptyset$.
  \end{itemize}
  By \Lemr{street-2.1}, we deduce that~$\bsmallerglueset$ moves~$U$ to~$V$.
  Also, note that~$x^- \cap \bsmallerglueset^+ = \emptyset$ since~$x$ was taken minimal in~$\glueset$.
  Using \Lemr{street-2.3}, we deduce that~$\glueset = \{x\} \cup \bsmallerglueset$ moves~$X_n$
  to~$\bsnddecset$. But~${\movedset = (X_n \cup \glueset^+) \setminus
    \glueset^-}$ so that~$\movedset = \bsnddecset$.
  
  \medskip \noindent The second part of \ref{thm:cell-gluing:act} is left to show, that is,~$X_n \cap \glueset^+ = \emptyset$. We compute that
  \begin{align*}
  X_n \cap \glueset^+ & =  (\bfstdecset \cup x^- \setminus x^+) \cap \glueset^+ & \text{(by \Lemr{street-2.1}, since \set{$x$} moves~$X_n$ to~$U$)}\\
  & =   ((\bfstdecset \cup x^-) \cap \glueset^+) \setminus x^+ &  \\
  & =  (\bfstdecset \cap \glueset^+) \setminus x^+ & \text{(since~$x^- \cap \glueset = x^- \cap (x^+ \cup \bsmallerglueset)  = \emptyset$)}\\
  & =  (\bfstdecset \cap \bsmallerglueset^+) =  \emptyset
  \end{align*}
  which concludes the proofs of Step~2 and \ref{thm:cell-gluing:act}.

  \proofof{thm:cell-gluing:glue} By \ref{thm:cell-gluing:act},~$\cellactivation(X,\glueset)$ is a cell. To conclude, we need to show that~$\glueset$ moves~$X_n$ to~$\movedset$. By definition of~$\movedset$, we have
  that~$\movedset = (X_n \cup \glueset^+) \setminus \glueset^-$. Moreover,
  \begin{align*}
    (\movedset \cup \glueset^-) \setminus \glueset^+ & = (((X_n \cup \glueset^+) \setminus \glueset^-)\cup \glueset^-)\setminus \glueset^+ \\
    & =  (X_n \cup \glueset^+ \cup \glueset^-) \setminus \glueset^+ \\
    & =  (X_n\setminus \glueset^+) \cup \glueset^\mp \\
    & =  X_n \cup \glueset^\mp & \text{(since~$X_n \cap \glueset^+ = \emptyset$ by~\ref{thm:cell-gluing:act})} \\
    & =  X_n & \text{(since~$\glueset$ is glueable on~$X$).}
  \end{align*}
  Hence,~$\cellgluing(X,\glueset)$ is a cell.

  \proofof{thm:cell-gluing:inter} By contradiction, suppose that~$\dualglueset^-
  \cap \glueset^+ \neq \emptyset$. Then, there are~${\dualel \in
    \dualglueset}$,~${\normalel \in \glueset}$ and~$\commonel \in \dualel^- \cap
  \normalel^+$. Consider $ \dfstdecset = \{ \dualelvar \in \dualglueset \mid
  \dualel \tl_{\dualglueset} \dualelvar \} \cup \set{\dualel}, $ and $
  \dsnddecset = \{ \normalelvar \in \glueset \mid \normalelvar \tl_\glueset
  \normalel \}\cup \set{\normalel} $. By the acyclicity~\forestaxiom{1}, we have
  $ \dfstdecset^+ \cap \dsnddecset^- = \emptyset $. Since~$\dfstdecset$ is a
  terminal set for~$\tl_{\dualglueset}$, we have in particular~$\dfstdecset^+
  \cap \dualglueset^- \subseteq \dfstdecset^-$. So,
\begin{align*}
  \dfstdecset^+ = (\dfstdecset^+ \setminus \dualglueset^-) \cup (\dfstdecset^+ \cap \dualglueset^-) \subseteq \dualglueset^\pm \cup \dfstdecset^-. 
\end{align*}
Hence,~$\dfstdecset^\pm \subseteq \dualglueset^\pm \subseteq X_n$ (since~$\dualglueset$ is dually glueable on~$X$). Similarly,~$\dsnddecset^\mp \subseteq
X_n$. Using the dual version of \ref{thm:cell-gluing:act}, the $n$\precell~${\dactcell =
  \dualcellactivation(X,\dfstdecset)}$ is an $n$\cell where~${\dactcell_{n}
  = (X_n \cup \dfstdecset^-) \setminus \dfstdecset^+}$ (see \Figr{23}) and we
have
\begin{align*}
  \dsnddecset^\mp &= \dsnddecset^\mp \setminus \dfstdecset^+ & \text{(since~$\dsnddecset^- \cap \dfstdecset^+ = \emptyset$)} \\
                  &\subseteq X_n \setminus \dfstdecset^+ & \text{(since~$\dsnddecset^\mp \subseteq X_n$)}\\
                  &\subseteq (X_n \cup \dfstdecset^-) \setminus \dfstdecset^+= \dactcell_{n}.
\end{align*}%

\begin{figure}
  \centering
\begin{tabular}{ccc}
\begin{tikzpicture}[baseline=(current bounding box.center),scale=0.6]
 \coordinate (source) at (0,4);
 \coordinate (target) at (0,-4);
 \coordinate (centrew) at ($ (0,0) + (-2,-2) $);
 \coordinate (debutW) at ($(0,-1)$);
 \coordinate (debutW') at ($ (centrew) + (90:0.5) $);
 \coordinate (finW) at (0,-3);
 \coordinate (finW') at ($ (centrew) + (-90:0.5) $);
 \coordinate (centrev) at ($(2,2)$);
 \coordinate (debutV) at ($(0,3)$);
 \coordinate (debutV') at ($ (centrev) + (90:0.5) $);
 \coordinate (finV) at ($ (centrev) + (-90:0.5) $);
 \coordinate (finV') at (0,1);

 \draw (0,0) circle (4);
 \draw[red] (source) -- (target);
 \node[red,right] at (0,0) {$X_n$};
 \draw (debutV) -- (debutV');
 \draw (finV) -- (finV');
 \draw (debutW) -- (debutW');
 \draw (finW) -- (finW');
 \draw (centrev) circle (0.5);
 \draw[very thick,->] ([shift=(30:0.5)]centrev) arc (30:-30:0.5);
 \node[right] at ([shift=(0:0.5)]centrev) {$\commonel$};
 \node at (centrev) {\begin{tabular}{c}$\normalel$\\[-5pt]$\Rightarrow$\end{tabular}};
 \draw (centrew) circle (0.5);
 \draw[very thick,<-] ([shift=(-150:0.5)]centrew) arc (-150:-210:0.5);
 \node[left] at ($ (centrew) + (-0.5,0) $) {$\commonel$};
 \node at (centrew) {\begin{tabular}{c}$\dualel$\\[-5pt]$\Rightarrow$\end{tabular}};
 \node at (-2,0) {\begin{tabular}{c}$\dualglueset$\\[-5pt]$\Rightarrow$\end{tabular}};
 \node at (2,0) {\begin{tabular}{c}$\glueset$\\[-5pt]$\Rightarrow$\end{tabular}};
 \node at ($ (centrew) + (1.2,0) $) {\begin{tabular}{c}$\dfstdecset$\\[-5pt]$\Rightarrow$\end{tabular}};
 \node at ($ (centrev) + (-1.2,0) $) {\begin{tabular}{c}$\dsnddecset$\\[-5pt]$\Rightarrow$\end{tabular}};

\end{tikzpicture}
&
$\xrightarrow{\makebox[3cm]{\text{\Thmr{cell-gluing}}}}$
&
\begin{tikzpicture}[baseline=(current bounding box.center),scale=0.6]
 \coordinate (source) at (0,4);
 \coordinate (target) at (0,-4);
 \coordinate (centrew) at ($ (0,0) + (-2,-2) $);
 \coordinate (debutW) at ($(0,-1)$);
 \coordinate (debutW') at ($ (centrew) + (90:0.5) $);
 \coordinate (finW) at (0,-3);
 \coordinate (finW') at ($ (centrew) + (-90:0.5) $);
 \coordinate (centrev) at ($(2,2)$);
 \coordinate (debutV) at ($(0,3)$);
 \coordinate (debutV') at ($ (centrev) + (90:0.5) $);
 \coordinate (finV) at ($ (centrev) + (-90:0.5) $);
 \coordinate (finV') at (0,1);
 \draw[blue] (source) -- (debutW) -- (debutW');
 \draw[blue] (finW') -- (finW) -- (target);
 \draw (debutV) -- (debutV');
 \node[blue,right] at (0,0) {$\dactcell_n$};
 \draw (finV) -- (finV');
 \draw (centrev) circle (0.5);
 \draw[very thick,->] ([shift=(30:0.5)]centrev) arc (30:-30:0.5);
 \node[right] at ([shift=(0:0.5)]centrev) {$\commonel$};
 \node at (centrev) {\begin{tabular}{c}$\normalel$\\[-5pt]$\Rightarrow$\end{tabular}};
 \draw[blue] ([shift=(90:0.5)]centrew) arc (90:270:0.5);
 \draw[very thick,<-] ([shift=(-150:0.5)]centrew) arc (-150:-210:0.5);
 \node[left] at ($ (centrew) + (-0.5,0) $) {$\commonel$};
 \node at ($ (centrev) + (-1.2,0) $) {\begin{tabular}{c}$\dsnddecset$\\[-5pt]$\Rightarrow$\end{tabular}};
\end{tikzpicture}
\end{tabular}
  \caption{$\dsnddecset$, $\dfstdecset$ and $\dactcell_n$}
  \label{fig:23}
\end{figure}

\noindent Using \Thmr{cell-gluing}\ref{thm:cell-gluing:act} with~$\dactcell$
and~$\dsnddecset$, we get $\dactcell_{n} \cap \dsnddecset^+ = \emptyset$. But,
since~$\commonel \in \dfstdecset^\mp \subseteq \dactcell_{n}$ (by
\forestaxiom{1}) and~$\dfstdecset^\mp \subseteq \dactcell_{n}$,~$\commonel \in
\dactcell_{n} \cap \dsnddecset^+$, which is a contradiction. Hence, $
\dualglueset^- \cap \glueset^+ = \emptyset $ which ends the proof of
\ref{thm:cell-gluing:inter}.
\end{proof}


\section{Details on the freeness property}
\label{sec:details-freeness}

This section is devoted to prove \Cref{thm:ext-freeness}, stating that the \ocat
of cells of a torsion-free complex is a tower of cellular extensions. For this
purpose, we first need a concrete description of cells of free extensions. Such
a description was already introduced by Makkai when he gave a solution to the
word problem on free strict categories~\cite{makkai2005word}. It is based on an
alternate definition of strict categories using another set of operations.
Indeed, the standard set of operations, \ie identity
operations~$\unit{}$ and operations~$\comp$ which allow composing pairs of cells
of homogeneous dimensions, is inconvenient for finding such a description, since
this homogeneous constraint requires an intensive use of identity operations to
lift dimensions of cells, in order to compose them with other cells of higher
dimensions. Instead, strict categories can be seen as instances of another kind
of higher categories called \emph{precategories} which satisfy an additional
condition. In precategories, the composition operation~$\pcomp$ of precategories
allows composing cells of heterogeneous dimensions, which is more convenient for
formal handling. Based on this new definition, syntactical devices called
\emph{contexts} and \emph{context classes} can be developed. These formally
represent whiskering operations on generators as composites with one hole. Using
them, cells of free extensions can be described as adequately quotiented
sequences of context classes formally applied to generators. From this
description, the freeness of the \ocat of cells of torsion-free complexes can be
proved.

In \Cref{ssec:scat-another-def}, we first introduce the definition of
precategories and show that strict categories can be interpreted as
precategories satisfying an additional exchange condition. Then, in
\Cref{text:ctxts-ctxtcls}, we introduce contexts and context classes for strict
categories, together with several natural operations on them. In
\Cref{ssec:polextp-description}, we give a description, in the form of
\Cref{thm:descr-polextp}, of the functor $\polextp--$ from cellular extensions
to strict categories using the intermediate notion of categorical actions, the
latter describing the structure that context classes have with regard to the underlying
strict category. Concretely, the cells of free extensions will be classes of
sequences of formally applied context classes. In \Cref{ssec:cell-dec}, we prove
that the cells of torsion-free complexes
admit a decomposition of this form (\Cref{thm:cell-dec}). In
\Cref{ssec:freeness-proof}, we finally prove \Cref{thm:ext-freeness} by showing
that this decomposition is essentially unique, which precisely characterizes
that a strict category is a free extension. If required, more detailed proofs
can be found in~\cite{forest:tel-03155192}.

\subsection{Another definition of strict categories}
\label{ssec:scat-another-def}

In this section, we introduce an alternate definition of strict categories as
particular instances of precategories.

\paragraph{Precategories}
\parlabel{text:precategories-def}

Precategories can be described, in a sense that will be made precise
in the next paragraph, as ``strict categories without exchange law'' and
generalize in higher dimensions the $2$\dimensional theory of sesquicategories
defined by Street in~\cite{street1996categorical}. They were already introduced
by \citeauthor{makkai2005word} in order to describe the cells of free
strict categories~\cite{makkai2005word}.

Given~$n\in\N \cup \set\omega$, an \index{precategory}\emph{$n$\precategory}~$C$ is an $n$\globular
set together with, for~$k \in \N_{n-1}$, identity \glossary(idabpcat){$\unitp k
  {}$}{the identity operation for precategories}operations
\[
  \unitp{k+1} {}\colon C_{k}\to C_{k+1}
\]
for which we use the same notation conventions than the identity operations on
strict categories,
and, for~$k,l \in \N^*_n$, composition \glossary(.cab){$\pcomp_i$}{the
  composition operation for precategories}operations
\[
  \pcomp_{k,l}\co C_k\times_{\min(k,l)-1}C_l\to C_{\max(k,l)}
\]
which satisfy the axioms below.
Given~$i,k,l \in \N_n$ with~${i = \min(k,l)}$, since the dimensions of the cells
determine the functions to be used, we often write~$\pcomp_i$
for~$\pcomp_{k,l}$. This way, we still display the most important information
which is the dimension~$i$ of composition.
The axioms of $n$\precategories are the
following:
\begin{enumerate}[label=(P-\roman*),ref=(P-\roman*)]
\item \label{precat:first}\label{precat:src-tgt-unit}for~$k \in \N_{n-1}$ and~$u\in C_k$,
  \[
    \csrc_k(\unitp {k+1}{u})=u=\ctgt_k(\unitp {k+1}{u}),
  \]
\item \label{precat:csrc-tgt}for~$i,k,l \in \N_n$ such that~$i = \min(k,l) -1$,~$(u,v)\in C_k\times_iC_l$, and~${\eps \in \set{-,+}}$,
  \begin{align*}
    \csrctgt\eps(u \pcomp_i v)
    &=
    \begin{cases}
      u\pcomp_i \csrctgt\eps(v)& \text{if~$k < l$,}\\
      \csrc(u)&\text{if~$k=l$ and~$\eps = -$,}\\
      \ctgt(v)&\text{if~$k=l$ and~$\eps = +$,}\\
      \csrctgt\eps(u)\pcomp_i v&\text{if~$k>l$,}
    \end{cases}
  \end{align*}
\item \label{precat:compat-id-comp}for~$i,k,l \in \N_n$ with~$i = \min(k,l) -
  1$, given~$(u,v)\in C_{k-1}\times_{i}C_l$,
  \begin{align*}
    \unit u\pcomp_i v
    &=
    \begin{cases}
      v&\text{if~$k \le l$,}\\
      \unit{u\pcomp_i v}&\text{if~$k > l$,}
    \end{cases}
  \shortintertext{and, given~$(u,v) \in C_k \times_i C_{l-1}$,}
    u\pcomp_i\unit{v}
    &=
    \begin{cases}
      u&\text{if~$l\le k$,}\\
      \unit{u\pcomp_i v}&\text{if~$l > k$,}
    \end{cases}
  \end{align*}

\item \label{precat:before-last} \label{precat:assoc}for~$i,k,l,m \in \N_n$ with~$i = \min(k,l) - 1 =
  \min(l,m) - 1$, and~$u \in C_k$,~$v \in C_l$ and~$w \in C_w$ such that~$u,v,w$
  are $i$\composable,
  \[
    (u\pcomp_iv)\pcomp_iw
    =
    u\pcomp_i(v\pcomp_iw),
  \]
\item \label{precat:distrib} for every~$i,j,k \in \N_{n}$ satisfying~$i < j <
  k$, and cells~$u_1,u_2 \in C_{i+1}$,~$v_1,v_2 \in C_{j+1}$ and~$w \in C_k$
  such that~$u_1,w,u_2$ are $i$\composable and~$v_1,w,v_2$ are $j$\composable,
  we have
    \[
      u_1 \pcomp_i (v_1 \pcomp_j w \pcomp_j v_2) \pcomp_i  u_2 = (u_1 \pcomp_i
      v_1 \pcomp_i u_2) \pcomp_j (u_1
      \pcomp_i w \pcomp_i u_2) \pcomp_j (u_1 \pcomp_i v_2 \pcomp_i u_2)\zbox.
    \]
\end{enumerate}
\noindent Given two $n$\precategories~$C$ and~$D$, a \index{morphism!of
  precategories}\emph{morphism of $n$-precategories between~$C$ and~$D$} (also
called \index{prefunctor@$n$\prefunctor}\emph{$n$\prefunctor}), is a morphism of
$n$\globular sets~$F\co C \to D$ which moreover commutes with the
operations~$\unit{}$ and~$\pcomp$. We \glossary(PCatn){$\nPCat n$}{the category
  of $n$\precategories}write~$\nPCat n$ for the category of $n$\precategories.

\paragraph{Strict categories as precategories}
\parlabel{text:eq-strcat-precat}

In this section, we recall from~\cite{makkai2005word} how
strict categories can be expressed as precategories satisfying a condition
analogous to the exchange law.

For~$n \in \Ninf$ and~$C \in \nPCat n$, we write~\ref{cond:pcat-to-strict} for the
following property on~$C$:
\begin{enumerateaxioms}[label=(E),ref=(E)]
\item\label{cond:pcat-to-strict} for~$i,k,l \in \N_n$ with~$1 \le i =
  \min(k,l)-1$,~$u \in C_k$ and~$v \in C_l$, if~$u,v$ are $(i{-}1)$\composable,
  then
  \[
    (u \pcomp_{i-1} \csrc_{i}(v)) \pcomp_{i} (\ctgt_{i}(u) \pcomp_{i-1} v) =
    (\csrc_{i}(u) \pcomp_{i-1} v) \pcomp_{i} (u \pcomp_{i-1} \ctgt_{i}(v))\zbox.
  \]
\end{enumerateaxioms}
Let~$\smash{\nPCatstr n}$\glossary(PCatE){$\nPCatstr n$}{the category of
  precategories which satisfy the exchange condition~\ref{cond:pcat-to-strict}}
be the full subcategory of~$\nPCat n$ of those $n$\precategories~$C$ that
satisfy~\ref{cond:pcat-to-strict}. The condition~\ref{cond:pcat-to-strict} can
be thought as an equivalent for precategories of the exchange law~\ref{cat:xch}
of strict categories.

Given~$n \in \Ninf$ and~$C \in \nCat n$, we define a structure of
$n$\precategory on the underlying $n$\globular set of~$C$. We keep the
identities given by the strict $n$\category structure and define the composition
operations~$\pcomp_{(-)}$ on~$C$ based on the composition
operations~$\comp_{(-)}$. Given~$i,k,l \in \N_n$ with~$i = \min(k,l) - 1$,~$u
\in C_k$ and~$v\in C_l$ such that~$u,v$ are $i$\composable, we put
\[
  u\pcomp_iv =
  \unitp{m}{u}\comp_i\unitp{m}{v}
\]
where~$m = \max(k,l)$. 
We can then prove that we indeed obtain an $n$\precategory which
satisfies~\ref{cond:pcat-to-strict} and that the construction is functorial.

Conversely, given~$C \in \smash{\nPCatstr n}$, we define a structure of strict
$n$\category on the underlying $n$\globular set of~$C$. We keep the identities
given by the structure of $n$\precategory of~$C$ as before and define the
multiple composition operations~$\comp_{(-)}$ based on the precategorical
composition operations~$\pcomp_{(-)}$. For~$i,k \in \N_{n}$ with~${i < k}$, we
define~$u\comp_iv$ for $i$\composable~$u,v\in C_{k}$ by induction on~$k - i$.
If~$i = k-1$, we put~$u\comp_i v = u\pcomp_i v$. Otherwise, if~$i < k-1$, we
define~$u\comp_i v$ inductively by
\begin{equation*}
  \label{eq:comp-from-pcomp}
  u\comp_iv=(u\pcomp_i\csrc_{i+1}(v))\comp_{i+1}(\ctgt_{i+1}(u)\pcomp_iv)\zbox.
\end{equation*}
We can then prove that we obtain a strict $n$\category and that the construction is functorial.
\begin{theo}
  \label{prop:strcat-pcat-isom}
  The two constructions define an isomorphism between $\nCat n$ and~$\nPCatstr n$.
\end{theo}
\noindent Thus, a strict $n$\category~$C$ is canonically an $n$\precategory
satisfying~\ref{cond:pcat-to-strict} (and \viceversa). For our purposes, we will
often prefer the definition of strict categories and use the precategorical
compositions~$\pcomp_{(-)}$ on a strict category without invoking
\Cref{prop:strcat-pcat-isom}.

\subsection{Contexts and contexts classes}
\label{text:ctxts-ctxtcls}

Here, we introduce \emph{contexts} and \emph{context classes}, that represent
formal cells of strict categories with ``holes'' in them. Our definitions are
similar to the one of context given by \citeauthor{metayer2008cofibrant}
in~\cite{metayer2008cofibrant}, but with a stronger syntactical perspective.

\paragraph{Definition}
\parlabel{text:ctxt-ctxtcl-def}

Let~$n \in \Ninf$ and~$G$ be an $n$\globular set. Given~$k \in \N_n$ and~$u,v
\in X_k$, $u$ and~$v$ are said \emph{parallel} when $k = 0$
or~$\csrctgt\eps_{k-1}(u) = \csrctgt\eps_{k-1}(v)$ for~$\eps \in \set{-,+}$.
Given~$m \in \N_n$, an \index{type}\emph{$m$\type} is a pair~$(u,u')$ of
parallel $(m{-}1)$\globes of~$G$ (we use the convention that there is a unique
$({-}1)$\globe~$\ast$ which is parallel with itself). Given~$k \in \N_n$ with~$k
\ge m$ and~${v \in G_k}$, the \emph{$m$\type of~$v$} is the
$m$\type~$(\csrc_{m-1}(v),\ctgt_{m-1}(v))$ so that every $k$\cell can be
implicitly considered as an $m$\type.

\medskip \noindent Let~$C \in \nCat n$. For every~$m \in \N_n$ and
$m$\type~$(u,u')$, we define, by induction on~$m$,
\begin{itemize}
\item the notion of \index{context (class)}\emph{$m$\context of type~$(u,u')$ of~$C$},
  
\item the notion of \emph{$m$\context class of type~$(u,u')$ of~$C$}, 
  
\item for~$k \in \N_n$ with~$k \ge m$, the \glossary(EwFw){$E[w]$ ($F[w]$)}{the
    evaluation of a context~$E$ (class~$F$) at a cell~$w$}\index{evaluation!of a
    context (class)}\emph{evaluation} of an $m$\context~$E$ (\resp $m$\context
  class~$F$) of type~$(u,u')$ at a cell~$w \in C_k$ of type~$(u,u')$ which is a
  $k$\cell denoted~$E[w]$ (\resp~$F[w]$).
\end{itemize}
For~$m \in \N_n$, an $m$\context class of type~$(u,u')$ of~$C$ will be an
equivalence class of $m$\contexts of type~$(u,u')$ under a relation
\glossary(.cd){$\ctxtrel_m$}{the relation which witnesses that two $m$\contexts
  are equivalent}denoted~$\ctxtrel_m$, so that we write~$\ctxtcl E$ for the
associated $m$\context class of an $m$\context~$E$. This relation witnesses that
two contexts are equivalent up to the equalities~\ref{cond:pcat-to-strict}
considered in dimension~$m$. 

There is a unique $0$\context, denoted~$[-]$, and the relation~$\ctxtrel_0$ is
the identity relation, so that a $0$\context class is exactly a $0$\context.
Given~$k \in \N_n$ and $k$\cell~$v \in C_k$, the evaluation of the unique
$0$\context (class)~$[-]$ at~$v$ is~$v$.


Given~$m \in \N_{n-1}$ and an $(m{+}1)$\type~$(u,u')$, an
$(m{+}1)$\context of type~$(u,u')$ is defined as a triple~$E = (l,F,r)$ where
\begin{itemize}
\item $F$ is an $m$\context class of type~$(\csrc_{m-1}(u),\ctgt_{m-1}(u'))$,
  
\item $l$ and~$r$ are $(m{+}1)$\cells of~$C$ such that~$\ctgt_{m}(l) = F[u]$ and~$\csrc_{m}(r) = F[u']$.
\end{itemize}
Moreover, given~$k \in \N_n$ with $k \ge m+1$ and~$w \in C_k$ of type~$(u,u')$,
the evaluation~$E[w]$ of~$E$ at~$w$ is the $k$\cell
\[
  E[w] = l \pcomp_m F[w] \pcomp_m r.
\]
We define the relation~$\ctxtrel_{m+1}$ on $(m{+}1)$\contexts of type~$(u,u')$.
When~$m = 0$, for all $1$\contexts~$E_1$ and~$E_2$ of type~$(u,u')$, we put~$E_1
\ctxtrel_1 E_2$ if and only if~$E_1 = E_2$. When~$m > 0$, we
define~$\ctxtrel_{m+1}$ to be the reflexive symmetrical transitive closure
of~$\ctxtrel^1_{m+1}$, where~$\ctxtrel^1_{m+1}$ is the relation such that, for all
$(m{+}1)$\contexts
\[
  E_1= (l_1,F_1,r_1) \qtand E_2=(l_2,F_2,r_2)
\]
of type~$(u,u')$, we have~$E_1 \ctxtrel^1_{m+1} E_2$ if there exist $m$\contexts
\[
  E'_1 = (l'_1,F'_1,r'_1)
  \qtand
  E'_2 = (l'_2,F'_2,r'_2)
\]
of type~$(\csrc_{m-1}(u),\ctgt_{m-1}(u'))$ with~$F_i = \ctxtcl{E'_i}$ for~$i \in \set{1,2}$, and~$l,r,w \in C_{m+1}$ such that at least one of the two
sets of conditions \ref{cond:ctxtrel-left} and \ref{cond:ctxtrel-right} is
satisfied, where the set of conditions~\ref{cond:ctxtrel-left} is \begingroup
\leqnomode
\begin{equation}
  \label{cond:ctxtrel-left}
  \tag*{($\ctxtrel$-L)}
  \begin{aligned}
    l_1 &= l \pcomp_m (w \pcomp_{m-1} F'_1[u] \pcomp_{m-1} r'_1)
    &
      r_1
    &= r
            \\
    l_2 &= l
    &
      r_2 &=
            (w \pcomp_{m-1} F'_2[u'] \pcomp_{m-1} r'_2) \pcomp_m r
    \\
    l'_1 &= \ctgt_{m}(w)
           &
    r'_1 &= r'_2
    \\
    l'_2 &= \csrc_{m}(w)
    & F'_1 &= F'_2
  \end{aligned}
\end{equation}
and the set of conditions~\ref{cond:ctxtrel-right} is
\begin{equation}
  \label{cond:ctxtrel-right}
  \tag*{($\ctxtrel$-R)}
  \begin{aligned}
    l_1 &= l \pcomp_m (l'_1 \pcomp_{m-1} F'_1[u] \pcomp_{m-1} w)  
    &
      r_1 &= r
    \\
    l_2 &= l
    &
      r_2 &=
            (l'_2 \pcomp_{m-1} F'_2[u'] \pcomp_{m-1} w) \pcomp_m r
    \\
    l'_1 &= l'_2 & r'_1 &= \ctgt_m(w) \\
    r'_2 &= \csrc_m(w)
           &
             F'_1 &= F'_2\zbox.
  \end{aligned}
\end{equation}
\endgroup
\noindent
We give a graphical representation of \ref{cond:ctxtrel-left} and
\ref{cond:ctxtrel-right} in the case~$m = 1$ in \Cref{fig:ctxtrel-left-right-dim2}.
\begin{figure}
  \centering
  \begin{gather*}
    \makebox[0pt][r]{\ref{cond:ctxtrel-left}\qquad}
    \begin{tikzpicture}[scale=0.4,baseline=(w.base)]
      \draw (0,0) circle (3);
      \draw (0,0) circle (1);
      \draw (180:3) arc (180:360:1);
      \draw[dotted] (180:3) arc (180:0:1);
      \draw (0:1) -- (0:3);
      \node at (90:2) {$l$};
      \node at (-90:2) {$r$};
      \node (w) at (180:2) {$w$};
    \end{tikzpicture}
    \qquad
    \ctxtrel^1_2
    \qquad
    \begin{tikzpicture}[scale=0.4,baseline=(w.base)]
      \draw (0,0) circle (3);
      \draw (0,0) circle (1);
      \draw[dotted] (180:3) arc (180:360:1);
      \draw (180:3) arc (180:0:1);
      \draw (0:1) -- (0:3);
      \node at (90:2) {$l$};
      \node at (-90:2) {$r$};
      \node (w) at (180:2) {$w$};
    \end{tikzpicture}
    \\
    \makebox[0pt][r]{\ref{cond:ctxtrel-right}\qquad}
    \begin{tikzpicture}[scale=0.4,baseline=(w.base)]
      \draw (0,0) circle (3);
      \draw (0,0) circle (1);
      \draw (180:1) -- (180:3);
      \draw (0:1) arc (180:360:1);
      \draw[dotted] (0:1) arc (180:0:1);
      \node at (90:2) {$l$};
      \node at (-90:2) {$r$};
      \node (w) at (0:2) {$w$};
    \end{tikzpicture}
    \qquad
    \ctxtrel^1_2
    \qquad
    \begin{tikzpicture}[scale=0.4,baseline=(w.base)]
      \draw (0,0) circle (3);
      \draw (0,0) circle (1);
      \draw (180:1) -- (180:3);
      \draw[dotted] (0:1) arc (180:360:1);
      \draw (0:1) arc (180:0:1);
      \node at (90:2) {$l$};
      \node at (-90:2) {$r$};
      \node (w) at (0:2) {$w$};
    \end{tikzpicture}
  \end{gather*}
  \caption{The rules \ref{cond:ctxtrel-left} and \ref{cond:ctxtrel-right} for $\ctxtrel^1_2$}
  \label{fig:ctxtrel-left-right-dim2}
\end{figure}
An $(m{+}1)$\context class of type~$(u,u')$ is an equivalence class of
$(m{+}1)$\contexts of type~$(u,u')$ under~$\ctxtrel_{m+1}$. Note that if~$E_1
\ctxtrel_{m+1} E_2$ and~$w$ is a $k$\cell of type~$(u,u')$, then~$E_1[w] =
E_2[w]$, so that we can define the evaluation~$F[w]$ of an $(m{+}1)$\context
class~$F$ by a $k$\cell~$w$, both of type~$(u,u')$, as~$E[w]$, where~$E$ is an
$(m{+}1)$\context of type~$(u,u')$ such that~$F = \ctxtcl{E}$.

\paragraph{Source and target of contexts}
Let~$n \in \Ninf$ and~$C \in \nCat n$. Given~$m \in \N^*_n$, an
$m$\type~$(u,u')$ and an $m$\context~$E = (l,F,r)$ of type~$(u,u')$ of~$C$, the
\index{source!of a context (class)}\emph{source} and the \index{target!of a
  context (class)}\emph{target} of~$E$ are respectively the $(m{-}1)$\cells
\[
  \csrc_{m-1}(E) = \csrc_{m-1}(l) \qtand \ctgt_{m-1}(E) = \ctgt_{m-1}(r).
\]
When~$m > 1$, for~$\eps \in \set{-,+}$, we easily verify that
\[
  \csrctgt\eps_{m-2} \circ \csrc_{m-1}(E) = \csrctgt\eps_{m-2} \circ \ctgt_{m-1}(E)\zbox.
\]
The operations~$\csrc,\ctgt$ on $m$\contexts extend to $m$\context classes since
they are compatible with the~$\approx_m$ relation. Given~$i \in \N_{m-1}$
and~$\eps \in \set{-,+}$ and an $m$\context~$E$ (\resp an $m$\context
class~$F$), we write~$\csrctgt\eps_i(E)$ for~$\csrctgt\eps_i \circ
\csrctgt\eps_{m-1}(E)$ (\resp~$\csrctgt\eps_i \circ \csrctgt\eps_{m-1}(F)$).
Thus, for~$i \in \N_{n-1}$, we can extend the notion of $i$\composable sequences
of globes of globular sets to sequences~$X_1,\ldots,X_l$ for some~$l \in \N^*$
where~$X_s$ is either an $m$\context, an $m$\context class, or a cell of~$C$
for~$s \in \N^*_l$, and say that~$X_1,\ldots,X_l$ is $i$\composable
when~$\ctgt_i(X_s) = \csrc_i(X_{s+1})$ for~$s \in \N^*_{l-1}$.

\paragraph{Composition operations}

Let~$n \in \Ninf$ and~$C \in \nCat n$. Given~$i,m \in \N_n$ with~$i < m$, an
$m$\context~$E = (l,F,r)$ of some $m$\type~$(u,u')$ of~$C$, and~$v \in C_{i+1}$,
if~$(v,E)$ is $i$\composable, we define an $m$\context~$v \pcomp_i E$ by
induction on~$m - i$ with
\[
  v \pcomp_i E =
  \begin{cases}
    (v \pcomp_i l,F,r) & \text{if~$i + 1 = m$,} \\
    (v \pcomp_i l,v\pcomp_i F,v\pcomp_i r) & \text{if~$i + 1 < m$,}
  \end{cases}
\]
and, since it can be verified that the~$\pcomp_i$~operation is compatible
with~$\ctxtrel_m$, we extend the operation on $m$\context classes and put~$v
\pcomp_i \ctxtcl{E} = \ctxtcl{v\pcomp_i E}$. Similarly, if~$(E,v)$ is
$i$\composable, we define an $m$\context~$E \pcomp_i v$ using an induction on~$m
- i$ by
\[
  E \pcomp_i v =
  \begin{cases}
    (l,F,r\pcomp_i v) & \text{if~$i + 1 = m$,} \\
    (l\pcomp_i v, F\pcomp_i v, r\pcomp_i v) & \text{if~$i + 1 < m$,}
  \end{cases}
\]
and we put~$\ctxtcl{E} \pcomp_iv = \ctxtcl{E \pcomp_i v}$. These composition
operations satisfy properties similar to strict $(n{+}1)$\categories (unitality,
associativity, condition~\ref{cond:pcat-to-strict}, \etc).

\subsection{A description of free extensions}
\label{ssec:polextp-description}

We now introduce a concrete description of the free extension functor~$\polextp
--$ of strict categories using context and context classes. It is based on the
intermediate notion of \emph{categorical action}, which encodes the structure of
context and context classes with regard to the underlying $n$\category. On the
one hand, the cells of free categorical action on a cellular extension will then
be characterized as formally applied context classes on the generators. On the
other hand, the cells of the free strict category on a categorical action will
be characterized as adequately quotiented sequences of the cells of the
categorical action. This will result in the characterization of cells of free
extensions as quotiented sequences of formally applied context classes.

\paragraph{Categorical actions}
\parlabel{text:catact-def}

Let~$n \in \N$. An \index{categorical action}\emph{$n$\categorical action} is
the data of an $n$\cellular extension~$(C,C_{n+1})$ together with, for~$k \in
\N^*_{n}$, composition operations
\[
  \pcomp_{k,n+1} \co C_{k} \times_{k-1} C_{n+1} \to C_{n+1}
  \qtand
  \pcomp_{n+1,k}\co C_{n+1}
  \times_{k-1} C_{k} \to C_{n+1}
\]
satisfying the axioms given below. We extend the convention used for
precategories, meaning that, for~${i,k,l \in \N_{n+1}}$ with $ {i = \min(k,l) -
  1} $ and $ {\max(k,l) = n+1} $, given~${(u,v) \in C_k \times_i C_l}$, we
write~$u \pcomp_i v$ for~${u \pcomp_{k,l} v}$. The axioms satisfied by
$n$\categorical actions are then the following:
\begin{enumerateaxioms}[label=(A-\roman*),ref=(A-\roman*)]
\item \label{act:csrc-tgt}\label{act:first-comp} for~$i,k,l \in \N_{n+1}$ satisfying
  \[
    i = \min(k,l) -1 \le n - 1
    \qtand
    \max(k,l) = n+1,
  \]
  and~$(u,v)\in C_k\times_i C_l$ and~${\eps \in \set{-,+}}$,
  \begin{align*}
    \csrctgt\eps_n(u \pcomp_i v)
    &=
      \begin{cases}
        u\pcomp_i \csrctgt\eps_n(v)& \text{if~$k < l$,}\\
        \csrctgt\eps_n(u)\pcomp_i v&\text{if~$k>l$,}
      \end{cases}
  \end{align*}

\item \label{act:assoc} for~$i,k,l,m \in \N_{n+1}$ satisfying
  \[
    i = \min(k,l) - 1 = \min(l,m) - 1 \le n-1
    \qtand
    \max(k,l,m) = n+1,
  \]
  and~$(u,v,w) \in C_k \times_i C_l \times_i C_m$,
  \[
    (u \pcomp_i v) \pcomp_i w = u \pcomp_i (v \pcomp_i w),
  \]
\item \label{act:distrib}\label{act:last-comp} for~$i,j\in \N_{n-1}$ with~$i < j$,~$u_1,u_2
  \in C_{i+1}$,~$v_1,v_2 \in C_{j+1}$ and~$w \in C_{n+1}$ such that~$u_1,w,u_2$
  are $i$\composable and~$v_1,w,v_2$ are $j$\composable,
  \[
    u_1 \pcomp_i (v_1 \pcomp_j w \pcomp_j v_2) \pcomp_i  u_2 = (u_1 \pcomp_i
    v_1 \pcomp_i u_2) \pcomp_j (u_1
    \pcomp_i w \pcomp_i u_2) \pcomp_j (u_1 \pcomp_i v_2 \pcomp_i u_2),
  \]

\item \label{act:exch} for~$i,k,l \in \N^*_{n+1}$ satisfying
  \[
    i = \min(k,l) - 1 \le n-1
    \qtand
    \max(k,l) = n+1,
  \]
  and~$(u,v) \in C_k \times_{i-1} C_l$,
  \[
    (u \pcomp_{i-1} \csrc_{i}(v)) \pcomp_{i} (\ctgt_{i}(u) \pcomp_{i-1} v) =
    (\csrc_{i}(u) \pcomp_{i-1} v) \pcomp_{i} (u \pcomp_{i-1} \ctgt_{i}(v)).
  \]
\end{enumerateaxioms}
Axioms~\ref{act:csrc-tgt},~\ref{act:assoc} and~\ref{act:distrib} above closely
match Axioms~\ref{precat:csrc-tgt},~\ref{precat:assoc} and~\ref{precat:distrib}
of precategories. \Axr{act:exch} is
analogous to the condition~\ref{cond:pcat-to-strict} satisfied by precategories
derived from strict categories %
(\cf~\Cref{ssec:scat-another-def}). %
An \index{morphism!of categorical actions}\emph{$n$\categorical action morphism}
between~$(C,C_{n+1})$ and~$(D,D_{n+1})$ is a morphism of $n$\cellular extension
\[
  (F,f) \co (C,C_{n+1}) \to (D,D_{n+1}) \in \nCatp n
\]
which is moreover commutes with the~$\pcomp_{k,n+1}$ and~$\pcomp_{n+1,k}$
operations for~$k \in \N^*_n$. We \glossary(CatAn){$\nCata n$}{the category of
  $n$\categorical actions}write~$\nCata n$ for the category of $n$\categorical
actions.

\paragraph{Free action on a cellular extension}
\parlabel{text:freeact-on-cell-ext}

There is
a forgetful functor
\[
  \mfunctorb U\co\nCata n \to \nCat n^+
\]
which forgets the data of the~$\pcomp_{k,n+1}$ and~$\pcomp_{n+1,k}$ operations,
for~$k \in\N^*_{n}$. In this section, we use the formalism of contexts and
contexts classes to define a left adjoint $(-,\freeacttop -)$ to the functor~$\mfunctorb U\co \nCata
n \to \nCatp n$: given an $n$\cellular extension~$(C,X)$, the elements of
$\freeacttop X$ will be the pairs $(g,F)$, where $g \in X$ and~$F$ is an
adapted $n$\context class, \ie $\freeacttop X$ is the set of context classes formally applied to
generators of~$X$.


Let~$n \in \N$. Given an $n$\cellular extension~$(C,X)$, an
$n$\categorical action~$\freeact C X = (C,\freeacttop X)$ can be defined as
follows:~$\freeacttop X$ is the set of pairs~$(g,F)$ with~$g \in X$ and~$F$ an
$n$\context class of type~$g$. The $n$\source and $n$\target of such a
pair~$(g,F)$ are defined respectively as the $n$\cells
\[
  \csrc_n((g,F)) = F[\gsrc_n(g)] 
  \qtand
  \ctgt_n((g,F)) = F[\gtgt_n(g)].
\]
and they equip~$(C,\freeacttop X)$ with a structure of an $n$\cellular
extension. We extend the operations~$\pcomp_i$ defined for $n$\context classes
to such pairs by putting
\[
  u \pcomp_i (g,F) = (g,u \pcomp_i F)
  \qtand
  (g,F) \pcomp_i v = (g,F\pcomp_iv)
\]
for~$i \in \N_{n-1}$ and~$u,v \in C_{i+1}$ such that~$u,(g,F)$ and~$(g,F),v$ are
$i$\composable. We then have:
\begin{prop}
  \label{prop:freeact-catact}
  \label{prop:freeact-free}
  The operations~$\pcomp_i$ defined above equip~$\freeact C X$ with the
  structure of an $n$\categorical action. It is the free categorical action
  relatively to the forgetful functor~$\mfunctorb U$.
\end{prop}
\noindent The construction~$(C,X) \mapsto \freeact C X$ of the above proof
uniquely extends to a \glossary(.d){$\freeact - -$, $\freeact C A$}{the functor
  which maps an $n$\cellular extension to the associated free $n$\categorical
  action}functor
\[
  \freeact - -\co \nCext n \to \nCata n
\]
which is left adjoint
to~$\mfunctorb U$. Given~$(H,h)\co (C,X) \to (D,Y)$ in~$\nCext n$, the
$n$\categorical action morphism
\[
  \freeact H h\co \freeact C X \to \freeact D Y \in \nCata n
\]
is defined by $ \freeact H h_i = H_i $ and $ \freeact H h_{n+1}((g,F)) =
(h(g),H(F)) $ for~$i \in \N_n$ and~$(g,F) \in\freeacttop X$.

\paragraph[Free \texorpdfstring{$(n{+}1)$}{(n+1)}-categories on \texorpdfstring{$n$}{n}-categorical actions]{Free $\bm{(n{+}1)}$-categories on $\bm n$-categorical actions}
\parlabel{ssec:freecat-on-act}


There is a forgetful functor
\[
  \mfunctorb{U}'\co \nCat {n+1} \to \nCata n
\]
which maps an $(n{+}1)$\category~$C$ to an $n$\categorical action~$(\restrictcat
C n,C_{n+1})$ by forgetting the~$\pcomp_n$ operation (where we consider the
$(n{+}1)$\precategory structure of~$C$). In this section, we describe explicitly
a left adjoint $\acttocat--$ to this functor: given~$(C,A) \in \nCata n$, we
show that the $(n{+}1)$\cells of~$\acttocat C A$ can be described as sequences
of composable elements of~$A$ that are adequately quotiented.

Let $n \in \N$ and $(C,A) \in \nCata n$. We define the
\glossary(.da){$\acttolist{A}$}{the set of $n$\sequences
  over~$A$}set~$\acttolist{A}$ of \emph{$n$\composable sequ\-ences} (or simply,
\index{sequence@$n$-sequence}\emph{$n$\sequences}) of~$(C,A)$ as the set of
terms of the form
\[
  \cseq{u_1,\ldots,u_k}
\]
for some~$k \in \N$ and~$u_1,\ldots,u_k \in A$ such that~$u_1,\ldots,u_k$ are
$n$\composable. When~$k = 0$, by convention, there is an empty
sequence~$\unitseq u$ for each~$u \in C_n$. Given~$v = \cseq{v_1,\ldots,v_k} \in
\acttolist A$, we say that~$k$ is the \index{length!of an
  $n$-sequence}\emph{length} of~$v$ and we \glossary(.e){$\len u$}{the length of
  the cell~$u$}write~$\len v$ for~$k$. Moreover, we define a source~$\csrc_n(v)$
and a target~$\ctgt_n(v)$ for~$v$ by putting
\[
  \csrc_n(v) = \csrc_n(v_1)
  \qtand
  \ctgt_n(v) = \ctgt_n(v_k)
\]
where, by convention, if $v = \unitseq u$ for some $u \in C_n$, then~$\csrc_n(v)
= \ctgt_n(v) = u$. Thus, we obtain an $n$\cellular extension whose set of
$(n{+}1)$\globes is~$\acttolist A$ and whose underlying $n$\category is~$C$. We
now define composition operations for the $n$\sequences. Given~$i \in \N_{n-1}$,
a~cell~$u \in C_{i+1}$ and an $n$\sequence~${v = \cseq{v_1,\ldots,v_l} \in
  \acttolist{A}}$ such that~$u,v$ are $i$\composable, we put
\[
  u \pcomp_i v = \cseq{u \pcomp_i v_1,\ldots,u\pcomp_i v_l}
\]
where, by convention, if~$v = \unitseq {\tilde v}$ for some~$\tilde v \in C_n$,
then~$u \pcomp_i v = \unitseq {u \pcomp_i \tilde v}$. Given $n$\composable
$n$\sequences~$u = \cseq{u_1,\cdots,u_k}$ and~${v = \cseq{v_1,\cdots,v_l}}$
in~$\acttolist{A}$, we put
\[
  u \pcomp_n v = \cseq{u_1,\ldots,u_k,v_1,\ldots,v_l}\zbox.
\]

\noindent In order to obtain a strict $(n{+}1)$\category from~$C$
and~$\acttolist A$, we need to quotient~$\acttolist A$ so that the exchange
condition~\ref{cond:pcat-to-strict} on precategories holds (\cf
\Cref{prop:strcat-pcat-isom}). For this purpose, we define a
\glossary(X){$\exchwit$}{the exchange relation for the top cells of categorical
  actions}relation $ \exchwit \subseteq A^6 $ such that, given~$l,l',r,r',u,v
\in A$,~$\exchwit(l,l',r,r',u,v)$ holds when~$u,v$ are $(n{-}1)$\composable and
the following equalities hold in~$A$
\begin{align*}
  l & = u \pcomp_{n-1} \csrc_n(v) & r &= \csrc_n(u) \pcomp_{n-1} v \\
  l' &= \ctgt_n(u) \pcomp_{n-1} v & r' &= u \pcomp_{n-1} \ctgt_{n}(v)\zbox.
\end{align*}
In \Cref{fig:2-dimen-exchwit-example}, we illustrate this condition in the case
of a $1$\categorical action.
\begin{figure}
  \centering
  \begin{align*}
    l
    &=
      \begin{tikzcd}[ampersand replacement=\&]
        x
        \ar[r,bend left=70,"f",""{auto=false,name=f}]
        \ar[r,bend right=70,"{f'}"'{pos=0.53},""{auto=false,name=fp}]
        \&
        y
        \ar[r,"g"]
        \&
        z
        \ar[from=f,to=fp,"\Downarrow u",phantom]
      \end{tikzcd}
    &
      r
    &=
      \begin{tikzcd}[ampersand replacement=\&]
        x
        \ar[r,"f"]
        \&
        y
        \ar[r,bend left=70,"g",""{auto=false,name=f}]
        \ar[r,bend right=70,"{g'}"'{pos=0.48},""{auto=false,name=fp}]
        \&
        z
        \ar[from=f,to=fp,"\Downarrow v",phantom]
      \end{tikzcd}
    \\
    l'
    &=
      \begin{tikzcd}[ampersand replacement=\&]
        x
        \ar[r,"f'"]
        \&
        y
        \ar[r,bend left=70,"g",""{auto=false,name=f}]
        \ar[r,bend right=70,"{g'}"'{pos=0.48},""{auto=false,name=fp}]
        \&
        z
        \ar[from=f,to=fp,"\Downarrow v",phantom]
      \end{tikzcd}
    &
      r'
    &=
      \begin{tikzcd}[ampersand replacement=\&]
        x
        \ar[r,bend left=70,"f",""{auto=false,name=f}]
        \ar[r,bend right=70,"{f'}"'{pos=0.53},""{auto=false,name=fp}]
        \&
        y
        \ar[r,"g'"]
        \&
        z
        \ar[from=f,to=fp,"\Downarrow u",phantom]
      \end{tikzcd}
  \end{align*}
  \caption{A configuration of $2$-cells $l,l',r,r',u,v$ such that
    $\exchwit(l,l',r,r',u,v)$}
  \label{fig:2-dimen-exchwit-example}
\end{figure}
Given top-level elements~$l,l',r,r' \in A$, we write~$\exchwit(l,l',r,r')$ when
there exist~$u,v \in A$ such that we have~$\exchwit(l,l',r,r',u,v)$. We define
an equivalence \glossary(.ea){$\actrel$}{the relation which witnesses that two
  $n$\sequences are equivalent}relation~$\actrel$ on~$\acttolist{A}$ as the
reflexive sym\-me\-trical transitive closure of~$\actrel^1$, where, for~$l =
\cseq{l_1,\ldots,l_k}$ and~${r = \cseq{r_1,\ldots,r_k}}$ in~$\acttolist{A}$,~$l
\actrel^1 r$ when there is~$i \in \N^*_{k-1}$ such that $
\exchwit(l_i,l_{i+1},r_i,r_{i+1}) $ and $ l_j = r_j $ for~$j \in \N^*_k
\setminus \set{i,i+1}$. We \glossary(.eb){$\acttocattop A$}{the set of
  $n$\sequence classes over~$A$}write~$\acttocattop A$ for the quotient
set~$\acttolist{A}/\actrel$ of \index{sequence class@$n$-sequence
  class}\emph{$n$\sequence classes} and 
\[
  \actcl -\co \acttolist A \to \acttocattop A
\]
for the associated projection. We remark that, if~$u,v \in \acttolist A$ are
such that~$u \actrel v$, then~$\len u = \len v$. Thus, the length given for the
members of~$\acttolist A$ induces a \index{length!for a free strict
  category}\emph{length} for the members of~$\acttocattop A$. The
operations~$\csrctgt\eps_n$ for~$\eps \in \set{-,+}$ on~$\acttolist{A}$ can be
shown compatible with the relation~$\actrel$, so that they are well-defined
on~$\acttocattop A$ as well. Thus, we obtain an $n$\cellular
extension~$\acttocat C A$ by extending the strict $n$\category~$C$
with~$\acttocattop A$. Similarly, the operations~$\pcomp_i$ for~$i \in \N_{n-1}$
and~$\pcomp_n$ defined for~$\acttolist{A}$ are compatible with the
relation~$\actrel$, so that they are well-defined on~$\acttocat C A_{n+1} =
\acttocattop A$ as well. We add an identity operation by putting~${\unitp {n+1}
  u}={\actcl{\unitseq u}}$ for~${u \in C_n}$. We then have:
\begin{prop}
  \label{prop:acttocat-free}
  $\acttocat C A$ has a structure of a strict $(n{+}1)$\category. It is the free
  $(n{+}1)$\category on the action~$(C,A)$ relatively to the forgetful
  functor~$\mfunctorb {U}'$.
\end{prop}
\noindent In the following, for all $n$\categorical action~$(C,A)$, we
write~$\acttocat C A$ for~$\acttocat C A$ as above. The construction~$(C,A)
\mapsto \acttocat C A$ uniquely extends to a \glossary(.f){$\acttocat--$,
  $\acttocat C A$}{the functor which maps an $n$\categorical action to the
  associated free strict $(n{+}1)$\category}functor
\[
  \acttocat - -\co \nCata n \to \nCat {n+1}
\]
which is left adjoint to~$\mfunctorb U'$. Given~$(H,h)\co (C,A) \to (D,B)$ in~$\nCext n$, the $(n{+}1)$\functor 
\[
  \acttocat H h\co \acttocat C A \to \acttocat D B \in \nCat {n+1}
\]
is defined by
  $\acttocat H h_i = H_i$
  and
  $
  \acttocat H h_{n+1}(\cseqcl{u_1,\ldots,u_k}) = \cseqcl{h(u_1),\ldots,h(u_k)}
  $
for~$i \in \N_n$ and~$\cseq{u_1,\ldots,u_k} \in\acttolist A$.

\paragraph{Free categories on cellular extensions}
\parlabel{text:polextp-another-description}

Let $n \in \N$. We can sum up the content of the previous sections to give a
concrete description of the functor
\[
  \polextp - -\co \nCext n \to \nCat {n+1}\zbox.
\]
Indeed, since its right adjoint $\algtoce_{n} \co \nCat {n+1} \to \nCext n$ is
the composite of the right adjoints
  $\mfunctorb {U}'$
  and
  $\mfunctorb U$,
we have, as a consequence of \Cref{prop:freeact-free,prop:acttocat-free}:
\begin{theo}
  \label{thm:descr-polextp}
  The composite
  \begin{equation*}
    \label{eq:isom-freecat}
    (\acttocat - -) \circ (\freeact - -)\co \nCatp n \to \nCat {n+1}
  \end{equation*}
  is a left adjoint for~$\algtoce_n$. In particular, it is isomorphic
  to~$\polextp--$.
\end{theo}
\noindent Our description of~$\polextp - -$ also induces a decomposition
property for the $(n{+}1)$\cells of free extensions:
\begin{prop}
  \label{prop:decomp-cells-polextp-strcats}
  Given an $n$\cellular extension~$(C,X)$ and~$u \in \polextp C X_{n+1}$,~$u$
  can be written
  \[
    F_1[g_1] \pcomp_n \cdots \pcomp_n F_k[g_k]
  \]
  where~$k\in \N$,~$g_i \in X$ and~$F_i$ is an $n$\context class of type~$g_i$
  for~$i \in \N^*_k$. Moreover,~$k$ is unique for~$u$.
\end{prop}

\subsection{Cell decompositions}
\label{ssec:cell-dec}

Here, we use the machinery of context classes in order to prove a decomposition
property for the cells of a torsion-free complex. More precisely, given a
torsion-free complex~$P$, we prove that the $n$\cells of~$\Cell(P)$ can be
written as sequences of applied $(n{-}1)$\context classes. Actually, we prove
the stronger statement that such a composite exists for any total ordering,
called \emph{linear extensions}, of the top-level $n$\generators that respects
the relation~$\tl$. This result will be a first step towards the proof that
$\Cell(P)$ is freely generated on the atoms. The next one, tackled in the following
section, will be to show that the above decomposition is unique up to the
relation~$\actrel$ defined in the previous section.

\paragraph{Linear extensions} 

Given a finite poset~$(S,<)$, a \index{linear extension}\emph{linear extension
  of~$(S,<)$} is the data of a bijection~$\sigma \co \N^*_{\setsize S} \to S$
such that, for~$i,j \in \N^*_{\setsize S}$, if~$\sigma(i) < \sigma(j)$, then~$i
< j$. Given two linear extensions~$\sigma,\sigma' \colon \N_{|S|} \to S$, a
\index{morphism!of linear extensions}\emph{morphism of linear extensions
  of~$(S,<)$} between~$\sigma$ and~$\sigma'$ is a function~${\rho \colon
  \N^*_{\setsize S} \to \N^*_{\setsize S}}$ such that the triangle
\[
  \begin{tikzcd}
    \N^*_{\setsize S} \ar[rr,"\rho"] \ar[rd,"\sigma"'] & & \N^*_{\setsize S} \ar[ld,"\sigma'"] \\
    & S
  \end{tikzcd}
\]
is commutative (in particular,~$\rho$ is a bijection). We
\glossary(LinExtS){$\linext(S)$}{the category of linear extensions
  of~$(S,<)$}write~$\linext(S)$ for the category of linear extensions of~$S$.
Given~$n \in \N$ and a bijection~$\rho \co \N^*_n \to \N^*_n$, we write
$\Inv(\rho) \in \N$ for the number of \emph{inversions} of~$\rho$, \ie
\[
  \Inv(\rho) = \setsize{ \set{ (i,j) \in \N^*_n \times \N^*_n \mid i < j \qtand \rho(i) > \rho(j)}}\zbox.
\]
Moreover, given~$i,j \in \N^*_{n}$ such that~$i \neq j$, we write~$\tau_{i,j}$
for the bijection~$\N^*_n \to \N^*_n$ which is the transposition of~$i$ and~$j$.
We show that the morphisms of linear extensions are generated by the
transpositions:
\begin{lem}
  \label{lem:linext-dec}
  Given a poset~$(S,<)$ and~$\sigma,\sigma' \in \linext(S)$ and~$\rho \co \sigma
  \to \sigma' \in \linext(S)_1$, there exist~$p \in \N$
  and~${\sigma_0,\ldots,\sigma_p \in \linext(S)}$ with~$\sigma = \sigma_0$
  and~$\sigma_p = \sigma'$, and~$\rho_i \co \sigma_{i-1} \to \sigma_i \in
  \linext(S)$ for~$i \in \N^*_p$ such that $ \rho = \rho_1 \comp_0 \cdots
  \comp_0 \rho_p $ and $\rho_i$ is a transposition for~$i \in \N^*_p$.
\end{lem}

\begin{proof}
  We prove the result by induction on the number~$\Inv(\rho)$ of inversions of
  the bijection~$\rho$. If~$\Inv(\rho) = 0$, then $ \rho = \id{\N^*_{\setsize
      S}} = \unitp{1}\sigma $. So suppose that~$\Inv(\rho) > 0$. Thus, there
  exists~$k \in \N^*_{\setsize S-1}$ such that~$\rho(k) > \rho(k+1)$. The
  bijection~$\bar\sigma = \sigma \circ \tau_{k,k+1}$ is then a linear extension
  of~$(S,<)$ as in
  \[
    \begin{tikzcd}[column sep=huge]
      \N^*_{\setsize S} \ar[r,"\tau_{k,k+1}"] \ar[rd,"\sigma"'] & \N^*_{\setsize S} \ar[d,"\bar\sigma"]
      \ar[r,"\rho \circ \tau_{k,k+1}"] & \N^*_{\setsize S} \ar[ld,"\sigma'"] \\
      & S 
    \end{tikzcd}
    .
  \]
  Indeed, for~$i, j \in \N^*_{\setsize S}$ such that~$i \neq j$
  and~$\bar\sigma(i) < \bar\sigma(j)$,
  \begin{itemize}
  \item if~$i = k+1$ and~$j = k$, then~$\sigma(k) < \sigma(k+1)$,
    so~$\sigma'(\rho(k)) < \sigma'(\rho(k+1))$ and~$\rho(k) < \rho(k+1)$,
    contradicting the hypothesis;
  
  \item if~$i = k+1$ and~$j \neq k$, then~$\sigma(i-1) < \sigma(j)$, so~$i-1 <
    j$, and, since~$j \neq i$,~$i < j$;
  \item otherwise, we are able to prove that~$i < j$ easily.  
  \end{itemize}
  Moreover, the number of inversions of~$\rho \circ \tau_{k,k+1}$ is~$\Inv(\rho) -
  1$. By induction hypothesis,~$\rho \circ \tau_{k,k+1}$ can be written as
  \[
    \rho \circ \tau_{k,k+1} = \rho_2 \comp_0 \cdots \comp_0 \rho_p
  \]
  for some~$p \in \N$ and transpositions~$\rho_i \co \sigma_{i-1} \to \sigma_i
  \in \linext(S)_1$ for~$i \in \N^*_{p-1}$, so that
  \[
    \rho = \tau_{k,k+1} \comp_0\rho_2 \comp_0 \cdots \comp_0 \rho_p
  \]
  is of the wanted form.
\end{proof}

\paragraph{Decomposition theorem}
Here, we write~$P$ for a torsion-free complex. We show that cells
of~$\Cell(P)$ can be decomposed as composites of applied context classes that
respect the relation~$\tl$. First, we state a simple criterion for equality
in~$\Cell(P)$, which readily follows from the definitions of cells and
source/target operations:
\begin{lem}
  \label{lem:cell-eq}
  Given~$k,n \in \N$ with~$k < n$,~$\eps \in \set{-,+}$ and~$X,Y \in \Cell(P)_n$
  such that
  $
    \csrctgt\eps_{k} (X) = \csrctgt\eps_{k} (Y)
  $
  and
  $
    X_{i,\eps} = Y_{i,\eps}  
  $
  for~$i
  \in \set{k+1,\ldots,n}$, we have~$X = Y$.
\end{lem}
\noindent Next, we show that we can write a cell as a composition by extracting a minimal
element for~$\tl$:

\begin{lem}
  \label{lem:toplevel-decomp-min-gen}
  Let~$n \in \N^*$ and~$X$ be an $n$\cell and~$g$ be a minimal element of~$X_n$
  for~$\tl_{X_n}$. Then, there exist $n$\cells~$Y$ and~$Z$ that are
  $(n{-}1)$\composable such that $ Y_n = \set{g} $, $ Z_n = X_n \setminus
  \set{g} $ and $ X = Y \comp_{n-1} Z $.
\end{lem}
\begin{proof}
  Since~$g$ is minimal for~$\tl_{X_n}$, we have~$\set{g}^\mp \subseteq
  X_{n-1,-}$. Moreover, since~$X$ is an $n$\cell,~$X_n$ is fork-free so
  that~$\set{g} \perp (X_n \setminus \set{g})$. Thus, by \Lemr{street-2.4},
  writing~$V$ for~$(X_{n-1,-} \cup g^+) \setminus g^-$, we have that
    $\set{g}$ moves~$X_{n-1,-}$ to~$V$ and~$X_n \setminus \set g$ moves~$V$ to~$X_{n-1,+}$.
  By \Thmr{cell-gluing}, the cell~$Y = \cellgluing(\csrc_{n-1}(X),\set{g})$ is
  an $n$\cell which satisfies that
  \[
    Y_n = \set g,
    \quad
    {\csrc_{n-1}(Y) = \csrc_{n-1}(X)}
    \qtand
    {Y_{n-1,+} = V}.
  \]
  By \Thmr{cell-gluing} again,~$Z = \cellgluing(\ctgt_{n-1}(Y),X_n \setminus
  \set g)$ is an $n$\cell such that
  \[
    Z_n = X_n \setminus \set g,
    \quad
    \csrc_{n-1}(Z) = \ctgt_{n-1}(Y)
    \qtand
    Z_{n-1,+} = X_{n-1,+},
  \]
  so that~$\ctgt_{n-1}(Z)=\ctgt_{n-1}(X)$. Then, by the definition
  of~$\comp_{n-1}$, we have~$X = Y \comp_{n-1} Z$.
\end{proof}
\noindent The previous lemma implies that we can write a cell as a composite of
cells with a single top-level generator, that are moreover ordered by a given
linear extension:
\begin{lem}
  \label{lem:toplevel-decomp-gen}
  Let~$n \in \N^*$ and~$X$ be an $n$\cell of~$P$,~$p =
  \setsize{X_n}$ and~$\sigma \co \N^*_{p} \to (X_n,\tl_{X_n})$ be a linear
  extension. There exist $n$\cells~$X^1,\ldots,X^p$ that are
  $(n{-}1)$\composable and such that
  \[
    X^i_n = \set{\sigma(i)}
    \quad
    \text{for~$i \in \N^*_p$} 
    \qtand
    X = X^1 \comp_{n-1} \cdots \comp_{n-1} X^p.
  \]
\end{lem}
\begin{proof}
  We prove this property by induction on~$p$. When~$p = 0$ or~$p = 1$, then the
  property is trivial. So suppose that~$p > 1$. Note that~$\sigma(1)$ is minimal
  in~${X_n}$ for~$\tl_{X_n}$. By \Lemr{toplevel-decomp-min-gen}, we can write~$X
  = X^1 \comp_{n-1} X'$ where~$X^1$ and~$X'$ are $(n{-}1)$\composable $n$\cells
  such that~$X^1_n = \set{\sigma(1)}$ and~$X'_n = X_n \setminus \set
  {\sigma(1)}$. By induction hypothesis, we have that~$X' = X^2 \comp_{n-1}
  \cdots \comp_{n-1} X^p$ for some $(n{-}1)$\composable $n$\cells~$X^2, \ldots,
  X^p$ such that~$X^i_n = \set{\sigma(i)}$ for~$i \in \set{2,\ldots,p}$, which
  concludes the proof.
\end{proof}
\noindent Next, we give a sufficient criterion for a cell to be written as an
applied context class:
\begin{lem}
  \label{lem:cell-to-applied-ctxt}
  Let~$k,n \in \N$ with~$k < n$,~$g \in P_n$ and~$X$ be an
  $n$\cell such that
  $
    X_{i,\eps} = \gen{g}_{i,\eps}
  $
    for~${i \in \set{k+1,\ldots,n}}$ and~${\eps \in \set{-,+}}$.
  There exists a $k$\context class~$F$ of type~$\gen{g}$ such that we have~$X =
  F[\gen{g}]$.
\end{lem}
\begin{proof}
  We show this property by induction on~$k$. When~$k = 0$, we have that~$X_{i,\eps} = \gen{g}_{i,\eps}$ for~${i \in \N^*_n}$ and~${\eps \in \set{-,+}}$.
  Moreover, since~$X$ is an $n$\cell, we have that~$\gen{g}_{1,-}$ moves~$X_{0,-}$ to~$X_{0,+}$, so that
  $
    \gen{g}_{1,-}^\mp = \gen{g}_{0,-} \subseteq
    X_{0,-}
  $.
  Since~$X_{0,-}$ is fork-free, $\setsize{X_{0,-}} = 1$. Thus,~$X_{0,-}
  = \gen{g}_{0,-}$. Similarly,~$X_{0,+} = \gen{g}_{0,+}$. Hence, we have~$X
  = \gen{g}$ and the property of the statement is verified with the unique
  $0$\context class.

  So suppose that~$k > 0$. We have that~$X_{k+1,\eps} = \gen{g}_{k+1,\epsilon}$
  moves~$X_{k,-}$ to~$X_{k,+}$, so~$\gen{g}_{k,-} \subseteq X_{k,-}$.
  By~\forestaxiom{3},~$\gen{g}_{k,-}$ is a segment for~$\tl_{X_{k,-}}$ and, by
  \Lemr{segment-decomp-in-3}, there exist~$U,V \subseteq X_{k,-}$ and~$A,B
  \subseteq P_{k-1}$ such that
  \begin{itemize}
  \item $U,\gen{g}_{k,-},V$ is a partition of~$X_{k,-}$,
    
  \item $U$ moves~$X_{k-1,-}$ to~$A$,~$\gen{g}_{k,-}$ moves~$A$ to~$B$ and~$V$
    moves~$B$ to~$X_{k-1,+}$.
  \end{itemize}
  Writing
  \[
    L = \cellgluing(\csrc_{k-1}(X),U) \quad X^k =
    \cellgluing(\ctgt_{k-1}(L),\gen{g}_{k,-})
    \quad
    R =
    \cellgluing(\ctgt_{k-1}(X^k),V)\zbox,
  \]
  by \Thmr{cell-gluing}, we have that~$L,X^k,R$ are $k$\cells that are
  $(k{-}1)$\composable and such that
  \[
    \csrc_k(X) = L \pcomp_{k-1} X^k \pcomp_{k-1} R
  \]
  By induction on~$i \in \set{k+1,n}$, we define $i$\cells~$X^i$ such
  that~$\csrc_{i-1}(X^i) = X^{i-1}$ and~$X^i_i = \gen{g}_{i,-}$ by putting $ X^i
  = \cellgluing(X^{i-1},\gen{g}_{i,-}) $, which is indeed a cell by
  \Thmr{cell-gluing}. Then,~$X^n$ is an $n$\cell such that $ \csrc_k(X^n) = X^k
  $ and $ X^n_{i,\eps} = \gen{g}_{i,\eps} $ for~$i \in \set{k,\ldots,n}$.
  Moreover, since~$\csrc_k(X^n) = X^k$,
  \[
    \text{$L,X^n,R$ are $(k{-}1)$\composable}
    \qtand
    \csrc_k(L \pcomp_{k-1} X^n \pcomp_{k-1} R) = \csrc_k(X).
  \]
  Furthermore, we have that
  \[
    X_{i,-} = \gen{g}_{i,-} = X^n_{i,-} = (L \pcomp_{k-1} X^n \pcomp_{k-1}
    R)_{i,-}
  \]
  for~$i \in \set{k+1,n}$ so that, by \Lemr{cell-eq}, we have
  $
    X = L \pcomp_{k-1} X^n \pcomp_{k-1} R
  $.
  By induction hypothesis, there exists a $(k{-}1)$\context class~$F'$ such
  that~$X^n = F'[\gen{g}]$. Writing~$F$ for the $k$\context class~$\ctxtcl{(L,F',R)}$, we have that~$X = F[\gen{g}]$ as wanted.
\end{proof}
\noindent We can now prove the following decomposition theorem:

\begin{theo}
  \label{thm:cell-dec}
  Given~$n \in \N^*$, an $n$\cell~$X \in \Cell(P)$ and a linear
  extension
  \[
    \sigma \co \N^*_p \to (X_n,\tl_{X_n})
  \]
  with~$p = \setsize{X_n}$,
  there exist $(n{-}1)$\context classes~$F_1,\ldots,F_p$ of~$\Cell(P)$ of
  types~$\gen{\sigma(1)},\ldots,\gen{\sigma(p)}$ respectively such that
  \[
    X = F_1[\gen{\sigma(1)}] \pcomp_{n-1} \cdots \pcomp_{n-1} F_p[\gen{\sigma(p)}]\zbox.
  \]
\end{theo}
\begin{remark}
  By \forestaxiom{1}, given~$n \in \N^*$ and a finite subset~$S \subseteq
  P_n$, there always exists a linear extension~$\sigma \co \N^*_{\setsize P} \to
  (S,\tl_S)$, so that an $n$\cell~$X$ of~$P$ has at least one decomposition of
  the form given by \Thmr{cell-dec}.
\end{remark}
\begin{proof}
  By \Lemr{toplevel-decomp-gen},~$X$ can be written
  $
    X = X^1 \pcomp_{n-1} \cdots \pcomp_{n-1} X^p
  $
  for some $n$\cells~$X^1,\ldots,X^p$ such that~$X^i_n = \set{\sigma(i)}$ for~$i
  \in \N^*_p$. We conclude with \Lemr{cell-to-applied-ctxt}.
\end{proof}
\noindent We verify with the following property that
\Thmr{cell-dec} does not miss other possible decompositions:
\begin{prop}
  \label{prop:lemdec.gen}
  Given~$n \in \N^*$ and~$X \in \Cell(P)_{n}$ such that
  \[
    X = F_1[\gen{x_1}] \pcomp_{n-1} \cdots \pcomp_{n-1} F_k[\gen{x_k}]
  \]
  for some~$k \in \N$,~$x_1,\ldots,x_k \in P_{n}$ and $(n{-}1)$\context
  classes~$F_1,\ldots,F_k$ of~$\Cell(P)$, we have
  \begin{enumerate}[label=(\roman*),ref=(\roman*)]
  \item \label{lemdec.gen.1} $X_{n}= \set{x_1,\ldots,x_k}$,
    \ndr{on a perdu une prop dans le transfert. pê la remettre plus tard}
  \item \label{lemdec.gen.2} for~$i,j \in \N^*_k$ with $i \neq j$, we have~$x_i
    \neq x_j$,
    
  \item \label{lemdec.gen.3} the function~$p \mapsto x_p$ of type~$\N^*_k \to
    X_{n}$ is a linear extension of~$(X_{n},\tl_{X_{n}})$.
  \end{enumerate}
  In particular, if
  $
    X = F'_1[\gen{y_1}] \pcomp_{n-1} \cdots \pcomp_{n-1} F'_{l}[\gen{y_l}]
  $
  for some~$l \in \N$,~$y_1,\ldots,y_l \in P_{n}$ and $(n{-}1)$\context classes~$F'_1,\ldots,F'_l$, then~$k = l$ and
  $
    \set{x_1,\ldots,x_k} = \set{y_1,\ldots,y_{l}}
$.
\end{prop}
\begin{proof}
  \ndr{preuve effacée ici à redéplacer plus tard si nécessaire}

  Given~$m < n$,~$x \in P_n$ and an $m$\context class~$F$ of type~$\gen{x}$, by
  a simple induction on~$m$, one can prove that~$(F[\gen{x}])_{n} = \set{x}$.
  Thus, by definition of~$\comp_{n-1}$, we have~$X_n = \set{x_1,\ldots,x_k}$, so
  \ref{lemdec.gen.1} holds.
  Let~$i,j \in \N^*_k$ with~$i < j$, and~$Y,Z$ be the $n$\cells defined by
  \[
    Y =
    F_1[\gen{x_1}] \pcomp_{{n-1}} \cdots \pcomp_{{n-1}} F_i[\gen{x_i}] \qtand Z =
    F_{i+1}[\gen{x_{i+1}}] \pcomp_{{n-1}} \cdots \pcomp_{{n-1}} F_k[\gen{x_k}].
  \]
  Then~$x_i \in Y_{n}$,~$x_j \in Z_{n}$ and~$Y$,$Z$ are $(n{-}1)$\composable.
  By \Lemr{cell-comp-n-1},~${Y_{n} \cap Z_{n} = \emptyset}$.
  Hence,~$x_i \neq x_j$, thus \ref{lemdec.gen.2} holds. Moreover, by
  \Lemr{cell-comp-n-1} again,~$(Y_{n})^- \cap (Z_{n})^+ =
  \emptyset$, so that~${\neg (x_j \tl^1_{X_{n}} x_i)}$. Thus, by contrapositive,
  given~$i,j \in \N^*_k$ such that~$x_{i} \tl^1_{X_{n}} x_{j}$, we have~$i \le
  j$, and in fact~$i < j$ by \forestaxiom{1}. Since~$\tl_{X_{n}}$ is the
  transitive closure of~$\tl^1_{X_{n}}$, given~$i,j \in \N^*_k$, we have
  that~${x_i \tl_{X_{n}} x_j}$ implies~$i < j$, so the function~$p \mapsto
  x_p$ is a linear extension of~$(X_{n},\tl_{X_{n}})$, which concludes the
  proof of~\ref{lemdec.gen.3}.
\end{proof}

\subsection{Freeness for torsion-free complexes}
\label{ssec:freeness-proof}

In this section, we give a proof to \Cref{thm:ext-freeness}, which states that
the \ocat of cells of a torsion-free complex is, in each dimension, a free
extension over itself. By the characterization of the functor~$\polextp - - \co
\nCatp n \to \nCat {n+1}$ %
given in \Cref{ssec:polextp-description}, we are only left to prove that the
canonical forms
$
  F_1[x_1] \pcomp_n \cdots \pcomp_n F_p[x_p]
$
from the previous section are unique, up to the relation~$\actrel$ defined
in~\Cref{text:ctxts-ctxtcls}. We first prove the unicity of the decomposition
in the case~$p = 1$, and then handle the general case afterwards.

\noindent In this section, we write~$P$ for a torsion-free complex.

\paragraph{Freeness of decompositions of length one}

We first prove two technical lemmas on the manipulation of contexts by mutual
induction. The first states that, as long as we respect the relation~$\tl$, we
can modify the whiskers of the contexts:
\begin{lem}
  \label{lem:dep-xch}
  Let~$k,n \in \N^*$ with~$k < n$,~$\eps \in \set{-,+}$,~$g \in P_n$ and~${E =
    (L,F,R)}$ be a $k$\context of type~$\gen g$ of~$\Cell(P)$. Consider the following
  subsets of~$P_k$:
  \begin{align*}
    S &= L_k \cup R_k,
    &
    S' &= S \cup \gen{g}_{k,\eps},
    \\
    U &= \setof{ y \in S}{ y \tl_{S'} \gen{g}_{k,\eps}},
    &
    V &= \setof{y \in S}{ \gen{g}_{k,\eps}\tl_{S'} y }.
  \end{align*}
  Then, for every partition~$U'\cup V'$ of~$S$ such that~$U \subseteq U'$,~$V
  \subseteq V'$,~$U'$ is initial in~$S$ and~$V'$ is final in~$S$, there exists
  a $k$\context~$E' = (L',F',R')$ of type~$X$ such that
  \begin{align*}
    L'_k &= U',&
    R'_k &= V',&
    E &\ctxtrel_k E'.
  \end{align*}
\end{lem}

\noindent For~$k = 2$, \Cref{lem:dep-xch} states that, given~$g \in P_n$ for
some~$n > 2$ and a $2$\context~$E = (L,F,R)$ of type~$\gen g$
\Cref{fig:illustration:lem:dep-xch},~$E$ is equivalent through~$\ctxtrel_2$ to a
$2$\context~$E' = (L',F',R')$ as on the right of \Cref{fig:illustration:lem:dep-xch}.
\begin{figure}
  \centering
  \[
  \begin{tikzpicture}[scale=0.6,baseline=(current bounding box.center),rotate=90,xscale=.9,yscale=.9]
    \draw[decorate,decoration={brace,amplitude=4,mirror}] (3,3.3) -- node[left]{$L_2\ $} (0.2,3.3);
    \draw[decorate,decoration={brace,amplitude=4}] (-3,3.3) -- node[left]{$R_2\ $} (-.2,3.3);
    \clip (0,0) circle (3.2);
    \coordinate (centrez) at (0,0);
    \coordinate (dn-2s) at (0,3);
    \coordinate (dn-2t) at (0,-3);
    \coordinate (zn-2s) at ($ (centrez) + (0,0.75) $);
    \coordinate (zn-2t) at ($ (centrez) + (0,-0.75) $);
    \node[circle,inner sep=0pt] (dn-2sn) at (dn-2s) {$.$};
    \node[circle,inner sep=0pt] (dn-2tn) at (dn-2t) {$.$};
    \node[circle,inner sep=0pt] (zn-2sn) at (zn-2s) {$.$};
    \node[circle,inner sep=0pt] (zn-2tn) at (zn-2t) {$.$};
    \node at (centrez) {$\gene{g}$};
    \draw[->] (95:3) arc (95:265:3);
    \draw[->] (85:3) arc (85:-85:3);
    \draw[->] (centrez) + (100:0.75) arc (100:260:0.75);
    \draw[->] (centrez) + (80:0.75) arc (80:-80:0.75);
    \draw[bend right,->] (dn-2sn) to[bend left=60] (zn-2sn.north west);
    \draw[bend right,->] (zn-2tn.south east) to[bend right=60] (dn-2tn);
    \draw (zn-2sn) -- ($ (0,0) + (150:3) $);
    \draw (zn-2sn) -- ($ (0,0) + (30:3) $);
    \draw (zn-2tn) -- ($ (0,0) + (-150:3) $);
    \draw (zn-2tn) -- ($ (0,0) + (-30:3) $);
    \node (flecheS) at (-1.875,0) {$\Downarrow V$};
    \node (flecheT) at (1.875,0) {$\Downarrow U$};
  \end{tikzpicture}
  \quad\ctxtrel_2\quad
  \begin{tikzpicture}[scale=0.6,baseline=(current bounding box.center),rotate=90,xscale=.9,yscale=.9]
    \draw[decorate,decoration={brace,amplitude=4}] (3,-3.3) -- node[right]{\ $L'_2$} (.2,-3.3);
    \draw[decorate,decoration={brace,amplitude=4,mirror}] (-3,-3.3) -- node[right]{\ $R'_2$} (-.2,-3.3);
    \clip (0,0) circle (3.2);
    \coordinate (centrez) at (0,0);
    \coordinate (dn-2s) at (0,3);
    \coordinate (dn-2t) at (0,-3);
    \coordinate (zn-2s) at ($ (centrez) + (0,0.75) $);
    \coordinate (zn-2t) at ($ (centrez) + (0,-0.75) $);
    \node[circle,inner sep=0pt] (dn-2sn) at (dn-2s) {$.$};
    \node[circle,inner sep=0pt] (dn-2tn) at (dn-2t) {$.$};
    \node[circle,inner sep=0pt] (zn-2sn) at (zn-2s) {$.$};
    \node[circle,inner sep=0pt] (zn-2tn) at (zn-2t) {$.$};
    \node at (centrez) {$\gene{g}$};
    \draw[->] (95:3) arc (95:265:3);
    \draw[->] (85:3) arc (85:-85:3);
    \draw[->] (centrez) + (100:0.75) arc (100:260:0.75);
    \draw[->] (centrez) + (80:0.75) arc (80:-80:0.75);
    \draw[bend right,->] (dn-2sn) to[bend right=60] (zn-2sn.south west);
    \draw[bend right,->] (zn-2tn.north east) to[bend left=60] (dn-2tn);
    \draw (zn-2sn) -- ($ (0,0) + (150:3) $);
    \draw (zn-2sn) -- ($ (0,0) + (30:3) $);
    \draw (zn-2tn) -- ($ (0,0) + (-150:3) $);
    \draw (zn-2tn) -- ($ (0,0) + (-30:3) $);
    \node (flecheS) at (-1.875,0) {$\Downarrow V$};
    \node (flecheT) at (1.875,0) {$\Downarrow U$};
  \end{tikzpicture}
\]
  \caption{Illustration of \Cref{lem:dep-xch}}
  \label{fig:illustration:lem:dep-xch}
\end{figure}
The
second lemma gives sufficient conditions under which two composable context
classes can be decomposed in a way that allows them to be exchanged by the
relations~$\ctxtrel_k$ or~$\actrel$ defined in~\Cref{text:ctxts-ctxtcls}:
\begin{lem}
  \label{lem:linext-comp}
  Let~$k,n_1,n_2 \in \N^*$ with~$k < \min(n_1,n_2)$,~$g_1 \in P_{n_1}$,~$g_2 \in
  P_{n_2}$, and~$F_1,F_2$ be $k$\context classes of~$\Cell(P)$, of
  type~$\gen{g_1}$ and~$\gen{g_2}$ respectively, such that
  \[
    F_1[\ctgt_k(\gen{g_1})] = F_2[\csrc_k(\gen{g_2})]
    \qtand
    \gen{g_1}_{k,+} \cap \gen{g_2}_{k,-} = \emptyset.
  \]
   There exist
  $k$\context classes~$\bar F_1,\bar F_2$ of type~$\gen{g_1}$ and~$\gen{g_2}$
  respectively, such that
  \begin{itemize}
  \item either~$\bar F_1,\bar F_2$ are $(k{-}1)$\composable and
    \begin{align*}
      F_1 &= \bar F_1 \pcomp_{k-1} \bar F_2[\csrc_k(\gen{g_2})] 
      &
        F_2 &= \bar F_1[\ctgt_k(\gen{g_1})] \pcomp_{k-1} \bar F_2\zbox,
    \end{align*}
    
  \item or~$\bar F_2,\bar F_1$ are $(k{-}1)$\composable and
    \begin{align*}
      F_1 &= \bar F_2[\csrc_k(\gen{g_2})] \pcomp_{k-1} \bar F_1
      &
        F_2 &= \bar F_2 \pcomp_{k-1} \bar F_1[\ctgt_k(\gen{g_1})]\zbox.
    \end{align*}
  \end{itemize}
\end{lem}
\begin{proof}
  We prove the two lemmas by induction on~$k$. { \newcommand{\midcset}{S'}
    \newcommand{\ctxtset}{S} \newcommand{\gset}{Y} \newcommand{\gsetbis}{Z}
    \newcommand{\gvar}{x} \newcommand{\tempc}{T} \medbreak\noindent\textit{Proof
      of \Lemr{dep-xch}.}
    Let~$p = \setsize{L_k}$. Since~$U'$ is initial in~$\ctxtset$,~$U' \cap L_k$
    is initial for~$\tl_{L_k}$, so there exists a
    linear extension
    \[
      \sigma \colon \N^*_p \to (L_k,\tl_{L_k})
    \]
    such that~$\set{ i \in \N^*_p \mid \sigma(i) \in U'} = \set{1,\ldots,i_0}$
    for some~$i_0 \in \N_p$. Writing~$x_i$ for~$\sigma(i)$ for~$i \in \N^*_p$,
    by \Thmr{cell-dec},~$L$ can be decomposed as
    \[
      L = F_1[\gen{x_1}] \pcomp_{k-1} \cdots \pcomp_{k-1} F_p[\gen{x_p}]
    \]
    for some $(k{-}1)$\context classes~$F_1,\ldots,F_p$.
    For~${i \in \set{i_0 + 1,\ldots,p}}$, we aim at transferring~$F_i[x_{i}]$
    from~$L$ to~$R$ using the relation~$\ctxtrel_k$ on~$k$\contexts. If~$k = 1$,
    then~$\gen{x_1},\ldots,\gen{x_p},\csrctgt\eps_1(\gen{g})$ are
    $0$\composable, so that
    \[
      x_1 \tl_{\midcset} \cdots \tl_{\midcset} x_p \tl_{\midcset}
      \gen{g}_{1,\eps}
    \]
    which implies that~$x_1,\ldots,x_p \in U'$ and~$i_0 = p$. Thus, we can
    suppose that~$k > 1$. Assume moreover that~${i_0 < p}$. To transfer the~$F_i[x_i]$'s, our plan is to use \Lemr{linext-comp}. We only need to show
    how to do this for~${i = p}$, and then iterate this procedure for~$i \in
    \set{i_0+1,\ldots,p-1}$.

    Note that~$F_p[\ctgt_{k-1}(\gen {x_p})] = F[\csrc_{k-1}(\gen{g})]$. Moreover, since~$x_p
    \notin U'$, we have~$x_p \notin U$, so that
    \[
      \gen{x_p}_{k-1,+} \cap \gen{g}_{k-1,-} = \emptyset\zbox.
    \]
    Thus, using \Lemr{linext-comp} inductively, we get $(k{-}1)$\context
    classes~$\bar F_p$ and~$\bar F$ of type~$\gen{x_p}$ and~$\gen{g}$ such
    that
    \begin{itemize}
    \item either~$\bar F_p,\bar F$ are $(k{-}2)$\composable and
      \begin{align*}
        F_p &= \bar F_p \pcomp_{k-2} \bar F[\csrc_{k-1}(\gen{g})] 
        &
          F &= \bar F_p[\ctgt_{k-1}(\gen{x_p})] \pcomp_{k-2} \bar F
      \end{align*}
      
    \item or~$\bar F,\bar F_p$ are $(k{-}2)$\composable and
      \begin{align*}
        F_p &= \bar F[\csrc_{k-1}(\gen{g})] \pcomp_{k-2} \bar F_p
        &
          F &= \bar F \pcomp_{k-2} \bar F_p[\ctgt_{k-1}(\gen{x_p})]
      \end{align*}
    \end{itemize}
    By symmetry, we can suppose that we are in the first situation. Then, by
    axiom~\ref{cond:ctxtrel-left} of~$\ctxtrel_k$, we get that~${E \ctxtrel_k \tilde E}$
    where~$\tilde E = (\tilde L,\tilde F,\tilde R)$ is such that
    \begin{align*}
      \tilde L &= F_1[\gen{x_1}] \pcomp_{k-1} \cdots \pcomp_{k-1} F_{p-1}[\gen{x_{p-1}}]
               \\
      \tilde F &= \bar F_p[\csrc_{k-1}(\gen{x_p})] \pcomp_{k-2} \bar F
               \\
      \tilde R &= (\bar F_p[\gen{x_p}] \pcomp_{k-2} \bar F[\ctgt_{k-1}(\gen{g})]) \pcomp_{k-1} R
    \end{align*}
    By iterating the above procedure for~$i \in \set{i_0+1,\ldots,p-1}$, we obtain
    a $k$\context~$E' = (L',F',R')$ of type~$\gen{g}$ such that
    \[
      E \ctxtrel_k E'
      \qquad
      L'_k = L_k \cap U'
      \qquad
      R'_k = R_k \cup (L_k \setminus U')\zbox.
    \]
    Using a similar method to transfer elements from~$R'$ to~$L'$, we get a
    $k$\context~$E'' = (L'',F'',R'')$ of type~$\gen{g}$ such that
    \[
      E' \ctxtrel_k E'' \qquad L''_k = L'_k \cup (R'_k \setminus V') \qquad
      R''_k = R'_k \cap V'\zbox.
    \]
    Then, we have~$E \ctxtrel_k E''$ and we compute that
    \begin{align*}
      L''_k &=  L'_k \cup (R'_k \setminus V') \\
            &= (L_k \cap U') \cup (R_k \setminus V') \cup (L_k \setminus (U' \cup V')) \\
            &= (L_k \cap U') \cup (R_k \cap U') & \text{(since~$L_k \cup R_k = U' \cup V'$)} \\
            &= U'
    \end{align*}
    and, similarly,~$R''_k = V'$. Thus,~$E''$ satisfies the wanted properties.
  }%

  {%
    \newcommand{\midcell}{M}%
    \newcommand{\midcset}{S'}%
    \newcommand{\ctxtset}{S}%
    \newcommand{\beforeset}{U}%
    \newcommand{\afterset}{V}%
    \newcommand{\partset}{Q}%
    \newcommand{\cumulset}{R}%
\medbreak\noindent\textit{Proof of \Lemr{linext-comp}.} Let~$E_k =
(L^k,F'_k,R^k)$ be such that~$\ctxtcl {E_k} = F_k$ for~$k \in \set{1,2}$.
Consider
\begin{align*}
  \midcell & =F_1[\ctgt_k(\gen{g_1})] && \text{(or, equivalently,~$F_2[\csrc_k(\gen{g_2})]$)}, \\
  \ctxtset_i & = L^i_k \cup R^i_k && \text{for~$i \in \set{1,2}$},\\
  \midcset & = \midcell_k,\\
  \beforeset_1 & = \set{ x \in \ctxtset_1 \mid x \tl_{\midcset}
                 \gen{g_1}_{k,+}}&
  \afterset_1 & = \{x \in \ctxtset_1 \mid \gen{g_1}_{k,+} \tl_{\midcset}
                x\}\\ 
  \beforeset_2 & = \set{ x \in \ctxtset_2 \mid x \tl_{\midcset}
                 \gen{g_2}_{k,-}} &
  \afterset_2 & = \{x \in \ctxtset_2 \mid \gen{g_2}_{k,-} \tl_{\midcset}
                x\}
\end{align*}
Since, by \forestaxiom{4},~${g_1}$ and~${g_2}$ are not in torsion with respect
to~$F_1[\ctgt_k(\gen{g_1})]$, we have
\[
  \text{either\quad$\neg (\gen{g_1}_{k,+} \tl_{\midcset} \gen{g_2}_{k,-})$\quad or\quad$\neg(\gen{g_2}_{k,-} \tl_{\midcset} \gen{g_1}_{k,+})$.}
\]
By symmetry, we can suppose that~$\neg (\gen{g_2}_{k,-} \tl_{\midcset} \gen{g_1}_{k,+})$. Since we
can use \Lemr{dep-xch} (which is proved for the current value of~$k$) to change~$E_1$ and~$E_2$, we can suppose that
\begin{align*}
  L^1_k & = \beforeset_1, 
  &
    R^1_k & = \ctxtset_1 \setminus \beforeset_1, \\
  L^2_k & = \ctxtset_2 \setminus \afterset_2,
  & R^2_k & = \afterset_2.
\end{align*}
Then,
\[
  (\beforeset_1 \cup \gen{g_1}_{k,+}) \cap (\gen{g_2}_{k,-} \cup \afterset_2) = \emptyset
\]
since, otherwise, it would contradict the condition~$\neg (\gen{g_2}_{k,-}
\tl_{\midcset} \gen{g_1}_{k,+})$. Consider the following sets:
\abovelongtoshortskip
\begin{align*}
  \partset_1 &= \beforeset_1, & \partset_2 &= \gen{g_1}_{k,+}, \\
  \partset_3 &= \zbox{$\midcset \setminus (\beforeset_1 \cup \gen{g_1}_{k,+} \cup 
               \gen{g_2}_{k,-} \cup \afterset_2)$,} \\
  \partset_4 &= \gen{g_2}_{k,-}, &
                                   \partset_5 &= \afterset_2.
\end{align*}
Then~$\partset_1,\partset_2,\partset_3,\partset_4,\partset_5$ form a partition of~$\midcset$. Moreover, this partition is compatible with~$\tl_{\midcset}$. Indeed,
given~$x,y \in \midcset$ such that~$x \tl_{\midcset} y$,
\begin{itemize}
\item if~$x \in \partset_2$, then we can not have~$y \in \partset_1$ since, by \forestaxiom{3},~$\gen{g_1}_{k,+}$ is a segment for~$\tl_{\midcset}$,
  
\item if~$x \in \partset_3$, then we can not have~$y \in \partset_1 \cup
  \partset_2$ (otherwise, we would have~$x \in \beforeset_1 \cup
  \gen{g_1}_{k,+}$),
  
\item if~$x \in \partset_4$, then either~$y \in \partset_4$ or~$y \in
  \partset_5$ by definition of~$\partset_5$,
  
\item if~$x \in \partset_5$, then~$y \in \partset_5$ since, by \forestaxiom{3},~$\gen{g_2}_{k,-}$ is a segment for~$\tl_{\midcset}$.
\end{itemize}
Thus, there exists a linear extension for~$(\midcset,\tl_{\midcset})$
\[
  \sigma \colon \N_{\midcset} \to \midcset
\]
such that, for~$i,j \in \N_{\setsize\midcset}$ and~$r,s \in \N^*_5$,
if~$\sigma(i) \in \partset_r$ and~$\sigma(j) \in \partset_s$ with~$r < s$,
then~$i < j$. Since~${\midcset = \midcell_k}$, using \Thmr{cell-dec},~$\midcell$
can be written
\[
  \midcell = \prod_{i = 1}^{\setsize{\midcset}} F_i [ \gen{\sigma(i)} ]
\]
for some $(k{-}1)$\context classes~$F_1,\ldots,F_{\setsize\midcset}$. By
gathering the terms corresponding to~$\partset_1,\ldots,\partset_5$
respectively, we obtain five
$k$\cells~$\midcell^1,\midcell^2,\midcell^3,\midcell^4$,~$\midcell^5 \in
\Cell(P)_k$ where
\[
  \midcell^j = \prod_{i \in \sigma^{-1}(\partset_j)} F_i[\gen{\sigma(i)}]
\]
as in \Cref{fig:midcell-dec} and such that
\begin{figure}
  \centering
  \[
    \begin{tikzpicture}[scale=0.4,rotate=-90]
      \def\orad{5}
      \def\irad{1}
      \def\ampl{40}
      \path (0,0) coordinate (center) -- +(\orad/3 + \irad/3,0) coordinate (centerR) -- +(-\orad/3 - \irad/3,0) coordinate (centerL);
      \draw (center) circle (\orad);
      \draw (centerL) circle (\irad);
      \draw (centerR) circle (\irad);
      \draw ($(centerL) + (90:\irad)$) -- ($(center) + (180 - \ampl:\orad)$);
      \draw ($(centerL) + (-90:\irad)$) -- ($(center) + (-180 + \ampl:\orad)$);
      \draw ($(centerR) + (90:\irad)$) -- ($(center) + (\ampl:\orad)$);
      \draw ($(centerR) + (-90:\irad)$) -- ($(center) + (-\ampl:\orad)$);
      \node at ($(center) + (-2/3*\orad -2/3*\irad,0)$) {$M^1$};
      \node at ($(centerL)$) {$M^2$};
      \node at ($(center)$) {$M^3$};
      \node at ($(centerR)$) {$M^4$};
      \node at ($(center) + (2/3*\orad +2/3*\irad,0)$) {$M^5$};
    \end{tikzpicture}
  \]
  \caption{The decomposition of~$\midcell$}
  \label{fig:midcell-dec}
\end{figure}
\[
  \midcell = \midcell^1 \pcomp_{k-1} \midcell^2 \pcomp_{k-1} \midcell^3\pcomp_{k-1} \midcell^4\pcomp_{k-1} \midcell^5.
\]
\goodbreak
\noindent Since
\[
  \csrc_{k-1} (L^1) = \csrc_{k-1} (\midcell) = \csrc_{k-1} (\midcell^1)
  \qtand
  L^1_k = \beforeset_1 = \midcell^1_k,
\]
by \Lemr{cell-eq}, we have~$L^1=\midcell^1$. Moreover, since
\[
  \csrc_{k-1} (F'_1[\gen{g_1}]) =
  \ctgt_{k-1}(L^1) = \ctgt_{k-1} (\midcell^1) = \csrc_{k-1} (\midcell^2)
\]
and
\[
  (F'_1[\ctgt_k(\gen{g_1})])_k = \gen{g_1}_{k,+} = \midcell^2_k,
\]
by \Lemr{cell-eq}, it follows that
\[
  F'_1[\ctgt_k(\gen{g_1})] = \midcell^2\zbox.
\]
Similarly, we can show that
\[
  F'_2[\csrc_k(\gen{g_2})] = \midcell^4 \qtand R^2 = \midcell^5.
\]
Moreover, since
\begin{gather*}
  \csrc_{k-1}(L^2) = \csrc_{k-1} (M) = \csrc_{k-1}(\midcell^1 \pcomp_{k-1}
  \midcell^2 \pcomp_{k-1} \midcell^3)
  \shortintertext{and}
  L^2_k   = \ctxtset_2 \setminus \afterset_2  
                     = \midcset \setminus (\gen{g_2}_{k,-} \cup \afterset_2) 
                    = \partset_1 \cup \partset_2 \cup \partset_3\zbox,
\end{gather*}
by \Lemr{cell-eq}, we have
\[
  L^2 = \midcell^1 \pcomp_{k-1} \midcell^2 \pcomp_{k-1} \midcell^3.
\]
Similarly, we have
\[
  R^1 = \midcell^3 \pcomp_{k-1} \midcell^4 \pcomp_{k-1} \midcell^5.
\]
Hence, writing
\[
  \bar F_1 = \ctxtcl {(L^1,F'_1,\unitp k {F'_1[\ctgt_{k-1}(\gen{g_1})]})}
  \qtand
  \bar F_2 = \ctxtcl{(M^3,F'_2,R^2)}
\]
we have
$F_1 = \bar F_1 \pcomp_{k-1} \bar F_2[\csrc_k(g_2)]$  and
$F_2 = \bar F_1[\ctgt_k(g_1)]
  \pcomp_{k-1} \bar F_2$
as wanted.}%
\end{proof}
\noindent We deduce that applied context classes are
completely determined by their sources (or targets):
\begin{theo}
  \label{thm:ext-env}
  Given~$k,n \in \N$ with~$k < n$,~$g \in P_n$ and $k$\context classes~$F_1,F_2$
  of type~$\gen{g}$ such that
  \[
    \csrc_k (F_1[\gen{g}]) = \csrc_k (F_2[\gen{g}])
    \qtext{or}
    \ctgt_k (F_1[\gen{g}]) = \ctgt_k (F_2[\gen{g}]),
  \]
  we have~$F_1 = F_2$.
\end{theo}
\begin{proof}
  \newcommand{\midcell}{M}
  \newcommand{\midcset}{{S'}}
  \newcommand{\ctxtset}{S}
  \newcommand{\beforeset}{U}
  \newcommand{\afterset}{V}
  By symmetry, it is enough to prove the case where~$\csrc_k (F_1[\gen{g}]) =
  \csrc_k (F_2[\gen{g}])$. We prove this property by an induction on~$k$. If~$k =
  0$, the result is trivial. So suppose that~$k > 0$. Let
  \[
    E_1 = (L^1,F'_1,R^1)
    \qtand
    E_2 = (L^2,F'_2,R^2)
  \]
  be $k$\contexts such that~$F_i = \ctxtcl{E_i}$ for~$i \in \set{1,2}$. Thus,
  \[
    L^1 \pcomp_{k-1} F'[\csrc_k(\gen g)] \pcomp_{k-1}R^1 =L^2 \pcomp_{k-1} F'[\csrc_k(\gen
    g)] \pcomp_{k-1}R^2
  \]
  In particular,~$L^1_k \cup \gen g_{k,-} \cup R^1_k = L^2_k \cup \gen g_{k,-}
  \cup R^2$ and, by \Lemr{cell-comp-n-1}, both sides are partitions, so that
  we have~${L^1_k \cup R^1_k = L^2_k \cup R^2_k}$.
  Consider the following subsets of~$P_k$:
  \begin{align*}
    \ctxtset &= L^1_k \cup R^1_k, &
    \midcset &= \ctxtset \cup \gen{g}_{k,-}, \\
    \beforeset &= \set{x \in \ctxtset \mid x \tl_{\midcset} \gen{g}_{k,-}}, &
    \afterset &= \ctxtset \setminus \beforeset.
  \end{align*}
  By \Lemr{dep-xch}, we can suppose that
  \[
    L^1_k = L^2_k = U
    \qtand
    R^1_k = R^2_k = V.
  \]
  For~$i \in \set{1,2}$, we have
  \[
    \csrc_{k-1}(L^i) = \csrc_{k-1} (F_i[\gen{g}])
    = \csrc_{k-1} \circ \csrc_k (F_i[\gen{g}])
  \]
  so that~$\csrc_{k-1}(L^1) = \csrc_{k-1}(L^2)$. Thus, by \Lemr{cell-eq}, we
  have
  $
    L^1 = L^2
  $
  and, by a similar argument,~$R^1 = R^2$.
  Moreover, for~$i \in \set{1,2}$,~$\ctgt_{k-1} (L^i) = \csrc_{k-1}
  (F'_i[\gen{g}])$,
  so
  \[
    \csrc_{k-1} (F'_1[\gen{g}]) = \csrc_{k-1} (F'_2[\gen{g}]).
  \]
  By induction hypothesis, we have~$F'_1 = F'_2$. Hence,~$F_1 = F_2$.
\end{proof}


\paragraph{Freeness of general decompositions}
\parlabel{text:freeness-proof}

We now handle the case of general decompositions of arbitrary lengths. First, we
show an analogous of~\Cref{thm:cell-dec}, \ie that the decompositions
in~$\defcellext$ can also be reordered by linear extensions:
\begin{lem}
  \label{lem:dep-comp}
  Let~$n \in \N$ and~$X$ be an $(n{+}1)$\cell of~$\defcellext$ such that
  \[
    X = F_1[\gene{x_1}] \pcomp_{n} \cdots \pcomp_{n} F_p[\gene{x_p}]
  \]
  for some~$p \in \N$,~$x_1,\ldots,x_p \in P_{n+1}$ and $n$\context
  classes~$F_1,\ldots,F_p$ of~$\Cell(P)$. Then, we have that the function~$q
  \mapsto x_q$ of type~$\N^*_q \to X_{n+1}$ is a linear extension
  of~$(X_{n+1},\tl_{X_{n+1}})$. Moreover, if~$\sigma$ is a linear extension of~$(X_{n+1},\tl_{X_{n+1}})$, then there exist $n$\context classes~$\bar
  F_1,\ldots,\bar F_p$ of respective types~${\gen{\sigma(1)},\ldots,\gen{\sigma(p)}}$ such that
  \[
    X = \bar F_1[\gene{{\sigma(1)}}] \pcomp_n \cdots \pcomp_n \bar F_p[\gene{{\sigma(p)}}]
    .
  \]
\end{lem}
\begin{proof}
  Write~$\rho \co \N_p \to X_{n+1}$ for the function such that
  $
    \rho(i) = x_i
  $
  for~$i \in \N^*_p$. By the functoriality of~$\ev$, we have
  \[
    \ev(X) = F_1[\gen{x_1}] \pcomp_n \cdots \pcomp_n F_p[\gen{x_p}]
  \]
  so that~$\rho$ is a linear extension by \Cref{prop:lemdec.gen}. We are left to
  prove the second part of the statement. We have a morphism of linear extensions
  \[
    f = \finv \sigma \circ \rho
  \]
  between~$\sigma$ and~$\rho$. By \Lemr{linext-dec}, we can suppose that~$f =
  \tau_{i,i+1}$ for some~${i \in \N^*_{p-1}}$. To conclude, we only need to show
  that~$\gene{x_i}$ and~$\gene{x_{i+1}}$ can be swapped in the decomposition
  of~$X$ as~${F_1[\gene{x_1}] \pcomp_n \cdots \pcomp_n F_p[\gene{x_p}]}$. By
  contradiction, suppose that~$\gen{x_i}_{n,+} \cap \gen{x_{i+1}}_{n,-} \neq
  \emptyset$. In particular, we have~${\rho(i) \tl_{X_{n+1}} \rho(i+1)}$.
  Since~$\rho = \sigma \circ \tau_{i,i+1}$, it implies~${\sigma(i+1)
    \tl_{X_{n+1}} \sigma(i)}$. Thus, since~$\sigma$ is a linear extension, we
  deduce that~$i+1 < i$, which is a contradiction. So~${\gen{x_i}_{n,+} \cap
    \gen{x_{i+1}}_{n,-} = \emptyset}$. By~\Lemr{linext-comp}, there exist
  $n$\context classes~$\bar F_i$ and~$\bar F_{i+1}$ such that, in~$\acttocat
  {\restrict n {\Cell(P)}}{P_{n+1}}$,
  \begin{gather*}
    \cseq{(x_i,F_i),(x_{i+1},F_{i+1})} \actrel \cseq{(x_{i+1},\bar F_{i}),(x_i,\bar F_{i+1})}
    \shortintertext{so that}
    \cseq{(x_1,F_1), \ldots, (x_p,F_p)}\\
    \llap{$\actrel\;$}
    \cseq{(x_1,F_1),\ldots,(x_{i-1},F_{i-1}),(x_{i+1},\bar F_i),(x_i,\bar F_{i+1}),(x_{i+2},F_{i+2}),\ldots,(x_p,F_p)}
  \end{gather*}
  \ie in~$\defcellext$,
  \[
    X = F_1[\gene{x_1}] \pcomp_n \cdots \pcomp_n F_{i-1}[\gene{x_{i-1}}] \pcomp_n
    \bar F_{i}[\gene{x_{i+1}}] \pcomp_n \bar F_{i+1}[\gene{x_{i}}] \pcomp_n
    F_{i+2}[\gene{x_{i+2}}]
    \pcomp_n \cdots \pcomp_n
    F_p[\gene{x_p}]
  \]
  which concludes the proof.
\end{proof}
\noindent We can now deduce that~$\restrict {n+1} {\Cell(P)}$ is canonically a
free extension on~$\restrict n {\Cell(P)}$:
\ifx\theoremextfreeness\undefined
\begin{theo}
  Given a torsion-free complex~$P$, for~$n \in \N$, the
  $(n{+}1)$\functor~$\ev^n$ is an isomorphism between $\defcellext$ and
  $\restrict {n+1}{\Cell(P)}$.
\end{theo}
\else
  \theoremextfreeness*
\fi
\begin{proof}
  Since~$\restrict n \ev = \unit {\restrict n {\Cell(P)}}$, it is enough to
  prove that~$\ev$ induces a bijection on the $(n{+}1)$\cells. By
  \Thmr{cell-dec}, it is surjective, so we are left to prove injectivity.
  Let~$X^1$ and~$X^2$ be $(n{+}1)$\cells of~$\defcellext$, such that~${\ev(X^1)
    = \ev(X^2)}$ and
  \[
    X^i = F^i_1[\gene {x^i_1}] \pcomp_{n} \cdots \pcomp_{n} F^i_{p_i}[\gene{x^i_{p_i}}]
  \]
  for some~$p_i \in \N$,~$x^i_1,\ldots,x^i_{p_i} \in P_{n+1}$ and $n$\context
  classes~$F^i_1,\ldots,F^i_{p_i}$ for~$i \in \set{1,2}$. By functoriality
  of~$\ev$, we have
  \[
    \ev(X^i) = F^i_1[\gen {x^i_1}] \pcomp_{n} \cdots \pcomp_{n} F^i_{p_i}[\gen{x^i_{p_i}}]
  \]
  for~$i \in \set{1,2}$, so that, by \Cref{prop:lemdec.gen}, we have~$p_1 = p_2$, and
  we write~$p$ for the common value. Moreover,
  $
    \set{x^1_1,\ldots,x^1_p} = \set{x^2_1,\ldots,x^2_p}
  $.
  By \Lemr{dep-comp}, we can suppose that~$x^1_j = x^2_j$ for~$j \in \N^*_p$,
  and we write~$x_j$ for the common value. Since~${\csrc_n(X^i) =
  \csrc_n(F^i_1[x_1])}$ for~$i \in \set{1,2}$, we have
  \[
    \csrc_n(F^1_1[x_1]) = \csrc_n(F^2_1[x_1])
  \]
  so that, by \Thmr{ext-env},~$F^1_1 = F^2_1$. In particular,~$\ctgt_n(F^1_1[x_1]) =
  \ctgt_n(F^2_1[x_1])$, so that
  \[
    \csrc_n(F^1_2[\gene {x_2}] \pcomp_{n} \cdots \pcomp_{n} F^1_{p}[\gene{x_{p}}])
    =
    \csrc_n(F^2_2[\gene {x_2}] \pcomp_{n} \cdots \pcomp_{n} F^2_{p}[\gene{x_{p}}])
    \zbox.
  \]
  Thus, we can iterate the above procedure to show that~$F^1_j = F^2_j$ for~$j
  \in \set{1,\ldots,p}$, so that we get~$X^1 = X^2$. Hence, the $(n{+}1)$\functor~$\ev$
  is an isomorphism.
\end{proof}


\clearpage
\printbibliography

\end{document}